\documentclass[reqno,oneside,12pt]{amsart}
\usepackage[utf8]{inputenc}
\usepackage{amsmath}
\usepackage{amssymb,amsthm, amscd}
\usepackage[dvipsnames]{xcolor}
\usepackage{mathtools}
\usepackage{graphicx}
\usepackage{enumitem}
\usepackage{bm}
\usepackage{tikz-cd}
\usepackage{tikz}
\usetikzlibrary{calc,intersections,through,backgrounds}
\usetikzlibrary{positioning}
\usetikzlibrary{decorations.pathmorphing, decorations.shapes}
\usetikzlibrary{decorations.pathreplacing,calligraphy}
\usepackage{todonotes}

\newcommand\mapsfrom{\mathrel{\reflectbox{\ensuremath{\mapsto}}}}

\textwidth 6.5truein
\textheight 8.67truein      
\oddsidemargin 0truein
\evensidemargin 0truein
\topmargin 0truein

\numberwithin{equation}{section}
\setcounter{secnumdepth}{3}

\setcounter{tocdepth}{1}

\usepackage{hyperref}
\hypersetup{
linkcolor = {black},
urlcolor = {blue},
citecolor = {black}
}

\theoremstyle{plain}

\newtheorem{theorem}{Theorem}[section]
\newtheorem{lemma}[theorem]{Lemma}
\newtheorem{prop}[theorem]{Proposition}
\newtheorem{cor}[theorem]{Corollary}

\newtheorem*{theoremIntro}{Theorem}

\theoremstyle{definition}

\newtheorem{definition}[theorem]{Definition}
\newtheorem{example}[theorem]{Example}

\newtheorem*{assumption}{Assumption}
\newtheorem*{notation}{Notation}
\newtheorem*{convention}{Convention}

\theoremstyle{remark}

\newtheorem{remark}[theorem]{Remark}

\begin{document}


\title{Chevalley--Monk formulas for bow varieties}
\author{Till Wehrhan}
\address{Max-Planck Institute for Mathematics, Vivatgasse 7, 53111 Bonn, Germany}
\email{wehrhan@mpim-bonn.mpg.de}

\begin{abstract}
We prove a formula for the multiplication of equivariant first Chern classes of tautological bundles of type A bow varieties with respect to the stable envelope basis. This formula naturally generalizes the classical Chevalley--Monk formula and can be formulated in terms of  creating crossings of skein type diagrams that label the stable envelope basis.
\end{abstract}

\maketitle

\tableofcontents

\section{Introduction}

By introducing the theory of \textit{stable envelopes}, Maulik and Okounkov provided in~\cite{maulik2019quantum} a way to assign Hopf algebras (called \textit{quantum groups}) to a large family of symplectic varieties with appropriate torus action.
Given such a variety $X$ then stable envelopes provide families of bases of the localized torus equivariant cohomology of $X$ which are uniquely characterized by stability conditions which are similar to the stability conditions from equivariant Schubert calculus. The base change matrices of these bases give solutions of the Yang--Baxter equation and the corresponding quantum group is defined via the usual FRT-construction~\cite{faddeev1988quantization}. By construction, this quantum group naturally acts on the equivariant cohomology of $X$ and contains all operators of multiplication and (more generally) quantum multiplication with equivariant cohomology classes.

The main focus of the work~\cite{maulik2019quantum} lies on the case where $X$ is a Nakajima quiver variety, which includes the special case where $X$ is the cotangent bundle of a partial flag variety, the Hilbert schemes of points in the complex plane or (more generally) a moduli spaces of framed torsion sheaves. In their study, the first Chern classes of tautological bundles are of great significance as they are well behaved with respect to the stable envelopes basis. These operators can be described in terms of well-known Hamiltonians of integrable models in specific examples. For instance, in the case of Hilbert schemes of points in the complex plane, the operator of multiplication with the first Chern class of the tautological bundle coincides with the  Calogero--Moser--Sutherland Hamiltonian (see e.g.~\cite{costello2003hilbert}), whereas the operator of quantum multiplication yields a deformation of this Hamiltonian, see~\cite{okounkov2010quantum}. In the case of cotangent bundles of partial flag varieties, the operators of multiplication  and quantum multiplication with first Chern classes of tautological bundles are identified with the dynamical Hamiltonians for the XXX model for general linear groups as defined by Tarasov and Varchenko, see~\cite{tarasov2000difference}, \cite{tarasov2005dynamical}, \cite{tarasov2014hypergeometric} and \cite{rimanyi2015partial}.

In this article, we state and prove a formula for the multiplication of first Chern classes of tautological bundles with respect to the stable envelope basis for bow varieties. These varieties are a very general class of varieties which again are defined in terms of quiver representations and some Hamiltonian reductions construction extending the class of type A Nakajima quiver varieties.

Bow varieties were defined by Nakajima and Takayama in~\cite{nakajima2017cherkis}. They form a rich family of smooth symplectic varieties which fits into the framework of Maulik and Okounkov's theory. They have their origins in theoretical physics, where they appear as certain instanton moduli spaces, see~\cite{cherkis2009moduli}, \cite{cherkis2010instantons} and \cite{cherkis2011instantons}. In particular, they play a remarkable role in the context of 3d mirror symmetry which is a phenomenon from theoretical physics which relates different field theories, see in particular~\cite{nakajima2017cherkis}.

By construction, bow varieties are naturally endowed with a family of tautological vector bundles. They also admit a torus action which scales the symplectic form and has only finite fixed point locus. The torus fixed points of bow varieties were classified by Nakajima in~\cite[Theorem~A.5]{nakajima2021geometric} in terms of Maya diagrams or equivalenty partitions. In~\cite{rimanyi2020bow}, Rim\'anyi and Shou give an equivalent classification of torus fixed points in terms of skein type diagrams called \textit{tie diagrams} which naturally extend the torus fixed point combinatorics of partial flag varieties:
\[
\{\textup{Tie diagrams}\}\xleftrightarrow{\phantom{x}1:1\phantom{x}} \{\textup{Torus fixed points}\},\quad D\mapsto x_D.
\]

Our main result is then an \textit{explicit} formula for the multiplication of the tautological bundles (see Theorem~\ref{thm:CMAntidominantChamber} and Theorem~\ref{thm:CMGeneralCase}):
\begin{theoremIntro}[Multiplication formula]
Let $\mathcal C(\mathcal D)$ be a bow variety and $(\mathrm{Stab}(x_D))_D$ a fixed choice of stable basis and $\xi_i$ a tautological bundle. Then, we have 
\begin{equation}\label{eq:IntroductionCMBowVarieties}
c_1(\xi_i)\cup \mathrm{Stab}(x_D)= \iota_{x_D}^\ast(c_1(\xi_i))\cdot \mathrm{Stab}(x_D) + \sum_{D'\in \mathrm{SM}_{D,i}} \operatorname{sgn}(D,D')h\cdot\mathrm{Stab}(x_{D'}),
\end{equation}
where $h$ is the equivariant parameter corresponding to the scaling of the symplectic form, $\mathrm{SM}_{D,i}$ a certain set of tie diagrams which are obtained from $D$ by resolving one crossing and $\operatorname{sgn}(D,D')$ a explicitly computable sign.
\end{theoremIntro}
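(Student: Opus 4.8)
The plan is to compute $c_1(\xi_i)\cup\mathrm{Stab}(x_D)$ by expanding it in the stable basis and determining the coefficients through equivariant localization at the finitely many torus fixed points. Writing
\[
c_1(\xi_i)\cup\mathrm{Stab}(x_D)=\sum_{D'}c_{D,D'}\,\mathrm{Stab}(x_{D'}),\qquad c_{D,D'}\in H_T^\ast(\mathrm{pt})_{\mathrm{loc}},
\]
one uses that the restriction matrix $\big(\iota_{x_{D'}}^\ast\mathrm{Stab}(x_{D''})\big)$ is triangular with respect to the attracting partial order on fixed points and has invertible diagonal entries, namely $\iota_{x_{D'}}^\ast\mathrm{Stab}(x_{D'})$ is, up to sign, the product of the attracting tangent weights at $x_{D'}$. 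Applying $\iota_{x_{D'}}^\ast$ to the displayed identity then determines the $c_{D,D'}$ recursively: on the diagonal one immediately gets $c_{D,D}=\iota_{x_D}^\ast(c_1(\xi_i))$, which produces the first summand of~\eqref{eq:IntroductionCMBowVarieties}, and the remaining coefficients are obtained inductively along the order. The whole problem thus splits into (i) a vanishing statement identifying which $D'\prec D$ actually contribute, and (ii) the explicit evaluation of those coefficients.

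For (i) I would combine two inputs. First, $c_1(\xi_i)\cup\mathrm{Stab}(x_D)$ is still supported on the full attracting set of $x_D$, so $c_{D,D'}$ vanishes unless $D'\preceq D$. Second, the degree axiom for stable envelopes bounds the degree of $\iota_{x_{D'}}^\ast\mathrm{Stab}(x_D)$ strictly below that of $\iota_{x_{D'}}^\ast\mathrm{Stab}(x_{D'})$ when $D'\prec D$; since $\iota^\ast c_1(\xi_i)$ is homogeneous of degree one, feeding this into the recursion forces every $c_{D,D'}$ with $D'\prec D$ to be a homogeneous degree-one form in the equivariant parameters and, moreover, to vanish unless $D'$ is a covering element of $D$ of a very restricted type. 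Translating the attracting order and this covering relation into the tie-diagram combinatorics of Rim\'anyi--Shou is exactly what produces the set $\mathrm{SM}_{D,i}$ of diagrams obtained from $D$ by resolving a single crossing compatible with $\xi_i$; for such $D'$ there are no intermediate fixed points in the order, so the recursion collapses to
\[
c_{D,D'}=\frac{\big(\iota_{x_{D'}}^\ast(c_1(\xi_i))-\iota_{x_D}^\ast(c_1(\xi_i))\big)\,\iota_{x_{D'}}^\ast\mathrm{Stab}(x_D)}{\iota_{x_{D'}}^\ast\mathrm{Stab}(x_{D'})}.
\]

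For (ii) I would evaluate this ratio by reducing to a rank-one model. The crossing being resolved, together with the pieces of the brane diagram it connects, can be isolated --- either by restricting to the fixed locus of a suitable subtorus of the framing torus or by a Hanany--Witt/migration isomorphism --- so that locally the bow variety looks like a product of a point with $T^\ast\mathbb{P}^1$, on which the Chevalley--Monk identity is the classical rank-one computation of Maulik--Okounkov with coefficient $\pm h$. Using that stable envelopes restrict to stable envelopes of such a local model up to the equivariant Euler class of the normal directions, and that under this restriction $\xi_i$ matches the tautological bundle of $T^\ast\mathbb{P}^1$ up to a trivial twist, transports the value $\pm h$ to $c_{D,D'}$; in particular the flavor parameters occurring in $\iota^\ast c_1(\xi_i)$ cancel, which is consistent with~\eqref{eq:IntroductionCMBowVarieties}. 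Alternatively --- the more hands-on route --- one substitutes the explicit product formulas for $\iota_{x_{D'}}^\ast\mathrm{Stab}(x_D)$, $\iota_{x_{D'}}^\ast\mathrm{Stab}(x_{D'})$ and the weight formula for $\iota^\ast c_1(\xi_i)$ into the collapsed recursion and simplifies. The sign $\operatorname{sgn}(D,D')$ then records the discrepancy between the orderings of the attracting weights at $x_D$ and at $x_{D'}$ used to normalize the two stable envelopes, so computing it amounts to a parity count in the tie-diagram combinatorics.

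This establishes the formula in the antidominant chamber (Theorem~\ref{thm:CMAntidominantChamber}); the general case (Theorem~\ref{thm:CMGeneralCase}) would then be deduced by transporting the identity along the R-matrices (wall-crossing operators) relating stable envelopes for different chambers and checking that the ``resolve one crossing'' description transforms compatibly. I expect the main obstacle to be step (ii): making the rank-one reduction rigorous --- that is, proving that the relevant sub-locus of the bow variety is genuinely $T^\ast\mathbb{P}^1$-like and that both stable envelopes and tautological Chern classes restrict correctly --- and then pinning down $\operatorname{sgn}(D,D')$ through the combinatorics, with the passage from a single chamber to a general chamber a further, more bookkeeping-heavy, source of difficulty.
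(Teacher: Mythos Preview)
Your overall architecture---extract the coefficients $c_{D,D'}$, show the diagonal term is $\iota_{x_D}^\ast c_1(\xi_i)$, then argue that the off-diagonal $c_{D,D'}$ are linear forms in the equivariant parameters---is sound and matches what the paper does. But the mechanism you propose for step~(i), and the local-model computation in step~(ii), both have genuine gaps.

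In step~(i) you write that the degree bound forces $c_{D,D'}$ ``to vanish unless $D'$ is a covering element of $D$.'' Degree reasons alone do not give this: knowing $c_{D,D'}$ is a linear form in $t_1,\dots,t_N,h$ says nothing about whether it is zero. Moreover, the elements of $\mathrm{SM}_{D,i}$ are \emph{not} in general covering elements of $D$ in the attracting order, so the recursion does not collapse to the two-term formula you write. The paper circumvents this entirely: it extracts $c_{D,D'}$ via orthogonality with the \emph{opposite-chamber} stable basis (Theorem~\ref{thm:OrthogonalityOfStableEnvelopes}), so that
\[
c_{D,D'}=(c_1(\xi_i)\cup\mathrm{Stab}_{\mathfrak C_-}(D),\mathrm{Stab}_{\mathfrak C_+}(D'))_{\mathrm{virt}}
=\sum_{T}\frac{\iota_T^\ast(c_1(\xi_i))\,\iota_T^\ast\mathrm{Stab}_{\mathfrak C_-}(D)\,\iota_T^\ast\mathrm{Stab}_{\mathfrak C_+}(D')}{e_{\mathbb T}(T_T\mathcal C(\mathcal D))},
\]
and then proves an \emph{$h^2$-divisibility} result (Proposition~\ref{prop:DivisibilityResult}): $\iota_{D'}^\ast\mathrm{Stab}_{\mathfrak C_-}(D)$ is divisible by $h^2$ whenever $D'\notin\mathrm{SM}_D\cup\{D\}$. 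Combined with the smallness axiom this makes every summand with $T\notin\{D,D'\}$, and the entire sum when $D'\notin\mathrm{SM}_D\cup\{D\}$, vanish modulo $h^2$---hence identically, since $c_{D,D'}$ is linear. This $h^2$-divisibility is the missing idea in your step~(i), and it is not a formal consequence of the stable-envelope axioms.

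For step~(ii) the paper does \emph{not} use a rank-one $T^\ast\mathbb P^1$ reduction. Both the $h^2$-divisibility and the $h^2$-approximation of $\iota_{D'}^\ast\mathrm{Stab}_{\mathfrak C_-}(D)$ for $D'\in\mathrm{SM}_D$ (Proposition~\ref{prop:ClosestNeighbors}) are proved by the Botta--Rim\'anyi equivariant resolution theorem (Theorem~\ref{thm:EquivariantResolution}), which expresses the equivariant multiplicities on a general separated bow variety in terms of those on $T^\ast F(R_1,\dots,R_{M-1};n)$, where Su's explicit localization formula (Theorem~\ref{thm:SuLocalizationFormula}) is available. The sign $\operatorname{sgn}(D,D')$ then drops out of a length comparison of the permutations $\tilde w_D$ and $\tilde y_D$ (Proposition~\ref{prop:SignSMandPermutations}), not from a normalization discrepancy of attracting weights. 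Finally, the passage to an arbitrary chamber is not via $R$-matrices but via the explicit $S_N$-equivariance of the normalized stable basis (Theorem~\ref{thm:ComparisonOfChambers}), and the non-separated case is handled by Hanany--Witten transition (Section~10). Your $T^\ast\mathbb P^1$ local-model idea is plausible in spirit, but making it rigorous for bow varieties would itself require substantial work that is not in the existing literature.
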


The formula holds in the localized torus equivariant cohomology of $\mathcal C(\mathcal D)$. The localization coefficients of $c_1(\xi_i)$ which appear in \eqref{eq:IntroductionCMBowVarieties} can be explicitly determined using the formula from~\cite[Theorem~4.10]{rimanyi2020bow}.

The formula \eqref{eq:IntroductionCMBowVarieties} generalizes the classical Chevalley--Monk formula from Schubert calculus as follows:
if our bow variety is the cotangent bundle of the full flag variety $F_n$ then the torus fixed points are labeled by elements in the symmetric group $S_n$ and the tautological bundles $\xi_i$ correspond to the universal quotient bundles $\mathcal Q_1,\ldots,\mathcal Q_{n-1}$.
In this special case, the formula \eqref{eq:IntroductionCMBowVarieties} was already proved by Su in~\cite[Theorem~3.7]{su2016equivariant} and can be reformulated as
\begin{equation}\label{eq:IntroductionCMCotangenBundles}
c_1(\mathcal Q_i)\cup \mathrm{Stab}(x_w)= \iota_{x_w}^\ast(c_1(\mathcal Q_i))\cdot \mathrm{Stab}(x_w) + \sum_{\substack{j\le i <k \\ \ell(w t_{k,j})<\ell(w)}} h\cdot\mathrm{Stab}(x_{w t_{k,j}}),
\end{equation}
where $w\in S_n$, $l$ is the Bruhat length and $t_{k,j}\in S_n$ the transposition switching $k$ and $j$. Since basis elements can be interpreted as one-parameter deformations of Schubert classes, a well-known limit argument (see e.g.~\cite[Section~9]{aluffi2017shadows}) proves that~\eqref{eq:IntroductionCMCotangenBundles} implies the classical Chevalley--Monk formula in the singular cohomology of $F_n$ (\cite{chevalley1994decompositions}, \cite{monk1959geometry}):
\begin{equation*}
c_1(\mathcal Q_i)\cup \mathfrak S_w= \sum_{\substack{j\le i <k \\ \ell(w t_{k,j})=\ell(w)+1}} \mathfrak S_{w t_{k,j}}, 
\end{equation*}
where $\mathfrak S_w$ denotes the Schubert class corresponding to the permutation $w$.

For the proof of \eqref{eq:IntroductionCMBowVarieties}, we employ the equivariant resolution theorem of Botta and Rim\'anyi \cite[Theorem~6.13]{botta2023mirror} that provides a way to compute localization coefficients of stable envelopes of bow varieties in terms of localization coefficients of stable envelopes of cotangent bundles of partial flag varieties. Then applying the localization formula from \cite{su2017stable} allows us to express these localization coefficients in terms of symmetric group combinatorics which gives a good control over them.

\medskip
\noindent
\textbf{Acknowledgements.} First, I want to thank my supervisor C. Stroppel for stimulating discussions and continuous support. Moreover, I am grateful to R. Rim\'anyi and T. Botta for sharing knowledge about bow varieties and further interesting mathematical discussions.
I thank the Max--Planck Institute for Mathematics Bonn for financial support.

\section{Bow varieties}

In this section, we recall the construction of bow varieties from~\cite{nakajima2017cherkis} and state some of their important properties following~\cite{nakajima2017cherkis} and~\cite{rimanyi2020bow}.

\begin{convention}
If not stated otherwise, all varieties and vector spaces are over $\mathbb C$. Given a variety $Y$ and a smooth point $y\in Y$ we denote by $T_yY$ the tangent space of $Y$ at $y$. 
\end{convention}

\subsection{Brane diagrams}\label{subsection:BraneDiagrams}

For the construction of bow varieties, we use the language of brane diagrams introduced by Rim\'anyi and Shou which we now briefly recall.

A \textit{brane diagram} is an object like this:
\[
\begin{tikzpicture}
[scale=.5]
\draw[thick] (0,0)--(2,0);
\draw[thick] (3,0)--(5,0);
\draw[thick] (6,0)--(8,0);
\draw[thick] (9,0)--(11,0);
\draw[thick] (12,0)--(14,0);
\draw[thick] (15,0)--(17,0);
\draw[thick] (18,0)--(20,0);
\draw[thick] (21,0)--(23,0);
\draw[thick] (24,0)--(26,0);

\draw [thick,red] (2,-1) --(3,1); 
\draw [thick,red] (8,-1) --(9,1); 
\draw [thick,red] (11,-1) --(12,1); 
\draw [thick,red] (18,1) --(17,-1);

\draw [thick,blue] (5,1) --(6,-1); 
\draw [thick,blue] (15,-1) --(14,1);  
\draw [thick,blue] (21,-1) --(20,1); 
\draw [thick,blue] (24,-1) --(23,1);  

\node at (1,0.5){$0$};
\node at (4,0.5){$3$};
\node at (7,0.5){$2$};
\node at (10,0.5){$3$};
\node at (13,0.5){$5$};
\node at (16,0.5){$3$};
\node at (19,0.5){$4$};
\node at (22,0.5){$1$};
\node at (25,0.5){$0$};
\end{tikzpicture}
\]
That is, a brane diagram is a finite sequence of black horizontal lines drawn from left to right. Between each consecutive pair of horizontal lines there is either a blue SE-NW line \textcolor{blue}{$\backslash$} or a red SW-NE line \textcolor{red}{$\slash$}. Each horizontal line $X$ is labeled by a non-negative integer $d_X$ where we demand that the first and the last horizontal line is labeled by $0$. 

\begin{remark}
In~\cite{rimanyi2020bow},
the black horizontal lines are, motivated by string theory, called D3 branes, the blue lines D5 branes and the red lines NS5 branes. 
Since brane diagrams appear for us only as purely combinatorial objects, we will not use this naming, but refer to their color instead.
\end{remark}

Given a brane diagram $\mathcal D$, we denote by $\mathrm h(\mathcal D)$, $\mathrm b(\mathcal D)$ and $\mathrm r(\mathcal D)$ the subset of black, blue and red lines. For lines $Y_1$, $Y_2$ in $\mathcal D$ write $Y_1 \triangleleft Y_2$ if $Y_1$ is to the left of $Y_2$. We denote the number of red lines  by $M$ and the individual red lines by $V_1,\ldots,V_M$ where we number the lines from \textit{right to left}.  Likewise, let $N$ be the number of blue lines and the blue lines are denoted by  $U_1,\ldots,U_N$ numbered from \textit{left to right}. So in the above brane diagrams the colored lines are labeled like this:
\[
\begin{tikzpicture}
[scale=.5]
\draw[thick] (0,0)--(2,0);
\draw[thick] (3,0)--(5,0);
\draw[thick] (6,0)--(8,0);
\draw[thick] (9,0)--(11,0);
\draw[thick] (12,0)--(14,0);
\draw[thick] (15,0)--(17,0);
\draw[thick] (18,0)--(20,0);
\draw[thick] (21,0)--(23,0);
\draw[thick] (24,0)--(26,0);

\draw [thick,red] (2,-1) --(3,1); 
\draw [thick,red] (8,-1) --(9,1); 
\draw [thick,red] (11,-1) --(12,1); 
\draw [thick,red] (18,1) --(17,-1);

\node at (2,-1.5) {$V_4$};
\node at (8,-1.5) {$V_3$};
\node at (11,-1.5) {$V_2$};
\node at (17,-1.5) {$V_1$};

\draw [thick,blue] (5,1) --(6,-1); 
\draw [thick,blue] (15,-1) --(14,1);  
\draw [thick,blue] (21,-1) --(20,1); 
\draw [thick,blue] (24,-1) --(23,1);

\node at (6,-1.5) {$U_1$};
\node at (15,-1.5) {$U_2$};
\node at (21,-1.5) {$U_3$};
\node at (24,-1.5) {$U_4$};  

\node at (1,0.5){$0$};
\node at (4,0.5){$3$};
\node at (7,0.5){$2$};
\node at (10,0.5){$3$};
\node at (13,0.5){$5$};
\node at (16,0.5){$3$};
\node at (19,0.5){$4$};
\node at (22,0.5){$1$};
\node at (25,0.5){$0$};
\end{tikzpicture}
\]
The black lines are denoted by $X_1,\ldots,X_{M+N+1}$ numbered from \textit{left to right}. So in the above example $(d_{X_1},\ldots,d_{X_9})=(0,3,2,3,5,3,4,1,0)$.
Note that our choice of labeling of red lines differs from~\cite{rimanyi2020bow}.

The \textit{separatedness degree of $\mathcal D$} is defined as
\begin{equation}\label{eq:DefinitionSeparatednessDegree}
\mathrm{sdeg}(\mathcal D)=|\{(U,V)\in \mathrm{b}(\mathcal D)\times\mathrm{r}(\mathcal D) \mid U\triangleleft V \}|.
\end{equation}
We call $\mathcal D$ \textit{separated} if $\mathrm{sdeg}(\mathcal D)=0$. So $\mathcal D$ is separated if and only if it has the shape
$\textcolor{red}{\slash}\textcolor{red}{\slash}\cdots \textcolor{red}{\slash}\textcolor{blue}{\backslash}\textcolor{blue}{\backslash}\cdots \textcolor{blue}{\backslash}$. 

Bow varieties associated to separated brane diagrams have many convenient properties, as we will discuss in Section~\ref{section:StabSpearatedCase} and Section~\ref{section:EquivariantResolutionTheorem}. 
There are explicit moves on brane diagrams (called \textit{Hanany--Witten transition}) that reduce the separatedness degree by $1$ which we discuss in Subsection~\ref{subsection:HW}.

\subsection{Construction of bow varieties}\label{subsection:ConstructionBow}

Let $\mathcal D$ be a brane diagram. The construction of its associated bow variety proceeds through several steps. At first, we assign to each blue line $U\in \mathrm{b}(\mathcal D)$ a smooth affine variety $\mathrm{tri}_U$ called \textit{the triangle part (of $U$)} as follows:
we define
\[
\mathbb M_U\coloneqq \operatorname{Hom}({\mathbb C}^{d_{U^+}},{\mathbb C}^{d_{U^-}})\oplus \operatorname{End}({\mathbb C}^{d_{U^+}})\oplus \operatorname{End}({\mathbb C}^{d_{U^-}})\oplus\operatorname{Hom}({\mathbb C}^{d_{U^+}},\mathbb C)\oplus \operatorname{Hom}(\mathbb C,{\mathbb C}^{d_{U^-}})
\]
and denote the elements of $\mathbb M_U$ by tuples $(A_U,B_U^+,B_U^-,a_U,b_U)$ according to the diagram: 
\[
\begin{tikzcd}
{\mathbb C}^{d_{U^-}}\arrow[out=120,in=60,loop,"B_U^-"]&&{\mathbb C}^{d_{U^+}}\arrow[out=120,in=60,loop,"B_U^+"]\arrow[ll, "A_U", swap]\arrow[dl,"b_U"]\\
&\mathbb C\arrow[ul,"a_U"]
\end{tikzcd}
\]
The \textit{triangle part} $\mathrm{tri}_U$ is then defined as 
$
\mathrm{tri}_U=\{x\in\mu^{-1}(0)\mid \textup{$x$ satisfies \ref{item:S1},\;\ref{item:S2}}\},
$
where the map $\mu\colon\mathbb M_U\rightarrow \operatorname{Hom}({\mathbb C}^{d_{U^+}},{\mathbb C}^{d_{U^-}})$ is given by
\begin{equation}\label{eq:nonMomentMap}
(A_U,B_U^+,B_U^-,a_U,b_U)\mapsto B_U^-A_U-A_UB_U^++a_Ub_U
\end{equation}
and the conditions \ref{item:S1} and \ref{item:S2} are defined as
\begin{enumerate}[label=(S\arabic*)]
\item\label{item:S1}  If $S\subset \mathbb C^{d_{U^+}}$ is a subspace with $B_U^+(S)\subset S,A_U(S)=0$ and $b_U(S)=0$ then $S=0$.
\item \label{item:S2} If $T\subset \mathbb C^{d_{U^-}}$ is a subspace with $B_U^-(T)\subset T$ and $\mathrm{Im}(A_U)+\mathrm{Im}(a_U)\subset T$ then $T=\mathbb C^{d_{U^-}}$.
\end{enumerate}
The following was shown in~\cite[Proposition~2.20]{takayama2016nahm}:

\begin{prop} The triangle part $\mathrm{tri}_U$ is a smooth and affine open subvariety of $\mu^{-1}(0)$.
\end{prop}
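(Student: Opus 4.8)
The claim breaks into four assertions: that $\mu^{-1}(0)$ is affine, that $\mathrm{tri}_U$ is open in $\mu^{-1}(0)$, that $\mathrm{tri}_U$ is smooth, and that $\mathrm{tri}_U$ is itself affine. The first is immediate, since $\mu$ is a polynomial map on the affine space $\mathbb M_U$, so $\mu^{-1}(0)$ is Zariski closed in $\mathbb M_U$ and hence affine. For openness I would recast \ref{item:S1} and \ref{item:S2} as rank conditions: a subspace $S\subseteq\mathbb C^{d_{U^+}}$ with $B_U^+(S)\subseteq S$, $A_U(S)=0$ and $b_U(S)=0$ is contained in $\bigcap_{k\ge 0}\ker\bigl(A_U(B_U^+)^k\bigr)\cap\bigcap_{k\ge 0}\ker\bigl(b_U(B_U^+)^k\bigr)$, and this intersection is the largest such subspace, where by Cayley--Hamilton it suffices to let $k$ range over $0,1,\dots,d_{U^+}-1$. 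Hence \ref{item:S1} holds exactly when the linear map
\[
\rho_1(x)\colon \mathbb C^{d_{U^+}}\longrightarrow \bigoplus_{k=0}^{d_{U^+}-1}\bigl(\mathbb C^{d_{U^-}}\oplus\mathbb C\bigr),\qquad v\longmapsto \bigl(A_U(B_U^+)^k v,\ b_U(B_U^+)^k v\bigr)_{0\le k\le d_{U^+}-1},
\]
whose matrix entries are polynomial in $x\in\mathbb M_U$, is injective, and this is a Zariski open condition on $\mathbb M_U$. Dually, \ref{item:S2} amounts to surjectivity of an analogous polynomial map built from $B_U^-,A_U$ and $a_U$, again an open condition. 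Therefore $\mathrm{tri}_U$ is the intersection of $\mu^{-1}(0)$ with a Zariski open subset of $\mathbb M_U$, hence open in $\mu^{-1}(0)$.

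For smoothness the plan is to prove that $0$ is a regular value of $\mu$ on the open set $\{x:\text{\ref{item:S1} holds}\}$, i.e. that $d\mu_x\colon\mathbb M_U\to\operatorname{Hom}(\mathbb C^{d_{U^+}},\mathbb C^{d_{U^-}})$ is surjective whenever $x$ satisfies \ref{item:S1}. Identifying the dual of $\operatorname{Hom}(\mathbb C^{d_{U^+}},\mathbb C^{d_{U^-}})$ with $\operatorname{Hom}(\mathbb C^{d_{U^-}},\mathbb C^{d_{U^+}})$ via the trace pairing, one takes $\phi$ in the annihilator of $\operatorname{Im}(d\mu_x)$ and pairs it against variations in each of the five summands of $\mathbb M_U$; a short trace computation yields the relations $B_U^+\phi=\phi B_U^-$, $A_U\phi=0$, $\phi A_U=0$, $b_U\phi=0$ and $\phi a_U=0$. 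The first, second and fourth of these say that $\operatorname{Im}(\phi)\subseteq\mathbb C^{d_{U^+}}$ is $B_U^+$-invariant and killed by both $A_U$ and $b_U$, so \ref{item:S1} forces $\operatorname{Im}(\phi)=0$, i.e. $\phi=0$. Thus $d\mu_x$ is surjective, $\mu$ is a submersion on $\{x:\text{\ref{item:S1}}\}$, and $\mu^{-1}(0)\cap\{x:\text{\ref{item:S1}}\}$ is smooth; being an open subset of it, $\mathrm{tri}_U$ is smooth as well. Note that only \ref{item:S1} enters here.

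The remaining point, and the one I expect to require the most care, is that $\mathrm{tri}_U$ is itself affine. This does not follow from openness alone: the locus where \ref{item:S1} holds is in general the complement in $\mathbb M_U$ of a subvariety of codimension $\ge 2$, hence not affine, so one must genuinely use the relation $\mu=0$. The mechanism, which one can already see explicitly in the rank-one cases, is that on $\mu^{-1}(0)$ the two conditions \ref{item:S1} and \ref{item:S2} together become equivalent to the non-vanishing of a single regular function, namely the determinant of a suitable square map assembled from $A_U,B_U^{\pm},a_U,b_U$ using the relation $\mu(x)=0$; granting this, $\mathrm{tri}_U$ is a principal open subset of the affine variety $\mu^{-1}(0)$, hence affine. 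Producing this function (equivalently, an explicit affine model of $\mathrm{tri}_U$) in general is the content I would take over from~\cite{takayama2016nahm}, and it is the one substantive step; openness and smoothness, by contrast, are formal consequences of the stability condition \ref{item:S1}.
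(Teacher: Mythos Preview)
The paper does not give its own proof of this proposition; it simply records the statement and cites \cite[Proposition~2.20]{takayama2016nahm}. So there is no in-paper argument to compare against.

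Your sketch is correct where it gives details. The reformulation of \ref{item:S1} and \ref{item:S2} as rank conditions via the maps $\rho_1(x)$ and its dual analogue is the standard way to see openness, and your computation for smoothness is right: the relations $A_U\phi=0$, $B_U^+\phi=\phi B_U^-$, $b_U\phi=0$ force $\operatorname{Im}(\phi)$ to be a subspace of the type forbidden by \ref{item:S1}, so $\phi=0$ and $d\mu_x$ is surjective. (You could equally well run the dual argument with $\ker(\phi)$ and \ref{item:S2}; either stability condition alone suffices for smoothness, as you observe.)

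For affineness you correctly identify this as the substantive step and defer to Takayama, which is exactly what the paper does. Your diagnosis that on $\mu^{-1}(0)$ the conditions \ref{item:S1}--\ref{item:S2} should cut out a principal open is the right expectation, and that is indeed how it is handled in the cited reference. In short, your proposal is sound and strictly more informative than the paper's treatment, which is a bare citation.
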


The next step is to define the \textit{affine brane variety} $\widetilde{\mathcal M}(\mathcal D)$ as follows:
\[
\widetilde{\mathcal M}(\mathcal D)\coloneqq \Big(\prod_{V\in \mathrm r(\mathcal D)}\operatorname{Hom}(\mathbb C^{d_{V^+}},\mathbb C^{d_{V^-}}) \times \operatorname{Hom}(\mathbb C^{d_{V^-}},\mathbb C^{d_{V^+}})\Big)\times \Big( \prod_{U\in \mathrm b(\mathcal D)} \mathrm{tri}_U\Big).
\]
Denote the elements of $\operatorname{Hom}(\mathbb C^{d_{V^+}},\mathbb C^{d_{V^-}}) \times \operatorname{Hom}(\mathbb C^{d_{V^-}},\mathbb C^{d_{V^+}})$ by $(C_V,D_V)$ and consequently elements of $\widetilde{\mathcal M}(\mathcal D)$ by tuples
\[
((A_U,B_U^+,B_U^-,a_U,b_U)_U,(C_V,D_V)_V).
\]
There is a natural algebraic action of
$
\mathcal G\coloneqq \prod_{X\in\mathrm h(\mathcal D)} \operatorname{GL}_{d_X}
$
on $\widetilde{\mathcal M}(\mathcal D)$ given by
\begin{align*}
(g_X)_X.&((A_U,B_U^+,B_U^-,a_U,b_U)_U,(C_V,D_V)_V) \\
&= ((g_{U^-}A_Ug_{U^+}^{-1},g_{U^+}B_U^+g_{U^+}^{-1},g_{U^-}B_U^-g_{U^-}^{-1},g_{U^-}a_U,b_Ug_{U^+}^{-1})_U,(g_{V^-}C_Vg_{V^+}^{-1},g_{V^+}D_Vg_{V^-}^{-1})_V).
\end{align*}
As shown in~\cite{nakajima2017cherkis} that $\widetilde{\mathcal M}(\mathcal D)$ admits a $\mathcal G$-equivariant symplectic structure with moment map
\[
m\colon\widetilde{\mathcal M}(\mathcal D)\xrightarrow{\phantom{xxx}} \prod_{X\in\mathrm{h}(X)} \operatorname{End}(\mathbb C^{d_X}),
\]
given for $x=((A_U,B_U^+,B_U^-,a_U,b_U)_U,(C_V,D_V)_V)$ by
\[
m(x)_X=
\begin{cases}
			   B_{X^+}^- - B_{X^-}^+ &\textup{if $X^+,X^-$ are both blue,} \\
			D_{X^-}C_{X^-}-C_{X^+}D_{X^+} &\textup{if $X^+,X^-$ are both red,}\\
			D_{X^-}C_{X^-}+ B_{X^+}^- &\textup{if $X^+$ is blue and $X^-$ is red,}\\
			-C_{X^+}D_{X^+}-B_{X^-}^+ &\textup{if $X^+$ is red and $X^-$ is blue.}
\end{cases}
\]

\begin{definition} Fix the character $\chi\colon\mathcal G\rightarrow\mathbb C^\ast, (g_X)_X\mapsto \prod_{X\in M_{\mathcal D}}\operatorname{det}(g_X)$, where $M_{\mathcal D}$ denote the set of black lines $X\in\mathrm{h}(\mathcal D)$ such that $X^-$ is red.
Then, the \textit{bow variety associated to $\mathcal D$} is defined as the G.I.T. quotient
\[
\mathcal C(\mathcal D)\coloneqq m^{-1}(0)/\!/_\chi \mathcal G.
\]
\end{definition}

The following properties of $\mathcal C(\mathcal D)$ were shown in \cite[Section~2]{nakajima2017cherkis}:
\begin{theorem}\label{thm:PropertiesOfBowVarieties} The following holds:
\begin{enumerate}[label=(\roman*)]
\item The $\chi$-stable locus $m^{-1}(0)^{\mathrm{s}}$ equals the $\chi$-semistable locus $m^{-1}(0)^{\mathrm{ss}}$.
\item $\mathcal C(\mathcal D)$ is a smooth and symplectic variety.
\item\label{item:PrincipalBundleProperty} The projection $m^{-1}(0)^{\mathrm{s}}\rightarrow \mathcal C(\mathcal D)$ is a principal $\mathcal G$-bundle in the Zariski topology.
\end{enumerate}
\end{theorem}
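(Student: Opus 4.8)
The plan is to follow the route that is standard for Nakajima quiver varieties: deduce smoothness, the symplectic structure, and the principal-bundle property from a careful analysis of GIT (semi)stability together with a freeness statement for the $\mathcal G$-action on the stable locus. Concretely, I would first establish (i) via the Hilbert--Mumford numerical criterion, then use (i) to show that $\mathcal G$ acts freely on $m^{-1}(0)^{\mathrm s}$, deduce from the moment-map formalism that $0$ is a regular value of $m$ restricted to the stable locus, and finally invoke Marsden--Weinstein reduction for (ii) and the specialness of products of general linear groups for (iii). Everything downstream of (i) is essentially formal, so the work is concentrated in the stability analysis.

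For (i): a point $x\in m^{-1}(0)$ is $\chi$-semistable (resp. $\chi$-stable) precisely when, for every one-parameter subgroup $\lambda\colon\mathbb C^\ast\to\mathcal G$ for which $\lim_{t\to 0}\lambda(t)\cdot x$ exists in $\widetilde{\mathcal M}(\mathcal D)$, one has $\langle\chi,\lambda\rangle\ge 0$ (resp. $>0$ for $\lambda$ nontrivial modulo the stabilizer of $x$). A one-parameter subgroup of $\mathcal G=\prod_{X\in\mathrm h(\mathcal D)}\operatorname{GL}_{d_X}$ amounts to a $\mathbb Z$-grading $\mathbb C^{d_X}=\bigoplus_{n\in\mathbb Z}\mathbb C^{d_X}(n)$ on each black line, and the limit exists iff all the maps $A_U,B^\pm_U,a_U,b_U,C_V,D_V$ are compatible with these gradings in the appropriate nonnegative-weight sense. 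Since, by definition, $\chi$ only involves the black lines $X$ with $X^-$ red, translating $\langle\chi,\lambda\rangle$ reduces the statement to showing: if $x$ is semistable, then no nontrivial $\lambda$ (modulo $\mathrm{Stab}_x$) has $\langle\chi,\lambda\rangle=0$. The key input is that the conditions \ref{item:S1} and \ref{item:S2} built into each triangle part $\mathrm{tri}_U$, combined with the four cases of the relation $m(x)_X=0$, force any such limiting grading to be concentrated in a single weight, i.e.\ $\lambda$ is trivial modulo the stabilizer; this is the combinatorial heart of the argument. The same analysis shows that stable points have trivial stabilizer.

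For (ii): freeness of the $\mathcal G$-action on $m^{-1}(0)^{\mathrm s}$ is exactly the triviality-of-stabilizer statement just obtained. Since the action of $\mathcal G$ on $\widetilde{\mathcal M}(\mathcal D)$ is Hamiltonian with moment map $m$, at any point $x$ the image of the differential $dm_x$ is the annihilator of $\operatorname{Lie}(\mathrm{Stab}_x)$ inside $\prod_X\operatorname{End}(\mathbb C^{d_X})^\ast$ (nondegeneracy of the symplectic form identifies $\operatorname{Ann}(\operatorname{im}dm_x)$ with $\operatorname{Lie}(\mathrm{Stab}_x)$); hence $dm_x$ is surjective whenever $\mathrm{Stab}_x$ is finite, in particular at every stable $x$. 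Therefore $m^{-1}(0)^{\mathrm s}$ is a smooth $\mathcal G$-stable locally closed subvariety of $\widetilde{\mathcal M}(\mathcal D)$ on which $\mathcal G$ acts freely and properly (properness holds on the semistable locus of a GIT quotient), so the geometric quotient $\mathcal C(\mathcal D)=m^{-1}(0)^{\mathrm s}/\mathcal G$ is smooth, and the symplectic form on $\widetilde{\mathcal M}(\mathcal D)$ descends to a symplectic form on $\mathcal C(\mathcal D)$ by the Marsden--Weinstein construction.

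For (iii): with the free proper action in hand, Luna's étale slice theorem shows that $m^{-1}(0)^{\mathrm s}\to\mathcal C(\mathcal D)$ is an étale-locally trivial principal $\mathcal G$-bundle; since $\mathcal G$ is a product of general linear groups, hence a special group in Serre's sense, every such bundle is already locally trivial in the Zariski topology, giving (iii). Alternatively one exhibits Zariski-local sections directly, as is done for quiver varieties. I expect the main obstacle to be step (i): the translation of the Hilbert--Mumford inequality for the determinant character $\chi$ into a statement about $\lambda$-gradings, and the ensuing case analysis over the four types of black line in $m$ together with the stability and costability conditions \ref{item:S1}, \ref{item:S2}, showing that semistability admits no strictly semistable points. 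Once (i) and the resulting freeness are in place, (ii) and (iii) are routine.
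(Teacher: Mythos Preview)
The paper does not actually prove this theorem: it is stated with the preamble ``The following properties of $\mathcal C(\mathcal D)$ were shown in \cite[Section~2]{nakajima2017cherkis}'' and no argument is given. So there is no in-paper proof to compare against; the result is imported wholesale from Nakajima--Takayama.

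Your outline is nonetheless the correct strategy and matches what is done in that reference. The logical structure you describe---Hilbert--Mumford for (i), freeness of the $\mathcal G$-action on the stable locus via the conditions \ref{item:S1}, \ref{item:S2}, regularity of $m$ from the moment-map identity $\operatorname{im}(dm_x)^\perp\cong\operatorname{Lie}(\mathrm{Stab}_x)$, Marsden--Weinstein for the symplectic form, and specialness of $\prod\operatorname{GL}$ for Zariski-local triviality---is exactly how such results are established for quiver-type varieties. You are also right that the substance lies entirely in (i): in Nakajima--Takayama this is their Proposition~2.8, which translates $\chi$-(semi)stability into explicit subspace conditions (the ``$\nu$-stability'' conditions), and the equality of stable and semistable loci together with freeness follows from that translation. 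Your sketch of this step is accurate in spirit but would need the detailed case analysis across the four black-line types and the triangle conditions to be a complete proof; the reference carries this out.
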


\begin{remark} The family of bow varieties contains type A Nakajima quiver varieties which can be realized as bow varieties corresponding to so called cobalanced brane diagrams, see~\cite[Section~2.6]{nakajima2017cherkis} for more details. In Section~\ref{subsec:TFlagAsBow}, we consider this realization in the special case of cotangent bundles of flag varieties.
\end{remark}

\subsection{Torus actions}\label{subsection:TorusAction} As described in~\cite[Section~6.9.3]{nakajima2017cherkis}, $\mathcal C(\mathcal D)$ is equipped with two torus actions. One action of $\mathbb A=(\mathbb C^\ast)^N$ and one of $\mathbb C^\ast_h=\mathbb C^\ast$. The $\mathbb A$-action is a direct consequence of the construction which we call the \textit{obvious action}. This action leaves the symplectic form on $\mathcal C(\mathcal D)$ invariant. The $\mathbb C_h^\ast$-action induces a scaling of the symplectic form and hence we refer to it as the \textit{scaling action}. In our exposition of these actions, we follow the conventions from~\cite[Section~3.1]{rimanyi2020bow}.
Denote the elements of $\mathbb A=(\mathbb C^\ast)^N$ by $(t_1,\ldots,t_N)$ or $(t_U)_{U\in \mathrm b(\mathcal D)}$ or just $(t_U)_U$. The obvious action is given by
\[
(t_U)_{U}.[(A_U,B_U^+,B_U^-,a_U,b_U)_U,(C_V,D_V)_V]=
[(A_U,B_U^+,B_U^-,a_Ut_U^{-1},t_Ub_U)_U,(C_V,D_V)_V].
\]
We denote the elements of $\mathbb C^\ast_h$ by $h$. Then, the scaling action is given by
\[
h.[(A_U,B_U^+,B_U^-,a_U,b_U)_U,(C_V,D_V)_V]=[(A_U,hB_U^+,hB_U^-,a_U,hb_U)_U,(hC_V,D_V)_V].
\]
The $\mathbb T$-equivariant cohomology algebra of $\mathcal C(\mathcal D)$ for the large torus $\mathbb T\coloneqq \mathbb A\times \mathbb C_h^\ast$ is one of the main actors of this article. Our main tool in the study of this algebra is the localization principle which allows a powerful interplay of local and global data. To apply the localization principle appropriately, we briefly recall the classification of $\mathbb T$-fixed points of bow varieties as presented in~\cite[Section~4]{rimanyi2020bow}, in the upcoming subsections. For this, we first  define certain combinatorial objects that can be assigned to brane diagrams.

\subsection{Tie diagrams}
Given a pair of colored lines $(Y_1,Y_2)$ in $\mathcal D$ with $Y_1<Y_2$ then we say that a black line $X$ is \textit{covered} by $(Y_1,Y_2)$ if $Y_1$ is to the left of $X$ and $Y_2$ is to the right of $X$. 

\begin{definition}\label{definition:TieDiagrams}
\textit{A tie data with underlying brane diagram $\mathcal D$} is the data of $\mathcal D$ together with a set $D$ of pairs of colored lines of $\mathcal D$ such that the following holds:
\begin{itemize}
\item If $(Y_1,Y_2)\in D$ then $Y_1\triangleleft Y_2$.
\item If $(Y_1,Y_2)\in D$ then either $Y_1$ is blue and $Y_2$ is red or $Y_1$ is red and $Y_2$ is blue.
\item For all black lines $X$ of $\mathcal D$, the number of pairs in $D$ covering $X$ is equal to $d_X$.
\end{itemize}
\end{definition}

Usually, we work with a fixed brane diagram $\mathcal D$, so we refer to a tie data just by the set $D$. The set of all tie data corresponding to $\mathcal D$ is denoted by $\mathrm{Tie}(\mathcal D)$. 

A tie data $D$ can be visualized as follows: attach to the brane diagram $\mathcal D$ dotted curves (that are called \textit{ties}) following the conventions:
\begin{itemize}
\item For each pair $(Y_1,Y_2)\in D$ with $Y_1$ blue and $Y_2$ red, we draw a dotted curve below the diagram $\mathcal D$. 
\item For each pair $(Y_1,Y_2)\in D$ with $Y_1$ red and $Y_2$ blue, we draw a dotted curve above the diagram $\mathcal D$. 
\end{itemize}
The resulting diagram is then called \textit{tie diagram of $D$}. In the following, we also refer to the elements of $\mathrm{Tie}(\mathcal D)$ as tie diagrams.

\begin{example}\label{example:tiediagrams}
Let $\mathcal D$ be the brane diagram $
0\textcolor{blue}{\backslash} 
2\textcolor{red}{\slash} 
3\textcolor{blue}{\backslash} 
4\textcolor{blue}{\backslash} 
4\textcolor{red}{\slash}
3\textcolor{blue}{\backslash}
2\textcolor{red}{\slash}
0$
and $D$ the tie data
\[
D=\{(U_1,V_3),(U_1,V_2),(V_3,U_3),(V_3,U_4),(U_2,V_1),(U_3,V_1)\}.
\]
Then, the visualization of $D$ is given as follows:
\[
\begin{tikzpicture}[scale=.4]
\foreach \i in {0,...,7}
{
\draw[thick] (3*\i,0) -- (3*\i+2,0);
}

\draw[thick, red] (15,1) -- (14,-1);
\draw[thick, red] (6,1) -- (5,-1);
\draw[thick, red] (21,1) -- (20,-1);

\draw[thick, blue] (2,1) -- (3,-1);
\draw[thick, blue] (8,1) -- (9,-1);
\draw[thick, blue] (11,1) -- (12,-1);
\draw[thick, blue] (17,1) -- (18,-1);

\node at (1,0.5) {$0$};
\node at (4,0.5) {$2$};
\node at (7,0.5) {$3$};
\node at (10,0.5) {$4$};
\node at (13,0.5) {$4$};
\node at (16,0.5) {$3$};
\node at (19,0.5) {$2$};
\node at (22,0.5) {$0$};

\draw[dashed] (3,-1) to[out=-45,in=-135] (5,-1);
\draw[dashed] (3,-1) to[out=-45,in=-135] (14,-1);
\draw[dashed] (6,1) to[out=45,in=135] (11,1);
\draw[dashed] (6,1) to[out=45,in=135] (17,1);
\draw[dashed] (9,-1) to[out=-45,in=-135] (20,-1);
\draw[dashed] (12,-1) to[out=-45,in=-135] (20,-1);

\end{tikzpicture}
\]
\end{example}

\subsection{Torus fixed points}\label{subsection:TorusFixedPoints}

In \cite[Section~4]{rimanyi2020bow}, Rim\'anyi and Shou give an explicit construction which assigns a $\mathbb T$-fixed point $x_D\in\mathcal C(\mathcal D)$ to each tie diagram $D\in\mathrm{Tie}(\mathcal D)$. Based on this construction, they proved that there is a bijection
\begin{equation}\label{eq:RSTFixedPoints}
\mathrm{Tie}(\mathcal D)\xrightarrow{\phantom{x}\sim\phantom{x}} \mathcal C(\mathcal D)^{\mathbb T},\quad D\mapsto x_D
\end{equation}
which classifies the $\mathbb T$-fixed point of $\mathcal C(\mathcal D)$. We will usually identify a tie diagram $D$ with its corresponding $\mathbb T$-fixed point $x_D$.

The classification result \eqref{eq:RSTFixedPoints} can be strengthened in the following way: for a cocharacter 
\[
\sigma\colon\mathbb C^\ast\rightarrow \mathbb A,\quad t\mapsto (\sigma_U(t))_U,
\]
we denote by $\mathcal C(\mathcal D)^\sigma$ the corresponding $\mathbb C^\ast$-fixed locus. If $\sigma$ is \textit{generic}, that means if $\sigma_U\ne\sigma_{U'}$ for all $U$, $U'\in\mathrm{b}(\mathcal D)$, then $\mathcal C(\mathcal D)^{\mathbb T}=\mathcal C(\mathcal D)^{\sigma}$, see e.g.~\cite[Theorem~4.14]{expository2023orthogonality}.  

The bijection \eqref{eq:RSTFixedPoints} implies that $\mathcal C(\mathcal D)$ admits a $\mathbb T$-fixed point if and only if $\mathrm{Tie}(\mathcal D)\ne \emptyset$. We call a brane diagram $\mathcal D$ \textit{admissible} if $\mathrm{Tie}(\mathcal D)\ne \emptyset$. As the theory of stable envelopes is based on the localization principle in equivariant cohomology, we make the following assumption:

\begin{assumption} For the remainder of this article, we assume that $\mathcal D$ is admissible.
\end{assumption}

\begin{remark} The $\mathbb T$-fixed points of bow varieties were first classified in \cite[Theorem~A.5]{nakajima2021geometric}. For more details on the relation between this classification and the classification from \cite{rimanyi2020bow}, see \cite[Appendix~B]{shou2021bow}.
\end{remark}

\subsection{Binary contingency tables}\label{subsec:BCT} We continue with giving an equivalent definition of tie diagrams in terms of matrices with entries in $\{0,1\}$ which satisfies convenient compatibilities as we will discuss in Subsection~\ref{subsection:HW}.

Given a brane diagram $\mathcal D$, we first assign the following invariants to $\mathcal D$:
\[
r_i(\mathcal D)\coloneqq d_{V_i^+}-d_{V_i^-}+|\{U\in\mathrm{b}(\mathcal D)\mid U\triangleleft V_i\}|,\quad
c_j(\mathcal D)\coloneqq d_{U_j^-}-d_{U_j^+}+|\{V\in\mathrm{r}(\mathcal D)\mid U_j\triangleleft V\}|,
\]
where $i\in\{1,\ldots,M\} ,j\in\{1,\ldots,N\}$. In addition, we set
\[
R_l(\mathcal D)\coloneqq \sum_{i=1}^l r_i,\quad C_l(\mathcal D)\coloneqq \sum_{j=1}^lc_j.
\]
As $\mathcal D$ is usually a fixed brane diagram, we just denote $r_i(\mathcal D)$, $c_j(\mathcal D)$, $R_{i}(\mathcal D)$ and $C_j(\mathcal D)$ by $r_i$, $c_j$, $R_i$ and $C_j$. 
The vectors $\textbf r=(r_1,\ldots,r_M)$ and $\textbf c=(c_1,\ldots,c_N)$ are called \textit{margin vectors}.
The following was proved in~\cite[Lemma~2.3]{rimanyi2020bow}:
\begin{lemma} All $r_i(\mathcal D)$ and $c_j(\mathcal D)$ are non-negative and we have $R_M=C_N$.
\end{lemma}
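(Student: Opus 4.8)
The plan is to treat the two assertions separately: the identity $R_M = C_N$ is purely numerical and holds for every brane diagram, whereas the non-negativity of the $r_i$ and $c_j$ genuinely uses admissibility.

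For $R_M = C_N$ I would first split each margin sum as
\[
R_M = \sum_{i=1}^M (d_{V_i^+} - d_{V_i^-}) + \mathrm{sdeg}(\mathcal D), \qquad C_N = \sum_{j=1}^N (d_{U_j^-} - d_{U_j^+}) + \mathrm{sdeg}(\mathcal D),
\]
using that $\sum_i |\{U \in \mathrm b(\mathcal D) \mid U \triangleleft V_i\}|$ and $\sum_j |\{V \in \mathrm r(\mathcal D) \mid U_j \triangleleft V\}|$ both count the set $\{(U,V) \in \mathrm b(\mathcal D) \times \mathrm r(\mathcal D) \mid U \triangleleft V\}$ and so both equal $\mathrm{sdeg}(\mathcal D)$. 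It then remains to show $\sum_i (d_{V_i^+} - d_{V_i^-}) = \sum_j (d_{U_j^-} - d_{U_j^+})$. For this I would enumerate all colored lines of $\mathcal D$ from left to right as $Z_1 \triangleleft \cdots \triangleleft Z_{M+N}$, so that $Z_k$ sits between the consecutive black lines $X_k$ and $X_{k+1}$; then $Z_k^- = X_k$ and $Z_k^+ = X_{k+1}$ for every $k$, and hence
\[
\sum_{i=1}^M (d_{V_i^+} - d_{V_i^-}) + \sum_{j=1}^N (d_{U_j^+} - d_{U_j^-}) = \sum_{k=1}^{M+N} (d_{X_{k+1}} - d_{X_k}) = d_{X_{M+N+1}} - d_{X_1} = 0
\]
by telescoping and the boundary condition $d_{X_1} = d_{X_{M+N+1}} = 0$. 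Rearranging gives the desired equality, hence $R_M = C_N$.

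For non-negativity I would use admissibility to fix a tie data $D \in \mathrm{Tie}(\mathcal D)$, so that every black line $X$ is covered by exactly $d_X$ ties of $D$. Fix $i$ and write $V_i^- = X_p$, $V_i^+ = X_{p+1}$. The key observation is that a tie $(Y_1,Y_2) \in D$ covers $X_{p+1}$ but not $X_p$ exactly when $Y_1 = V_i$ (which forces $Y_2$ to be a blue line lying to the right of $V_i$), covers $X_p$ but not $X_{p+1}$ exactly when $Y_2 = V_i$ (which forces $Y_1$ to be a blue line lying to the left of $V_i$), and covers both or neither otherwise. Letting $a_i$ and $b_i$ denote the number of ties of the first and second type, comparing the ties covering $X_p$ with those covering $X_{p+1}$ yields $d_{V_i^+} - d_{V_i^-} = a_i - b_i$, and therefore
\[
r_i = (a_i - b_i) + |\{U \in \mathrm b(\mathcal D) \mid U \triangleleft V_i\}| = a_i + \bigl(|\{U \in \mathrm b(\mathcal D) \mid U \triangleleft V_i\}| - b_i\bigr) \ge 0,
\]
since $a_i \ge 0$ and the ties counted by $b_i$ pair $V_i$ with distinct blue lines lying to its left, so $b_i \le |\{U \mid U \triangleleft V_i\}|$. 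The inequality $c_j \ge 0$ follows from the symmetric argument with the roles of red and blue lines exchanged (the relevant ties are now those having $U_j$ as an endpoint, and one uses $d_{U_j^-} - d_{U_j^+} = b_j' - a_j'$ with $a_j'$ bounded by the number of red lines to the right of $U_j$).

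I expect the only genuine work to lie in the bookkeeping of the third paragraph — verifying precisely which ties of $D$ change their covering status across a given colored line — while the rest is telescoping and double counting. One point worth flagging is that non-negativity really does require the running admissibility assumption and cannot be proved purely numerically: the (non-admissible) brane diagram $0\,\textcolor{blue}{\backslash}\,2\,\textcolor{red}{\slash}\,0$ has $r_1 = 0 - 2 + 1 = -1$, so the use of an actual tie data is essential.
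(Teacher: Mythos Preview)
The paper does not prove this lemma; it simply cites \cite[Lemma~2.3]{rimanyi2020bow}. So there is no in-paper argument to compare against, and the question is just whether your proof is correct. It is.

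Your telescoping argument for $R_M = C_N$ is clean and correct: both correction terms equal $\mathrm{sdeg}(\mathcal D)$ by double counting, and the sum $\sum_i (d_{V_i^+}-d_{V_i^-}) + \sum_j (d_{U_j^+}-d_{U_j^-})$ telescopes to $d_{X_{M+N+1}}-d_{X_1}=0$. For non-negativity, your analysis of which ties change covering status across $V_i$ is accurate (the only colored line between $X_p$ and $X_{p+1}$ is $V_i$ itself, so a tie gains or loses coverage precisely when it has $V_i$ as an endpoint), and the bound $b_i \le |\{U \mid U \triangleleft V_i\}|$ follows because a tie data is a \emph{set} of pairs, so each blue line $U$ to the left of $V_i$ can contribute at most one tie $(U,V_i)$. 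The symmetric argument for $c_j$ goes through verbatim.

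Your counterexample $0\,\textcolor{blue}{\backslash}\,2\,\textcolor{red}{\slash}\,0$ is a nice observation: it confirms that non-negativity genuinely requires the running admissibility assumption made just before this subsection, so your appeal to a tie data $D$ is both necessary and legitimate.
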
 

Let $\mathrm{bct}(\mathcal D)$ denote the set of all $M\times N$ matrices $B$ with entries in $\{0,1\}$ satisfying following row and column sum conditions:
\begin{itemize}
\item $\sum_{i=1}^M B_{i,j}=c_j$, for all $j\in\{1,\ldots,N\}$,
\item $\sum_{j=1}^N B_{i,j}=r_i$, for all $i\in\{1,\ldots,M\}$.
\end{itemize}
The elements of $\mathrm{bct}(\mathcal D)$ are called \textit{binary contingency tables of $\mathcal D$}.

The importance of binary contingency tables is the following bijection, see e.g.~\cite[Proposition~2.2.8]{shou2021bow}
\begin{equation}\label{eq:TieDiagramsVSBCTs}
\mathrm{Tie}(\mathcal D)\xleftrightarrow{\phantom{x}1:1\phantom{x}} \mathrm{bct}(\mathcal D),\quad  D\mapsto M(D),\quad D_B\mapsfrom B.
\end{equation}
The bijection is given as follows: if $D\in\mathrm{Tie}(\mathcal D)$ then the corresponding binary contingency table $M(D)$ is defined as 
\[
M(D)_{i,j}=\begin{cases}
1 &\textup{if $V_i\triangleleft U_j$ and $(V_i,U_j)\in D$,} \\
1 &\textup{if $U_j\triangleleft V_i$ and $(U_j,V_i)\notin D$,}\\
0 &\textup{if $V_i\triangleleft U_j$ and $(V_i,U_j)\notin D$,}\\
0 &\textup{if $U_j\triangleleft V_i$ and $(U_j,V_i)\in D$.}
\end{cases}
\]
Conversely, if we are given $B\in \mathrm{bct}(\mathcal D)$, we obtain a tie diagram $D_B$ as follows
\[
D_B=D_B'\cup D_B'',
\]
where 
\[
D_B'=\{(V_i,U_j)\mid V_i\triangleleft U_j,\;B_{i,j}=1\},\quad  D_B''=\{(U_j,V_i)\mid U_j\triangleleft V_i,\;B_{i,j}=0\}.
\] 

Next, we describe the separating line of a binary contingency table $B$ which is a useful tool for illustrating the associated tie diagram $D_B$. For this, we draw the matrix $B$ into a coordinate system where the entry $B_{i,j}$ is put into the square box with side length $1$ and south-west corner at $(M-i,j-1)$. Then, we define points $p_0,\ldots,p_{M+N}$ in this coordinate system via $p_0=(0,0)$ and
\[
p_i=
\begin{cases}
 p_{i-1}+(1,0) &\textup{if $X_i^-$ is blue,}\\
p_{i-1}+(0,1) &\textup{if $X_i^-$ is red.}
\end{cases}
\]
The \textit{separating line $S_B$ of $B$} is then obtained by connecting each $p_i$ with $p_{i+1}$ by a straight line. Using $S_B$ we can easily illustrate $D_B$ using the following rules:
\begin{itemize}
\item For each $(i,j)$ such that $B_{i,j}=1$ and the entry $B_{i,j}$ lies below $S_D$ draw a dotted curve connecting $V_i$ and $U_j$.
\item For each $(i,j)$ such that $B_{i,j}=0$ and the entry $B_{i,j}$ lies above $S_D$ draw a dotted curve connecting $V_i$ and $U_j$.
\end{itemize}

\begin{example}
Let $\mathcal D$ and $D$ be as in Example~\ref{example:tiediagrams}. Then, the corresponding binary contingency table $M(D)$ with separating line is given as follows:
\[
\begin{tikzpicture}[scale=.4]
\draw[ultra thin] (0,0) -- (4,0);
\draw[ultra thin]  (0,1) -- (4,1);
\draw[ultra thin]  (0,2) -- (4,2);
\draw[ultra thin]  (0,3) -- (4,3);
\draw[ultra thin]  (0,0) -- (0,3);
\draw[ultra thin]  (1,0) -- (1,3);
\draw[ultra thin]  (2,0) -- (2,3);
\draw[ultra thin]  (3,0) -- (3,3);
\draw[ultra thin]  (4,0) -- (4,3);
\draw[ultra thick] (0,0) -- (1,0) -- (1,1) -- (3,1) -- (3,2) -- (4,2)-- (4,3);  

\node at (-0.7,0.6) {\tiny $V_3$};
\node at (-0.7,1.6) {\tiny $V_2$};
\node at (-0.7,2.6) {\tiny $V_1$};

\node at (0.5,3.7) {\tiny $U_1$};
\node at (1.5,3.7) {\tiny $U_2$};
\node at (2.5,3.7) {\tiny $U_3$};
\node at (3.5,3.7) {\tiny $U_4$};

\node at (0.5,0.5) {\tiny $0$};
\node at (1.5,0.5) {\tiny $0$};
\node at (2.5,0.5) {\tiny $1$};
\node at (3.5,0.5) {\tiny $1$};

\node at (0.5,1.5) {\tiny $0$};
\node at (1.5,1.5) {\tiny $1$};
\node at (2.5,1.5) {\tiny $1$};
\node at (3.5,1.5) {\tiny $0$};

\node at (0.5,2.5) {\tiny $1$};
\node at (1.5,2.5) {\tiny $0$};
\node at (2.5,2.5) {\tiny $0$};
\node at (3.5,2.5) {\tiny $1$};

\end{tikzpicture}
\]
\end{example}

\subsection{Localization in equivariant cohomology} 
We now recall the localization principle in torus equivariant cohomology which is an astonishing feature of this cohomology theory that provides an interesting connection between local and global data. For more details on this subject see e.g.~\cite{hsiang1975cohomology},~\cite{tomdieck1987transformation},~\cite{allday1993cohomological},~\cite{goresky1998equivariant} ,~\cite{anderson2012introduction} and~\cite{anderson2023equivariant}.

In the framework of bow varieties we are in the preferable situation of finitely many torus fixed points which are classified by tie diagrams respectively binary contingency tables as we discussed in the previous subsections.

We work with a fixed brane diagram $\mathcal D$ and let $\mathbb T=\mathbb A\times\mathbb C_h^\ast$ be the torus from Subsection~\ref{subsection:TorusAction}. Let $H_{\mathbb T}^\ast$ denote the $\mathbb T$-equivariant cohomology functor with coefficients in $\mathbb Q$. The $\mathbb T$-equivariant cohomology of a point $H^{\ast}_{\mathbb T}(\operatorname{pt})$ is isomorphic to the polynomial algebra $\mathbb Q[t_1,\ldots,t_N,h]$, where the parameters $t_1,\ldots,t_N$ correspond to $\mathbb A$ and the parameter $h$ corresponds to the factor $\mathbb C_h^\ast$.

Given a $\mathbb T$-fixed point $p\in\mathcal C(\mathcal D)^{\mathbb T}$ we denote by $\iota_p^\ast\colon H_{\mathbb T}^\ast(\mathcal C(\mathcal D))\rightarrow H_{\mathbb T}^\ast(\{p\})$ the corresponding restriction morphism in equivariant cohomology. If $\gamma\in H_{\mathbb T}^\ast(\mathcal C(\mathcal D))$ then we call $\iota_p^\ast(\gamma)$ the \textit{localization coefficient of $\gamma$ at $p$}.

The equivariant localization theorem, see e.g.~\cite[Theorem~7.1.1]{anderson2023equivariant}, states that the restriction $\iota^\ast\colon H_{\mathbb T}^\ast(\mathcal C(\mathcal D))\rightarrow H_{\mathbb T}^\ast(\mathcal C(\mathcal D)^{\mathbb T})$ induces an isomorphism
\[
H_{\mathbb T}^\ast(\mathcal C(\mathcal D))_{\mathrm{loc}}\xrightarrow{\phantom{x}\sim\phantom{x}} H_{\mathbb T}^\ast(\mathcal C(\mathcal D)^{\mathbb T})_{\mathrm{loc}},
\]
where $H_{\mathbb T}^\ast(\mathcal C(\mathcal D))_{\mathrm{loc}}$ resp. $H_{\mathbb T}^\ast(\mathcal C(\mathcal D)^{\mathbb T})_{\mathrm{loc}}$ is the localization at the multiplicative set generated by
\[
\{ a_1t_1+\dots +a_Nt_N+bh\mid a_1,\ldots,a_N,b\in\mathbb Q \}
\subset H_{\mathbb T}^\ast(\operatorname{pt}).
\]


\subsection{Tautological bundles}\label{subsection:TautologicalBundles} Bow varieties come with a family of \textit{tautological bundles} which are $\mathbb T$-equivariant. As we discuss below, their first Chern classes generate the localized equivariant cohomology ring $H_{\mathbb T}^\ast(\mathcal C(\mathcal D)^{\mathbb T})_{\mathrm{loc}}$, see Corollary~\ref{cor:FirstChernGenerators}. Since the restrictions of tautological bundles fit well into the framework of tie diagrams, they form a preferred choice of generators for this cohomology ring.

Given a black line $X$ in $\mathcal D$, the \textit{tautological bundle of $X$} is defined as the geometric quotient 
\begin{equation}\label{eq:DefinitionTautological}
\xi_X\coloneqq \mathbb C^{d_X}\times m^{-1}(0)^{\mathrm s}/\mathcal G.
\end{equation}
Here, $\mathcal G$ and $m\colon \widetilde{\mathcal D}\rightarrow \prod_{X'\in \mathrm{h}(\mathcal D)}\operatorname{End}(\mathbb C^{d_X'})$ are defined as in Subsection~\ref{subsection:ConstructionBow}, $\mathcal G$ acts diagonally on the product, where $\mathcal G$ acts on  $\mathbb C^{d_X}$ via
$(g_{X'})_{X'}.v=g_Xv$.
Since the quotient morphism $m^{-1}(0)^{\mathrm s}\rightarrow \mathcal C(\mathcal D)$ is a principal $\mathcal G$-bundle (in the Zariski topology), we conclude that $\xi_X$ is a vector bundle (in the Zariski topology) over $\mathcal C(\mathcal D)$ of rank $d_X$.
If $X=X_i$, we also denote $\xi_X$ by $\xi_{i}$.

The torus $\mathbb T$ acts on the first factor of $\mathbb C^{d_X}\times m^{-1}(0)^{\mathrm s}$ which induces a $\mathbb T$-action on $\xi_X$ giving $\xi_X$ the structure of a $\mathbb T$-equivariant vector bundle over $\mathcal C(\mathcal D)$.

The restrictions of tautological bundles to the $\mathbb T$-fixed points of $\mathcal C(\mathcal D)$ can  explicitly be computed via the following formula, see~\cite[Theorem~4.10]{rimanyi2020bow}:
\begin{equation}\label{eq:RestrictionTautologicalBundles}
\iota_D^\ast(\xi_i)=\bigoplus_{U\in \mathrm{b}(\mathcal D)}\bigoplus_{k=0}^{d_{D,U,X_i}-1} h^{c_{D,U,j}-d_{D,U,U^-}+1+k}\mathbb C_U.
\end{equation}
Here, $\mathbb C_U$ is the one-dimensional $\mathbb A$-module given by the character
$
\mathbb A\rightarrow\mathbb C^\ast
$ that projects to the factor with label $U$ and $h^i\mathbb C_U$ is the $\mathbb T$-module by further letting $\mathbb C_h^\ast $ act on $\mathbb C_U$ via $h.v=h^iv$ for $v\in\mathbb C_U$. The non-negative integer $d_{D,U,X}$ is defined for every black line $X$ as
$d_{D,U,X}=|\{\textup{$V\in \mathrm r(\mathcal D)$}\mid (V, U)\in D\}| $ for $X\triangleleft U$ and as
$d_{D,U,X}=|\{\textup{$V\in \mathrm r(\mathcal D)$}\mid (U,V)\in D\}|$ for $U\triangleleft X$.
The $c_{D,U,j}$ are given by 
$
c_{D,U,j} \coloneqq d_{D,U,U^+}- d_{D,U,X_j}
$
for all $j$ with $U \triangleleft X_j$. For $j$ with $X_j \triangleleft U$, $c_{D,U,j}$ is defined recursively as
\[
c_{D, U,j}\coloneqq \begin{cases}
c_{D, U,{j}} &\textup{if $X_j^+$ is blue},\\
c_{D, U,{j}} &\textup{if $X_j^+$ is red and $d_{D, U,X_j}+1=d_{D, U,X_{j+1}}$},\\
c_{D, U,{j}}-1 &\textup{if $X_j^+$ is red and $d_{D, U,X_j}=d_{D,U,X_{j+1}}$}.
\end{cases}
\]

\begin{example}
Consider the brane diagram $0\textcolor{red}{\slash} 2
\textcolor{red}{\slash} 3
\textcolor{red}{\slash} 4
\textcolor{red}{\slash} 5
\textcolor{blue}{\backslash} 2
\textcolor{red}{\slash}3
\textcolor{blue}{\backslash}1
\textcolor{red}{\slash}0$ with tie diagram $D$:
\[
\begin{tikzpicture}[scale=0.4]

\foreach \i in {0,...,8}
{
\draw[thick] (3*\i,0) -- (3*\i+2,0);
}

\draw[thick, red] (3,1) -- (2,-1);
\draw[thick, red] (6,1) -- (5,-1);
\draw[thick, red] (9,1) -- (8,-1);
\draw[thick, red] (12,1) -- (11,-1);
\draw[thick, red] (18,1) -- (17,-1);
\draw[thick, red] (24,1) -- (23,-1);

\draw[thick, blue] (14,1) -- (15,-1);
\draw[thick, blue] (20,1) -- (21,-1);

\draw[dashed] (3,1) to[out=45,in=135] (14,1);
\draw[dashed] (3,1) to[out=45,in=135] (20,1);
\draw[dashed] (6,1) to[out=45,in=135] (14,1);
\draw[dashed] (9,1) to[out=45,in=135] (14,1);
\draw[dashed] (12,1) to[out=45,in=135] (20,1);
\draw[dashed] (18,1) to[out=45,in=135] (20,1);

\draw[dashed] (15,-1) to[out=-45,in=-135] (17,-1);
\draw[dashed] (15,-1) to[out=-45,in=-135] (23,-1);

\node at (1,0.5) {$0$};
\node at (4,0.5) {$2$};
\node at (7,0.5) {$3$};
\node at (10,0.5) {$4$};
\node at (13,0.5) {$5$};
\node at (16,0.5) {$2$};
\node at (19,0.5) {$3$};
\node at (22,0.5) {$1$};
\node at (25,0.5) {$0$};

\end{tikzpicture}
\]
For a blue line $U$ and a black line $X_j$, the number $d_{D, U,X_j}$ is the number of ties starting in $U$ and covering $X_j$ from above or below. For instance, there are three ties starting in $U_1$ and covering $X_4$ which gives $d_{D, U_1,X_4}=3$. The other numbers $d_{D, U,X_j}$ are recorded in the following table:
\begin{center}
\begin{tabular}{| c | c | c | c | c | c | c | c |}
  \hline			
  $j$ & $2$ & $3$ & $4$ & $5$ & $6$ & $7$ & $8$ \\ \hline
  $d_{D, U_1,X_j}$ & $1$ & $2$ & $3$ & $3$ & $2$ & $1$ & $1$  \\
  \hline 
  $d_{D, U_2,X_j}$ & $1$ & $1$ & $1$ & $2$ & $2$ & $3$ & $0$  \\
  \hline  
\end{tabular}
\end{center}
The resulting indices $c_{D, U,j}$ are then given as follows:
\begin{center}
\begin{tabular}{| c | c | c | c | c | c | c | c |}
  \hline			
  $j$ & $2$ & $3$ & $4$ & $5$ & $6$ & $7$ & $8$ \\ \hline
  $c_{D, U_1,j}$ & $-1$ & $-1$ & $-1$ & $0$ & $0$ & $1$ & $1$  \\
  \hline
  $c_{D, U_2,j}$ & $-2$ & $-1$ & $0$ & $0$ & $0$ & $0$ & $0$  \\
  \hline  
\end{tabular}
\end{center}
Inserting in \eqref{eq:RestrictionTautologicalBundles} then gives the restrictions of the tautological bundles to the $\mathbb T$-fixed point $D$. For example, since $d_{D, U_1,X_4}=3$ and $c_{D, U_1,4}=-1$, the $U_1$-contribution in \eqref{eq:RestrictionTautologicalBundles} equals $h^{-3}\mathbb C_{U_1}\oplus h^{-2}\mathbb C_{U_1}\oplus h^{-1}\mathbb C_{U_1}$. Likewise, as $d_{D, U_1,X_4}=1$ and $c_{D, U_1,4}=-1$, the $U_2$ contribution in \eqref{eq:RestrictionTautologicalBundles} is $h^{-1}\mathbb C_{U_2}$. Consequently, $\iota_D^\ast(\xi_4)\cong  h^{-3}\mathbb C_{U_1} \oplus h^{-2}\mathbb C_{U_1} \oplus h^{-1}\mathbb C_{U_1} \oplus h^{-2}\mathbb C_{U_2}$. The other restrictions $\iota_D^\ast(\xi_j)$ are given as follows:
\begin{center}
\begin{tabular}{| c | c |}
\hline
$j$ & $\iota_D^\ast(\xi_j)$ \\ \hline \hline
$2$ & $h^{-3}\mathbb C_{U_1} \oplus h^{-4}\mathbb C_{U_2}$
\\ \hline
$3$ & $h^{-3}\mathbb C_{U_1} \oplus h^{-2}\mathbb C_{U_1} \oplus h^{-3}\mathbb C_{U_2}$
\\ \hline
$4$ & $h^{-3}\mathbb C_{U_1} \oplus h^{-2}\mathbb C_{U_1} \oplus h^{-1}\mathbb C_{U_1} \oplus h^{-2}\mathbb C_{U_2}$
\\ \hline
$5$ & $h^{-2}\mathbb C_{U_1} \oplus h^{-1}\mathbb C_{U_1} \oplus \mathbb C_{U_1} \oplus h^{-2} \mathbb C_{U_2}\oplus h^{-1}\mathbb C_{U_2}$
\\ \hline
$6$ & $h^{-2}\mathbb C_{U_1} \oplus h^{-1}\mathbb C_{U_1} \oplus h^{-2}\mathbb C_{U_2}\oplus h^{-1}\mathbb C_{U_2}$
\\ \hline
$7$ & $h^{-1}\mathbb C_{U_1}\oplus h^{-2}\mathbb C_{U_2}\oplus h^{-1}\mathbb C_{U_2}\oplus \mathbb C_{U_2}$
\\ \hline
$8$ & $h^{-1}\mathbb C_{U_1}$
\\ \hline
\end{tabular}
\end{center}
\end{example}

Using~\eqref{eq:RestrictionTautologicalBundles}, one can easily prove the following result:

\begin{cor}\label{cor:TautologicalRestrictionDeterminesTieDiagrams} Let $D$, $D'\in\mathrm{Tie}(\mathcal D)$. Then, $D=D'$ if and only if $\iota_D^{\ast}(\xi_X)=\iota_{D'}^{\ast}(\xi_X)$ for all $X\in\mathrm{h}(\mathcal D)$.
\end{cor}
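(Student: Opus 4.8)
My plan is to prove the nontrivial implication: assuming $\iota_D^\ast(\xi_X)=\iota_{D'}^\ast(\xi_X)$ for all black lines $X$, deduce $D=D'$. The strategy has two independent steps: (i) extract from the equivariant restrictions a combinatorial shadow of the tie diagram, namely all the numbers $d_{D,U,X}$; and (ii) show that this collection of numbers already pins down $D$. Everything only uses the explicit restriction formula~\eqref{eq:RestrictionTautologicalBundles}.

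For step (i): since $\mathbb A=(\mathbb C^\ast)^N$ and $\mathbb C_U$ is the coordinate character indexed by the blue line $U$, we have $\mathbb C_U\not\cong\mathbb C_{U'}$ as $\mathbb A$-modules whenever $U\ne U'$. Reading off~\eqref{eq:RestrictionTautologicalBundles}, the inner sum ranges over $k=0,\dots,d_{D,U,X}-1$ and contributes exactly one copy of $\mathbb C_U$ for each such $k$; hence the multiplicity of $\mathbb C_U$ in $\iota_D^\ast(\xi_X)$, viewed as an $\mathbb A$-module, is precisely $d_{D,U,X}$. Therefore the family $\{\iota_D^\ast(\xi_X)\}_{X\in\mathrm h(\mathcal D)}$ determines the family of integers $\{d_{D,U,X}\}_{U\in\mathrm b(\mathcal D),\,X\in\mathrm h(\mathcal D)}$, and likewise for $D'$; so it suffices to show the latter family determines $D$.

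For step (ii): recall that each tie consists of one red and one blue line, with the left–right order inside the pair forced by $\mathcal D$; thus $D$ is recovered once we know, for every red line $V$ and every blue line $U$, whether the unique possible tie between them lies in $D$. Here I would use that $d_{D,U,X}$ counts the ties incident to $U$ covering $X$, together with the alternating structure of a brane diagram (the two neighbours of any colored line are black). Writing $V^-,V^+$ for the black lines flanking $V$, the point is that a tie incident to $U$ covers exactly one of $V^-,V^+$ precisely when $V$ is its other endpoint; since there is at most one such tie, $|d_{D,U,V^+}-d_{D,U,V^-}|\in\{0,1\}$, and it equals $1$ if and only if $V$ and $U$ are tied in $D$. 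This recovers $D$ from $\{d_{D,U,X}\}$, and the proof is complete.

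The argument is essentially routine; the one place I would be careful is the combinatorial claim in step (ii) about which black lines a tie incident to $U$ covers. Proving it cleanly amounts to checking a few cases for the relative position of $V^-,V,V^+$ against the two endpoints $U,W$ of a tie, repeatedly invoking that no colored line sits between a black line and its immediate black neighbour. If one prefers, step (ii) can instead be routed through the binary contingency table $M(D)$ via~\eqref{eq:TieDiagramsVSBCTs}, expressing partial sums of $d_{D,U,\cdot}$ through rows and columns of $M(D)$, but the direct tie-counting argument above is shorter.
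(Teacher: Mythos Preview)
Your proof is correct and follows essentially the same approach as the paper: the paper's argument is precisely your steps (i) and (ii), stated more tersely (it just says that \eqref{eq:RestrictionTautologicalBundles} gives $d_{D,U,X}=d_{D',U,X}$ for all $U,X$, and that this is equivalent to $D=D'$). Your step (ii) spells out in detail what the paper leaves implicit, and the argument via $|d_{D,U,V^+}-d_{D,U,V^-}|$ is a clean way to do it.
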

\begin{proof} If $\iota_D^{\ast}(\xi_X)=\iota_{D'}^{\ast}(\xi_X)$ for all $X\in\mathrm{h}(\mathcal D)$ then $d_{D,U,X}=d_{D',U,X}$ for all $U\in\mathrm{b}(\mathcal D)$ and $X\in\mathrm{h}(\mathcal D)$ by~\eqref{eq:RestrictionTautologicalBundles}. This is equivalent to $D=D'$.
\end{proof}

We conclude that the first Chern classes of the tautological bundles generate the localized equivariant cohomology $H_{\mathbb T}^\ast(\mathcal C(\mathcal D))_{\mathrm{loc}}$:

\begin{cor}\label{cor:FirstChernGenerators} We have that $c_1(\xi_1),\ldots,c_1(\xi_{M+N+1})$ are $H_{\mathbb T}^\ast(\operatorname{pt})_{\mathrm{loc}}$-algebra generators of $H_{\mathbb T}^\ast(\mathcal C(\mathcal D))_{\mathrm{loc}}$.
\end{cor}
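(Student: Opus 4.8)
The plan is to combine the equivariant localization isomorphism with Corollary~\ref{cor:TautologicalRestrictionDeterminesTieDiagrams}. By the localization theorem recalled above, the restriction map
\[
\iota^\ast\colon H_{\mathbb T}^\ast(\mathcal C(\mathcal D))_{\mathrm{loc}}\xrightarrow{\ \sim\ } H_{\mathbb T}^\ast(\mathcal C(\mathcal D)^{\mathbb T})_{\mathrm{loc}}\cong \bigoplus_{D\in\mathrm{Tie}(\mathcal D)} H_{\mathbb T}^\ast(\operatorname{pt})_{\mathrm{loc}}
\]
is an isomorphism of $H_{\mathbb T}^\ast(\operatorname{pt})_{\mathrm{loc}}$-algebras, where the right-hand side is a product of copies of the field $K:=\operatorname{Frac}(H_{\mathbb T}^\ast(\operatorname{pt}))$ indexed by the finitely many fixed points. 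So it suffices to show that the elements $\iota^\ast(c_1(\xi_1)),\ldots,\iota^\ast(c_1(\xi_{M+N+1}))$ generate this finite product of fields as a $K$-algebra.

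The key step is separation of points: I claim that for any two distinct tie diagrams $D\neq D'$ there is some index $i$ with $\iota_D^\ast(c_1(\xi_i))\neq \iota_{D'}^\ast(c_1(\xi_i))$. Indeed, by Corollary~\ref{cor:TautologicalRestrictionDeterminesTieDiagrams} we have $\iota_D^\ast(\xi_X)\neq \iota_{D'}^\ast(\xi_X)$ for some black line $X$, i.e.\ these two $\mathbb T$-representations differ. Passing to first Chern classes, $\iota_D^\ast(c_1(\xi_X))$ is the sum of the $\mathbb T$-weights of $\iota_D^\ast(\xi_X)$, viewed as a linear form in $t_1,\ldots,t_N,h$; I must check that $\iota_D^\ast(\xi_X)\neq\iota_{D'}^\ast(\xi_X)$ already forces these weight-sums to differ. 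This follows from the explicit formula~\eqref{eq:RestrictionTautologicalBundles}: the $\mathbb A$-character appearing in the $U$-summand is always $\mathbb C_U$, so the coefficient of $t_U$ in $\iota_D^\ast(c_1(\xi_X))$ is exactly the multiplicity $d_{D,U,X}$, and the coefficient of $h$ records $\sum_{U}\sum_{k}(c_{D,U,j}-d_{D,U,U^-}+1+k)$. If the coefficients of every $t_U$ agreed for $D$ and $D'$, then $d_{D,U,X}=d_{D',U,X}$ for all $U$, which by the remark following~\eqref{eq:RestrictionTautologicalBundles} (the criterion used in Corollary~\ref{cor:TautologicalRestrictionDeterminesTieDiagrams}) forces $D=D'$, a contradiction. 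Hence $\iota_D^\ast(c_1(\xi_X))\neq \iota_{D'}^\ast(c_1(\xi_X))$ as linear forms.

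With separation in hand, the conclusion is a standard commutative-algebra fact: if a finite product of fields $\prod_{D} K$ is equipped with $K$-algebra elements $f_1,\ldots,f_m$ whose joint ``evaluation vectors'' $(f_i(D))_D$ separate the factors, then the $f_i$ generate the product as a $K$-algebra. Concretely, for each fixed point $D$ one builds, by Lagrange-interpolation in the values of a single suitable $f_i$ that distinguishes $D$ from each other $D'$ and taking products, a polynomial in the $f_i$ equal to the idempotent $e_D$ (the indicator of the factor $D$); once all idempotents $e_D$ lie in the subalgebra generated by the $f_i$, so does $e_D\cdot f_i(D)$ for every $D$ and $i$, and these span the whole product over $K$. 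Applying this with $f_i=\iota^\ast(c_1(\xi_i))$ shows $\iota^\ast(c_1(\xi_1)),\ldots,\iota^\ast(c_1(\xi_{M+N+1}))$ generate $H_{\mathbb T}^\ast(\mathcal C(\mathcal D)^{\mathbb T})_{\mathrm{loc}}$, and transporting back through the isomorphism $\iota^\ast$ proves the corollary.

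The main obstacle is the separation step, and within it the passage from ``the representations $\iota_D^\ast(\xi_X)$ differ'' to ``their Chern roots sum to different linear forms.'' A priori two different multisets of weights could have equal sums; the point is that~\eqref{eq:RestrictionTautologicalBundles} constrains the weights enough that the $t_U$-coefficient literally reads off the multiplicity $d_{D,U,X}$, so equality of linear forms would force equality of all these combinatorial invariants and hence $D=D'$. Everything else is either quoted (localization, Corollary~\ref{cor:TautologicalRestrictionDeterminesTieDiagrams}) or routine interpolation in a product of fields.
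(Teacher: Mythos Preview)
Your argument is correct and follows essentially the same route as the paper: separate fixed points using the first Chern classes via Corollary~\ref{cor:TautologicalRestrictionDeterminesTieDiagrams}, then build the idempotents by interpolation (the paper phrases this as the Chinese remainder theorem). One small caveat: $H_{\mathbb T}^\ast(\operatorname{pt})_{\mathrm{loc}}$ as defined here is the localization at nonzero linear forms, not the full fraction field; your Lagrange interpolation still works because the separating differences $\iota_D^\ast(c_1(\xi_X))-\iota_{D'}^\ast(c_1(\xi_X))$ are themselves nonzero linear forms and hence units.
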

\begin{proof} Let $A\subset H_{\mathbb T}^\ast(\mathcal C(\mathcal D))_{\mathrm{loc}}$ be the  $H_{\mathbb T}^\ast(\operatorname{pt})_{\mathrm{loc}}$-algebra generated by $c_1(\xi_1),\ldots,c_1(\xi_{M+N+1})$. By Corollary~\ref{cor:TautologicalRestrictionDeterminesTieDiagrams}, we have for all $D,D'\in\mathrm{Tie}(\mathcal D)$ that $D=D'$ if and only if
$\iota_D^\ast(c_1(\xi_X))=\iota_{D'}^\ast(c_1(\xi_X))$ for all $X\in \mathrm{h}(\mathcal D)$. Thus, the Chinese remainder theorem implies that for all $D\in \mathrm{Tie}(\mathcal D)$ there exists an element $f_D\in A$ such that $f_D\in \bigcap_{D'\ne D}\mathrm{ker}(\iota_{D'}^\ast)$ and $f_D\equiv 1\;\mathrm{mod} \; \mathrm{ker}(\iota_{D}^\ast)$. Thus, the localization theorem implies $A= H_{\mathbb T}^\ast(\mathcal C(\mathcal D))_{\mathrm{loc}}$.
\end{proof}

Suppose now that $\mathcal D$ is separated. As shown in e.g.~\cite[Proposition~3.4]{botta2023mirror}, the $\mathbb T$-equivariant vector bundles $\xi_{U^-}$ are topologically trivial. The corresponding $\mathbb T$-characters given as follows:

\begin{cor}\label{cor:PropertiesTautologicalBundlesSeparated} 
Let $\mathcal D$ be separated. Then, we have
\[
\xi_{U_j^-}= \bigoplus_{k=j}^N \bigoplus_{i=0}^{c_k-1} h^{-i}\mathbb C_{U_k},
\quad
\textit{for $U_j\in \mathrm{b}(\mathcal D)$ and all $D\in\mathrm{Tie}(\mathcal D)$.}
\]
\end{cor}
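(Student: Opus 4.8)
The plan is to compute the restriction $\iota_D^\ast(\xi_{U_j^-})$ directly from the restriction formula~\eqref{eq:RestrictionTautologicalBundles}, observe that the resulting $\mathbb T$-representation does not depend on $D$, and conclude from the cited topological triviality of $\xi_{U_j^-}$ that this representation is its $\mathbb T$-character. The first step is to record what separatedness forces. Since $\mathcal D$ has the shape $\textcolor{red}{\slash}\cdots\textcolor{red}{\slash}\textcolor{blue}{\backslash}\cdots\textcolor{blue}{\backslash}$, the black lines are ordered $X_1\textcolor{red}{\slash}\cdots\textcolor{red}{\slash}X_{M+1}\textcolor{blue}{\backslash}\cdots\textcolor{blue}{\backslash}X_{M+N+1}$, so $U_k^-=X_{M+k}$ and $U_k^+=X_{M+k+1}$; in particular $U_j^-=X_{M+j}$. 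Moreover, no red line lies to the right of any blue line, so every tie joining a blue line $U_k$ to a red line $V_i$ has the form $(V_i,U_k)\in D$. Hence $M(D)_{i,k}=1$ if and only if $(V_i,U_k)\in D$ in the binary contingency table of~\eqref{eq:TieDiagramsVSBCTs}, and therefore $|\{V\in\mathrm r(\mathcal D)\mid (V,U_k)\in D\}|=\sum_{i=1}^M M(D)_{i,k}=c_k$ for every blue line $U_k$ and every $D\in\mathrm{Tie}(\mathcal D)$.

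I would then evaluate the ingredients of~\eqref{eq:RestrictionTautologicalBundles} for $X_i=X_{M+j}$, one blue line at a time. If $k<j$, then $U_k\triangleleft X_{M+j}$, and since no red line sits to the right of $U_k$ we get $d_{D,U_k,X_{M+j}}=|\{V\in\mathrm r(\mathcal D)\mid(U_k,V)\in D\}|=0$, so $U_k$ contributes an empty summand. If $k\ge j$, then $X_{M+j}\triangleleft U_k$, so $d_{D,U_k,X_{M+j}}=|\{V\in\mathrm r(\mathcal D)\mid(V,U_k)\in D\}|=c_k$ by the identity above, and the same computation with $X_{M+j}$ replaced by $U_k^-=X_{M+k}$ gives $d_{D,U_k,U_k^-}=c_k$. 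It remains to compute the index $c_{D,U_k,M+j}$ (the last subscript being the index of the black line $X_{M+j}$). The point is that every colored line strictly between $X_{M+j}$ and $U_k^+=X_{M+k+1}$ is blue, namely $U_j,\dots,U_k$, so the recursion defining $c_{D,U_k,\bullet}$ only ever uses its ``$X^+$ blue'' branch; therefore $c_{D,U_k,M+j}=c_{D,U_k,M+j+1}=\cdots=c_{D,U_k,M+k+1}$, and the base case $U_k\triangleleft X_{M+k+1}=U_k^+$ gives $c_{D,U_k,M+k+1}=d_{D,U_k,U_k^+}-d_{D,U_k,X_{M+k+1}}=0$. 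Hence $c_{D,U_k,M+j}=0$ whenever $k\ge j$.

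Substituting $c_{D,U_k,M+j}=0$ and $d_{D,U_k,U_k^-}=c_k$ into~\eqref{eq:RestrictionTautologicalBundles}, the $U_k$-summand becomes $\bigoplus_{\ell=0}^{c_k-1}h^{\ell-c_k+1}\mathbb C_{U_k}=\bigoplus_{i=0}^{c_k-1}h^{-i}\mathbb C_{U_k}$ after reindexing by $i=c_k-1-\ell$; summing over $k=j,\dots,N$ gives $\iota_D^\ast(\xi_{U_j^-})=\bigoplus_{k=j}^N\bigoplus_{i=0}^{c_k-1}h^{-i}\mathbb C_{U_k}$, which is visibly independent of $D$, and the claim follows. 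I expect the main obstacle to be the index bookkeeping in the second paragraph: one has to check that the recursion for $c_{D,U_k,\bullet}$ really stays in the ``blue'' branch from the index $M+j$ all the way down to the base index $M+k+1$, and that $U_k^+=X_{M+k+1}$ is exactly where the base case applies. The reduction $d_{D,U_k,-}=c_k$ via the column sums of the contingency table is the other delicate point, as it uses separatedness essentially; everything else is routine reindexing.
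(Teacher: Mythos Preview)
Your proof is correct and follows essentially the same route as the paper: use separatedness to evaluate the quantities $d_{D,U_k,X}$ and $c_{D,U_k,M+j}$ entering~\eqref{eq:RestrictionTautologicalBundles}, observe the answer is independent of $D$, and read off the character. Your value $c_{D,U_k,M+j}=0$ is in fact the right one (the paper's stated value $-c_k$ appears to be a slip, likely conflating $c_{D,U_k,M+j}$ with $c_{D,U_k,M+j}-d_{D,U_k,U_k^-}$), and it is exactly what makes the exponent in~\eqref{eq:RestrictionTautologicalBundles} come out to $-i$ as required.
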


\begin{proof} Following the definition of the $d_{D,U,X}$ and $c_{D,U,X}$, the separatedness condition yields
$d_{D,U_k,U_j}=0$ if $k<j$ and $d_{D,U_k,U_j}=c_k$ if $k\geq j$.
We also deduce $c_{D,U_k,M+j}=-c_k$ for $k\geq j$. Thus, Corollary~\ref{cor:PropertiesTautologicalBundlesSeparated} follows from \eqref{eq:RestrictionTautologicalBundles}.
\end{proof}

\subsection{Hanany\textbf{--}Witten transition}\label{subsection:HW}

We consider now important classes of isomorphisms, so-called \textit{Hanany--Witten isomorphisms}, corresponding to certain moves on brane diagrams: given brane diagrams $\mathcal D$ and $\tilde{\mathcal D}$. Then, we say that $\tilde{\mathcal D}$ is obtained from $\mathcal D$ via \textit{Hanany--Witten transition} if $\tilde{\mathcal D}$ differs from $\mathcal D$ by performing a local move of the form
\[
\begin{tikzpicture}
[scale=.5]
\draw[thick] (0,0)--(2,0);
\draw[thick] (3,0)--(5,0);
\draw[thick] (6,0)--(8,0);

\draw [thick,red] (5,-1) --(6,1);
\draw [thick,blue] (3,-1) --(2,1);

\node at (5,-1.5) {$V_i$};
\node at (3,-1.5) {$U_j$};

\draw[-to] decorate[decoration=zigzag] {(10,0) -- (12,0)};

\draw[thick] (14,0)--(16,0);
\draw[thick] (17,0)--(19,0);
\draw[thick] (20,0)--(22,0);

\draw [thick,red] (16,-1) --(17,1);
\draw [thick,blue] (20,-1) --(19,1);

\node at (16,-1.5) {$V_i$};
\node at (20,-1.5) {$U_j$};

\node at (1,0.5){$d_1$};
\node at (4,0.5){$d_2$};
\node at (7,0.5){$d_3$};
\node at (15,0.5){$d_1$};
\node at (18,0.5){$\tilde{d}_2$};
\node at (21,0.5){$d_3$};
\end{tikzpicture}
\]
where $d_1+d_3+1=d_2+\tilde d_2$. Clearly, Hanany--Witten transitions reduce the separatedness degree (as defined in \eqref{eq:DefinitionSeparatednessDegree}) by $1$. It is straightforward to show that from any (admissible) brane diagram we can obtain a separated brane diagram via a finite number of Hanany--Witten transitions, see e.g.~\cite[Proposition~4.12]{expository2023orthogonality}.

The following proposition, see~\cite[Proposition~7.1]{nakajima2017cherkis} and \cite[Theorem~3.9]{rimanyi2020bow}, characterizes the isomorphism corresponding to a Hanany--Witten transition as well as the interplay of tautological bundles under this isomorphism:
\begin{prop}\label{prop:HWTransition}
Assume $\tilde{\mathcal D}$ is obtained from ${\mathcal D}$ via a Hanany--Witten transition where the blue line $U_j$ is exchanged with the red line $V_i$. Let $X_k$ be the black line in $\mathcal D$  with $X_k^-=U_j$ and $X_k^+=V_i$.
Then, there exists a $\rho_j$-equivariant isomorphism of symplectic varieties \begin{equation}\label{eq:HWIsomorphism}
\Phi:\mathcal C(\mathcal D)\xrightarrow{\phantom{x}\sim\phantom{x}} \mathcal C(\tilde{\mathcal D}),
\end{equation}
where $\rho_j$ is the algebraic group automorphism 
\[
\rho_j\colon \mathbb T\rightarrow \mathbb T,\quad (t_1,\ldots,t_N,h)\mapsto (t_1,\ldots, t_{j-1},ht_j,t_{j+1},\ldots,t_N,h).
\]
Furthermore, the following holds:
\begin{enumerate}[label=(\roman*)]
\item We have $\mathbb T$-equivariant isomorphisms of vector bundles $\xi_l\cong \Phi^\ast \tilde \xi_l$ for $l\ne k$.
\item There is a short exact sequence of $\mathbb T$-equivariant vector bundles
\begin{equation}\label{eq:HWExactSequence}
0\rightarrow \xi_{k} \rightarrow \xi_{k-1}\oplus \xi_{k+1} \oplus h\mathbb C_{U_i}\rightarrow \Phi^\ast \tilde \xi_k \rightarrow 0.
\end{equation}
\end{enumerate}
Here, the $\tilde \xi_l$ denote the tautological bundles on $\mathcal C(\tilde {\mathcal D})$ and 
$\Phi^\ast \tilde \xi_l$ is the $\mathbb T$-equivariant pull-back of $\tilde \xi_l$ via $\Phi$.
\end{prop}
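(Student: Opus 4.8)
The proof rests on the fact that a Hanany--Witten transition is a purely local move: it alters only the linear-algebra data attached to the blue line $U_j$, the red line $V_i$ and the black line $X_k$ between them, while every vector space $\mathbb C^{d_{X_l}}$ and every factor $\operatorname{GL}_{d_{X_l}}$ of $\mathcal G$ with $l\neq k$, and every moment-map equation not involving $X_k$, stays untouched. Accordingly, the plan is to regard the factor $\operatorname{GL}_{d_{X_k}}=\operatorname{GL}_{d_2}$ of $\mathcal G$ (resp.\ $\operatorname{GL}_{\tilde d_2}$ of the group for $\tilde{\mathcal D}$) as the only one against which this step reduces, to treat the remaining groups and equations as ``framing'' carried along verbatim, and to build a $\operatorname{GL}_{d_2}$--$\operatorname{GL}_{\tilde d_2}$-equivariant isomorphism of the two local prequotients that is the identity on all framing data; gluing this into the rest of $\mathcal D$ then produces the isomorphism~\eqref{eq:HWIsomorphism}.

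The local isomorphism I would build by a reflection-functor construction. Writing a point of $m^{-1}(0)^{\mathrm s}$ locally as $(A,B^+,B^-,a,b)$ for $U_j$ and $(C,D)$ for $V_i$, the black-line dimensions force $A\colon\mathbb C^{d_2}\to\mathbb C^{d_1}$, $b\colon\mathbb C^{d_2}\to\mathbb C$ and $D\colon\mathbb C^{d_2}\to\mathbb C^{d_3}$, while the equation $m_{X_k}=0$ reads $B^+=-CD$. The key point is that
\[
\alpha:=(A,\,b,\,D)\colon \mathbb C^{d_2}\longrightarrow \mathbb C^{d_1}\oplus\mathbb C\oplus\mathbb C^{d_3}
\]
is injective on $m^{-1}(0)^{\mathrm s}$: its kernel is killed by $A$ and $b$, and it is $B^+$-stable because $B^+=-CD$ vanishes on $\ker D$, hence it is zero by condition~\ref{item:S1}. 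The Hanany--Witten balancing $d_1+d_3+1=d_2+\tilde d_2$ then makes $\mathbb C^{\tilde d_2}:=\operatorname{coker}\alpha$ a bundle of the correct rank over $m^{-1}(0)^{\mathrm s}$; the components of the canonical projection $\mathbb C^{d_1}\oplus\mathbb C\oplus\mathbb C^{d_3}\twoheadrightarrow\operatorname{coker}\alpha$ provide the new maps $\tilde A,\tilde a,\tilde D$, and $\tilde C,\tilde b,\tilde B^{\pm}$ are then determined by the remaining relations, the vanishings needed for well-definedness being exactly the old identities $\mu=0$ and $m_{X_k}=0$. One then has to verify that the resulting tuple satisfies $\tilde\mu=0$, $\tilde m=0$ and the conditions~\ref{item:S1}, \ref{item:S2} for $\tilde{\mathcal D}$ (so that it lies in the stable locus of $\tilde{\mathcal D}$); that the recipe is $\mathcal G$-equivariant, so that it descends to $\Phi\colon\mathcal C(\mathcal D)\to\mathcal C(\tilde{\mathcal D})$; that the colour-swapped analogue of the construction applied to $\tilde{\mathcal D}$ — which realizes $\mathbb C^{d_2}$ as the kernel of a surjection $\mathbb C^{d_1}\oplus\mathbb C\oplus\mathbb C^{d_3}\twoheadrightarrow\mathbb C^{\tilde d_2}$ built from $\tilde D,\tilde a,\tilde A$ and surjective by condition~\ref{item:S2} — yields a two-sided inverse; and that $\Phi$ intertwines the reduced symplectic forms, which is visible from the reduction-in-stages picture. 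The cocharacter twist $\rho_j$ is forced on weights: among the three components $A=A_{U_j}$, $D=D_{V_i}$, $b=b_{U_j}$ of $\alpha$, only $b_{U_j}$ has a nontrivial $\mathbb T$-weight, and the obvious and scaling actions multiply it by $t_j$ and $h$ respectively; hence the framing line $\mathbb C$ carries the character $h\,\mathbb C_{U_j}$, and passing this through the quotient produces precisely the twist $t_j\mapsto ht_j$.

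Granting $\Phi$, the statement about tautological bundles is read off the same construction. For $l\neq k$ the local isomorphism of prequotients is the identity on the $\mathbb C^{d_{X_l}}$-factor and intertwines the $\mathbb T$-actions up to $\rho_j$, so it lifts to a $\mathcal G$--$\tilde{\mathcal G}$-equivariant isomorphism of total spaces over $\Phi$ and yields $\xi_l\cong\Phi^\ast\tilde\xi_l$. For $l=k$, regard $\alpha$ as a morphism of $\mathcal G$-equivariant bundles over $m^{-1}(0)^{\mathrm s}$; its components $A_{U_j}$, $D_{V_i}$, $b_{U_j}$ become $\mathbb T$-equivariant once the middle $\mathbb C$-summand is twisted by $h\mathbb C_{U_j}$, so after descent $\alpha$ is an injection of $\mathbb T$-equivariant bundles $\xi_k\hookrightarrow\xi_{k-1}\oplus\xi_{k+1}\oplus h\mathbb C_{U_j}$ whose cokernel is $\Phi^\ast\tilde\xi_k$, because $\mathbb C^{\tilde d_2}=\operatorname{coker}\alpha$ by construction. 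This is the sequence~\eqref{eq:HWExactSequence}; one only has to keep in mind that the $\mathbb T$-structure on $\Phi^\ast\tilde\xi_k$ is the one obtained through the twist by $\rho_j$, so that the sequence is genuinely $\mathbb T$-equivariant.

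The main obstacle is the verification that the stability conditions~\ref{item:S1} and~\ref{item:S2} are preserved by the reflection — a diagram chase that pulls a hypothetical destabilizing subspace back along $\alpha$ and feeds it into the old conditions — carried out in parallel with the weight bookkeeping that produces exactly the cocharacter $\rho_j$ and the summand $h\mathbb C_{U_j}$. If only the statement is needed downstream, the economical route is to invoke \cite[Proposition~7.1]{nakajima2017cherkis} for the isomorphism~\eqref{eq:HWIsomorphism} and \cite[Theorem~3.9]{rimanyi2020bow} for parts (i) and (ii), after reconciling the (slightly different) conventions used there for the labelling of the red lines with ours.
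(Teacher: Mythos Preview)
The paper does not prove this proposition at all: it simply records the statement and cites \cite[Proposition~7.1]{nakajima2017cherkis} and \cite[Theorem~3.9]{rimanyi2020bow} for the proof. Your final paragraph already identifies this exactly, so on the level of what the paper actually does, you are spot on.

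What you add beyond the paper is a correct outline of the reflection-functor construction underlying those references. The key points --- injectivity of $\alpha=(A_{U_j},b_{U_j},D_{V_i})$ via condition~\ref{item:S1} together with $B^+=-CD$ from $m_{X_k}=0$, the rank count $\dim\operatorname{coker}\alpha=\tilde d_2$ from the Hanany--Witten relation, the inverse via surjectivity from~\ref{item:S2}, and the weight bookkeeping giving the twist $\rho_j$ and the summand $h\mathbb C_{U_j}$ --- are all right and match the construction in Nakajima--Takayama. (Incidentally, the paper's statement has a typo: the middle summand in~\eqref{eq:HWExactSequence} should be $h\mathbb C_{U_j}$, not $h\mathbb C_{U_i}$, as your analysis shows.) So your proposal is both correct and more informative than the paper's own treatment, which is purely a citation.
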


The fixed point matching under Hanany--Witten transition is described in~\cite[Section~4.7]{rimanyi2020bow} as follows:
\begin{prop}\label{prop:HWFixedPointMatching}
With the assumptions of Proposition~\ref{prop:HWTransition}, let
$
\phi\colon \mathcal C(\mathcal D)^{\mathbb T}\xrightarrow{\sim} \mathcal C(\tilde{\mathcal D})^{\mathbb T}
$
denote the bijection induced by the Hanany--Witten isomorphism $\Phi$ from \eqref{eq:HWIsomorphism}. Then, we have
$M(D)=M(\phi(D))$ for all $D\in\mathrm{Tie}(\mathcal D)$.
\end{prop}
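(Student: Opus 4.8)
The plan is to prove that the binary contingency table is an invariant of the Hanany--Witten transition by checking that the combinatorial recipe producing $M(D)$ from a tie diagram $D$ is compatible with the local move exchanging $U_j$ and $V_i$. First I would recall that, by the bijection~\eqref{eq:TieDiagramsVSBCTs}, it suffices to track how the pair of lines $(U_j,V_i)$ enters the definition of $M(D)_{i,j}$ before and after the move. In $\mathcal D$ we have $U_j\triangleleft V_i$ and the relevant black line $X_k$ sits between them with $X_k^-=U_j$, $X_k^+=V_i$; in $\tilde{\mathcal D}$ the order is reversed, so $V_i\triangleleft U_j$. The entry $M(D)_{i,j}$ is by definition $1$ iff $(U_j,V_i)\notin D$ (case $U_j\triangleleft V_i$), and after the transition $M(\phi(D))_{i,j}$ is $1$ iff $(V_i,U_j)\in\phi(D)$ (case $V_i\triangleleft U_j$). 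So the whole claim reduces to showing that the Hanany--Witten isomorphism on fixed points is exactly the move on tie diagrams which replaces a tie $(U_j,V_i)$ by its absence, and an absence of $(U_j,V_i)$ by a tie $(V_i,U_j)$, while leaving every other tie untouched; this is precisely the description of $\phi$ in~\cite[Section~4.7]{rimanyi2020bow}.

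With that description in hand, the verification splits into two parts. The first is that, for all pairs $(Y_1,Y_2)$ of colored lines other than the exchanged pair $(U_j,V_i)$, the contribution to $M(D)_{i',j'}$ is literally unchanged: the relative left--right order of any such pair is unaffected by the local move (only $U_j$ and $V_i$ swap past each other), and the membership of $(Y_1,Y_2)$ in $D$ versus $\phi(D)$ is unchanged by the description of $\phi$, so all these entries agree tautologically. The second part is the single entry $M(D)_{i,j}$ versus $M(\phi(D))_{i,j}$: here I would simply run the four-case definition of $M(-)$ on both sides. If $(U_j,V_i)\in D$ then $M(D)_{i,j}=0$; under $\phi$ the tie disappears, so in $\tilde{\mathcal D}$ we are in the case $V_i\triangleleft U_j$ with $(V_i,U_j)\notin\phi(D)$, giving $M(\phi(D))_{i,j}=0$. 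If $(U_j,V_i)\notin D$ then $M(D)_{i,j}=1$; under $\phi$ a tie $(V_i,U_j)$ is created, so in $\tilde{\mathcal D}$ we are in the case $V_i\triangleleft U_j$ with $(V_i,U_j)\in\phi(D)$, giving $M(\phi(D))_{i,j}=1$. Hence the two tables coincide entry by entry.

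The one genuine point that needs care — and the step I expect to be the main obstacle — is establishing, or at least citing precisely, that the fixed-point bijection $\phi$ induced by the \emph{geometric} isomorphism $\Phi$ of Proposition~\ref{prop:HWTransition} really is the above elementary move on tie data, rather than merely some bijection. One clean way to see this without re-deriving the construction of $x_D$ is to use the tautological-bundle characterization: by Proposition~\ref{prop:HWTransition} we have $\xi_l\cong\Phi^\ast\tilde\xi_l$ for $l\ne k$ and the short exact sequence~\eqref{eq:HWExactSequence} for $l=k$, and by Corollary~\ref{cor:TautologicalRestrictionDeterminesTieDiagrams} a tie diagram is determined by the restrictions $\iota_D^\ast(\xi_X)$. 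Restricting~\eqref{eq:HWExactSequence} to a fixed point gives $\iota_{\phi(D)}^\ast(\tilde\xi_k)$ in terms of $\iota_D^\ast(\xi_{k-1})$, $\iota_D^\ast(\xi_{k+1})$, $\iota_D^\ast(\xi_k)$ and the extra character $h\mathbb C_{U_j}$; feeding the explicit formula~\eqref{eq:RestrictionTautologicalBundles} into this identity and into the $l\ne k$ identities then forces $d_{\phi(D),U,X}$ to equal $d_{D,U,X}$ for every black line in $\tilde{\mathcal D}$ except possibly across the moved segment, where the length changes from $d_{X_k}$ to $d_{X_k}+d_1+d_3+1-2d_{X_k}$ in exactly the way dictated by removing/adding the single tie. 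Comparing with the combinatorics of the move on $D$ then identifies $\phi(D)$, and the computation above finishes the proof; alternatively, one may just quote~\cite[Section~4.7]{rimanyi2020bow} for the explicit form of $\phi$ and present only the four-case check.
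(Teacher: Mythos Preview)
The paper does not give a proof of this proposition at all: it simply records the result as coming from \cite[Section~4.7]{rimanyi2020bow}. Your proposal is correct and is essentially an unpacking of that citation --- once one accepts the combinatorial description of $\phi$ on tie diagrams from \cite{rimanyi2020bow} (remove the tie $(U_j,V_i)$ if present, insert the tie $(V_i,U_j)$ if absent, leave everything else alone), your four-case check against the definition of $M(-)$ is exactly the right verification and is entirely routine.

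Your alternative route, pinning down $\phi(D)$ from the restrictions of tautological bundles via Proposition~\ref{prop:HWTransition}, \eqref{eq:HWExactSequence}, \eqref{eq:RestrictionTautologicalBundles} and Corollary~\ref{cor:TautologicalRestrictionDeterminesTieDiagrams}, is a genuinely different and more self-contained argument than either your main sketch or the paper's bare citation: it avoids importing the explicit fixed-point construction from \cite{rimanyi2020bow} and instead derives the combinatorics of $\phi$ internally from the behaviour of tautological bundles under Hanany--Witten. This is a nice idea, though carrying it out in full requires tracking the numbers $d_{D,U,X}$ through the short exact sequence carefully (in particular the $\rho_j$-twist on the $\mathbb T$-action and the extra summand $h\mathbb C_{U_j}$), which is more bookkeeping than the direct combinatorial check. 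Either approach is fine; the first is what the paper implicitly relies on.
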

The statement of tautological bundles from Proposition~\ref{prop:HWTransition} directly gives the following relation of equivariant first Chern classes:

\begin{cor}\label{cor:MatchingOfTautologicalsHW} 
Under the assumptions of Proposition~\ref{prop:HWTransition}
let $\Phi^\ast\colon H_{\mathbb T}^\ast(\mathcal C(\tilde{\mathcal D})) \xrightarrow\sim H_{\mathbb T}^\ast(\mathcal C({\mathcal D}))$ be the induced isomorphism of rings. Then, we have
\begin{enumerate}[label=(\roman*)]
\item\label{item:MatchingOfTautologicalsHW1}
$\Phi^\ast(c_1(\tilde\xi_l))=c_1(\xi_l)$ for $l\ne k$,
\item\label{item:MatchingOfTautologicalsHW2}
$\Phi^\ast(c_1(\tilde \xi_k))=c_1(\xi_{k+1})+c_1(\xi_{k-1})+h+t_j-c_1(\xi_k)$.
\end{enumerate}
\end{cor}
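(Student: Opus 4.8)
The plan is to extract the statement from Proposition~\ref{prop:HWTransition} by taking equivariant first Chern classes, using nothing beyond the basic functoriality of $c_1$ and the Whitney sum formula (multiplicativity of the total equivariant Chern class on short exact sequences) for $\mathbb T$-equivariant vector bundles.

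Part~\ref{item:MatchingOfTautologicalsHW1} is immediate: the equivariant first Chern class is invariant under isomorphism of $\mathbb T$-equivariant vector bundles and is compatible with the pullback induced by $\Phi$, so the $\mathbb T$-equivariant isomorphism $\xi_l\cong\Phi^\ast\tilde\xi_l$ of Proposition~\ref{prop:HWTransition}(i) gives $\Phi^\ast(c_1(\tilde\xi_l))=c_1(\Phi^\ast\tilde\xi_l)=c_1(\xi_l)$ for all $l\ne k$.

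For part~\ref{item:MatchingOfTautologicalsHW2}, I would apply multiplicativity of the total $\mathbb T$-equivariant Chern class $c$ to the short exact sequence~\eqref{eq:HWExactSequence} of $\mathbb T$-equivariant bundles, writing the one-dimensional summand of its middle term as $h\mathbb{C}_{U_j}$. Since $c$ of a direct sum is the product of the factors, this gives $c(\xi_{k-1})\,c(\xi_{k+1})\,c(h\mathbb{C}_{U_j})=c(\xi_k)\,c(\Phi^\ast\tilde\xi_k)$, and comparing degree-two parts yields
\[
c_1(\xi_{k-1})+c_1(\xi_{k+1})+c_1^{\mathbb T}(h\mathbb{C}_{U_j})=c_1(\xi_k)+c_1(\Phi^\ast\tilde\xi_k).
\]
It then remains to compute $c_1^{\mathbb T}(h\mathbb{C}_{U_j})$. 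By the description of the $\mathbb T$-module $h^i\mathbb{C}_U$ given just after~\eqref{eq:RestrictionTautologicalBundles}, the representation $h\mathbb{C}_{U_j}$ is one-dimensional with $\mathbb C_h^\ast$ acting with weight $1$ and $\mathbb A$ acting through projection to the $U_j$-factor, hence $c_1^{\mathbb T}(h\mathbb{C}_{U_j})=h+t_j$. Substituting this in, solving for $c_1(\Phi^\ast\tilde\xi_k)$, and using $c_1(\Phi^\ast\tilde\xi_k)=\Phi^\ast(c_1(\tilde\xi_k))$ gives exactly $\Phi^\ast(c_1(\tilde\xi_k))=c_1(\xi_{k+1})+c_1(\xi_{k-1})+h+t_j-c_1(\xi_k)$.

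The only point meriting any attention is that $\Phi$ is only $\rho_j$-equivariant, so a priori one might worry about how the twist by $\rho_j$ affects the induced map $\Phi^\ast$ on equivariant cohomology; but this is already absorbed into the statement of Proposition~\ref{prop:HWTransition}, where $\Phi^\ast\tilde\xi_l$ and the sequence~\eqref{eq:HWExactSequence} are genuine $\mathbb T$-equivariant objects over $\mathcal C(\mathcal D)$. With that understood, the argument is a formal computation with Chern classes and I do not expect any substantial obstacle; the proof is short.
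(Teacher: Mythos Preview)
Your proposal is correct and matches the paper's approach: the paper simply states that the corollary follows directly from Proposition~\ref{prop:HWTransition}, and your argument is precisely the expected unpacking --- taking $c_1$ of the isomorphisms in part~(i) and applying additivity of $c_1$ on the short exact sequence~\eqref{eq:HWExactSequence} together with $c_1^{\mathbb T}(h\mathbb C_{U_j})=h+t_j$ for part~(ii). Your remark on the $\rho_j$-equivariance is a valid clarification but, as you note, already absorbed into the formulation of Proposition~\ref{prop:HWTransition}.
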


For a $\mathbb T$-equivariant vector bundle $\mathcal V$ over $\mathcal C(\mathcal D)$, we denote 
by $c_i(\mathcal V)\in H^{2i}_{\mathbb T}(\mathcal C(\mathcal D))$ its $i$-th $\mathbb T$-equivariant Chern class. By Corollary~\ref{cor:MatchingOfTautologicalsHW}, the localization coefficients of first Chern classes of tautological bundles satisfy the following matching properties:

\begin{cor}\label{cor:MatchingOfEqMultTautHW} With the same notation as in Corollary~\ref{cor:MatchingOfTautologicalsHW}  we have
\[
\varphi_j(\iota_{\phi(D)}^\ast (c_1( \tilde \xi_l)))=\iota_D^\ast (c_1(\xi_l)),\quad\textit{for $l\ne k$}
\]
and
\[
\varphi_j(\iota_{\phi(D)}^\ast (c_1( \tilde \xi_k)))=\iota_D^\ast(c_1(\xi_{k+1})+c_1(\xi_{k-1})-c_1(\xi_k))+h+t_j,
\]
for all $D\in\mathrm{Tie}(\mathcal D)$,
where $\varphi_j\colon \mathbb Q[t_1,\ldots,t_N,h]\xrightarrow\sim \mathbb Q[t_1,\ldots,t_N,h]$ is the $\mathbb Q[h]$-algebra automorphism given by $t_j\mapsto t_j+h$ and $t_i\mapsto t_i$ for $i\ne j$. 
\end{cor}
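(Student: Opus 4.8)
The plan is to derive Corollary~\ref{cor:MatchingOfEqMultTautHW} directly from Corollary~\ref{cor:MatchingOfTautologicalsHW} and the $\rho_j$-equivariance of the Hanany--Witten isomorphism $\Phi$ recorded in Proposition~\ref{prop:HWTransition}. The key point is to track how the restriction morphisms $\iota_p^\ast$ intertwine with $\Phi^\ast$ once we account for the fact that $\Phi$ is not $\mathbb T$-equivariant in the naive sense but only $\rho_j$-equivariant.

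First I would set up the functoriality square. Since $\Phi$ is $\rho_j$-equivariant and sends the $\mathbb T$-fixed point $x_D$ to $x_{\phi(D)}$ (by Proposition~\ref{prop:HWFixedPointMatching}, together with the bijection from \eqref{eq:RSTFixedPoints} and the matching of binary contingency tables), pulling back along $\Phi$ induces on equivariant cohomology a commuting diagram
\[
\begin{tikzcd}
H_{\mathbb T}^\ast(\mathcal C(\tilde{\mathcal D})) \arrow[r,"\Phi^\ast"] \arrow[d,"\iota_{\phi(D)}^\ast"'] & H_{\mathbb T}^\ast(\mathcal C(\mathcal D)) \arrow[d,"\iota_D^\ast"] \\
H_{\mathbb T}^\ast(\{x_{\phi(D)}\}) \arrow[r,"\rho_j^\ast"] & H_{\mathbb T}^\ast(\{x_D\}),
\end{tikzcd}
\]
where on equivariant cohomology of a point $\rho_j^\ast$ is precisely the algebra automorphism of $\mathbb Q[t_1,\dots,t_N,h]$ induced by the dual of $\rho_j$. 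Reading off $\rho_j$ from Proposition~\ref{prop:HWTransition}, the cocharacter $t_j$ is sent to $ht_j$, hence on the character lattice the parameter $t_j$ is sent to $t_j+h$ and all other parameters are fixed; this is exactly the map $\varphi_j$ in the statement. Thus $\iota_D^\ast\circ\Phi^\ast = \varphi_j\circ \iota_{\phi(D)}^\ast$.

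Next I would apply this identity to the classes $c_1(\tilde\xi_l)$. For $l\ne k$, Corollary~\ref{cor:MatchingOfTautologicalsHW}\ref{item:MatchingOfTautologicalsHW1} gives $\Phi^\ast(c_1(\tilde\xi_l))=c_1(\xi_l)$, so $\iota_D^\ast(c_1(\xi_l)) = \varphi_j(\iota_{\phi(D)}^\ast(c_1(\tilde\xi_l)))$, which is the first claimed equation. For $l=k$, Corollary~\ref{cor:MatchingOfTautologicalsHW}\ref{item:MatchingOfTautologicalsHW2} gives $\Phi^\ast(c_1(\tilde\xi_k)) = c_1(\xi_{k+1})+c_1(\xi_{k-1})+h+t_j-c_1(\xi_k)$; applying $\iota_D^\ast$ to both sides and using that $\iota_D^\ast$ is an $H_{\mathbb T}^\ast(\mathrm{pt})$-algebra map (so it fixes the scalars $h$ and $t_j$) yields
\[
\varphi_j(\iota_{\phi(D)}^\ast(c_1(\tilde\xi_k))) = \iota_D^\ast\big(c_1(\xi_{k+1})+c_1(\xi_{k-1})-c_1(\xi_k)\big) + h + t_j,
\]
which is the second claimed equation.

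The only genuinely delicate point — and the step I would be most careful about — is getting the direction and normalization of $\varphi_j$ right: one must check that $\rho_j^\ast$ on $H_{\mathbb T}^\ast(\mathrm{pt})$ really is $t_j\mapsto t_j+h$ and not $t_j\mapsto t_j-h$, since this depends on the convention relating a cocharacter $\lambda:\mathbb C^\ast\to\mathbb T$ to the induced action on $H_{\mathbb T}^\ast(\mathrm{pt})\cong \mathrm{Sym}(\mathrm{Hom}(\mathbb T,\mathbb C^\ast))$. With the convention fixed in Subsection~\ref{subsection:TorusAction} (where $t_U$ is the character projecting onto the $U$-factor) and the explicit formula for $\rho_j$, composing $\rho_j$ with the projection to the $j$-th factor sends $(t_1,\dots,t_N,h)\mapsto ht_j$, i.e.\ the character $t_j$ pulls back to $t_j+h$ additively; this confirms $\varphi_j$ as stated. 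Everything else is a formal diagram chase plus the already-established Corollary~\ref{cor:MatchingOfTautologicalsHW}, so no further computation is needed.
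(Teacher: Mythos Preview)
Your proof is correct and is precisely the argument the paper has in mind; the paper does not spell out a proof of this corollary at all, simply stating it as a direct consequence of Corollary~\ref{cor:MatchingOfTautologicalsHW}. Your commutative square $\iota_D^\ast\circ\Phi^\ast=\varphi_j\circ\iota_{\phi(D)}^\ast$, coming from the $\rho_j$-equivariance of $\Phi$, is exactly the missing link, and your check that $\rho_j^\ast(t_j)=t_j+h$ (hence $\rho_j^\ast=\varphi_j$) is the only nontrivial verification needed.
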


\section{Cotangent bundles of partial flag varieties as bow varieties}\label{subsec:TFlagAsBow}

For natural numbers
$
0<d_1<d_2<\ldots <d_m<n
$
let $F(d_1,\ldots,d_m;n)$ denote the partial flag variety parameterizing inclusions of $\mathbb C$-linear subspaces 
\[
\{0\}\subset E_1 \subset E_2 \subset \dots \subset E_m\subset \mathbb C^n
\] 
with $\operatorname{dim}(E_i)=d_i$ for $i=1,\ldots,m$.

It is well-known, see \cite[Theorem~7.3]{nakajima1994instantons}, that the cotangent bundle $T^\ast F(d_1,\ldots,d_m;n)$ is isomorphic to the Nakajima quiver variety corresponding to the framed quiver
\[
\begin{tikzcd}
\overset{1}\bullet  & \overset{2}\bullet\arrow[l] & \arrow[l]\dots & \overset{m}\bullet\arrow[l]\\
&&& \square \arrow[u]
\end{tikzcd}
\]
with dimension vector $(n-d_m,\ldots,n-d_1)$, framing vector $(0,\ldots,0,n)$ and character
\[
\theta\colon \prod_{i=1}^m\mathrm{GL}_{n-d_i}(\mathbb C)\xrightarrow{\phantom{xxx}} \mathbb C^\ast,\quad (g_1,\ldots,g_m)\mapsto \prod_{i=1}^m\operatorname{det}(g_i).
\]
Thus, by \cite[Theorem~2.15]{nakajima2017cherkis}, we can realize $T^\ast F(d_1,\ldots,d_m;n)$ as a bow variety. In this section, we will explicitly describe this realization. We also characterize the induced correspondence between the torus fixed point combinatorics of these varieties.

\subsection{Realization via parabolic subgroups} Let $d_0=0$, $d_{m+1}=n$, $E_0=0$, $E_{m+1}=\mathbb C^n$ and $\delta_i=d_i-d_{i-1}$ for $i=1,\ldots, m+1$. Let $G=\mathrm{GL}_n(\mathbb C)$ and $P\subset G$ be the parabolic subgroup of block matrices of the shape
\[
\begin{pmatrix}
P_{1,1} & P_{1,2} & \dots & P_{1,m+1} \\
& P_{2,2} & \dots & P_{2,m+1} \\
&&\ddots&\vdots\\
&&&P_{m+1,m+1}
\end{pmatrix},
\]
where each $P_{i,j}$ is a $\delta_i\times \delta_j$ matrix. It is well-known that the geometric quotient $G/P$ exists and we have an isomorphism of varieties $G/P\xrightarrow{\sim}F(d_1,\ldots,d_m;n)$ given by
\[
[g]\mapsto (\{0\}\subset \langle g_1,\ldots,g_{d_1}\rangle \subset\dots\subset \langle g_1,\ldots,g_{d_m}\rangle \subset \mathbb C^n).
\]
Here, $g_i$ denotes the $i$-th column vector of $g$ for $i=1,\ldots,n$. 

Let $\mathfrak g=\mathfrak{gl}_n(\mathbb C)$ be the Lie algebra of $G$ and $\mathfrak p\subset \mathfrak g$ be the Lie-subalgebra corresponding to $P$. We denote by $\mathfrak p^\perp$ the annihilator of $\mathfrak p$ with respect to the trace pairing. That is, $\mathfrak p^\perp$ is the Lie subalgebra of $\mathfrak g$ consisting of block matrices of the form
\[
\begin{pmatrix} 
0 & P_{1,2} & P_{1,3} & \dots & P_{1,m+1} \\
& 0 & P_{2,3} & \dots & P_{2,m+1} \\
& & \ddots & \ddots & \vdots \\
& & & 0 & P_{m,m+1}\\
&&&&0
\end{pmatrix},
\quad P_{i,j}\in \mathrm{Mat}_{\delta_i,\delta_j}(\mathbb C).
\]
Here, again $P_{i,j}$ is a $\delta_i\times \delta_j$ matrix. The parabolic subgroup $P$ acts algebraically on $\mathfrak p^\perp$ via conjugation. It is well-known that the cotangent bundle $T^\ast F(d_1,\ldots,d_m;n)$ is isomorphic as algebraic variety to the geometric quotient $(G\times\mathfrak p^\perp)/P$, see e.g.~\cite[Lemma~1.4.9]{chriss1997representation}. Hence, the points of $T^\ast F(d_1,\ldots,d_m;n)$ can be identified with pairs $(\mathcal F, f)$ where
\[
\mathcal F= (\{0\}\subset E_1  \subset\dots\subset E_m \subset \mathbb C^n)
\]
is a point in $F(d_1,\ldots,d_m;n)$ and $f\in\operatorname{End}(\mathbb C^n)$ such that
$f(E_i)\subset E_{i-1}$ for $i=1,\ldots,m+1$.

\subsection{Bow variety realization}
Let $\tilde{\mathcal D}(d_1,\ldots,d_m;n)$ be the brane diagram:
\[
\begin{tikzpicture}
[scale=.5]
\draw[thick] (0,0)--(2,0);
\draw[thick] (3,0)--(5,0);
\draw[thick] (6,0)--(8,0);
\draw[dotted] (9,0)--(12,0);
\draw[thick] (13,0)--(15,0);
\draw[thick] (16,0)--(18,0);
\draw[thick] (19,0)--(21,0);
\draw[dotted] (22,0)--(25,0);
\draw[thick] (26,0)--(28,0);
\draw[thick] (29,0)--(31,0);

\draw [thick,red] (2,-1) --(3,1); 
\draw [thick,red] (5,-1) --(6,1); 
\draw [thick,red] (8,-1) --(9,1); 
\draw [thick,red] (12,-1) --(13,1);
\draw [thick,red] (15,-1) --(16,1);
\draw [thick,red] (28,-1) --(29,1);

\node at (2,-1.5){$V_{m+1}$};
\node at (5,-1.5){$V_m$};
\node at (8,-1.5){$V_{m-1}$};
\node at (12,-1.5){$V_{3}$};
\node at (15,-1.5){$V_2$};
\node at (28,-1.5){$V_{1}$};

\node at (19,-1.5){$U_{1}$};
\node at (22,-1.5){$U_{2}$};
\node at (26,-1.5){$U_{n}$};

\draw [thick,blue] (19,-1) --(18,1); 
\draw [thick,blue] (22,-1) --(21,1);  
\draw [thick,blue] (26,-1) --(25,1); 

\node at (1,0.5){$0$};
\node at (4,0.5){$d_m'$};
\node at (7,0.5){$d_{m-1}'$};
\node at (14,0.5){$d_{2}'$};
\node at (17,0.5){$d_1'$};
\node at (20,0.5){$d_1'$};
\node at (27,0.5){$d_1'$};
\node at (30,0.5){$0$};
\end{tikzpicture}
\]
where $d_i'=n-d_{i}$ for $i=1,\ldots,m$.
We denote elements of $\mathcal C(\tilde{\mathcal D}(d_1,\ldots,d_m;n))$ according to the diagram
\[
\begin{tikzpicture}[scale=.5]
\node at (1,0){$0$};
\node at (4,0){$\mathbb C^{d_m'}$};
\node at (7,0){$\mathbb C^{d_{m-1}'}$};
\draw[-to] (1.3,-0.5) to[out=-45, in=-135] (3.4,-0.5); 
\node at ( 2.5,-1.4) {$\begin{smallmatrix}{D_{m+1}}\end{smallmatrix} $};
\draw[-to] (4.6,-0.5) to[out=-45, in=-135] (6.3,-0.5); 
\node at ( 5.5,-1.4) {$\begin{smallmatrix}{D_{m}}\end{smallmatrix} $};
\draw[-to] (7.6,-0.5) to[out=-45, in=-135] (9.4,-0.5); 
\node at ( 8.5,-1.4) {$\begin{smallmatrix}{D_{m-1}}\end{smallmatrix} $};
\draw[to-] (1.3,0.5) to[out=45, in=135] (3.4,0.5); 
\node at ( 2.5,1.4) {$\begin{smallmatrix}{C_{m+1}}\end{smallmatrix} $};
\draw[to-] (4.6,0.5) to[out=45, in=135] (6.3,0.5); 
\node at ( 5.5,1.4) {$\begin{smallmatrix}{C_{m}}\end{smallmatrix} $};
\draw[to-] (7.6,0.5) to[out=45, in=135] (9.4,0.5); 
\node at ( 8.5,1.4) {$\begin{smallmatrix}{C_{m-1}}\end{smallmatrix} $};
\draw[dotted] (10,0) -- (11,0);
\draw[-to] (11.6,-0.5) to[out=-45, in=-135] (13.4,-0.5); 
\node at ( 12.5,-1.4) {$\begin{smallmatrix}{D_{3}}\end{smallmatrix} $};
\node at ( 15.5,-1.4) {$\begin{smallmatrix}{D_{2}}\end{smallmatrix} $};
\draw[to-] (11.6,0.5) to[out=45, in=135] (13.4,0.5); 
\node at ( 12.5,1.4) {$\begin{smallmatrix}{C_{3}}\end{smallmatrix} $};
\node at ( 15.5,1.4) {$\begin{smallmatrix}{C_{2}}\end{smallmatrix} $};
\node at (14,0){$\mathbb C^{d_{2}'}$};
\draw[-to] (14.6,-0.5) to[out=-45, in=-135] (16.4,-0.5); 
\draw[to-] (14.6,0.5) to[out=45, in=135] (16.4,0.5); 
\node at (17,0){$\mathbb C^{d_{1}'}$};
\node at (18.5,-2) {$\mathbb C$};
\draw[-to] (18,-1.5) -- (17.5,-0.5); 
\node at ( 17.2,-0.9) {$\begin{smallmatrix}{a_1}\end{smallmatrix} $};
\draw[-to] (19.3,0) -- (17.7,0); 
\node at ( 18.5,0.4) {$\begin{smallmatrix}{A_1}\end{smallmatrix} $};
\node at (20,0){$\mathbb C^{d_{1}'}$};
\node at (21.5,-2) {$\mathbb C$};
\draw[-to] (22.5,-0.5) -- (22,-1.5); 
\node at ( 22.7,-1.35) {$\begin{smallmatrix}b_2 \end{smallmatrix} $};
\draw[to-] (19,-1.5) -- (19.5,-0.5); 
\node at ( 19.7,-1.35) {$\begin{smallmatrix}{b_1}\end{smallmatrix} $};
\draw[-to] (21,-1.5) -- (20.5,-0.5); 
\node at ( 20.2,-0.9) {$\begin{smallmatrix}{a_2}\end{smallmatrix} $};
\draw[-to] (22.3,0) -- (20.7,0); 
\node at ( 21.5,0.4) {$\begin{smallmatrix}{A_2}\end{smallmatrix} $};
\draw[dotted] (22.8,0) -- (24.2,0);
\draw[-to] (26.3,0) -- (24.7,0); 
\node at ( 25.5,0.4) {$\begin{smallmatrix}{A_n}\end{smallmatrix} $};
\node at (25.5,-2) {$\mathbb C$};
\draw[-to] (25,-1.5) -- (24.5,-0.5); 
\node at ( 24.2,-0.9) {$\begin{smallmatrix}{a_n}\end{smallmatrix} $};
\draw[-to] (26.5,-0.5) -- (26,-1.5); 
\node at ( 26.7,-1.35) {$\begin{smallmatrix}{b_n}\end{smallmatrix} $};
\node at (27,0){$\mathbb C^{d_{1}'}$};
\draw[-to] (27.5,-0.5) to[out=-45, in=-135] (29.7,-0.5); 
\node at ( 28.5,-1.4) {$\begin{smallmatrix}{D_{1}}\end{smallmatrix} $};
\draw[to-] (27.5,0.5) to[out=45, in=135] (29.7,0.5); 
\node at ( 28.5,1.4) {$\begin{smallmatrix}{C_{1}}\end{smallmatrix} $};
\node at (30,0){$0$};
\draw[-to] (16.75,0.55) to[out=110, in=70, looseness=8] (17.25,0.55);
\node at (17,2.1) {$\begin{smallmatrix} B_1^- \end{smallmatrix}$};
\draw[-to] (19.75,0.55) to[out=110, in=70, looseness=8] (20.25,0.55);
\node at (20,2.1) {$\begin{smallmatrix} B_1^+,\,B_2^- \end{smallmatrix}$};
\draw[-to] (26.75,0.55) to[out=110, in=70, looseness=8] (27.25,0.55);
\node at (27,2.1) {$\begin{smallmatrix} B_n^+\end{smallmatrix}$};
\end{tikzpicture}
\]

Given $x=[(A_{i},B_{i}^+,B_{i}^-,a_{i},b_{i})_i;(C_{j},D_{j})_j]\in\mathcal C(\tilde{\mathcal D}(d_1,\ldots,d_m;n))$. By~\cite[Lemma~2.18]{takayama2016nahm}, the linear operators $A_1,\ldots,A_n$ are isomorphisms. Hence, we can define the operators
\[
a\colon \mathbb C^n\xrightarrow{\phantom{xxx}} \mathbb C^{d_1'},\quad b\colon \mathbb C^{d_1'}\xrightarrow{\phantom{xxx}} \mathbb C^{n}
\]
via the matrices
\[
a=\begin{pmatrix} a_1 & A_1a_2 & \dots & A_1\cdots A_{n-1} a_n\end{pmatrix},\quad b=\begin{pmatrix} b_1 A_1^{-1} \\
b_2A_{2}^{-1} A_1^{-1} \\
\vdots\\
b_{n-1} A_{n-1}^{-1}\ldots A_1^{-1}\\
b_nA_n^{-1}\dots A_1^{-1} \end{pmatrix}.
\]
The stability criterion~\cite[Proposition~2.8]{nakajima2017cherkis} implies that
\[
\mathcal F_x\coloneqq (\{0\}\subset \mathrm{ker}(a)  \subset \mathrm{ker}(C_1a) \subset \ldots \subset \mathrm{ker}(C_1\dots C_m a) \subset \mathbb C^n) 
\]
defines a point on $F(d_1,\ldots,d_m;n)$ which is independent of choice of representative for $x$. By construction, the linear operator $f_x\in \operatorname{End}(\mathbb C^n), f_x\coloneqq ba$ is also independent of the choice of an representative for $x$. The isomorphism 
\begin{equation}\label{eq:DefinitionIsoFlagBow}
\Psi\colon \mathcal C(\tilde{\mathcal D}(d_1,\ldots,d_m;n))\xrightarrow{\phantom{x}\sim\phantom{x}} T^\ast F(d_1,\ldots,d_m;n)
\end{equation}
from \cite[Theorem~2.15]{nakajima2017cherkis} is then given by \[
\Psi(x)=(\mathcal F_x,f_x), \quad\textup{for all $x\in \mathcal C(\tilde{\mathcal D}(d_1,\ldots,d_m;n))$.}
\]

\subsection{Torus action and tautological bundles} The $\mathbb T=(\mathbb A\times \mathbb C_h^\ast)$-action on $\mathcal C(\mathcal D)$ from Subsection~\ref{subsection:TorusAction} induces the following $\mathbb T$-action on $T^\ast F(d_1,\ldots,d_m;n)$:
\[
t.(\mathcal F,f)= (d(t)(\mathcal F), d(t) f d(t)^{-1} ),\quad h.(\mathcal F,f)=(\mathcal F,hf),
\]
where $t=(t_1,\ldots,t_n)\in \mathbb A$, $(\mathcal F,f)\in T^\ast F(d_1,\ldots,d_m;n)$ and $d(t)$ is the diagonal operator such that $d(t)(e_i)=t_ie_i$ for $i=1,\ldots n$, where $e_1,\ldots,e_n$ denote the standard basis vectors of $\mathbb C^n$.

For $i\in\{1,\ldots,m\}$, let  
\[
\mathcal S_i=\{((\{0\}\subset E_1 \subset E_2 \subset \dots \subset E_m\subset \mathbb C^n),v)\mid v\in E_i  \}\subset F(d_1,\ldots,d_m;n)\times\mathbb C^n
\] 
be the corresponding tautological bundle and $\mathcal Q_i=( F(d_1,\ldots,d_m;n)\times \mathbb C^n)/\mathcal S_i$ the quotient bundle.
By abuse of language, we also denote the pullbacks of $\mathcal S_i$ and $\mathcal Q_i$ to $T^\ast F(d_1,\ldots,d_m;n)$ by $\mathcal S_i$ and $\mathcal Q_i$. Both, $\mathcal S_i$ and $\mathcal Q_i$ are $\mathbb T$-equivariant vector bundles over  $T^\ast F(d_1,\ldots,d_m;n)$, where the $\mathbb T$-action is induced by the $\mathbb T$-action on $T^\ast F(d_1,\ldots,d_m;n)\times\mathbb C^n$ where $\mathbb A$ acts on $\mathbb C^n$ via the standard action and $\mathbb C_h^\ast$ acts trivially on $\mathbb C^n$.
 
From the construction of $\Psi$, it follows that $\Psi^{\ast}\mathcal Q_i=h^{i-1} \xi_{m-i+1}$, so up to a scaling factor the tautological bundles on the bow variety $\mathcal C(\tilde{\mathcal D}(d_1,\ldots,d_m;n))$ correspond to the quotient bundles on $T^\ast F(d_1,\ldots,d_m;n)$.

\subsection{Fixed point matching} In this subsection, we describe the bijection of $\mathbb T$-fixed points 
\begin{equation}\label{eq:FPMatchingCotangentVSBow}
T^\ast F(d_1,\ldots,d_m;n)^{\mathbb T}\xleftrightarrow{\phantom{x}1:1\phantom{x}} \mathcal C(\tilde{\mathcal D}(d_1,\ldots,d_m;n))^{\mathbb T}.
\end{equation}
that is induced by the isomorphism $\Psi$ from \eqref{eq:DefinitionIsoFlagBow}. 

Let $S_n$ the symmetric group on $n$ letters. We usually denote permutations $w\in S_n$ in one line notation $w=w(1)w(2)\ldots w(n)$.

For a permutation $w\in S_n$, we define the flag
\[
\mathcal F_w\coloneqq (\{0\} \subset \langle e_{w(1)},\ldots,e_{w(d_1)} \rangle \subset \dots \subset \langle e_{w(1)},\ldots,e_{w(d_m)} \rangle \subset \mathbb C^n).
\]
It is well-known that we have a bijection
\[
S_n/S_{\bm\delta} \xrightarrow{\phantom{x}\sim\phantom{x}} (T^\ast F(d_1,\ldots,d_m;n))^{\mathbb T},\quad wS_{\bm\delta}\mapsto (\mathcal F_w,0).
\]
where $S_{\bm \delta}=S_{\delta_1}\times\cdots\times S_{\delta_{m+1}}\subset S_n$ is the Young subgroup corresponding to $\bm \delta=(\delta_1,\ldots,\delta_{m+1})$.

Following the construction of $\Psi$ reveals that the preimage $\Psi^{-1}(\mathcal F_w,0)$ corresponds to the diagram is represented by 
\[
[(A_{w,i},B_{w,i}^+,B_{w,i}^-,a_{w,i},b_{w,i})_i;(C_{w,j},D_{w,j})_j],
\]
where $A_{w,i}=\operatorname{id}_{\mathbb C^{d_1'}}$ for all $i$ and
\begin{align*}
a_{w,i}&\colon\mathbb C\rightarrow \mathbb C^{d_1'},\quad 1\mapsto 0,\quad \textup{for $i\in\{w(1),\ldots,w(d_1)\}$,}\\
a_{w,i}&\colon \mathbb C\rightarrow \mathbb C^{d_1'},\quad 1\mapsto e_{w^{-1}(i)-d_1},\quad \textup{for $i\in\{w(d_1+1),\ldots,w(n)\}$,}\\
C_{w,j}&\colon \mathbb C^{d_{j-1}'} \rightarrow \mathbb C^{d_{j}'} \quad
C_{w,j}(e_i)=\begin{cases}
0 &\textup{if $i=1,\ldots,d_j$,}\\
e_{i-d_j'} &\textup{if $i=d_j+1,\ldots,d_j'$,}
\end{cases}
\quad
\textup{for $j=2,\ldots,m+1$.}
\end{align*}
The remaining operators vanish.

Following the explicit construction of $\mathbb T$-fixed points of bow varieties from \cite[Section~4]{rimanyi2020bow} gives $\Psi^{-1}(\mathcal F_w,0)=x_{\tilde D_w}$, where $\tilde D_w= \tilde D_w'\cup \tilde D_w''$. Here, $\tilde D_w'$ is the set of all pairs $(V_{i},U_j)$ with $i\in\{2,3,\ldots,m\},j\in\{1,\ldots, n\}$ and there exists $l\in\{d_{i-1}+1,\ldots,d_{i}\}$ such that $w(l)=j$. The set $\tilde D_w''$ is defined as the set of all pairs $(U_j,V_{1})$ with $ j\in\{1,\ldots, n\}$ and there exists $l\in \{d_1+1,\ldots,n\}$ such that $w(l)=j$.

A straight-forward check shows that $\tilde D_w$ only depends on the coset $wS_{\bm\delta}$. Hence, we denote $\tilde D_w$ also by $\tilde D_{wS_{\bm \delta}}$. Thus,\eqref{eq:FPMatchingCotangentVSBow} corresponds to the following combinatorial bijection:
\begin{equation}\label{eq:FPMatchingCotangentVSBowCombinatoric}
S_n/S_{\bm \delta}\xrightarrow{\phantom{x}\sim\phantom{x}} \mathrm{Tie}(\tilde{\mathcal D}(d_1,\ldots,d_m;n)),\quad 
wS_{\bm \delta}\mapsto D_{wS_{\bm \delta}}.
\end{equation}

\begin{example}\label{example:FixedPointMatching1} Let $m=3,d_1=2,d_2=4,d_3=5$ and $n=6$. Then, the brane diagram $\tilde{\mathcal D}(2,4,5;6)$ equals $
0
\textcolor{red}{\slash}
1
\textcolor{red}{\slash}
2
\textcolor{red}{\slash}
4
\textcolor{blue}{\backslash}
4
\textcolor{blue}{\backslash}
4
\textcolor{blue}{\backslash}
4
\textcolor{blue}{\backslash}
4
\textcolor{blue}{\backslash}
4
\textcolor{blue}{\backslash}
4
\textcolor{red}{\slash}
0
$
. Let $w\in S_6$ be the permutation $w=253614$. To construct $\tilde D_w$, note that since $d_1=2$, we have that there are no ties in $\tilde D_w$ which are connected to $U_2$ and $U_5$. As $d_2=4$, the blue lines $U_3$, $U_6$ are both connected to $V_1$ and $V_2$. Likewise, since $d_3=5$, the blue line $U_1$ is connected to $V_1$ and $V_3$. Finally, $n=6$ implies that there are ties between $U_4$ and $V_1$, $V_4$. Hence,  $\tilde D_w$ is illustrated as follows:
\[
\begin{tikzpicture}
[scale=.4]
\draw[thick] (0,0)--(2,0);
\draw[thick] (3,0)--(5,0);
\draw[thick] (6,0)--(8,0);
\draw[thick] (9,0)--(11,0);
\draw[thick] (12,0)--(14,0);
\draw[thick] (15,0)--(17,0);
\draw[thick] (18,0)--(20,0);
\draw[thick] (21,0)--(23,0);
\draw[thick] (24,0)--(26,0);
\draw[thick] (27,0)--(29,0);
\draw[thick] (30,0)--(32,0);

\draw [thick,red] (2,-1) --(3,1); 
\draw [thick,red] (5,-1) --(6,1); 
\draw [thick,red] (8,-1) --(9,1);
\draw [thick,red] (29,-1) --(30,1);

\draw [thick,blue] (12,-1) --(11,1); 
\draw [thick,blue] (15,-1) --(14,1); 
\draw [thick,blue] (18,-1) --(17,1); 
\draw [thick,blue] (21,-1) --(20,1); 
\draw [thick,blue] (24,-1) --(23,1); 
\draw [thick,blue] (27,-1) --(26,1);

\node at (1,0.6){$0$};
\node at (4,0.6){$1$};
\node at (7,0.6){$2$};
\node at (10,0.6){$4$};
\node at (13,0.6){$4$};
\node at (16,0.6){$4$};
\node at (19,0.6){$4$};
\node at (22,0.6){$4$};
\node at (25,0.6){$4$};
\node at (28,0.6){$4$};
\node at (31,0.6){$0$};

\draw[dashed] (3,1) to[out=45,in=135] (20,1);
\draw[dashed] (6,1) to[out=45,in=135] (11,1);
\draw[dashed] (9,1) to[out=45,in=135] (17,1);
\draw[dashed] (9,1) to[out=45,in=135] (26,1);

\draw[dashed] (12,-1) to[out=315,in=225] (29,-1);
\draw[dashed] (18,-1) to[out=315,in=225] (29,-1);
\draw[dashed] (21,-1) to[out=315,in=225] (29,-1);
\draw[dashed] (27,-1) to[out=315,in=225] (29,-1);
\end{tikzpicture}
\]
\end{example}

\subsection{Transition to separated brane diagram}\label{subsection:DepBraneDiagPartialFlag}
In the later course of this article, we will mostly work with bow varieties corresponding to separated brane diagrams. We define the brane diagram $\mathcal D(d_1,\ldots,d_m;n)$ as follows:
\[
\begin{tikzpicture}
[scale=.5]
\draw[thick] (0,0)--(2,0);
\draw[thick] (3,0)--(5,0);
\draw[thick] (6,0)--(8,0);
\draw[dotted] (9,0)--(12,0);
\draw[thick] (13,0)--(15,0);
\draw[thick] (16,0)--(18,0);
\draw[thick] (19,0)--(21,0);
\draw[dotted] (22,0)--(25,0);
\draw[thick] (26,0)--(28,0);
\draw[thick] (29,0)--(31,0);

\draw [thick,red] (2,-1) --(3,1); 
\draw [thick,red] (5,-1) --(6,1); 
\draw [thick,red] (8,-1) --(9,1); 
\draw [thick,red] (12,-1) --(13,1);
\draw [thick,red] (15,-1) --(16,1);

%

\draw [thick,blue] (19,-1) --(18,1); 
\draw [thick,blue] (22,-1) --(21,1);  
\draw [thick,blue] (26,-1) --(25,1); 
\draw [thick,blue] (29,-1) --(28,1);  

\node at (1,0.5){$0$};
\node at (4,0.5){$d_m'$};
\node at (7,0.5){$d_{m-1}'$};
\node at (14,0.5){$d_{1}'$};
\node at (17,0.5){$n$};
\node at (20,0.5){$n-1$};
\node at (27,0.5){$1$};
\node at (30,0.5){$0$};
\end{tikzpicture}
\]
Note that $\mathcal D(d_1,\ldots,d_m;n)$ is obtained from $\tilde{\mathcal D}(d_1,\ldots,d_m;n)$ via Hanany--Witten transitions by moving $V_{1}$ to the left of $U_1,\ldots,U_n$. Let 
\begin{equation}\label{eq:DefinitionHWIsoFlagSeparated}
\Phi\colon \mathcal C(\mathcal D(d_1,\ldots,d_m;n))\xrightarrow{\phantom{x}\sim\phantom{x}} \mathcal C(\tilde{\mathcal D}(d_1,\ldots,d_m;n))
\end{equation}
be the corresponding Hanany--Witten isomorphism. Then, by Proposition~\ref{prop:HWFixedPointMatching}, the $\mathbb T$-fixed point matching under $\Phi$ can be characterized as follows: for $w\in S_n$, let $D_w$ be the tie diagram attached to $\mathcal D(d_1,\ldots,d_m;n)$ consisting of all pairs $(V_i,U_j)$ with $i\in{1,\ldots,m+1}$, $j\in\{1,\ldots,n\}$ and there exists $l\in\{d_{i-1}+1,\ldots,d_{i}\}$ such that $w(l)=j$. Again, the $D_w$ only depends on the coset $wS_{\bm\delta}$. Employing Proposition~\ref{prop:HWFixedPointMatching} gives that for all $w\in S_n$, $\Phi$ maps $x_{\tilde D_{wS_{\bm \delta}}}$ to $x_{D_{wS_{\bm \delta}}}$. Thus, by~\eqref{eq:FPMatchingCotangentVSBowCombinatoric}, we have a bijection
\[
S_n/S_{\bm \delta} \xrightarrow{\phantom{x}\sim\phantom{x}} \mathrm{Tie}(\mathcal D(d_1,\ldots,d_m;n)),
\quad
wS_{\bm \delta}\mapsto D_{wS_{\bm \delta}}.
\]

\begin{example} As in Example~\ref{example:FixedPointMatching1}, we choose $m=3,d_1=2,d_2=4,d_3=5$ and $n=6$. Thus, $\mathcal D(2,4,5;6)$ is given by $0\textcolor{red}{\slash}1\textcolor{red}{\slash}2\textcolor{red}{\slash}4\textcolor{red}{\slash}6\textcolor{blue}{\backslash}5\textcolor{blue}{\backslash}4\textcolor{blue}{\backslash}3\textcolor{blue}{\backslash}2\textcolor{blue}{\backslash}1\textcolor{blue}{\backslash}0$. Again, choose $w=253614$. For the construction of $D_w$, note that since $d_1=2$, the blue lines $U_2$ and $U_5$ are connected in $D_w$ to $V_1$. As $d_2=4$, the blue lines $U_3$ and $U_6$ are connected to $V_2$. Likewise, $d_3=5$ gives that there is a tie between $U_1$ and $V_3$. Finally, $n=6$ implies that $U_4$ is connected to $V_4$. Therefore, we can illustrate $D_w$ as follows:
\[
\begin{tikzpicture}
[scale=.4]
\draw[thick] (0,0)--(2,0);
\draw[thick] (3,0)--(5,0);
\draw[thick] (6,0)--(8,0);
\draw[thick] (9,0)--(11,0);
\draw[thick] (12,0)--(14,0);
\draw[thick] (15,0)--(17,0);
\draw[thick] (18,0)--(20,0);
\draw[thick] (21,0)--(23,0);
\draw[thick] (24,0)--(26,0);
\draw[thick] (27,0)--(29,0);
\draw[thick] (30,0)--(32,0);

\draw [thick,red] (2,-1) --(3,1); 
\draw [thick,red] (5,-1) --(6,1); 
\draw [thick,red] (8,-1) --(9,1);
\draw [thick,red] (11,-1) --(12,1);

\draw [thick,blue] (15,-1) --(14,1); 
\draw [thick,blue] (18,-1) --(17,1); 
\draw [thick,blue] (21,-1) --(20,1); 
\draw [thick,blue] (24,-1) --(23,1); 
\draw [thick,blue] (27,-1) --(26,1);
\draw [thick,blue] (30,-1) --(29,1);

\node at (1,0.5){$0$};
\node at (4,0.5){$1$};
\node at (7,0.5){$2$};
\node at (10,0.5){$4$};
\node at (13,0.5){$6$};
\node at (16,0.5){$5$};
\node at (19,0.5){$4$};
\node at (22,0.5){$3$};
\node at (25,0.5){$2$};
\node at (28,0.5){$1$};
\node at (31,0.5){$0$};

\draw[dashed] (3,1) to[out=45,in=135] (23,1);
\draw[dashed] (6,1) to[out=45,in=135] (14,1);
\draw[dashed] (9,1) to[out=45,in=135] (20,1);
\draw[dashed] (9,1) to[out=45,in=135] (29,1);
\draw[dashed] (12,1) to[out=45,in=135] (26,1);
\draw[dashed] (12,1) to[out=45,in=135] (17,1);
\end{tikzpicture}
\]
\end{example}
\section{Stable envelopes}\label{section:StableEnvelopes}

Stable envelopes are families of equivariant cohomology classes introduced by Maulik and Okounkov in~\cite{maulik2019quantum}. They exist for a large class of symplectic varieties with torus action including Nakajima quiver varieties and more generally bow varieties. Their definition involves stability conditions which are similar to the stability conditions of equivariant Schubert classes, see e.g.~\cite{knutson2003puzzles}, \cite{gorbounov2020yang}.

In this section, we recall the definition of stable envelopes in the framework of bow varieties and some of their properties. 

\subsection{Attracting cells}\label{subsection:AttractionCells}

The theory of stable envelopes is based on the theory of attracting cells. We briefly recall important features of this theory.

\begin{notation}
Let $V$ be a finite dimensional representation of $\mathbb C^\ast$ and \[
V_a=\{v\in V\mid t.v=t^av\textup{ for all }t\in\mathbb C^\ast\},\quad\textup{for $a\in\mathbb Z$.} 
\]
We set $V^+\coloneqq \bigoplus_{a\geq 1}V_a$ and $V^-\coloneqq\bigoplus_{a\leq -1}V_a$. 
\end{notation}

As in Subsection~\ref{subsection:TorusFixedPoints}, let $\sigma$ be a generic cocharacter of $\mathbb A$.
The \textit{attracting cell (with respect to $\sigma$)} is defined as
\[
\operatorname{Attr}_\sigma(p)=\{x\in\mathcal C(\mathcal D)\mid\lim_{t\to0}\sigma(t).x=p\}.
\]
Using the classical Bia\l{}nicky-Birula theorem, one can show that $\operatorname{Attr}_{\sigma}(p)$ is an affine and locally closed $\mathbb T$-invariant subvariety of $\mathcal C(\mathcal D)$ which is $\mathbb T$-equivariantly isomorphic to $T_p\mathcal C(\mathcal D)_\sigma^+$, see \cite[Lemma~3.2.4]{maulik2019quantum} and also the exposition in \cite[Proposition~5.1]{expository2023orthogonality}. Here, $T_p\mathcal C(\mathcal D)_\sigma$ is the $\mathbb C^\ast$-representation obtained from the $\mathbb A$-representation $T_p\mathcal C(\mathcal D)$ via $\sigma$.

Attracting cells admit the following independence property: let $\Lambda$ be the cocharacter lattice of $\mathbb A$ and set $\Lambda_{\mathbb R}\coloneqq \Lambda\otimes_{\mathbb Z}\mathbb R$. We have the usual root hyperplanes of $\mathfrak{gl}_N(\mathbb C)$ in $\Lambda_{\mathbb R}$:
	\[
	H_{i,j}\coloneqq \{(t_1,\ldots,t_N)\mid t_i=t_j\} \subset \Lambda_{\mathbb R},\quad \textup{for $1\le i,j\le N$ with $i\ne j$.}
	\]
	The \textit{chambers} of $\mathfrak{gl}_N(\mathbb C)$ are the connected components of 
	\[
	\Lambda_{\mathbb R} \setminus\bigg( \bigcup_{\substack{1\le i,j\le N\\ i\ne j}} H_{i,j}\bigg)
	\]
	We have the \textit{dominant chamber} and the \textit{antidominant chamber}
	\[
	\mathfrak C_+= \{(t_1,\ldots,t_N)\mid t_1>t_2>\dots>t_N\},\quad
	\mathfrak C_-= \{(t_1,\ldots,t_N)\mid t_1<t_2<\dots<t_N\}.
	\]
	
	As usual, reflecting along the root hyperplanes gives an action of the symmetric group $S_N$ on $\Lambda_{\mathbb R}$. It coincides with the action which permutes the coordinates and provides a well-known bijection from Lie theory
	\[
	\{\textup{Chambers}\}\xleftrightarrow{\phantom{x}1:1\phantom{x}} S_N,
	\]
	 where we assign to a permutation $w\in S_N$ the chamber $w.\mathfrak C_+$.
	 
     Attracting cells are constant on chambers, that is if $\sigma$, $\tau$ lie in the same chamber $\mathfrak C$, we have $\operatorname{Attr}_\sigma(p)=\operatorname{Attr}_\tau(p)$, see e.g.~\cite[Proposition~5.4]{expository2023orthogonality}. Hence, we also just write $\operatorname{Attr}_{\mathfrak C}(p)$. In addition, $T_p\mathcal C(\mathcal D)_\sigma^\pm$ also just depends on $\mathfrak C$. Therefore, we also just write $T_p\mathcal C(\mathcal D)_{\mathfrak C}^\pm$.

There is a partial order $\preceq=\preceq_{\mathfrak C}$ on $\mathcal C(\mathcal D)^{\mathbb T}$ defined as the transitive closure of the relation
\[
q\in \overline{\operatorname{Attr}_{\mathfrak C}(p)} \Rightarrow q\preceq p,
\]
where $ \overline{\operatorname{Attr}_{\mathfrak C}(p)}$ denotes the Zariski closure of $\operatorname{Attr}_{\mathfrak C}(p)$ in $\mathcal C(\mathcal D)$. For any $p\in\mathcal C(\mathcal D)^{\mathbb T}$, we set
\[
\operatorname{Attr}_{\mathfrak C}^f(p)\coloneqq \bigsqcup_{q\preceq p}\operatorname{Attr}_{\mathfrak C}(q).
\]
As in the case of Schubert varieties, one can show that $\operatorname{Attr}_{\mathfrak C}^f(p)$ is a closed subvariety of $\mathcal C(\mathcal D)$ which is called the \textit{full attracting cell} of $p$, see \cite[Lemma~3.2.7]{maulik2019quantum} or the exposition in \cite[Proposition~5.8]{expository2023orthogonality}.

The \textit{opposite chamber} of $\mathfrak C$ is defined as
	\[
		\mathfrak C^{\operatorname{op}}\coloneqq \{a\in \mathfrak a_{\mathbb R} \mid -a \in \mathfrak C\}.
	\]
It is a general that fact that $\preceq_{\mathfrak C^{\operatorname{op}}}$ is the opposite order of $\preceq_{\mathfrak C}$, see e.g.~\cite[Proposition~5.10]{expository2023orthogonality}.


\subsection{Definition of stable envelopes}

Let $d=\operatorname{dim}(\mathcal C(\mathcal D))$ be the dimension of $\mathcal C(\mathcal D)$ as complex variety. 

\textit{Stable envelopes} are maps 
	\[
	\mathcal C(\mathcal D)^{\mathbb T} \xrightarrow{\mathrm{Stab}_{\mathfrak C}}  H_{\mathbb T}^{d}(\mathcal C(\mathcal D)),
	\]
	depending on a choice of chamber $\mathfrak C$ which are uniquely characterized by the following stability conditions, see \cite[Theorem~3.3.4]{maulik2019quantum}:
	
	\begin{theorem}\label{thm:existenceStableEnvelopes} 
	There exist a unique family $(\mathrm{Stab}_{\mathfrak C}(p))_{p\in\mathcal C(\mathcal D)^{\mathbb T}}$ in $H_{\mathbb T}^{d}(\mathcal C(\mathcal D))$ satisfying the following conditions:
	\begin{enumerate}[label=(Stab\arabic*), leftmargin=1.75cm]
		\item\label{item:Normalization} We have $\iota_p^\ast(\mathrm{Stab}_{\mathfrak C}(p))=e_{\mathbb T}(T_p\mathcal C(\mathcal D)_{\mathfrak C}^-)$, for all $p\in\mathcal C(\mathcal D)^{\mathbb T}$.
		\item\label{item:Support} We have that $\mathrm{Stab}_{\mathfrak C}(p)$ is supported on $\operatorname{Attr}_{\mathfrak C}^f(p)$, for all $p\in\mathcal C(\mathcal D)^{\mathbb T}$.
		\item\label{item:Smallness} Given $p$, $q\in\mathcal C(\mathcal D)^{\mathbb T}$ with $q\prec p$. Then, $\iota_q^\ast(\mathrm{Stab}_{\mathfrak C}(p))$ is divisible by $h$.
	\end{enumerate}
	Here, $e_{\mathbb T}$ denotes the $\mathbb T$-equivariant Euler class.
	\end{theorem}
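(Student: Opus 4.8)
The statement is, in the end, \cite[Theorem~3.3.4]{maulik2019quantum} applied to $\mathcal C(\mathcal D)$, so my plan is to deduce it from there after checking the hypotheses, all of which have been recorded above: $\mathcal C(\mathcal D)$ is smooth and symplectic (Theorem~\ref{thm:PropertiesOfBowVarieties}), the obvious $\mathbb A$-action preserves the symplectic form while $\mathbb C_h^\ast$ scales it, the $\mathbb T$-fixed locus is finite \eqref{eq:RSTFixedPoints}, and the attracting cells are affine with $\operatorname{Attr}_{\mathfrak C}(p)\cong T_p\mathcal C(\mathcal D)_{\mathfrak C}^+$ and closed full attracting cells. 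For completeness I would give the uniqueness argument in full, since it is short and purely formal, and only sketch existence.

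\emph{Uniqueness.} Fix $p$ and suppose $(\mathrm{Stab}_{\mathfrak C}(q))_q$ and $(\mathrm{Stab}'_{\mathfrak C}(q))_q$ both satisfy \ref{item:Normalization}--\ref{item:Smallness}. Put $\gamma=\mathrm{Stab}_{\mathfrak C}(p)-\mathrm{Stab}'_{\mathfrak C}(p)\in H^d_{\mathbb T}(\mathcal C(\mathcal D))$; it is supported on $\operatorname{Attr}^f_{\mathfrak C}(p)$, satisfies $\iota_p^\ast\gamma=0$, and $\iota_q^\ast\gamma$ is divisible by $h$ for $q\prec p$. Assuming $\gamma\ne 0$, I choose a $\preceq_{\mathfrak C}$-maximal $q$ with $\iota_q^\ast\gamma\ne0$; the support condition forces $q\preceq p$, and $\iota_p^\ast\gamma=0$ then forces $q\prec p$. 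Next, restrict to the open $\mathbb T$-invariant subvariety $Y=\mathcal C(\mathcal D)\setminus\bigcup_{q'\not\succeq_{\mathfrak C}q}\operatorname{Attr}^f_{\mathfrak C}(q')=\bigsqcup_{q'\succeq_{\mathfrak C}q}\operatorname{Attr}_{\mathfrak C}(q')$; inside $Y$ the cell $\operatorname{Attr}_{\mathfrak C}(q)$ is closed, affine, of complex codimension $\tfrac d2$ (attracting cells are Lagrangian, since $\omega$ is $\mathbb A$-invariant and the $\mathbb T$-fixed points are isolated), with normal bundle $T_q\mathcal C(\mathcal D)_{\mathfrak C}^-$ at $q$, and $Y$ together with its open subsets is equivariantly formal via the affine paving by the $\operatorname{Attr}_{\mathfrak C}(q')$. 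By maximality of $q$, $\gamma|_Y$ restricts to $0$ at every $\mathbb T$-fixed point of $Y$ other than $q$, so equivariant localization gives $\gamma|_{Y\setminus\operatorname{Attr}_{\mathfrak C}(q)}=0$, i.e. $\gamma|_Y$ is supported on the smooth closed $\operatorname{Attr}_{\mathfrak C}(q)$; hence $\gamma|_Y=(\iota_{\operatorname{Attr}_{\mathfrak C}(q)})_\ast\beta$ with $\beta\in H^0_{\mathbb T}(\mathrm{pt})=\mathbb Q$ for degree reasons, and the self-intersection formula yields $\iota_q^\ast\gamma=\beta\cdot e_{\mathbb T}(T_q\mathcal C(\mathcal D)_{\mathfrak C}^-)$. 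Finally, genericity of $\sigma$ implies that no $\mathbb T$-weight of $T_q\mathcal C(\mathcal D)$ is a pure multiple of $h$ (such a vector would lie in $T_q\mathcal C(\mathcal D)^\sigma=0$), so $h$ does not divide $e_{\mathbb T}(T_q\mathcal C(\mathcal D)_{\mathfrak C}^-)$; together with \ref{item:Smallness} this forces $\beta=0$, contradicting $\iota_q^\ast\gamma\ne0$. Hence $\gamma=0$.

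\emph{Existence.} Here I would build $\mathrm{Stab}_{\mathfrak C}(p)$ by induction along $\preceq_{\mathfrak C}$, starting from $[\,\overline{\operatorname{Attr}_{\mathfrak C}(p)}\,]$ — which satisfies \ref{item:Normalization} (its restriction at $p$ is $e_{\mathbb T}(T_p\mathcal C(\mathcal D)_{\mathfrak C}^-)$, as $\overline{\operatorname{Attr}_{\mathfrak C}(p)}$ is smooth at $p$) and \ref{item:Support} — and replacing it by $[\,\overline{\operatorname{Attr}_{\mathfrak C}(p)}\,]-\sum_{q\prec p}a_q\,\mathrm{Stab}_{\mathfrak C}(q)$, where the $a_q\in H^\ast_{\mathbb T}(\mathrm{pt})$ are chosen recursively from the top so as to make each $\iota_q^\ast$ divisible by $h$. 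Condition \ref{item:Normalization} is preserved since $\iota_p^\ast\mathrm{Stab}_{\mathfrak C}(q)=0$ for $q\prec p$, and \ref{item:Support} since $\operatorname{Attr}^f_{\mathfrak C}(q)\subseteq\operatorname{Attr}^f_{\mathfrak C}(p)$. The main obstacle — and the one step where I would simply invoke \cite[Theorem~3.3.4]{maulik2019quantum} (or the self-contained exposition in \cite{expository2023orthogonality}) rather than reprove it — is that this correction is actually \emph{possible}: one needs to know that $\iota_q^\ast[\,\overline{\operatorname{Attr}_{\mathfrak C}(p)}\,]$ is, modulo $h$, divisible by $e_{\mathbb T}(T_q\mathcal C(\mathcal D)_{\mathfrak C}^-)$ in $\mathbb Q[t_1,\dots,t_N]$. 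This is precisely where the symplectic geometry is indispensable (Lagrangian attracting cells and $\omega$ of $\mathbb C_h^\ast$-weight $h$), and it is the heart of the Maulik--Okounkov construction; everything else above is formal equivariant localization.
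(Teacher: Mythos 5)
Your proposal is correct and in essence coincides with what the paper does: the paper offers no proof of Theorem~\ref{thm:existenceStableEnvelopes} but simply quotes \cite[Theorem~3.3.4]{maulik2019quantum} (with the remark that all polarization signs are normalized to $+1$, polarizations existing by \cite{shou2021bow}), and you likewise verify the hypotheses and defer the essential existence step to that theorem. The uniqueness argument you spell out (maximal fixed point with nonvanishing restriction, passing to the open union of attracting cells where $\operatorname{Attr}_{\mathfrak C}(q)$ is closed and Lagrangian, Gysin pushforward plus the observation that $h$ does not divide $e_{\mathbb T}(T_q\mathcal C(\mathcal D)_{\mathfrak C}^-)$ by genericity of $\sigma$) is the standard Maulik--Okounkov argument and is sound.
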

	
	The condition \ref{item:Normalization} is called \textit{normalization condition}, \ref{item:Support} is called \textit{support condition} and \ref{item:Smallness} is called \textit{smallness condition}.
	
	By the normalization and support condition, we obtain that $(\mathrm{Stab}_{\mathfrak C}(p))_{p\in\mathcal C(\mathcal D)^{\mathbb T}}$ is a basis of the localized equivariant cohomology ring $H_{\mathbb T}^\ast(\mathcal C(\mathcal D))_{\mathrm{loc}}$.
	The basis $(\mathrm{Stab}_{\mathfrak C}(p))_{p\in\mathcal C(\mathcal D)^{\mathbb T}}$ is called the \textit{stable basis (corresponding to $\mathfrak C$)}. We refer to the equivariant cohomology classes $\mathrm{Stab}_{\mathfrak C}(p)\in H^d_{\mathbb T}(\mathcal C(\mathcal D))$ as \textit{stable basis elements}.
	
	\begin{remark} In \cite{maulik2019quantum} the definition of stable envelopes slightly differs from our definition given in Theorem~\ref{thm:existenceStableEnvelopes}. Their version of the normalization condition \ref{item:Normalization} involves a certain choice of signs depending on a choice of polarization bundle of the bow variety. It was shown in \cite[Section~4]{shou2021bow} that polarization bundles always exist for bow varieties. For simplicity, we chose all signs in the normalization condition \ref{item:Normalization} to be $+1$.
	\end{remark}
	
	In the next subsections, we state convenient properties of stable bases.

\subsection{Orthogonality} As shown in \cite[Theorem~4.4.1]{maulik2019quantum} (see also \cite[Theorem~8.1]{expository2023orthogonality}), stable bases satisfy the following useful 
orthogonality property which is analogous to the orthogonality properties of Schubert bases:

\begin{theorem}[Orthogonality]\label{thm:OrthogonalityOfStableEnvelopes}
For all $p$, $q\in\mathcal C(\mathcal D)^{\mathbb T}$ we have
\[
(\mathrm{Stab}_{\mathfrak C}(p),\mathrm{Stab}_{\mathfrak C^{\mathrm{op}}}(q))_{\mathrm{virt}}=\begin{cases}
1&\textit{if $p=q$,}\\
0&\textit{if $p\ne q$,} 
\end{cases}
\]
where 
\[
(.,.)_{\mathrm{virt}}:H_{\mathbb T}^\ast(\mathcal C(\mathcal D))\times 
H_{\mathbb T}^\ast(\mathcal C(\mathcal D))\rightarrow S^{-1}H_{\mathbb T}^\ast(\operatorname{pt}),\quad 
(\alpha,\beta)_{\mathrm{virt}}=\sum_{p\in \mathcal C(\mathcal D)^{\mathbb T}}\frac{\iota_p^\ast(\alpha\cup \beta)}{e_{\mathbb T}(T_p\mathcal C(\mathcal D))}
\]
is the virtual intersection pairing with
\begin{equation}\label{eq:DefinitionS}
S=\{t_i-t_j+mh\mid i,j\in\{1,\ldots,N\},m\in\mathbb Z\}.
\end{equation}
\end{theorem}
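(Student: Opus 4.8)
The plan is to show that the Gram matrix $\mathcal{M}_{p,q}:=(\mathrm{Stab}_{\mathfrak{C}}(p),\mathrm{Stab}_{\mathfrak{C}^{\mathrm{op}}}(q))_{\mathrm{virt}}$ is the identity matrix, following the strategy of Maulik--Okounkov. Unwinding the definition, $\mathcal{M}_{p,q}=\sum_{r\in\mathcal{C}(\mathcal{D})^{\mathbb{T}}} e_{\mathbb{T}}(T_r\mathcal{C}(\mathcal{D}))^{-1}\,\iota_r^\ast(\mathrm{Stab}_{\mathfrak{C}}(p))\cup\iota_r^\ast(\mathrm{Stab}_{\mathfrak{C}^{\mathrm{op}}}(q))$, which makes sense in $S^{-1}H_{\mathbb{T}}^\ast(\operatorname{pt})$ because the $\mathbb{T}$-weights of $T_r\mathcal{C}(\mathcal{D})$ have the shape $t_i-t_j+mh$ (implicit already in the choice of $S$ in \eqref{eq:DefinitionS}), so each Euler class in the denominator is a unit. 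First I would exploit triangularity: by the support condition \ref{item:Support} a summand is nonzero only if $r\preceq_{\mathfrak{C}}p$ and $r\preceq_{\mathfrak{C}^{\mathrm{op}}}q$, and since $\preceq_{\mathfrak{C}^{\mathrm{op}}}$ is the reverse of $\preceq_{\mathfrak{C}}$ this forces $q\preceq_{\mathfrak{C}}r\preceq_{\mathfrak{C}}p$; in particular $\mathcal{M}_{p,q}=0$ unless $q\preceq_{\mathfrak{C}}p$. For $p=q$ only the term $r=p$ survives; since $\sigma$ is generic we have $T_p\mathcal{C}(\mathcal{D})^{\sigma}=0$, hence $e_{\mathbb{T}}(T_p\mathcal{C}(\mathcal{D}))=e_{\mathbb{T}}(T_p\mathcal{C}(\mathcal{D})_{\mathfrak{C}}^{+})\,e_{\mathbb{T}}(T_p\mathcal{C}(\mathcal{D})_{\mathfrak{C}}^{-})$, and applying the normalization condition \ref{item:Normalization} to both $\mathfrak{C}$ and $\mathfrak{C}^{\mathrm{op}}$ gives $\iota_p^\ast(\mathrm{Stab}_{\mathfrak{C}}(p))\cup\iota_p^\ast(\mathrm{Stab}_{\mathfrak{C}^{\mathrm{op}}}(p))=e_{\mathbb{T}}(T_p\mathcal{C}(\mathcal{D})_{\mathfrak{C}}^{-})\,e_{\mathbb{T}}(T_p\mathcal{C}(\mathcal{D})_{\mathfrak{C}}^{+})$, so $\mathcal{M}_{p,p}=1$.

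The remaining, and genuinely substantive, case is $q\prec_{\mathfrak{C}}p$, where I must show $\mathcal{M}_{p,q}=0$ by combining three observations. (i) \emph{Divisibility.} For every contributing $r$ one has either $r\prec_{\mathfrak{C}}p$, or $r=p$ and then $q\prec_{\mathfrak{C}}r$; so by the smallness condition \ref{item:Smallness} (for $\mathfrak{C}$, respectively for $\mathfrak{C}^{\mathrm{op}}$) each summand is divisible by $h$, hence $h\mid\mathcal{M}_{p,q}$. (ii) \emph{Regularity.} The product $\mathrm{Stab}_{\mathfrak{C}}(p)\cup\mathrm{Stab}_{\mathfrak{C}^{\mathrm{op}}}(q)$ is supported on $\operatorname{Attr}_{\mathfrak{C}}^{f}(p)\cap\operatorname{Attr}_{\mathfrak{C}^{\mathrm{op}}}^{f}(q)$, the intersection of a full attracting cell with a full repelling cell; a point of this intersection has both a forward and a backward $\sigma$-limit, hence maps into the $\sigma$-fixed locus of the affinization $m^{-1}(0)/\!/\mathcal{G}$, in fact into a finite set of points, so by properness of the resolution-type morphism $\mathcal{C}(\mathcal{D})\to m^{-1}(0)/\!/\mathcal{G}$ this closed subvariety is complete. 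Therefore the virtual pairing equals the proper pushforward to a point of the product class and lies in $H_{\mathbb{T}}^\ast(\operatorname{pt})=\mathbb{Q}[t_1,\dots,t_N,h]$ — it is an honest polynomial, not merely a fraction. (iii) \emph{Degree.} Both $\mathrm{Stab}_{\mathfrak{C}}(p)$ and $\mathrm{Stab}_{\mathfrak{C}^{\mathrm{op}}}(q)$ have cohomological degree $d=\dim_{\mathbb{C}}\mathcal{C}(\mathcal{D})$ while $e_{\mathbb{T}}(T_r\mathcal{C}(\mathcal{D}))$ has degree $2d$, so $\mathcal{M}_{p,q}$ is homogeneous of degree $0$. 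A degree-zero polynomial is a scalar, and a scalar divisible by $h$ vanishes; thus $\mathcal{M}_{p,q}=0$, which together with the first paragraph completes the proof.

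I expect the main obstacle to be step (ii), namely establishing that $\operatorname{Attr}_{\mathfrak{C}}^{f}(p)\cap\operatorname{Attr}_{\mathfrak{C}^{\mathrm{op}}}^{f}(q)$ is proper: this is the one point where one must use genuine geometry of bow varieties rather than the formal properties of stable envelopes, relying on the projective morphism to the affine GIT quotient coming from the construction in \cite{nakajima2017cherkis}. Both this properness and the shape of the tangent weights used at the outset are available in the bow-variety literature (compare the treatment in \cite{expository2023orthogonality}); once they are in place, the rest of the argument is purely formal manipulation of the three defining conditions \ref{item:Normalization}, \ref{item:Support}, \ref{item:Smallness} together with the localization theorem and a degree count.
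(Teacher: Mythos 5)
Your argument is correct and is essentially the proof this paper relies on: the paper does not prove Theorem~\ref{thm:OrthogonalityOfStableEnvelopes} itself but imports it from \cite{maulik2019quantum} (Theorem~4.4.1) and \cite{expository2023orthogonality} (Theorem~8.1), and your computation of the diagonal via triangularity plus \ref{item:Normalization}, together with the properness--degree--smallness argument for the off-diagonal entries, is exactly that standard proof. The only point to phrase more carefully is step (i): since $h\in S$ is inverted in $S^{-1}H^\ast_{\mathbb T}(\operatorname{pt})$, ``divisible by $h$'' should be read as saying that, over a common denominator prime to $h$ (possible because all tangent weights are of the form $t_i-t_j+mh$ with $i\ne j$, so the Euler classes do not vanish at $h=0$), the numerator lies in the ideal $(h)$; combined with your degree-zero polynomiality this forces the constant $\mathcal M_{p,q}$ to vanish.
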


\begin{remark} It is well-known that the tangent weights of fixed points of $\mathcal C(\mathcal D)$ are contained in $S$, see e.g.~\cite[Corollary~4.15]{expository2023orthogonality}. Thus, the virtual intersection form takes indeed values in $S^{-1}H_{\mathbb T}^\ast(\operatorname{pt})$.
\end{remark}

The following practical property of the stable basis on matrix coefficients of multiplication operators of equivariant cohomology classes with respect to the stable basis
can be found in \cite[Proposition~8.2]{expository2023orthogonality}:

\begin{prop}[Polynomiality]\label{prop:MatrixCoefficients}
For all $\gamma\in H_{\mathbb T}^\ast(\mathcal C(\mathcal D))$ and all  $p$, $q\in\mathcal C(\mathcal D)^{\mathbb T}$ we have
\[
(\gamma\cup\mathrm{Stab}_{\mathfrak C}(p),\mathrm{Stab}_{\mathfrak C^{\mathrm{op}}}(q))_{\mathrm{virt}}\in H_{\mathbb T}^{\ast}(\operatorname{pt}).
\]
\end{prop}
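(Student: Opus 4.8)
\emph{Proof sketch (plan).} The plan is to use the support condition \ref{item:Support} to view both factors of the pairing as classes supported on opposite full attracting cells whose intersection is \emph{proper}, and then to recognize the virtual pairing as an honest equivariant pushforward to a point, which necessarily lands in the polynomial ring $H_{\mathbb T}^\ast(\operatorname{pt})$. This is the argument behind \cite[Proposition~8.2]{expository2023orthogonality}; here is how I would organize it.

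First I would set $Z_1:=\operatorname{Attr}^f_{\mathfrak C}(p)$ and $Z_2:=\operatorname{Attr}^f_{\mathfrak C^{\mathrm{op}}}(q)$, which are $\mathbb T$-invariant closed subvarieties by Subsection~\ref{subsection:AttractionCells}, and work with equivariant cohomology with supports. By \ref{item:Support}, the class $\mathrm{Stab}_{\mathfrak C}(p)$ lies in the image of the forgetful map $H_{\mathbb T,Z_1}^\ast(\mathcal C(\mathcal D))\to H_{\mathbb T}^\ast(\mathcal C(\mathcal D))$, and likewise $\mathrm{Stab}_{\mathfrak C^{\mathrm{op}}}(q)$ comes from $H_{\mathbb T,Z_2}^\ast(\mathcal C(\mathcal D))$. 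Since $H_{\mathbb T,Z_1}^\ast(\mathcal C(\mathcal D))$ is a module over $H_{\mathbb T}^\ast(\mathcal C(\mathcal D))$ compatibly with the forgetful map, $\gamma\cup\mathrm{Stab}_{\mathfrak C}(p)$ still comes from $H_{\mathbb T,Z_1}^\ast(\mathcal C(\mathcal D))$; and the cup product of classes with supports, $H_{\mathbb T,Z_1}^\ast(\mathcal C(\mathcal D))\otimes H_{\mathbb T,Z_2}^\ast(\mathcal C(\mathcal D))\to H_{\mathbb T,Z}^\ast(\mathcal C(\mathcal D))$ with $Z:=Z_1\cap Z_2$, then shows that $\delta:=\bigl(\gamma\cup\mathrm{Stab}_{\mathfrak C}(p)\bigr)\cup\mathrm{Stab}_{\mathfrak C^{\mathrm{op}}}(q)$ comes from $H_{\mathbb T,Z}^\ast(\mathcal C(\mathcal D))$. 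In particular $\delta$ restricts to $0$ on $\mathcal C(\mathcal D)\setminus Z$, so $\iota_r^\ast(\delta)=0$ for every fixed point $r\notin Z$.

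Next I would invoke the geometric input that $Z=\operatorname{Attr}^f_{\mathfrak C}(p)\cap\operatorname{Attr}^f_{\mathfrak C^{\mathrm{op}}}(q)$ is proper; this is the standard fact underlying the very definition of the geometric pairing in Theorem~\ref{thm:OrthogonalityOfStableEnvelopes} (see \cite{maulik2019quantum} and \cite{expository2023orthogonality}). Since $Z$ is proper, the structure map $\pi\colon\mathcal C(\mathcal D)\to\operatorname{pt}$ is proper on $Z$, so there is an equivariant pushforward $\pi_\ast\colon H_{\mathbb T,Z}^\ast(\mathcal C(\mathcal D))\to H_{\mathbb T}^\ast(\operatorname{pt})=\mathbb Q[t_1,\ldots,t_N,h]$, and $\pi_\ast\delta$ is by construction an honest polynomial. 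Finally, equivariant localization identifies $\pi_\ast\delta$ with the virtual pairing: writing the image of $\delta$ in $H_{\mathbb T}^\ast(\mathcal C(\mathcal D))_{\mathrm{loc}}$ as $\sum_{r\in\mathcal C(\mathcal D)^{\mathbb T}}\iota_{r\ast}\bigl(\iota_r^\ast(\delta)/e_{\mathbb T}(T_r\mathcal C(\mathcal D))\bigr)$, only the summands with $r\in Z$ survive (the others have $\iota_r^\ast(\delta)=0$), each surviving summand is supported at the point $r$, and $\pi_\ast\iota_{r\ast}=\mathrm{id}$; hence $\pi_\ast\delta=\sum_{r}\iota_r^\ast(\delta)/e_{\mathbb T}(T_r\mathcal C(\mathcal D))=(\gamma\cup\mathrm{Stab}_{\mathfrak C}(p),\mathrm{Stab}_{\mathfrak C^{\mathrm{op}}}(q))_{\mathrm{virt}}$, which therefore lies in $H_{\mathbb T}^\ast(\operatorname{pt})$.

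The main obstacle is the properness of $\operatorname{Attr}^f_{\mathfrak C}(p)\cap\operatorname{Attr}^f_{\mathfrak C^{\mathrm{op}}}(q)$; everything after that is formal bookkeeping with cohomology with supports and the Atiyah--Bott localization formula. For bow varieties the properness follows from the fact that $\mathcal C(\mathcal D)$ is a symplectic resolution of an affine (conical) variety together with the behaviour of attracting cells under the resolution map: a point in the intersection has $\sigma$-orbit closure a complete curve with endpoints fixed points sandwiched between $q$ and $p$ for $\preceq_{\mathfrak C}$, and one verifies that such orbit closures sweep out a closed subvariety proper over a point. I would either cite this from \cite{expository2023orthogonality} or include the short Bia\l{}ynicki-Birula argument as a lemma.
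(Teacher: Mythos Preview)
The paper does not supply its own proof of this proposition; it simply records the statement and cites \cite[Proposition~8.2]{expository2023orthogonality}. Your sketch is exactly the standard argument behind that citation: lift both stable basis elements to cohomology with supports via \ref{item:Support}, take their product in $H_{\mathbb T,Z}^\ast(\mathcal C(\mathcal D))$ with $Z=\operatorname{Attr}^f_{\mathfrak C}(p)\cap\operatorname{Attr}^f_{\mathfrak C^{\mathrm{op}}}(q)$, use properness of $Z$ to get an honest proper pushforward to $H_{\mathbb T}^\ast(\operatorname{pt})$, and identify that pushforward with the localized sum via Atiyah--Bott. So your approach and the paper's (outsourced) proof coincide.

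One small sharpening for the properness step: rather than the orbit-closure argument you sketch at the end, the cleanest route in this setting is to use that $\mathcal C(\mathcal D)$ admits a proper $\mathbb T$-equivariant affinization map $\pi\colon\mathcal C(\mathcal D)\to\mathcal C(\mathcal D)_0$ to an affine variety on which $\mathbb A$ acts with a single fixed point (the cone point). Then $\pi(\operatorname{Attr}^f_{\mathfrak C}(p))$ lies in the $\mathfrak C$-attracting set of that point and $\pi(\operatorname{Attr}^f_{\mathfrak C^{\mathrm{op}}}(q))$ in the $\mathfrak C^{\mathrm{op}}$-attracting set; their intersection in $\mathcal C(\mathcal D)_0$ is the cone point, so $Z$ sits in a fibre of $\pi$ and is therefore proper. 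This is how the properness is handled in \cite{maulik2019quantum} and \cite{expository2023orthogonality}, and it avoids having to analyse individual orbit closures.
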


\subsection{Matching under Hanany--Witten transition}

Suppose $\tilde{\mathcal D}$ is obtained from $\mathcal D$ via Hanany--Witten transition and let $\Phi\colon\mathcal C(\mathcal D)\xrightarrow\sim\mathcal C(\tilde{\mathcal D})$ be the corresponding Hanany--Witten isomorphism and $\phi\colon \mathrm{Tie}(\mathcal D)\xrightarrow\sim \mathrm{Tie}(\tilde{\mathcal D})$ be the induced bijection.

Stable bases are compatible with Hanany--Witten transition:

\begin{prop}\label{prop:MatchingStableEnvelopesHW} Let $\Phi^\ast\colon H_{\mathbb T}^{\ast}(\mathcal C(\tilde{\mathcal D}))\xrightarrow\sim H_{\mathbb T}^{\ast}(\mathcal C(\mathcal D))$ be the induced isomorphism. Then, we have for all $D\in\mathrm{Tie}(\mathcal D)$
\[
\Phi^\ast(\mathrm{Stab}_{\mathfrak C}(\phi(D)))=\mathrm{Stab}_{\mathfrak C}(D).
\]
\end{prop}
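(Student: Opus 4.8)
The plan is to invoke the uniqueness part of Theorem~\ref{thm:existenceStableEnvelopes}: it suffices to show that the pulled-back class $\Phi^\ast(\mathrm{Stab}_{\mathfrak C}(\phi(D)))\in H_{\mathbb T}^d(\mathcal C(\mathcal D))$ satisfies the three defining conditions \ref{item:Normalization}, \ref{item:Support}, \ref{item:Smallness} for $\mathrm{Stab}_{\mathfrak C}(D)$, and then appeal to uniqueness to conclude equality. (One first checks that $\dim \mathcal C(\mathcal D)=\dim\mathcal C(\tilde{\mathcal D})$, which is immediate from the fact that $\Phi$ is an isomorphism of varieties, so the cohomological degree matches.)

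First I would record the behaviour of the relevant structures under $\Phi$. Since $\Phi$ is a $\rho_j$-equivariant isomorphism of symplectic varieties (Proposition~\ref{prop:HWTransition}), it intertwines the $\mathbb A$-actions up to the lattice automorphism induced by $\rho_j$; in particular $\Phi$ carries $\mathcal C(\mathcal D)^{\mathbb T}$ bijectively onto $\mathcal C(\tilde{\mathcal D})^{\mathbb T}$ via $\phi$, it carries $\operatorname{Attr}_{\mathfrak C}(D)$ onto $\operatorname{Attr}_{\mathfrak C}(\phi(D))$ and $\operatorname{Attr}_{\mathfrak C}^f(D)$ onto $\operatorname{Attr}_{\mathfrak C}^f(\phi(D))$, and it identifies the partial orders $\preceq_{\mathfrak C}$ on the two fixed-point sets. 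The key point here is that the cocharacter $\rho_j\circ\sigma$ is again generic whenever $\sigma$ is, because $\rho_j$ only adds a multiple of $h$ to one coordinate and does not affect the pairwise differences $t_U-t_{U'}$ that define genericity; hence chambers of $\mathfrak{gl}_N$ are preserved and ``$\mathfrak C$'' means the same thing on both sides. Likewise, $\Phi$ being $\rho_j$-equivariant gives, for each $D$, an identification of $\mathbb A$-representations $T_D\mathcal C(\mathcal D)\cong (\rho_j)^\ast T_{\phi(D)}\mathcal C(\tilde{\mathcal D})$, and since $\rho_j$ fixes $h$ this restricts to an isomorphism $T_D\mathcal C(\mathcal D)^-_{\mathfrak C}\cong (\rho_j)^\ast T_{\phi(D)}\mathcal C(\tilde{\mathcal D})^-_{\mathfrak C}$ of $\mathbb C^\ast$-representations (the sign of a weight is unchanged by $\rho_j$). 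Therefore $\Phi^\ast$ sends $e_{\mathbb T}(T_{\phi(D)}\mathcal C(\tilde{\mathcal D})^-_{\mathfrak C})$ to $e_{\mathbb T}(T_D\mathcal C(\mathcal D)^-_{\mathfrak C})$ up to the automorphism $\varphi_j$ of $H_{\mathbb T}^\ast(\operatorname{pt})$ from Corollary~\ref{cor:MatchingOfEqMultTautHW}.

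With these translations in hand, the three conditions are verified one by one, using $\iota_D^\ast\circ\Phi^\ast=\Psi_j\circ\iota_{\phi(D)}^\ast$ where $\Psi_j$ denotes the action of $\rho_j$ on $H_{\mathbb T}^\ast(\operatorname{pt})$ (pulling back along $\Phi$ at a fixed point is the same as pulling back along the point map $\phi$ and then twisting coefficients by $\rho_j$). Condition \ref{item:Normalization}: $\iota_D^\ast\Phi^\ast(\mathrm{Stab}_{\mathfrak C}(\phi(D)))=\Psi_j(\iota_{\phi(D)}^\ast\mathrm{Stab}_{\mathfrak C}(\phi(D)))=\Psi_j(e_{\mathbb T}(T_{\phi(D)}\mathcal C(\tilde{\mathcal D})^-_{\mathfrak C}))=e_{\mathbb T}(T_D\mathcal C(\mathcal D)^-_{\mathfrak C})$ by the tangent space identification just discussed. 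Condition \ref{item:Support}: $\mathrm{Stab}_{\mathfrak C}(\phi(D))$ is supported on the closed subvariety $\operatorname{Attr}_{\mathfrak C}^f(\phi(D))$, and since $\Phi$ is an isomorphism of varieties carrying $\operatorname{Attr}_{\mathfrak C}^f(D)$ onto $\operatorname{Attr}_{\mathfrak C}^f(\phi(D))$, the pullback $\Phi^\ast(\mathrm{Stab}_{\mathfrak C}(\phi(D)))$ is supported on $\operatorname{Attr}_{\mathfrak C}^f(D)$. Condition \ref{item:Smallness}: for $q\prec_{\mathfrak C} D$ we have $\phi(q)\prec_{\mathfrak C}\phi(D)$, so $\iota_{\phi(q)}^\ast\mathrm{Stab}_{\mathfrak C}(\phi(D))$ is divisible by $h$; applying $\Psi_j$, which fixes $h$, we get that $\iota_q^\ast\Phi^\ast(\mathrm{Stab}_{\mathfrak C}(\phi(D)))$ is divisible by $h$ as well. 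All three conditions hold, so uniqueness in Theorem~\ref{thm:existenceStableEnvelopes} forces $\Phi^\ast(\mathrm{Stab}_{\mathfrak C}(\phi(D)))=\mathrm{Stab}_{\mathfrak C}(D)$.

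I expect the only genuinely delicate point to be the careful bookkeeping of the $\rho_j$-twist: one must be consistent about whether ``$\mathfrak C$'' on the two sides of $\Phi$ refers to the same chamber, and must check that $\rho_j$ preserves genericity of cocharacters and preserves the sign of every non-central $\mathbb A$-weight (so that $T^-$ is matched with $T^-$, not accidentally with $T^+$); these follow from the explicit form of $\rho_j$ in Proposition~\ref{prop:HWTransition}, but they are exactly the places where the argument could go wrong if stated carelessly. Everything else is a formal transport-of-structure argument along the isomorphism $\Phi$.
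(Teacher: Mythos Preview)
Your proposal is correct and follows essentially the same approach as the paper: verify the three defining conditions \ref{item:Normalization}, \ref{item:Support}, \ref{item:Smallness} for the pulled-back class and invoke uniqueness. The paper's proof is terser---it simply observes that $\Phi$ is $\mathbb A$-equivariant (which is immediate since $\rho_j$ restricts to the identity on $\mathbb A\subset\mathbb T$), deduces $\Phi^{-1}(\operatorname{Attr}_{\mathfrak C}(\phi(D)))=\operatorname{Attr}_{\mathfrak C}(D)$ for the support and normalization conditions, and notes $\Phi^\ast(h)=h$ for smallness---whereas you spell out the $\rho_j$-bookkeeping more explicitly, but the substance is identical.
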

\begin{proof}
The smallness condition is immediate from $\Phi^\ast(h)=h$. As $\Phi$ is $\mathbb A$-equivariant, we have
\begin{equation}\label{eq:ProofOFHWStableEnvelopeMatching}
\Phi^{-1}(\operatorname{Attr}_{\mathfrak C}(\phi(D)))=\operatorname{Attr}_{\mathfrak C}(D),\quad\textup{for all $D\in\mathrm{Tie}(\mathcal D)$}
\end{equation}
which implies the support condition. In addition,~\eqref{eq:ProofOFHWStableEnvelopeMatching} gives
\[
e_{\mathbb T}(T_D\mathcal C(\mathcal D)_{\mathfrak C}^{-})
= 
\iota_D^\ast(\overline{\operatorname{Attr}}_{\mathfrak C}(D)
=\iota_D^\ast (\Phi^\ast(\overline{\operatorname{Attr}}_{\mathfrak C}(\phi(D)))
\]
which proves the normalization condition.
\end{proof}

\section{Stable bases in the separated case}\label{section:StabSpearatedCase}
In this section, we recall some convenient properties of stable basis elements of bow varieties corresponding to separated brane diagrams from~\cite{botta2023mirror}. In this reference, the authors work in the framework of elliptic cohomology. As explained in e.g.~\cite{wehrhanphd}, the same results also hold in torus equivariant cohomology.

\begin{assumption}
For this section, we assume that $\mathcal D$ is a fixed separated brane diagram.
\end{assumption}

\subsection{Forgetting chargeless lines}\label{subsection:ForgettingChargelessLines}

We call a colored line $Y$ in $\mathcal D$ \textit{chargeless} if $d_{Y^+}=d_{Y^-}$. That is, the horizontal lines to the left and to the right of $Y$ are labeled by the same number. If $Y$ is not chargeless then $Y$ is called \textit{essential}. If all colored lines of $\mathcal D$ are essential we also call $\mathcal D$ \textit{essential}. 

Let $\mathrm{ess}(\mathcal D)$ be the brane diagram obtained from $\mathcal D$ by forgetting all chargeless lines. That is for all chargeless lines in $\mathcal D$ we perform the local moves:
\[
\begin{tikzpicture}[scale=.4]
\draw[thick] (0,0)--(2,0);
\draw[thick] (3,0)--(5,0);

\draw[thick,red] (3,1)--(2,-1);

\node at (1,0.6) {$d$};
\node at (4,0.6) {$d$};

\draw[-to] decorate [decoration=zigzag] {(6,0) -- (8,0)};
\draw[thick] (9,0)--(11,0);
\node at (10,0.6) {$d$};

\node at (15,0) {\textup{respectively}};

\def \s{5};

\draw[thick] (14+\s,0)--(16+\s,0);
\draw[thick] (17+\s,0)--(19+\s,0);

\draw[thick,blue] (17+\s,-1)--(16+\s,1);

\node at (15+\s,0.6) {$d$};
\node at (18+\s,0.6) {$d$};

\draw[-to] decorate [decoration=zigzag] {(20+\s,0) -- (22+\s,0)};
\draw[thick] (23+\s,0)--(25+\s,0);
\node at (24+\s,0.6) {$d$};
\end{tikzpicture}
\]

For instance if $\mathcal D=0
\textcolor{red}{\slash}2
\textcolor{red}{\slash}2
\textcolor{red}{\slash}4
\textcolor{red}{\slash}5
\textcolor{blue}{\backslash}5
\textcolor{blue}{\backslash}4
\textcolor{blue}{\backslash}2
\textcolor{blue}{\backslash}0
$
then $\mathrm{ess}(\mathcal D)=0\textcolor{red}{\slash}2
\textcolor{red}{\slash}4
\textcolor{red}{\slash}5
\textcolor{blue}{\backslash}4
\textcolor{blue}{\backslash}2
\textcolor{blue}{\backslash}0$.

Note that in a tie diagram $D\in \mathrm{Tie}(\mathcal D)$ there is no tie which is connected to a chargeless line. Thus, forgetting the chargeless lines gives a bijection
\[
f\colon \mathrm{Tie}(\mathcal D)\xrightarrow{\phantom{x}\sim\phantom{x}} \mathrm{Tie}(\mathrm{ess}(\mathcal D)).
\]
We also view $\mathcal C(\mathrm{ess}(\mathcal D))$ as $\mathbb T$-variety, where the components of $\mathbb A$ corresponding to chargeless lines act trivially on $\mathcal C(\mathrm{ess}(\mathcal D))$.

The localization coefficients of stable basis elements of $\mathcal C(\mathcal D)$ and $\mathcal C(\mathrm{ess}(\mathcal D))$ are closely connected, see \cite[Section~5.10]{botta2023mirror} or \cite[Proposition~8.44]{wehrhanphd}:

\begin{prop}\label{prop:EqMultiplicitiesEssentialBraneDiagram}
The following holds:
\begin{enumerate}[label=(\roman*)]
\item There is a $\mathbb T$-equivariant closed immersion $\iota\colon\mathcal C( \mathrm{ess}(\mathcal D))\hookrightarrow \mathcal C(\mathcal D)$.
\item The $\mathbb T$-equivariant cohomology class of the normal bundle of $\iota$ equals $[N_\iota]$, where
\[
N_\iota =\bigoplus_{U\in\mathrm{b}'(\mathcal D)}\Big(\operatorname{Hom}(\mathbb C_U,\xi_{U^-})\oplus h\operatorname{Hom}(\xi_{U^+},\mathbb C_U)\Big)
\]
and $\mathrm{b}'(\mathcal D)$ denotes the set of chargeless blue lines in $\mathcal D$.
\item We have
\[
\iota_{D'}^\ast \mathrm{Stab}_{\mathfrak C}(D)=e_{\mathbb T}(N_{\iota,\mathfrak C}^-) \iota_{f(D')}^\ast(\mathrm{Stab}_{\mathfrak C}(f(D))),
\]
for all $D$, $D'\in \mathrm{Tie}(\mathcal D')$, where $N_{\iota,\mathfrak C}^-$ is the negative part of $N_\iota$ with respect to $\mathfrak C$.
\end{enumerate}
\end{prop}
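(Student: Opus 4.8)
\emph{Strategy and parts (i), (ii).} I would realize $\iota$ as a composition of embeddings, each removing a single chargeless line, and analyze that basic move. If the removed line is a chargeless \emph{red} line, the defining equations together with $\chi$-stability force (after possibly passing to the dual map) an isomorphism between the two adjacent, equidimensional black spaces; using it to put that map equal to the identity merges the two spaces, the moment-map relation at the merged black line eliminates the other red map, and one reads off an isomorphism $\mathcal C(\mathcal D)\cong\mathcal C(\mathrm{ess}(\mathcal D))$ — so chargeless red lines contribute nothing. If the removed line is a chargeless \emph{blue} line $U$, then $A_U$ is an isomorphism on $\mathrm{tri}_U$ (as in \cite[Lemma~2.18]{takayama2016nahm}); putting $A_U=\mathrm{id}$ breaks the two adjacent $\mathrm{GL}$-factors to their diagonal, the moment map expresses $B_U^-$ through the remaining data, and the leftover triangle data $(B_U^+,a_U,b_U)$ cuts out the closed subvariety $\mathcal C(\mathrm{ess}(\mathcal D))\hookrightarrow\mathcal C(\mathcal D)$, which is (i). Reading $N_\iota$ off the deformation complex of the GIT quotient, the loop $B_U^+$ cancels against the extra moment-map constraint together with the residual $\mathrm{GL}$-factor, leaving $a_U\in\operatorname{Hom}(\mathbb C_U,\xi_{U^-})$ and $b_U\in h\operatorname{Hom}(\xi_{U^+},\mathbb C_U)$; summing over chargeless blue lines gives the formula for $N_\iota$ in (ii), and $\operatorname{rk}_{\mathbb C}N_\iota=\sum_U(d_{U^-}+d_{U^+})$ matches the dimension count. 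By Corollary~\ref{cor:PropertiesTautologicalBundlesSeparated} the bundles $\xi_{U^\pm}$ are topologically trivial (and isomorphic to each other) in the separated case, so $N_\iota$ is a trivial $\mathbb T$-bundle of the form $\mathcal V\oplus h\mathcal V^{\vee}$ whose weights come in pairs with sum $h$ (namely $t_{U_k}-t_U-\ell h$ and $t_U-t_{U_k}+(\ell+1)h$); hence for every chamber exactly one member of each pair lies in $N^+_{\iota,\mathfrak C}$, so $\operatorname{rk}N^+_{\iota,\mathfrak C}=\operatorname{rk}N^-_{\iota,\mathfrak C}$ and $e_{\mathbb T}(N^{\pm}_{\iota,\mathfrak C})$ are honest nonzerodivisors in $H^\ast_{\mathbb T}(\operatorname{pt})$.

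\emph{Part (iii): the candidate.} All $\mathbb T$-fixed points of $\mathcal C(\mathcal D)$ lie in $\iota(\mathcal C(\mathrm{ess}(\mathcal D)))$, and I would prove (iii) by showing that
\[
\gamma_D\ :=\ e_{\mathbb T}(N^+_{\iota,\mathfrak C})^{-1}\cdot\iota_\ast\big(\mathrm{Stab}_{\mathfrak C}(x_{f(D)})\big)\ \in\ H^\ast_{\mathbb T}(\mathcal C(\mathcal D))_{\mathrm{loc}}
\]
equals $\mathrm{Stab}_{\mathfrak C}(x_D)$, via the characterization in Theorem~\ref{thm:existenceStableEnvelopes}. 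The geometric facts needed use only that $\mathcal C(\mathrm{ess}(\mathcal D))$ is $\mathbb T$-invariant and closed in $\mathcal C(\mathcal D)$: for a fixed point $p$, the attracting cell of $p$ in $\mathcal C(\mathcal D)$ meets $\mathcal C(\mathrm{ess}(\mathcal D))$ precisely in the attracting cell of $p$ there, so $\preceq_{\mathfrak C}$ on $\mathcal C(\mathrm{ess}(\mathcal D))$ is contained in $\preceq_{\mathfrak C}$ on $\mathcal C(\mathcal D)$ and $\iota$ carries $\operatorname{Attr}^f_{\mathfrak C}(x_{f(D)})$ into $\operatorname{Attr}^f_{\mathfrak C}(x_D)$; and $T_{x_D}\mathcal C(\mathcal D)=T_{x_{f(D)}}\mathcal C(\mathrm{ess}(\mathcal D))\oplus N_\iota$ as $\mathbb T$-modules, hence compatibly with the $\mathfrak C$-positive/negative splitting.

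\emph{Part (iii): verification.} From $\iota^\ast\iota_\ast(-)=e_{\mathbb T}(N_\iota)\cup(-)$ and the triviality of $N_\iota$ one gets, for every fixed $D'$, that $\iota_{D'}^\ast\gamma_D=e_{\mathbb T}(N^-_{\iota,\mathfrak C})\cdot\iota_{f(D')}^\ast\mathrm{Stab}_{\mathfrak C}(x_{f(D)})$. At $D'=D$ this equals, by the normalization on $\mathcal C(\mathrm{ess}(\mathcal D))$ and the tangent splitting, $e_{\mathbb T}(N^-_{\iota,\mathfrak C})\cdot e_{\mathbb T}(T_{x_{f(D)}}\mathcal C(\mathrm{ess}(\mathcal D))^-_{\mathfrak C})=e_{\mathbb T}(T_{x_D}\mathcal C(\mathcal D)^-_{\mathfrak C})$, giving \ref{item:Normalization}. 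Since $\mathrm{Stab}_{\mathfrak C}(x_{f(D)})$ is supported on $\operatorname{Attr}^f_{\mathfrak C}(x_{f(D)})$, its pushforward is supported on the closed subvariety $\iota(\operatorname{Attr}^f_{\mathfrak C}(x_{f(D)}))\subseteq\operatorname{Attr}^f_{\mathfrak C}(x_D)$, which is \ref{item:Support}. For \ref{item:Smallness}, if $q'\prec x_D$ then $\iota_{q'}^\ast\gamma_D=e_{\mathbb T}(N^-_{\iota,\mathfrak C})\cdot\iota_{f(q')}^\ast\mathrm{Stab}_{\mathfrak C}(x_{f(D)})$ is divisible by $h$: if $q'\preceq x_{f(D)}$ this is the smallness on $\mathcal C(\mathrm{ess}(\mathcal D))$ (and $q'\ne x_{f(D)}$), otherwise the second factor already vanishes by the support condition there. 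By uniqueness $\gamma_D=\mathrm{Stab}_{\mathfrak C}(x_D)$, and the restriction formula above is exactly (iii).

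\emph{Main obstacle.} The genuinely nontrivial input is that $\gamma_D$ is an \emph{honest} class, i.e.\ that $e_{\mathbb T}(N^+_{\iota,\mathfrak C})$ divides $\iota_\ast\mathrm{Stab}_{\mathfrak C}(x_{f(D)})$ in the unlocalized $H^\ast_{\mathbb T}(\mathcal C(\mathcal D))$, which must be known before Theorem~\ref{thm:existenceStableEnvelopes} applies (the cohomological degree of $\gamma_D$ is already correct, thanks to $\operatorname{rk}N^+_{\iota,\mathfrak C}=\operatorname{rk}N^-_{\iota,\mathfrak C}$). I would obtain this by deformation to the normal cone: degenerating $\iota$ to the zero section of $\operatorname{Tot}(N_\iota)\to\mathcal C(\mathrm{ess}(\mathcal D))$ — which, $N_\iota$ being topologically trivial and of the form $\mathcal V\oplus h\mathcal V^{\vee}$, carries a symplectic $\mathbb T$-structure to which the theory of stable envelopes applies — reduces the claim to the vector-bundle case, where the corresponding identity $\mathrm{Stab}_{\mathfrak C}(x_D)=e_{\mathbb T}(N^-_{\iota,\mathfrak C})\cdot\pi^\ast\mathrm{Stab}_{\mathfrak C}(x_{f(D)})$ is readily checked and visibly integral; transporting this through the constant family $\mathcal C(\mathrm{ess}(\mathcal D))\times\mathbb A^1$ yields integrality of $\gamma_D$ (indeed (iii) directly). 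This is the step carried out carefully in \cite[Section~5.10]{botta2023mirror} and \cite{wehrhanphd}; everything else in the argument is formal.
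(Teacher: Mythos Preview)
The paper does not give its own proof of this proposition; it simply cites \cite[Section~5.10]{botta2023mirror} and \cite{wehrhanphd} for the argument. Your proposal is a faithful outline of that standard approach---analyzing chargeless lines one at a time to build the embedding and read off $N_\iota$, then verifying that the normalized pushforward $e_{\mathbb T}(N^+_{\iota,\mathfrak C})^{-1}\iota_\ast\mathrm{Stab}_{\mathfrak C}(x_{f(D)})$ satisfies the three axioms of Theorem~\ref{thm:existenceStableEnvelopes}---and you correctly isolate the integrality of $\gamma_D$ as the only substantive point, which is exactly what those references handle.

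One small comment: in your treatment of chargeless \emph{red} lines you assert an isomorphism of bow varieties, which is correct, but note that the proposition as stated only records the contribution of chargeless \emph{blue} lines to $N_\iota$; this is consistent with your claim that the red contribution is trivial, but it is worth saying explicitly that the formula in (ii) reflects this asymmetry (red chargeless lines give an isomorphism, blue chargeless lines give a genuine closed embedding with the stated normal bundle).
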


It follows from Corollary~\ref{cor:PropertiesTautologicalBundlesSeparated} that  $N_\iota$ is trivial with character
\begin{equation}\label{eq:NiotaCharacter}
N_\iota = \Big( \bigoplus_{U_j\in \mathrm{b}'(\mathcal D)} \bigoplus_{k=j+1}^{N} \bigoplus_{l=0}^{c_k-1} h^{-l} \mathbb C_{U_k}\otimes \mathbb C_{U_j}^\vee  \Big)
\oplus
\Big( \bigoplus_{U_j\in \mathrm{b}'(\mathcal D)} \bigoplus_{k=j+1}^{N} \bigoplus_{l=0}^{c_k} h^{l+1} \mathbb C_{U_j}\otimes \mathbb C_{U_k}^\vee  \Big),
\end{equation}
where the left summand corresponds to $\operatorname{Hom}(\mathbb C_U,\xi_{U^-})$ whereas the right summand corresponds to $h\operatorname{Hom}(\xi_{U^+},\mathbb C_U)$. In the above formula, $\mathbb C_{U_k}^\vee$ denotes the dual bundle of $\mathbb C_{U_k}$.

If we choose $\mathfrak C=\mathfrak C_-$ to be the antidominant chamber then \eqref{eq:NiotaCharacter} implies 
\[
N_{\iota,\mathfrak C_-}^+= \bigoplus_{U\in\mathrm{b}'(\mathcal D)}\operatorname{Hom}(\mathbb C_U,\xi_{U^-}),\quad N_{\iota,\mathfrak C_-}^- = \bigoplus_{U\in\mathrm{b}'(\mathcal D)} h\operatorname{Hom}(\xi_{U^+},\mathbb C_U).
\]
Thus, we have
\[
e_{\mathbb T}(N_{\iota,\mathfrak C_-}^+)= \prod_{U_j\in \mathrm{b}'(\mathcal D)} \prod_{k=j+1}^{N} \prod_{l=0}^{c_k-1}(t_k-t_j-lh),\quad
e_{\mathbb T}(N_{\iota, \mathfrak C_-}^-)= \prod_{U_j\in \mathrm{b}'(\mathcal D)} \prod_{k=j+1}^{N} \prod_{l=0}^{c_k-1}(t_j-t_k+(l+1)h)
\]
which determines the normalization factor in Proposition~\ref{prop:EqMultiplicitiesEssentialBraneDiagram} for the antidominant chamber.

\subsection{Compatibility with the symmetric group action}
In this subsection, we assume that $\mathcal D$ is essential as defined in the previous subsection. 

Given a tie diagram $D\in\mathrm{Tie}(\mathcal D)$ and two blue lines $U$, $U'\in\mathrm{b}(\mathcal D)$. Then, swapping the blue lines $U$, $U'$ with their connected ties gives a new tie diagram over a brane diagram that possibly differs from $\mathcal D$:
\begin{center}
\begin{tikzpicture}[scale=.35]
\draw[blue, thick] (1,-1)--(0,1);
\draw[blue, thick] (5,-1)--(4,1);
\draw[dotted] (1,0) -- (4,0);
\node at (1,-1.7) {$U$};
\node at (5,-1.7) {$U'$};

\draw[dashed] (0,1) to[out=125, in=0] (-2,2);
\draw[dashed] (0,1) to[out=123, in=0] (-2,2.2);
\draw[dotted] (-2,2.3) -- (-2,2.9);
\draw[dashed] (0,1) to[out=120, in=0] (-2,3);

\draw[decorate, decoration = {brace}] (-2.5,2) -- (-2.5,3);
\node at (-3.5,2.5) {$c$};

\draw[dashed] (4,1) to[out=120, in=0] (-2,4);
\draw[dashed] (4,1) to[out=117, in=0] (-2,4.2);
\draw[dotted] (-2,4.3) -- (-2,4.9);
\draw[dashed] (4,1) to[out=115, in=0] (-2,5);

\draw[decorate, decoration = {brace}] (-2.5,4) -- (-2.5,5);
\node at (-3.5,4.5) {$c'$};

\draw[-to] decorate [decoration=zigzag] {(6,0) -- (8,0)};

\def\s{11.5};

\draw[dashed] (4+\s,1) to[out=135, in=0] (-2+\s,2);
\draw[dashed] (4+\s,1) to[out=133, in=0] (-2+\s,2.2);
\draw[dotted] (-2+\s,2.3) -- (-2+\s,2.9);
\draw[dashed] (4+\s,1) to[out=130, in=0] (-2+\s,3);

\draw[blue, thick] (1+\s,-1)--(0+\s,1);
\draw[blue, thick] (5+\s,-1)--(4+\s,1);
\draw[dotted] (1+\s,0) -- (4+\s,0);

\node at (1+\s,-1.7) {$U'$};
\node at (5+\s,-1.7) {$U$};

\draw[decorate, decoration = {brace}] (-2.5+\s,2) -- (-2.5+\s,3);
\node at (-3.5+\s,2.5) {$c$};

\draw[dashed] (0+\s,1) to[out=115, in=0] (-2+\s,4);
\draw[dashed] (0+\s,1) to[out=113, in=0] (-2+\s,4.2);
\draw[dotted] (-2+\s,4.3) -- (-2+\s,4.9);
\draw[dashed] (0+\s,1) to[out=110, in=0] (-2+\s,5);

\draw[decorate, decoration = {brace}] (-2.5+\s,4) -- (-2.5+\s,5);
\node at (-3.5+\s,4.5) {$c'$};

\end{tikzpicture}
\end{center}
This gives $S_N$-actions on the sets
\[
B_N\coloneqq \{\textup{Speatated brane diagrams $\mathcal D$}\mid |\mathrm{b}(U)|=N\}\quad 
\textup{and}\quad
\bigcup_{\mathcal D\in B_N}\mathrm{Tie}(\mathcal D).
\]
Given a permutation $w\in S_N$ then $w.\mathcal D$ is the separated brane diagram with $M$ red lines, $N$ blue lines and the numbers on the horizontal lines are given as
\[
d_{X_i}(w.\mathcal D)=d_{X_i}(\mathcal D) 
,\quad i=1,\ldots,M+1,\quad 
d_{X_{M+j}}(w.\mathcal D)=\sum_{l=j}^{N} c_{w^{-1}(l)}(\mathcal D),\quad
j=1,\ldots,N+1.
\]
By construction, $\textbf r(w.\mathcal D)=\textbf r(\mathcal D)$ and $\textbf c(w.\mathcal D)=(c_{w^{-1}(1)}(\mathcal D),\ldots,c_{w^{-1}(N)}(\mathcal D))$, where $\textbf r, \textbf c$ are the margin vectors from Subsection~\ref{subsection:BraneDiagrams}. Likewise, if $D\in\mathrm{Tie}(\mathcal D)$ we define $w.D\in \mathrm{Tie}(w.\mathcal D)$ via
\[
w.D=\bigcup_{(V_i,U_j)\in D}\{(V_i,U_{w(j)})\}.
\]
Pictorially, the action is given by moving each blue line $U_i$ with its attached ties to the position of $U_{w(i)}$. 

\begin{example} Consider the following tie diagram $D$ with underlying brane diagram $\mathcal D$:
\begin{center}
\begin{tikzpicture}[scale=.4]
\foreach \i in {0,...,8}
{
\draw[thick] (3*\i,0) -- (3*\i+2,0);
}
\foreach \i in {1,...,4}
{
\draw[thick, red] (3*\i,1) -- (3*\i-1,-1);
}
\foreach \i in {5,...,8}
{
\draw[thick, blue] (3*\i,-1) -- (3*\i-1,1);
}
\draw[dashed] (3,1) to[out=45,in=135] (3*5-1,1);
\draw[dashed] (6,1) to[out=45,in=135] (3*5-1,1);
\draw[dashed] (9,1) to[out=45,in=135] (3*5-1,1);

\draw[dashed] (6,1) to[out=45,in=135] (3*6-1,1);
\draw[dashed] (9,1) to[out=45,in=135] (3*6-1,1);

\draw[dashed] (3,1) to[out=45,in=135] (3*7-1,1);
\draw[dashed] (12,1) to[out=45,in=135] (3*7-1,1);

\draw[dashed] (6,1) to[out=45,in=135] (3*8-1,1);

\node at (1,0.5) {$0$};
\node at (4,0.5) {$2$};
\node at (7,0.5) {$5$};
\node at (10,0.5) {$7$};
\node at (13,0.5) {$8$};
\node at (16,0.5) {$5$};
\node at (19,0.5) {$3$};
\node at (22,0.5) {$1$};
\node at (25,0.5) {$0$};
\end{tikzpicture}
\end{center}
Let $w=3142\in S_4$. To obtain the tie diagram $w.D$, we permute the blue lines with the attached ties according to $w$, i.e. the blue line $U_1$ is moved with its three attached ties to the position of $U_3$ and so on. The respective labels of the horizontal lines of $w.\mathcal D$ can then be easily determined by counting the number of ties above the horizontal lines:
\begin{center}
\begin{tikzpicture}[scale=.4]
\foreach \i in {0,...,8}
{
\draw[thick] (3*\i,0) -- (3*\i+2,0);
}
\foreach \i in {1,...,4}
{
\draw[thick, red] (3*\i,1) -- (3*\i-1,-1);
}
\foreach \i in {5,...,8}
{
\draw[thick, blue] (3*\i,-1) -- (3*\i-1,1);
}
\draw[dashed] (3,1) to[out=45,in=135] (3*7-1,1);
\draw[dashed] (6,1) to[out=45,in=135] (3*7-1,1);
\draw[dashed] (9,1) to[out=45,in=135] (3*7-1,1);

\draw[dashed] (6,1) to[out=45,in=135] (3*5-1,1);
\draw[dashed] (9,1) to[out=45,in=135] (3*5-1,1);

\draw[dashed] (3,1) to[out=45,in=135] (3*8-1,1);
\draw[dashed] (12,1) to[out=45,in=135] (3*8-1,1);

\draw[dashed] (6,1) to[out=45,in=135] (3*6-1,1);

\node at (1,0.5) {$0$};
\node at (4,0.5) {$2$};
\node at (7,0.5) {$5$};
\node at (10,0.5) {$7$};
\node at (13,0.5) {$8$};
\node at (16,0.5) {$6$};
\node at (19,0.5) {$5$};
\node at (22,0.5) {$2$};
\node at (25,0.5) {$0$};
\end{tikzpicture}
\end{center}
\end{example}

Stable basis elements satisfy the following compatibility relation with this $S_N$-action, see \cite[Proposition~6.18]{botta2023mirror} and \cite[Theroem~9.20]{wehrhanphd}:

\begin{theorem}\label{thm:ComparisonOfChambers}
For $D$, $D'\in\mathrm{Tie}(\mathcal D)$ and $w\in S_N$, we have
\[
e_{\mathbb T}(N^-_{\mathcal D,\mathfrak C})\iota_{D'}^\ast(\mathrm{Stab}_{\mathfrak C}(D))
=
w^{-1}.( e_{\mathbb T}(N^-_{w.\mathcal D,w.\mathfrak C}) \iota_{w.D'}^\ast(\mathrm{Stab}_{w.\mathfrak C}(w.D))).
\]
Here, $w\in S_N$ acts on $\mathbb Q[t_1,\ldots,t_N,h]$ via $w.h=h$ and $w.t_i=t_{w(i)}$ for $i=1,\ldots,N$ and $N_{\mathcal D,\mathfrak C}^-$ (resp.~ $N_{w.\mathcal D,w.\mathfrak C}^-$) is the negative part of the trivial $\mathbb T$-equivariant bundle $N_{\mathcal D}$ (resp.~$N_{w.\mathcal D}$) defined in the remark below.
\end{theorem}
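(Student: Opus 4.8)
The plan is to pin down the right-hand side by the characterising properties of stable envelopes and then invoke uniqueness. Fix $w\in S_N$ and, for each $D\in\mathrm{Tie}(\mathcal D)$, let $\gamma_D\in H_{\mathbb T}^\ast(\mathcal C(\mathcal D))_{\mathrm{loc}}$ be the class with
\[
\iota_{D'}^\ast(\gamma_D)=e_{\mathbb T}(N^-_{\mathcal D,\mathfrak C})^{-1}\cdot w^{-1}.\bigl(e_{\mathbb T}(N^-_{w.\mathcal D,w.\mathfrak C})\,\iota_{w.D'}^\ast(\mathrm{Stab}_{w.\mathfrak C}(w.D))\bigr)\qquad(D'\in\mathrm{Tie}(\mathcal D)),
\]
which is well defined since $H_{\mathbb T}^\ast(\mathcal C(\mathcal D))_{\mathrm{loc}}\cong\bigoplus_{D'}H_{\mathbb T}^\ast(\operatorname{pt})_{\mathrm{loc}}$ by localization and the $e_{\mathbb T}(N^-_\bullet)$ are invertible there. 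The claim is exactly $\gamma_D=\mathrm{Stab}_{\mathfrak C}(D)$, so by Theorem~\ref{thm:existenceStableEnvelopes} it is enough to check that $(\gamma_D)_D$ satisfies \ref{item:Normalization}, \ref{item:Support} and \ref{item:Smallness}; in particular one must first show that each $\gamma_D$ is an honest, non-localized class.

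The properties \ref{item:Normalization}, \ref{item:Support} and \ref{item:Smallness} themselves are the soft part. For \ref{item:Normalization} one needs the identity of $\mathbb T$-characters $[T_D\mathcal C(\mathcal D)]+[N_{\mathcal D}]=w^{-1}.\bigl([T_{w.D}\mathcal C(w.\mathcal D)]+[N_{w.\mathcal D}]\bigr)$ --- that the $N$-stabilized tangent space at a fixed point is $S_N$-equivariant --- which should follow directly from the definition of the constant bundle $N_{\mathcal D}$ in the remark below; taking negative parts with respect to $\mathfrak C$ resp. $w.\mathfrak C$ and passing to Euler classes gives \ref{item:Normalization}. For \ref{item:Support} and \ref{item:Smallness} I would show that $D\mapsto w.D$ carries the attracting order $\preceq_{\mathfrak C}$ on $\mathrm{Tie}(\mathcal D)$ to $\preceq_{w.\mathfrak C}$ on $\mathrm{Tie}(w.\mathcal D)$ --- a statement about attracting cells and their closures, compatible with the blue-line $S_N$-action --- so that $\iota_{w.D'}^\ast(\mathrm{Stab}_{w.\mathfrak C}(w.D))$ vanishes unless $D'\preceq_{\mathfrak C}D$ and is divisible by $h$ once $D'\prec_{\mathfrak C}D$; the divisibility transports to $\gamma_D$ because $w^{-1}$ fixes $h$ and, as one reads off from the explicit character of $N_{\mathcal D}$, the factors $e_{\mathbb T}(N^-_\bullet)$ are prime to $h$.

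The actual obstacle is to show that $\gamma_D\in H_{\mathbb T}^\ast(\mathcal C(\mathcal D))$ and not merely in its localization, and for this I would use the geometry of the separated case. By the equivariant resolution theorem of Botta and Rim\'anyi~\cite[Theorem~6.13]{botta2023mirror} --- in the same spirit as Proposition~\ref{prop:EqMultiplicitiesEssentialBraneDiagram} --- the normalized equivariant multiplicity $e_{\mathbb T}(N^-_{\mathcal D,\mathfrak C})\iota_{D'}^\ast(\mathrm{Stab}_{\mathfrak C}(D))$ equals an equivariant multiplicity of a stable envelope of the cotangent bundle $T^\ast F$ of a partial flag variety, namely the one attached to the separated brane diagram $\widetilde{\mathcal D}$ obtained from $\mathcal D$ by splitting each blue line of charge $c$ into $c$ blue lines of charge $1$, with the torus of $T^\ast F$ restricted to $\mathbb T$ along a suitable subtorus compatible with the tautological restriction formula \eqref{eq:RestrictionTautologicalBundles}, and with $N_{\mathcal D}$ identified with the negative part of the relative tangent bundle of the resolution. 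Since reordering charge-$1$ blue lines does not change a separated brane diagram, $\widetilde{\mathcal D}=\widetilde{w.\mathcal D}$, so both $e_{\mathbb T}(N^-_{\mathcal D,\mathfrak C})\iota_{D'}^\ast(\mathrm{Stab}_{\mathfrak C}(D))$ and $w^{-1}.\bigl(e_{\mathbb T}(N^-_{w.\mathcal D,w.\mathfrak C})\iota_{w.D'}^\ast(\mathrm{Stab}_{w.\mathfrak C}(w.D))\bigr)$ are pulled back from equivariant multiplicities of stable envelopes of one and the same $T^\ast F$, differing only by the permutation $\widetilde w$ of the equivariant parameters of $T^\ast F$ that reorders the charge-blocks. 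On $T^\ast F$ the required equality $\widetilde w^{-1}.\iota_{\widetilde w.y}^\ast(\mathrm{Stab}_{\widetilde w.\mathfrak C}(\widetilde w.y'))=\iota_y^\ast(\mathrm{Stab}_{\mathfrak C}(y'))$ is the familiar invariance of stable envelopes under relabelling of the equivariant parameters, which is visible from Su's localization formula~\cite{su2017stable}; dividing by $e_{\mathbb T}(N^-_{\mathcal D,\mathfrak C})$ shows that $\gamma_D$ is geometric and completes the verification of \ref{item:Normalization}--\ref{item:Smallness}, hence the theorem. The most delicate bookkeeping here is to match $e_{\mathbb T}(N^-_{\mathcal D,\mathfrak C})$ with the Euler class of the negative relative tangent bundle of the resolution and to check that the lift of the blue-line $S_N$-action to the charge-block permutation $\widetilde w$ on $T^\ast F$ is compatible, through the resolution, both with the fixed-point matching $D\mapsto w.D$ and with the identification of $N_{\mathcal D}$.
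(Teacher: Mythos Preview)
The paper does not contain its own proof of this statement; it is quoted from \cite[Proposition~6.18]{botta2023mirror} (there in elliptic cohomology, with a different normalization) and from the author's thesis. So there is no in-paper argument to compare against, only the cited sources.

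Your overall idea --- reduce via resolution to a single $T^\ast F$ where the $S_N$-action is just relabelling of equivariant parameters --- is essentially the Botta--Rim\'anyi approach, and it is the right one. But the framing through the uniqueness theorem is redundant scaffolding. Once your resolution argument shows that both $e_{\mathbb T}(N^-_{\mathcal D,\mathfrak C})\iota_{D'}^\ast(\mathrm{Stab}_{\mathfrak C}(D))$ and $w^{-1}.\bigl(e_{\mathbb T}(N^-_{w.\mathcal D,w.\mathfrak C})\iota_{w.D'}^\ast(\mathrm{Stab}_{w.\mathfrak C}(w.D))\bigr)$ equal the same equivariant multiplicity on $T^\ast F$, the theorem is proved outright; there is no need to repackage the right-hand side as a class $\gamma_D$ and re-verify \ref{item:Normalization}--\ref{item:Smallness}. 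Worse, your check of \ref{item:Support} is not really a check: $\mathcal C(\mathcal D)$ and $\mathcal C(w.\mathcal D)$ are \emph{different} varieties, so ``$D\mapsto w.D$ carries $\preceq_{\mathfrak C}$ to $\preceq_{w.\mathfrak C}$'' is a combinatorial statement about two fixed-point sets, not a statement about attracting cells on $\mathcal C(\mathcal D)$, and vanishing of restrictions at fixed points $D'\not\preceq_{\mathfrak C}D$ does not by itself give the geometric support condition for $\gamma_D$.

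There is also a genuine circularity hazard in your use of the resolution theorem. In this paper, Theorem~\ref{thm:EquivariantResolution} is stated only for the antidominant chamber $\mathfrak C_-$, and the passage from $\mathfrak C_-$ to a general chamber is carried out precisely via Theorem~\ref{thm:ComparisonOfChambers} (see e.g.\ the proofs of Corollary~\ref{cor:DivisibilityResultOppositeChamber} and Theorem~\ref{thm:CMArbitraryChamber}). So to avoid assuming what you want to prove, you must invoke a chamber-general form of the resolution theorem directly from \cite{botta2023mirror}, and make explicit that it is established there independently of the comparison statement; the ``most delicate bookkeeping'' you flag at the end --- matching $e_{\mathbb T}(N^-_{\mathcal D,\mathfrak C})$ with the Euler class of the negative relative tangent bundle and tracking the block permutation $\widetilde w$ through the fixed-point matching --- is then the entire content of the proof, not an afterthought.
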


\begin{remark} 
Set \begin{equation}\label{eq:NormalBundleResolutionDefinition}
N_{\mathcal D}\coloneqq
\Big( \bigoplus_{j=1}^N \bigoplus_{l=1}^{c_j-1} h^{l}(\xi_{U_j^+}\otimes \mathbb C_{U_j}^\vee) \Big) 
\oplus
\Big( \bigoplus_{j=1}^N \bigoplus_{l=1}^{c_j-1} h^{1-l}(\mathbb C_{U_j}\otimes \xi_{U_j^+}^\vee) \Big). 
\end{equation}
As explained in \cite[Corollary~6.6]{botta2023mirror}, the bow variety $\mathcal C(\mathcal D)$ can be $\mathbb T$-equivariantly embedded into the partial flag variety $T^\ast F(R_1,\ldots,R_M;n)$. As shown in \cite[Corollary~9.53]{wehrhanphd}, the $\mathbb T$-equivariant K-theory class of the normal bundle of this embedding is a sum of classes of trivial bundles. Its positive (resp.~negative) part with respect to any choice of chamber equals the positive (resp.~negative) part of $[N_{\mathcal D}]$.
\end{remark}

Recall from Corollary~\ref{cor:PropertiesTautologicalBundlesSeparated} that for all $U_j\in\mathrm{b}(\mathcal D)$, we have an isomorphism of $\mathbb T$-equivariant vector bundles $
\xi_{U_j^+} \cong \bigoplus_{i=j+1}^N  \bigoplus_{l=0}^{c_i-1} h^{-l} \mathbb C_{U_i}$. Thus, the positive (resp.~negative) part of $N_{\mathcal D}$ with respect to a choice of chamber $\mathfrak C$ can be easily read off from the definition. For instance, if $\mathfrak C$ equals the antidominant chamber $\mathfrak C_-$ then $N_{\mathcal D,\mathfrak C_-}^\pm$ are given as follows:

\begin{prop}
We have
\[
N_{\mathcal D,\mathfrak C_-}^+= \bigoplus_{j=1}^N \bigoplus_{i=j+1}^N \bigoplus_{l=1}^{c_j-1} \bigoplus_{k=0}^{c_i-1} h^{l-k} (\mathbb C_{U_i}\otimes \mathbb C_{U_j}^\vee),
\quad
N_{\mathcal D,\mathfrak C_-}^-=
\bigoplus_{j=1}^N \bigoplus_{i=j+1}^N \bigoplus_{l=1}^{c_j-1} \bigoplus_{k=0}^{c_i-1}
h^{k-l+1}
(\mathbb C_{U_j}\otimes \mathbb C_{U_i}^\vee).
\]
\end{prop}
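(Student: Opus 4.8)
The plan is to substitute the explicit description of the tautological bundles $\xi_{U_j^-}$ from Corollary~\ref{cor:PropertiesTautologicalBundlesSeparated} into the definition \eqref{eq:NormalBundleResolutionDefinition} of $N_{\mathcal D}$, expand, and then read off the positive and negative parts by inspecting the $\mathbb A$-weights of the resulting weight lines. There is no conceptual content beyond bookkeeping.

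Concretely, first I would recall that, $\mathcal D$ being separated, Corollary~\ref{cor:PropertiesTautologicalBundlesSeparated} provides a $\mathbb T$-equivariant isomorphism $\xi_{U_j^-}\cong\bigoplus_{i=j+1}^N\bigoplus_{k=0}^{c_i-1}h^{-k}\mathbb C_{U_i}$, and dualizing, $\xi_{U_j^-}^\vee\cong\bigoplus_{i=j+1}^N\bigoplus_{k=0}^{c_i-1}h^{k}\mathbb C_{U_i}^\vee$. Substituting both into \eqref{eq:NormalBundleResolutionDefinition} and distributing $\otimes$ over $\oplus$ yields
\[
N_{\mathcal D}\cong\Big(\bigoplus_{j=1}^N\bigoplus_{i=j+1}^N\bigoplus_{l=1}^{c_j-1}\bigoplus_{k=0}^{c_i-1}h^{l-k}(\mathbb C_{U_i}\otimes\mathbb C_{U_j}^\vee)\Big)\oplus\Big(\bigoplus_{j=1}^N\bigoplus_{i=j+1}^N\bigoplus_{l=1}^{c_j-1}\bigoplus_{k=0}^{c_i-1}h^{k-l+1}(\mathbb C_{U_j}\otimes\mathbb C_{U_i}^\vee)\Big),
\]
where the first parenthesis collects the contributions of the summand $h^l(\xi_{U_j^-}\otimes\mathbb C_{U_j}^\vee)$ in \eqref{eq:NormalBundleResolutionDefinition} and the second those of $h^{1-l}(\mathbb C_{U_j}\otimes\xi_{U_j^-}^\vee)$.

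Finally I would compute weights: a generic cocharacter $\sigma\in\mathfrak C_-$ satisfies $\sigma_1<\dots<\sigma_N$ and leaves $h$ fixed, so the $\sigma$-weight of a weight line $h^a(\mathbb C_{U_i}\otimes\mathbb C_{U_j}^\vee)$ equals $\sigma_i-\sigma_j$, which is $>0$ exactly when $i>j$. In the first parenthesis every index satisfies $i\geq j+1>j$, so every weight line lies in $N_{\mathcal D,\mathfrak C_-}^+$; in the second parenthesis each weight line $h^a(\mathbb C_{U_j}\otimes\mathbb C_{U_i}^\vee)$ with $i\geq j+1$ has $\sigma$-weight $\sigma_j-\sigma_i<0$, hence lies in $N_{\mathcal D,\mathfrak C_-}^-$. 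Since $i\neq j$ always, no weight line has vanishing $\sigma$-weight, so the two parentheses are precisely $N_{\mathcal D,\mathfrak C_-}^+$ and $N_{\mathcal D,\mathfrak C_-}^-$, which is exactly the asserted formula. The only point that requires care is the routine expansion of the $h$-exponents and index ranges, together with the standard fact that the $\pm$-decomposition of a $\mathbb T$-module with respect to a chamber depends only on the $\mathbb A$-weights, since chambers lie in the cocharacter lattice of $\mathbb A$.
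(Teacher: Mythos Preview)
Your proof is correct and follows essentially the same approach as the paper: substitute the explicit weight decomposition of $\xi_{U_j^-}$ from Corollary~\ref{cor:PropertiesTautologicalBundlesSeparated} into \eqref{eq:NormalBundleResolutionDefinition} and read off the sign of each $\mathbb A$-weight with respect to $\mathfrak C_-$. One small caveat: Corollary~\ref{cor:PropertiesTautologicalBundlesSeparated} actually gives $\xi_{U_j^-}\cong\bigoplus_{i=j}^{N}\bigoplus_{k=0}^{c_i-1}h^{-k}\mathbb C_{U_i}$ (the sum starts at $i=j$, not $i=j+1$), so your displayed expansion is not literally $N_{\mathcal D}$ but only its nonzero-$\mathbb A$-weight part; since the omitted $i=j$ terms all have vanishing $\mathbb A$-weight they contribute to neither $N_{\mathcal D,\mathfrak C_-}^+$ nor $N_{\mathcal D,\mathfrak C_-}^-$, and the conclusion stands.
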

%
\begin{proof} By Corollary~\ref{cor:PropertiesTautologicalBundlesSeparated}, all weights of $\xi_{U_j^-}\otimes \mathbb C_{U_j}^\vee$ are non-negative. Hence, only the second summand of \eqref{eq:NormalBundleResolutionDefinition} contributes to $N_{\mathcal D}^-$. Then, inserting $
\xi_{U_j^+} \cong \bigoplus_{i=j+1}^N  \bigoplus_{l=0}^{c_i-1} h^{-l} \mathbb C_{U_i}$ proves the proposition.
\end{proof}

Because of Theorem~\ref{thm:ComparisonOfChambers}, it is sometimes more convenient to work with the following normalized version of stable basis elements:

\begin{notation} With the above notation, we set
\begin{equation}\label{equation:DefinitionStabTilde} 
\widetilde{\mathrm{Stab}}_{\mathfrak C}(D)\coloneqq e_{\mathbb T}(N_{\mathcal D,\mathfrak C}^-)\mathrm{Stab}_{\mathfrak C}(D),\quad\textup{for all $D\in\mathrm{Tie}(\mathcal D)$.}
\end{equation}
\end{notation}

In the special case $\mathcal D=\mathcal D(d_1,\ldots,d_m;n)$ is as in Subsection~\ref{subsection:DepBraneDiagPartialFlag}, we have $c_1=\ldots=c_n=1$. Thus, by definition, $N_{\mathcal D}$ is the trivial bundle of rank zero which yields $\widetilde{\mathrm{Stab}}_{\mathfrak C}(D)=\mathrm{Stab}_{\mathfrak C}(D)$ in this case.

\begin{remark} In \cite[Proposition~6.18]{botta2023mirror}, Rim\'anyi and Botta prove a version Theorem~\ref{thm:ComparisonOfChambers} in the framework of elliptic cohomology with a different normalization factor.
\end{remark}

\section{Stable bases of cotangent bundles of flag varieties}

In this section, we recall the localization formula for stable basis elements of cotangent bundles of flag varieties from \cite[Theorem~1.1]{su2017restriction}, see also \cite{su2017stable}. Then, we give an equivalent reformulation of this formula using the language of diagrammatic calculus of symmetric groups.

\subsection{Reminders on symmetric groups} 

We denote the simple transpositions of $S_n$ by $s_1,\ldots,s_{n-1}$, where $s_i=(i,i+1)$.
Every permutation can be written as $
w=\sigma_1\cdots\sigma_{r},
$ where all $\sigma_i$ are simple transpositions. If $r$ is as small as possible, we call the expression $\sigma_1\cdots\sigma_{r}$ for $w$ \textit{reduced} and we call $r$ the \textit{length} of $w$ and denote it by $\ell(w)$.

It is well-known that $\ell(w)$ is equal to the number of inversions of $w$:
\begin{equation}\label{eq:LengthVSInversions}
\ell(w)= |\mathrm{Inv}(w)|,
\quad
\mathrm{Inv}(w)=\{(i,j)\mid 1\le i <j\le n, w(i)>w(j)\}.
\end{equation}

By definition, a permutation $w$ is larger than a permutation $w'$ in the \textit{Bruhat order} if some (not necessarily a consecutive) subword of a reduced expression for $w$ is a reduced word for $w'$. 
It is a well-known fact that if $w$ dominates $w'$ in the Bruhat order then every reduced expression for $w$ admits a subword which is a reduced expression for $w'$, see e.g.~\cite[Theorem~5.10]{humphreys1992reflection}.

Let $R^+=\{t_i-t_j\mid 1\le i<j\le n \}\subset \mathbb Q[t_1,\ldots,t_n]$ be the set of positive roots and $R^-=\{t_i-t_j\mid 1\le j<i\le n \}\subset  \mathbb Q[t_1,\ldots,t_n]$ the set of negative roots. By \eqref{eq:LengthVSInversions}, we have
\begin{equation}\label{eq:BruhatPositiveToNegative}
\ell(w)=|\{\alpha\in R^+ \mid w.\alpha\in R^-\}|.
\end{equation}
The set on the right hand side of \eqref{eq:BruhatPositiveToNegative} can also be characterized as follows: for $s=s_i$ we denote by $\alpha_{s}=t_i-t_{i+1}$ the corresponding simple root. Given a reduced expression $w=\sigma_1\cdots\sigma_{\ell(w)}$, we set
\begin{equation}\label{eq:DefinitionBeta}
\beta_i\coloneqq \sigma_1\cdots\sigma_{i-1}(\alpha_{\sigma_i}),\quad i=1,\ldots,\ell(w).
\end{equation}
Then, by e.g.~\cite[Section 5.6]{humphreys1992reflection}, we have
\begin{equation}\label{eq:SetOfBetasIndependentFromReducedExpression}
\{\beta_1,\ldots,\beta_{\ell(w)}\}= \{\alpha\in R^+ \mid w^{-1}.\alpha\in R^-\}.
\end{equation}

\begin{example}\label{example:Beta}
Let $n=5$ and $w=35412$. Then, $\ell(w)=7$ and $w=s_4s_2s_1s_3s_2s_4s_3\eqqcolon\sigma_1\cdots\sigma_7$ is a reduced expression of $w$. The corresponding $\beta_i$ are recorded in the following table:
\begin{center}
\begin{tabular}{|c||c|c|c|c|c|c|c|}
\hline
$i$ & $1$ & $2$ & $3$ & $4$ & $5$ & $6$ & $7$\\
\hline
$\beta_i$ & $t_4-t_5$ & $t_2-t_3$ & $t_1-t_3$ & $t_2-t_5$ & $t_1-t_5$ & $t_2-t_4$ & $t_1-t_4$\\
\hline
\end{tabular}
\end{center}
\end{example}

\subsection{Diagrammatics of permutations}\label{subsection:DiagrammaticsOfPermutations}

We illustrate permutations in the common way using string diagrams as follows: a \textit{strand} is a smooth embedding $\lambda\colon [0,1]\rightarrow \mathbb R^2$.

\begin{definition} 
Let $w\in S_n$ be a permutation. A collection $\lambda_1,\ldots,\lambda_n$ of $n$ strands is called a \textit{diagram of $w$} if the following holds:
\begin{enumerate}[label=(\roman*)]
\item $\lambda_i(0)=(i,0)$ and $\lambda_i(1)=(w(i),1)$ for all $i=1,\ldots,n$,
\item every two strands intersect only in finitely many points and all of these intersections are transversal,
\item there are no triple or even higher intersections among $\lambda_1,\ldots,\lambda_n$. 
\end{enumerate}
A diagram is called \textit{reduced} if the number of intersections among $\lambda_1,\ldots,\lambda_n$ is equal to $\ell(w)$.
\end{definition}

We call the intersection of two strands a \textit{crossing}. Given a diagram $d_w$ of a permutation $w$, we denote by $K(d_w)$ its set of crossings. If the second coordinates of all crossings of $d_w$ are pairwise distinct, we denote the crossings of $d_w$ by $\kappa_1,\ldots,\kappa_{\ell(w)}$ where $\kappa_1$ is the crossing with the highest second coordinate, $\kappa_2$ is the crossing with the second highest second coordinate and so on.

Given a crossing $\kappa\in K(d_w)$ we refer to the local move
\begin{center}
\begin{tikzpicture}
\draw (0.6,-1) to[out=90,in=-90] (0,0);
\draw (0,-1) to[out=90,in=-90] (0.6,0);
\node at (0.6,-0.5) {$\kappa$};
\draw [-to] decorate [decoration=zigzag] {(1,-0.5) -- (2,-0.5)};
\draw (2.2,-1) to[out=90,in=-90] (2.2,0);
\draw (2.7,-1) to[out=90,in=-90] (2.7,0);
\end{tikzpicture}
\end{center}
as \textit{resolving} of $\kappa$.

By definition, $w$ is larger than a permutation $w'\in S_n$ in the Bruhat order if and only if we can obtain a diagram for $w'$ by resolving crossing from $d_w$.

If $d_w$ is a reduced diagram such that all crossings of $d_w$ have pairwise distinct second coordinate then , by viewing $d_w$ as stacking of diagrams corresponding to simple transpositions, we can easily read off a reduced expression for $w$ from $d_w$.

\begin{example}\label{example:ReducedDiagrams} Let $w\in S_5$ be as in Example~\ref{example:Beta}. Then, a reduced diagram $d_w$ of $w$ is given by
\[
\begin{tikzpicture}[scale=.55]
\coordinate (B1) at (0,-5);
\coordinate (B2) at (1,-5);
\coordinate (B3) at (2,-5);
\coordinate (B4) at (3,-5);
\coordinate (B5) at (4,-5);

\node[below=0 of B1] {$1$};
\node[below=0 of B2] {$2$};
\node[below=0 of B3] {$3$};
\node[below=0 of B4] {$4$};
\node[below=0 of B5] {$5$};

\coordinate (T1) at (0,0);
\coordinate (T2) at (1,0);
\coordinate (T3) at (2,0);
\coordinate (T4) at (3,0);
\coordinate (T5) at (4,0);

\node[above=0 of T1] {$1$};
\node[above=0 of T2] {$2$};
\node[above=0 of T3] {$3$};
\node[above=0 of T4] {$4$};
\node[above=0 of T5] {$5$};

\draw (B1) to[out=90,in=-90] (T3);
\draw (B2) to[out=90,in=-90] (T5);
\draw (B3) to[out=90,in=-90] (3.8,-2.2);
\draw (3.8,-2.2) to[out=90,in=-90] (T4);
\draw (B4) to[out=90,in=-90] (T1);
\draw (B5) to[out=90,in=-90] (T2);
\end{tikzpicture}
\]
The diagram corresponds to the reduced expression $s_4s_2s_1s_3s_2s_4s_3$.
\end{example}

For given $w\in S_n$ with reduced diagram $d_w$, define a function
\[
\mathrm{wt}\colon K(d_w)\xrightarrow{\phantom{xxx}} \mathbb Q[t_1,\ldots,t_n]
\]
as follows: let $\kappa$ be a crossing between the strands $\lambda$ and $\lambda'$. Let $j$ resp. $j'$ be the endpoints of $\lambda$ resp. $\lambda'$. Assuming $j<j'$, we set
\[
\mathrm{wt}(\kappa)\coloneqq t_{j}-t_{j'}.
\]
We call $\mathrm{wt}(\kappa)$ the \textit{weight} of $\kappa$.

The next proposition gives that the weights of crossings correspond exactly to the $\beta_i$ from the previous subsection:

\begin{prop}\label{prop:WeightVSBeta} Suppose all crossings of $d_w$ have pairwise distinct second coordinate and let $w=\sigma_1\cdots\sigma_{\ell(w)}$ be the reduced expression corresponding to $d_w$. Then, we have
\[
\mathrm{wt}(\kappa_i)=\beta_{i},\quad \textit{for all }i=1,\ldots,\ell(w).
\]
\end{prop}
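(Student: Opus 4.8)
The plan is to read the weight of each crossing directly off the top endpoints of the two strands meeting there. Write $\sigma_i=s_{m_i}$. First I would make precise the stacking description behind the phrase \textit{``the reduced expression corresponding to $d_w$''}: up to isotopy, $d_w$ is the vertical concatenation of the elementary diagrams of $\sigma_1,\dots,\sigma_{l(w)}$, with the $\sigma_1$-block on top and the $\sigma_{l(w)}$-block at the bottom, and under this convention $\kappa_i$ is exactly the (unique) crossing of the $\sigma_i$-block; this is the convention illustrated in Example~\ref{example:ReducedDiagrams}. Sweeping a strand upward from bottom position $p$, after passing the bottommost $k$ blocks it sits at position $\sigma_{l(w)-k+1}\cdots\sigma_{l(w)}(p)$; at the very top it is at $\sigma_1\cdots\sigma_{l(w)}(p)=w(p)$, which pins down all the conventions (in particular: composition on the left as one moves up).

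Next I would locate the two top endpoints of the strands crossing at $\kappa_i$. Inside the $\sigma_i$-block these two strands occupy the positions $m_i$ and $m_i+1$, so immediately above that block they are at positions $m_i$ and $m_i+1$. By the bottom-to-top composition above, a strand sitting at position $q$ immediately above the $\sigma_i$-block reaches the top at position $u_i(q)$, where $u_i:=\sigma_1\cdots\sigma_{i-1}$ (indeed $w(p)=u_i\cdot(\sigma_i\cdots\sigma_{l(w)})(p)$, and $(\sigma_i\cdots\sigma_{l(w)})(p)$ is precisely the position of that strand just above the $\sigma_i$-block). Hence the two strands meeting at $\kappa_i$ end at the top positions $u_i(m_i)$ and $u_i(m_i+1)$.

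Finally I would conclude. By the definition of $\mathrm{wt}$, $\mathrm{wt}(\kappa_i)$ is the unique positive root among $\pm\bigl(t_{u_i(m_i)}-t_{u_i(m_i+1)}\bigr)$. Using $w.t_a=t_{w(a)}$ we have
\[
t_{u_i(m_i)}-t_{u_i(m_i+1)}=u_i\bigl(t_{m_i}-t_{m_i+1}\bigr)=u_i(\alpha_{\sigma_i})=\beta_i
\]
by~\eqref{eq:DefinitionBeta}, and $\beta_i\in R^+$ by~\eqref{eq:SetOfBetasIndependentFromReducedExpression}; therefore $\mathrm{wt}(\kappa_i)=\beta_i$. (As an alternative, one can run an induction on $l(w)$ by deleting the topmost crossing $\kappa_1$: it has weight $\alpha_{\sigma_1}=\beta_1$, and deleting it produces a reduced diagram of $\sigma_1 w=\sigma_2\cdots\sigma_{l(w)}$ in which the two top endpoints $m_1$ and $m_1+1$ get swapped, so every remaining crossing weight changes exactly by applying $\sigma_1=s_{m_1}$; this matches the recursion $\beta_{i+1}=\sigma_1\cdot\beta'_{i}$ relating the $\beta$'s of $w$ to those of $\sigma_1 w$.)

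I expect the only genuinely delicate part to be the bookkeeping of conventions in the first two steps — which block of the stacking sits where, and on which side simple transpositions compose as one moves upward — so that the identification of $\kappa_i$ with the $\sigma_i$-block and the endpoint formula ``top position $=u_i(\text{block position})$'' carry the correct indices. Once these are fixed, the weight identity is the one-line computation above, and no input beyond~\eqref{eq:DefinitionBeta}, \eqref{eq:SetOfBetasIndependentFromReducedExpression}, and isotopy invariance of string diagrams is needed.
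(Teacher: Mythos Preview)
Your proof is correct and follows essentially the same approach as the paper: both arguments view $d_w$ as a vertical stacking of blocks, identify the two top endpoints of the strands meeting at $\kappa_i$ as $u_i(m_i)$ and $u_i(m_i+1)$ (in the paper's notation $w_1(j)$ and $w_1(j+1)$), and conclude $\mathrm{wt}(\kappa_i)=u_i(\alpha_{\sigma_i})=\beta_i$. You are slightly more explicit about the positivity check ($\beta_i\in R^+$ via~\eqref{eq:SetOfBetasIndependentFromReducedExpression}) needed to pin down the sign in the definition of $\mathrm{wt}$, which the paper leaves implicit; your parenthetical induction is a genuine alternative but not needed.
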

\begin{proof} For given $i\in\{1,\ldots,\ell(w)\}$, set $w_1\coloneqq \sigma_1\cdots\sigma_{i-1}$, $w_2\coloneqq \sigma_{i+1}\cdots\sigma_{\ell(w)}$ and $\sigma_i=(j,j+1)$. After applying a homotopy, we can view $d_w$ as a stacking of a reduced diagram of $w_2$, a reduced diagram of $\sigma_i$ and a reduced diagram of $w_1$:
\[
\begin{tikzpicture}
\node[rectangle, draw, text = black,minimum width=2.8cm,minimum height=0.8cm] (w1) at (0,0) {$w_1$};
\draw (-0.2,-0.4) to[out=-90,in=90] (0.2,-1);
\draw (0.2,-0.4) to[out=-90,in=90] (-0.2,-1);

\draw (1.2,-1) -- (1.2,-0.4);
\draw (-1.2,-1) -- (-1.2,-0.4);
\draw (-0.7,-1) -- (-0.7,-0.4);
\draw (0.7,-1) -- (0.7,-0.4);

\node[rectangle, draw, text = black,minimum width=2.8cm,minimum height=0.8cm] (w2) at (0,-1.4) {$w_2$};
\node at (0.4,-0.7) {$\kappa_i$};
\draw[dotted] (-1.1,-0.7)--(-0.8,-0.7);
\draw[dotted] (1.1,-0.7)--(0.8,-0.7);
\end{tikzpicture}
\]
Thus, we have $
\mathrm{wt}(\kappa_i)=t_{w_1(j)}-t_{w_1(j+1)}=w_1.\alpha_{\sigma_i}=\beta_i
$ which completes the proof.
\end{proof}

\subsection{Localization formula}
Let $F=F(1,2,\ldots,n-1;n)$ be the full flag variety of $\mathbb C^n$ endowed with the $\mathbb T$-action from Subsection~\ref{subsec:TFlagAsBow}. For $w\in S_n$, we also denote the $\mathbb T$-fixed point $(\mathcal F_{w},0)$ just by $w$.

The localization formula from \cite[Theorem~1.1]{su2017restriction} determines the $\mathbb T$-localization coefficients of the stable basis elements of $T^\ast F$ with respect to the antidominant chamber $\mathfrak C_-$.

\begin{theorem}[Localization formula]\label{thm:SuLocalizationFormula}
Let $w\in S_n$ and $w=\sigma_1\sigma_2\cdots \sigma_{\ell(w)}$ a reduced expression. Then, for all $w'\in S_n$, we have
\[
\iota_{w}^\ast (\mathrm{Stab}_{\mathfrak C_-}(w')) = \Big( \prod_{(i,j)\in L_w} (t_i-t_j+h) \Big)
\Big(
\sum_{\substack{1\le i_1 < \dots < i_k\le \ell(w)\\ w'=\sigma_{i_1}\cdots\sigma_{i_k}}} h^{\ell(w)-k}\prod_{j=1}^k \beta_{i_j}
\Big),
\]
where the $\beta_i$ are defined as in \eqref{eq:DefinitionBeta} and
\[ 
L_w=R^+ \setminus \{\alpha\in R^+\mid w^{-1}.\alpha\in R^-\}= \{\alpha\in R^+\mid \textit{ $\alpha \ne \beta_l$ for all $l$}\}.
\]
\end{theorem}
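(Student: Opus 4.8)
The plan rests on the fact, already signalled above, that the statement is a restatement of Su's restriction formula \cite[Theorem~1.1]{su2017restriction} (see also \cite{su2017stable}), so the shortest route is to \emph{deduce} it from \emph{loc.\ cit.} by matching conventions: Su's formula has precisely this ``Billey-type'' shape --- a product over a distinguished set of roots times a sum over subwords of a fixed reduced word for the point one restricts \emph{to}, each subword weighted by a power of $h$ --- and identifying his combinatorial data with the roots $\beta_i$ of \eqref{eq:DefinitionBeta} is exactly \eqref{eq:SetOfBetasIndependentFromReducedExpression}; the same identity shows, a posteriori, that the right-hand side does not depend on the chosen reduced expression for $w$ once one knows the left-hand side does not. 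The only real care needed is with the sign and polarization conventions: here all signs in \ref{item:Normalization} are normalized to $+1$ and we work with $\mathfrak C_-$, whereas \emph{loc.\ cit.} may use a different chamber and polarization, and the passage between the two affects only an overall sign.

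If a self-contained argument is preferred, I would invoke uniqueness in Theorem~\ref{thm:existenceStableEnvelopes}: writing $R_w(w')$ for the claimed right-hand side, it suffices to show that $(R_w(w'))_{w\in S_n}$ is the tuple of fixed-point restrictions of a class satisfying \ref{item:Normalization}, \ref{item:Support}, \ref{item:Smallness}. One first checks that this tuple lies in the image of $H_{\mathbb T}^\ast(T^\ast F)\hookrightarrow\bigoplus_w H_{\mathbb T}^\ast(\mathrm{pt})$, i.e.\ satisfies the congruences $R_w(w')\equiv R_{ws_\alpha}(w')\bmod(t_i-t_j)$ along each edge $w\to ws_\alpha$ of the moment graph; this is the moment graph of $F$ itself, since $H_{\mathbb T}^\ast(T^\ast F)\cong H_{\mathbb A}^\ast(F)\otimes_{\mathbb Q}\mathbb Q[h]$, and the congruences come from the behaviour of the $\beta_i$ and of $\prod_{\alpha\in L_w}(\alpha+h)$ under right multiplication of $w$ by a simple reflection. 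Normalization is then immediate: for $w'=w$ the only subword of the reduced word evaluating to $w$ is the full word, so $R_w(w)=\big(\prod_{\alpha\in L_w}(\alpha+h)\big)\prod_{i=1}^{l(w)}\beta_i$; using $\{\beta_1,\dots,\beta_{l(w)}\}=R^+\cap wR^-$ from \eqref{eq:SetOfBetasIndependentFromReducedExpression} and $L_w=R^+\setminus\{\beta_i\}=R^+\cap wR^+$, a direct count of the torus weights of $T_w(T^\ast F)$ identifies $\prod_i\beta_i$ with the Euler class of the negative part of $T_wF$ and $\prod_{\alpha\in L_w}(\alpha+h)$ with that of the negative part of the cotangent fibre, so $R_w(w)=e_{\mathbb T}(T_w(T^\ast F)^-_{\mathfrak C_-})$. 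Smallness is equally cheap: if $w'\prec w$ then $w'\neq w$, hence every subword of the reduced word evaluating to $w'$ has length $k\leq l(w)-1$, so each summand of $R_w(w')$ carries a factor $h^{l(w)-k}$ with $l(w)-k\geq 1$.

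The remaining content --- the support condition, together with the GKM consistency check above --- is where the real work lies. For \ref{item:Support} one observes that $R_w(w')=0$ unless some subword of a reduced word for $w$ multiplies to $w'$, i.e.\ unless $w'\leq w$ in the Bruhat order; since $\preceq_{\mathfrak C_-}$ on $(T^\ast F)^{\mathbb T}\cong S_n$ is the opposite of the Bruhat order, this is exactly what supportedness on $\operatorname{Attr}^f_{\mathfrak C_-}(w')$ predicts on fixed points, and one upgrades it to the genuine support statement using that $T^\ast F$ is filtered by the affine attracting cells, so that a class vanishing at every fixed point outside a closed union of full attracting cells is supported there. I expect this upgrade and the GKM check to be the main obstacle, the normalization and smallness steps being purely mechanical; of course both obstacles are bypassed entirely by simply quoting \cite[Theorem~1.1]{su2017restriction} and performing only the notational translation of the first paragraph.
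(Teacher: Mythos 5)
Your first paragraph is exactly what the paper does: Theorem~\ref{thm:SuLocalizationFormula} is not proved in the paper but quoted from \cite[Theorem~1.1]{su2017restriction}, with the identification of Su's combinatorial data with the $\beta_i$ via \eqref{eq:SetOfBetasIndependentFromReducedExpression} and the sign/normalization conventions of \ref{item:Normalization} handled implicitly, so your proposal matches the paper's treatment. The additional self-contained sketch via GKM consistency and the uniqueness of Theorem~\ref{thm:existenceStableEnvelopes} goes beyond what the paper attempts and is not needed for the comparison.
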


\begin{example}\label{example:SuLocalizationFormula}
Let $n=5$ and consider the permutations
$w=35412$ and $w'=23415$. To compute the localization coefficient $\iota_w^\ast(\mathrm{Stab}_{\mathfrak C_-}(w'))$, we choose, as in Example~\ref{example:Beta}, $w=s_4s_2s_1s_3s_2s_4s_3$ as reduced expression for $w$.
Checking all possible subwords of this expression for $w$ yields that there are only two subwords that give $w'$, namely $\sigma_1\sigma_3\sigma_5\sigma_6\sigma_7$ and $\sigma_3\sigma_5\sigma_7$. 
We already computed the $\beta_i$ in Example~\ref{example:Beta}. Our computation implies $L_w=\{(t_1-t_2),(t_3-t_4),(t_3-t_5)\}$. Hence, Theorem~\ref{thm:SuLocalizationFormula} yields
\begin{equation}\label{eq:ExampleStableEnvelopeViaSusFormula}
\iota_w^\ast(\mathrm{Stab}_{\mathfrak C_-}(w'))=(t_1-t_2+h)(t_3-t_4+h)(t_3-t_5+h)\cdot h^2(\beta_1\beta_6+h^2)\beta_3\beta_5\beta_7.
\end{equation}
\end{example}

\subsection{Diagrammatic localization formula} 
Employing the diagrammatics related to permutations from Subsection~\ref{subsection:DiagrammaticsOfPermutations} leads to the following diagrammatic version of Theorem~\ref{thm:SuLocalizationFormula}:

\begin{prop}[Diagrammatic localization formula]\label{prop:SuLocalizationFormulaDiagrammatic}
Let $w\in S_n$ and $d_w$ a reduced diagram of $w$. Then, for all $w'\in S_n$, we have
\[
\iota_{w}^\ast (\mathrm{Stab}_{\mathfrak C_-}(w')) = \Big( \prod_{\alpha\in L_w'} (\alpha+h) \Big)
\Big(
\sum_{K'\in K_{d_w,w'}} h^{|K'|}\prod_{\kappa\in K(d_w)\setminus K'}\mathrm{wt}(\kappa)
\Big),
\]
where $ K_{d_w,w'}$ is the set of all subsets $K'\subset K(d_w)$ such that resolving all crossings of $K'$ from $d_w$ gives a diagram for $w'$ and 
\[ 
L_w'=\{\alpha\in R^+ \mid \textit{$\alpha \ne \mathrm{wt}(\kappa)$ for all $\kappa\in K(d_w)$}\}.
\]
\end{prop}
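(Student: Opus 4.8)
The plan is to deduce Proposition~\ref{prop:SuLocalizationFormulaDiagrammatic} from Theorem~\ref{thm:SuLocalizationFormula} by a purely combinatorial translation, using the diagrammatic apparatus of Subsection~\ref{subsection:DiagrammaticsOfPermutations}. First I would reduce to the case where all crossings of $d_w$ have pairwise distinct $y$-coordinates: any reduced diagram can be brought into this form by a small ambient homotopy that does not change the set of crossings, the weights $\mathrm{wt}(\kappa)$, nor the collection $K_{d_w,w'}$ (resolving a crossing is a local move insensitive to the $y$-coordinates). So assume $d_w$ has distinct crossing heights, and let $w=\sigma_1\cdots\sigma_{l(w)}$ be the reduced expression read off from $d_w$, with crossings $\kappa_1,\ldots,\kappa_{l(w)}$ ordered by decreasing height, so that $\kappa_i$ corresponds to the letter $\sigma_i$.

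The heart of the argument is a bijection between the two indexing sets appearing in the formulas. On the one hand, Theorem~\ref{thm:SuLocalizationFormula} sums over tuples $1\le i_1<\cdots<i_k\le l(w)$ with $w'=\sigma_{i_1}\cdots\sigma_{i_k}$; equivalently, over subwords of the fixed reduced word $\sigma_1\cdots\sigma_{l(w)}$ that spell $w'$. On the other hand, Proposition~\ref{prop:SuLocalizationFormulaDiagrammatic} sums over subsets $K'\subseteq K(d_w)$ such that resolving every crossing in $K'$ yields a diagram for $w'$. Given a subword indexed by $\{i_1,\ldots,i_k\}$, I assign the subset $K'=\{\kappa_i : i\notin\{i_1,\ldots,i_k\}\}$ of crossings to be resolved: since $d_w$ is the stacking of the elementary diagrams for $\sigma_1,\ldots,\sigma_{l(w)}$, resolving the crossings outside $\{i_1,\ldots,i_k\}$ produces exactly the diagram that is the stacking of the $\sigma_{i_j}$'s, which is a diagram for $\sigma_{i_1}\cdots\sigma_{i_k}=w'$; conversely any $K'$ with this property arises this way, because after resolving $K'$ the remaining diagram is read as a word in the surviving simple transpositions, and that word equals $w'$ precisely when the surviving indices form a valid subword. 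Under this bijection $k=l(w)-|K'|$, so $h^{l(w)-k}=h^{|K'|}$, and $\prod_{j=1}^k\beta_{i_j}=\prod_{i\in\{i_1,\ldots,i_k\}}\mathrm{wt}(\kappa_i)=\prod_{\kappa\in K(d_w)\setminus K'}\mathrm{wt}(\kappa)$ by Proposition~\ref{prop:WeightVSBeta}. This matches the two sums term by term.

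It remains to identify the prefactors. By Proposition~\ref{prop:WeightVSBeta} the multiset $\{\mathrm{wt}(\kappa_i)\}_{i=1}^{l(w)}$ coincides with $\{\beta_1,\ldots,\beta_{l(w)}\}$, and by~\eqref{eq:SetOfBetasIndependentFromReducedExpression} the latter equals $\{\alpha\in R^+\mid w^{-1}.\alpha\in R^-\}$; since in a reduced diagram no two crossings share the same weight (the $\beta_i$ are distinct positive roots), the set $\{\mathrm{wt}(\kappa)\mid\kappa\in K(d_w)\}$ is exactly this set, so $L_w'=R^+\setminus\{\alpha\in R^+\mid w^{-1}.\alpha\in R^-\}=L_w$ as defined in Theorem~\ref{thm:SuLocalizationFormula}. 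Hence $\prod_{\alpha\in L_w'}(\alpha+h)=\prod_{(i,j)\in L_w}(t_i-t_j+h)$, and the two formulas agree. The only point requiring genuine care — and the one I expect to be the main obstacle — is verifying cleanly that ``resolving the crossings outside a subword yields a diagram for that subword'' holds at the level of diagrams (not just reduced words), i.e.\ that the passage from the stacked picture to an arbitrary reduced diagram via homotopy is compatible with the resolution operation and with the bijection $K_{d_w,w'}\leftrightarrow\{\text{subwords}\}$; once this bookkeeping is pinned down, everything else is a direct substitution.
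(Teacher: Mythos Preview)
Your proposal is correct and follows essentially the same approach as the paper: reduce to distinct crossing heights, set up the bijection between subwords $\{i_1,\ldots,i_k\}$ and complements $K'=K(d_w)\setminus\{\kappa_{i_1},\ldots,\kappa_{i_k}\}$, invoke Proposition~\ref{prop:WeightVSBeta} to match $\beta_{i_j}$ with $\mathrm{wt}(\kappa_{i_j})$, and conclude $L_w=L_w'$. The paper dispatches the bookkeeping you flag at the end in one sentence by viewing $d_w$ as a stacking of simple-transposition diagrams, exactly as you outlined.
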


\begin{proof} We may assume without loss of generality that the second coordinates of all crossings in $d_w$ are pairwise distinct. Let $1\le i_1<\cdots <i_k\le \ell(w)$ and let $d'$ be the diagram obtained from $d_w$ by resolving all crossings $\kappa_i$ with $i\ne i_1,\ldots,i_k$. By viewing $d_w$ as a stacking of diagrams corresponding to simple transpositions we deduce that $w'=\sigma_{i_1}\cdots\sigma_{i_k}$ if and only if $d'$ is a diagram for $w'$.
Thus, Proposition~\ref{prop:WeightVSBeta} implies
\[
\sum_{K'\in K_{d_w,w'}} h^{|K'|}\prod_{\kappa\in K(d_w)\setminus K'}\mathrm{wt}(\kappa) = \sum_{\substack{1\le i_1 < \dots < i_k\le \ell(w)\\ w'=\sigma_{i_1}\cdots\sigma_{i_k}}} h^{\ell(w)-k}\prod_{j=1}^k \beta_{i_j}.
\]
In addition, Proposition~\ref{prop:WeightVSBeta} also gives $L_w=L_w'$ which completes the proof.
\end{proof}

\begin{example} Let $w$ and $w'$ be as in Example~\ref{example:SuLocalizationFormula} and let $d_w$ be as in Example~\ref{example:ReducedDiagrams}. To compute $\iota_w^\ast(\mathrm{Stab}_{\mathfrak C_-}(w'))$, note that there are just two possibilities to obtain a diagram for $w'$ by resolving crossings from $d_w$. One is given by resolving the crossings $\kappa_2$ and $\kappa_4$. The second one is given by resolving the crossings $\kappa_1$, $\kappa_2$, $\kappa_4$ and $\kappa_6$, in pictures:
\begin{center}
\begin{tikzpicture}[scale=.55]
\coordinate (B1) at (0,-5);
\coordinate (B2) at (1,-5);
\coordinate (B3) at (2,-5);
\coordinate (B4) at (3,-5);
\coordinate (B5) at (4,-5);

\node[below=0 of B1] {$1$};
\node[below=0 of B2] {$2$};
\node[below=0 of B3] {$3$};
\node[below=0 of B4] {$4$};
\node[below=0 of B5] {$5$};

\coordinate (T1) at (0,0);
\coordinate (T2) at (1,0);
\coordinate (T3) at (2,0);
\coordinate (T4) at (3,0);
\coordinate (T5) at (4,0);

\node[above=0 of T1] {$1$};
\node[above=0 of T2] {$2$};
\node[above=0 of T3] {$3$};
\node[above=0 of T4] {$4$};
\node[above=0 of T5] {$5$};

\draw[draw=none, name path=line1] (B1) to[out=90,in=-90] (T3);
\draw[draw=none, name path=line2] (B2) to[out=90,in=-90] (T5);
\draw (B3) to[out=90,in=-90] (3.8,-2.2);
\draw (3.8,-2.2) to[out=90,in=-90] (T4);
\draw (B4) to[out=90,in=-90] (T1);
\draw[draw=none,name path=line5] (B5) to[out=90,in=-90] (T2);

\path[name intersections={of=line1 and line5,by=E}];
\coordinate[left=0.08 of E] (E15l); 
\coordinate[right=0.08 of E] (E15r); 

\path [name intersections={of=line2 and line5,by=E25}];
\coordinate[left=0.08 of E25] (E25l); 
\coordinate[right=0.08 of E25] (E25r);

\draw (B2) to[out=90,in=-90] (E25l);
\draw (B5) to[out=90,in=-90] (E25r);
\draw (E25l) to[out=90,in=-90] (E15r);
\draw (E15r) to[out=90,in=-90] (T3);
\draw (E25r) to[out=90,in=-90] (T5);

\draw (B1) to[out=90,in=-90] (E15l);
\draw (E15l) to[out=90,in=-90] (T2);

\end{tikzpicture}
\qquad
\qquad 
\qquad
\begin{tikzpicture}[scale=.55]
\coordinate (B1) at (0,-5);
\coordinate (B2) at (1,-5);
\coordinate (B3) at (2,-5);
\coordinate (B4) at (3,-5);
\coordinate (B5) at (4,-5);

\node[below=0.1 of B1] {$1$};
\node[below=0.1 of B2] {$2$};
\node[below=0.1 of B3] {$3$};
\node[below=0.1 of B4] {$4$};
\node[below=0.1 of B5] {$5$};

\coordinate (T1) at (0,0);
\coordinate (T2) at (1,0);
\coordinate (T3) at (2,0);
\coordinate (T4) at (3,0);
\coordinate (T5) at (4,0);

\node[above=0.1 of T1] {$1$};
\node[above=0.1 of T2] {$2$};
\node[above=0.1 of T3] {$3$};
\node[above=0.1 of T4] {$4$};
\node[above=0.1 of T5] {$5$};

\draw[draw=none, name path=line1] (B1) to[out=90,in=-90] (T3);
\draw[draw=none, name path=line2] (B2) to[out=90,in=-90] (T5);
\draw[draw=none, name path=line3a] (B3) to[out=90,in=-90] (3.8,-2.2);
\draw[draw=none, name path=line3b] (3.8,-2.2) to[out=90,in=-90] (T4);
\draw (B4) to[out=90,in=-90] (T1);
\draw[draw=none,name path=line5] (B5) to[out=90,in=-90] (T2);

\path[name intersections={of=line1 and line5,by=E}];
\coordinate[left=0.08 of E] (E15l); 
\coordinate[right=0.08 of E] (E15r); 

\path[name intersections={of=line3a and line5,by=E35}];
\coordinate[left=0.08 of E35] (E35l); 
\coordinate[right=0.08 of E35] (E35r);

\path[name intersections={of=line2 and line3b,by=E23}];
\coordinate[left=0.08 of E23] (E23l); 
\coordinate[right=0.08 of E23] (E23r);

\path [name intersections={of=line2 and line5,by=E25}];
\coordinate[left=0.08 of E25] (E25l); 
\coordinate[right=0.08 of E25] (E25r);

\draw (B5) to[out=90,in=-90] (E35r);
\draw (E35r) to[out=90,in=-90] (3.8,-2.2);
\draw (3.8,-2.2) to[out=90,in=-90] (E23r);
\draw (E23r) to[out=90,in=-90] (T5);

\draw (B3) to[out=90,in=-90] (E35l);
\draw (E35l) to[out=90,in=-90] (E25r);
\draw (E25r) to[out=90,in=-90] (E23l);
\draw (E23l) to[out=90,in=-90] (T4);

\draw (B2) to[out=90,in=-90] (E25l);
\draw (E25l) to[out=90,in=-90] (E15r);
\draw (E15r) to[out=90,in=-90] (T3);

\draw (B1) to[out=90,in=-90] (E15l);
\draw (E15l) to[out=90,in=-90] (T2);
\end{tikzpicture}
\end{center}
The diagram on the left corresponds to the subword $\sigma_1\sigma_3\sigma_5\sigma_6\sigma_7$ and the diagram on the right hand side to $\sigma_3\sigma_5\sigma_7$. The first diagram contributes the summand
\[
(t_1-t_2+h)(t_3-t_4+h)(t_3-t_5+h)h^2\mathrm{wt}(\kappa_1)\mathrm{wt}(\kappa_3)\mathrm{wt}(\kappa_5)\mathrm{wt}(\kappa_6)\mathrm{wt}(\kappa_7),
\]
whereas the second diagram contributes the summand
\[
(t_1-t_2+h)(t_3-t_4+h)(t_3-t_5+h)h^4\mathrm{wt}(\kappa_3)\mathrm{wt}(\kappa_5)\mathrm{wt}(\kappa_7).
\]
It follows that
\begin{align*}
\iota_w^\ast(&\mathrm{Stab}_{\mathfrak C_-}(w'))=\\ 
&(t_1-t_2+h)(t_3-t_4+h)(t_3-t_5+h)\cdot h^2(\mathrm{wt}(\kappa_1)\mathrm{wt}(\kappa_6)+h^2)\mathrm{wt}(\kappa_3)\mathrm{wt}(\kappa_5)\mathrm{wt}(\kappa_7)
\end{align*}
which coincides with the computation \eqref{eq:ExampleStableEnvelopeViaSusFormula} from Example~\ref{example:SuLocalizationFormula}.
\end{example}

\subsection{Cotangent bundles of partial flag varieties} 
Let $F=F(d_1,\ldots,d_m;n)$ be a partial flag variety and $\bm{\delta}=(\delta_1,\ldots,\delta_{m+1})$ as in Subsection~\ref{subsec:TFlagAsBow}. As before, for a given $w\in S_n$, we also denote the $\mathbb T$-fixed point $(\mathcal F_{wS_{\bm \delta}},0)$ by $wS_{\bm \delta}$.

It was proved in \cite[Corollary~4.3]{su2017restriction} that the localization coefficients of the stable basis elements of $T^\ast F$ can be computed via localization coefficients of stable basis elements of $T^\ast F(1,2,\ldots,n-1;n)$ as follows: 

\begin{prop}\label{prop:PartialViaFullFlag}
For all $w$, $w'\in S_n$, we have
\[
\iota_{wS_{\bm \delta}}^\ast (\mathrm{Stab}_{\mathfrak C_-}(w'S_{\bm \delta})) = \sum_{z\in wS_{\bm \delta}} \frac{(-1)^{\ell(w'S_{\bm \delta})+\ell(w')} \iota_z^\ast (\mathrm{Stab_{\mathfrak C_-}(w')})}{\prod_{\alpha\in R_{\bm \delta}} (z.\alpha)},
\]
where $\ell(wS_{\bm \delta})$ is the length of the shortest coset representative of $wS_{\bm \delta}$ and 
\[
R_{\bm \delta}^+ =\{t_i-t_j \mid \textit{there exist $l\in\{1,\ldots,r\}$ with $d_1+\cdots+d_{l-1}\le i<j\le d_1+\cdots+d_{l}$}\}.
\]
\end{prop}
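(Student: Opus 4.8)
The plan is to exhibit the stable basis of $T^\ast F(d_1,\ldots,d_m;n)$ as a pushforward of the stable basis of the full cotangent flag variety $T^\ast F(1,2,\ldots,n-1;n)$ along a natural $\mathbb T$-equivariant correspondence, and then to evaluate this pushforward by equivariant localization.

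First I would set up the correspondence. Let $\rho\colon \tilde F:=F(1,2,\ldots,n-1;n)\to F:=F(d_1,\ldots,d_m;n)$ be the forgetful map; it is a $\mathbb T$-equivariant fiber bundle whose fiber over a flag $\mathcal F\in F$ is a product of full flag varieties of the subquotients. Let $Z:=\rho^\ast(T^\ast F)$ be the pullback of $T^\ast F$ along $\rho$; it carries a $\mathbb T$-equivariant closed embedding $j\colon Z\hookrightarrow T^\ast\tilde F$, sending a covector on $F$ to its pullback along $\rho$ (which vanishes on $\ker d\rho$), together with a proper $\mathbb T$-equivariant projection $q\colon Z\to T^\ast F$ whose fibers are the compact varieties $\rho^{-1}(\mathrm{pt})$. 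On $\mathbb T$-fixed loci, $q^{-1}((\mathcal F_{wS_{\bm\delta}},0))^{\mathbb T}$ is identified with $\{z\in S_n\mid zS_{\bm\delta}=wS_{\bm\delta}\}$, and $j$ carries such a fixed point to $(\mathcal F_z,0)\in (T^\ast\tilde F)^{\mathbb T}$. Moreover the $\mathbb T$-equivariant Euler class of the tangent space to $\rho^{-1}(\mathrm{pt})$ at the fixed point $z$ equals $\prod_{\alpha\in R_{\bm\delta}^+}(z.\alpha)$ up to a sign governed by the number of $\alpha\in R_{\bm\delta}^+$ with $z.\alpha\in R^-$.

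The key step is the compatibility of stable envelopes with this correspondence, namely
\[
q_\ast\big(j^\ast\,\mathrm{Stab}_{\mathfrak C_-}(w')\big)=(-1)^{l(w'S_{\bm\delta})+l(w')}\,\mathrm{Stab}_{\mathfrak C_-}(w'S_{\bm\delta})\qquad\text{for all }w'\in S_n.
\]
I would prove this using the uniqueness in Theorem~\ref{thm:existenceStableEnvelopes}: one checks that the left-hand side satisfies the support condition~\ref{item:Support}, because $q$ maps $j^{-1}$ of a full attracting cell of $T^\ast\tilde F$ into a full attracting cell of $T^\ast F$; the smallness condition~\ref{item:Smallness}, because $j^\ast$ and $q_\ast$ are $H_{\mathbb T}^\ast(\mathrm{pt})$-linear and preserve divisibility by $h$; and the normalization condition~\ref{item:Normalization}, which upon restriction to the fixed point $(\mathcal F_{w'S_{\bm\delta}},0)$ and an application of the localization formula for $q_\ast$ becomes an identity between $\sum_z \iota_z^\ast(\mathrm{Stab}_{\mathfrak C_-}(w'))/e_{\mathbb T}(T_z\rho^{-1}(\mathrm{pt}))$ and, up to sign, $e_{\mathbb T}$ of the negative part of $T_{w'S_{\bm\delta}}(T^\ast F)$. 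The last identity is verified from the explicit restrictions provided by Theorem~\ref{thm:SuLocalizationFormula}, the product structure of $\rho^{-1}(\mathrm{pt})$, and the fact that the negative tangent weights of $T^\ast\tilde F$ and of $T^\ast F$ at corresponding fixed points differ exactly by the tangent weights of the fiber. The sign is then pinned down by comparing the orientation of $e_{\mathbb T}(T_z\rho^{-1}(\mathrm{pt}))$ with that of the negative Euler class; since both sides depend only on the cosets involved, one may in fact assume $w'$ is the minimal-length representative of $w'S_{\bm\delta}$, where the sign is trivial, and reduce the general case to the behavior of $\mathrm{Stab}_{\mathfrak C_-}$ under right multiplication by $S_{\bm\delta}$.

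Finally, applying the Atiyah--Bott/Berline--Vergne localization formula to the proper map $q$ with smooth compact fibers gives $\iota_{wS_{\bm\delta}}^\ast q_\ast\alpha=\sum_{z\in wS_{\bm\delta}}\iota_z^\ast(j^\ast\alpha)/e_{\mathbb T}(T_z\rho^{-1}(\mathrm{pt}))$. Substituting $\alpha=\mathrm{Stab}_{\mathfrak C_-}(w')$, using $\iota_z^\ast j^\ast\mathrm{Stab}_{\mathfrak C_-}(w')=\iota_z^\ast\mathrm{Stab}_{\mathfrak C_-}(w')$ together with the formula for $e_{\mathbb T}(T_z\rho^{-1}(\mathrm{pt}))$, and combining with the compatibility identity from the previous step, produces exactly the formula claimed in Proposition~\ref{prop:PartialViaFullFlag}, with the sign collecting into $(-1)^{l(w'S_{\bm\delta})+l(w')}$. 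I expect the main obstacle to be the compatibility identity: verifying the three axioms is conceptually standard, but the normalization axiom requires a careful fixed-point computation, and extracting the uniform sign $(-1)^{l(w'S_{\bm\delta})+l(w')}$ — together with checking that the right-hand side is independent of the chosen representative $w'$ beyond this sign — is the delicate part.
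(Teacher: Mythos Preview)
The paper does not supply its own proof of this proposition: it is cited from \cite[Corollary~4.3]{su2017restriction}. Your strategy---pushing the full-flag stable basis element along the Lagrangian correspondence $T^\ast\tilde F\hookleftarrow \rho^\ast(T^\ast F)\to T^\ast F$, verifying the stable-envelope axioms \ref{item:Normalization}--\ref{item:Smallness} for the image, and then reading off restrictions by fixed-point localization on the compact fibers of $q$---is exactly the approach taken in Su's proof, and is in fact an instance of the general functoriality of stable envelopes under proper Lagrangian correspondences established by Maulik and Okounkov in \cite[Section~3.6]{maulik2019quantum}. You have correctly identified the only genuinely delicate point, namely pinning down the uniform sign $(-1)^{l(w'S_{\bm\delta})+l(w')}$; the reduction you propose (take $w'$ minimal in its coset, where the sign is $+1$, and then track right multiplication by simple reflections in $S_{\bm\delta}$) is how this is handled in the source.
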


\begin{example}\label{example:StabsOfCotangentPartialFlag} Let $\bm \delta=(2,2,1)$ and 
$w=25143$, $w'=52314$.
Note that $w$ is the minimal element of $wS_{\bm \delta}$. Using that $\ell(w)=5$ and $\ell(w')=6$, we deduce that $z=w(s_1\times s_1\times \operatorname{id})$ is the only element in $wS_{\bm \delta}$ that dominates $w'$ in the Bruhat order. Hence, Proposition~\ref{prop:PartialViaFullFlag} gives
\begin{equation}\label{eq:EquivariantMultiplicityParialExample}
\iota_{wS_{\bm \delta}}^\ast (\mathrm{Stab}_{\mathfrak C_-}(w'S_{\bm \delta}))
=
\frac{\iota_z^\ast(\mathrm{Stab}_{\mathfrak C_-}(w'))}{(t_{z(1)}-t_{z(2)})(t_{z(3)}-t_{z(4)})}\\
=
\frac{\iota_z^\ast(\mathrm{Stab}_{\mathfrak C_-}(w'))}{(t_5-t_2)(t_4-t_1)}.
\end{equation}
To compute $\iota_z^\ast(\mathrm{Stab}_{\mathfrak C_-}(w'))$ we use Proposition~\ref{prop:SuLocalizationFormulaDiagrammatic}. The following figure shows a reduced diagram $d_z$ for $z$. Since $\ell(z)=7$ and $\ell(w')=6$, there is only one possibility to obtain a diagram $d_{w'}$ for $w'$ from $d_z$ by resolving crossings:
\begin{equation}\label{eq:ExampleUniqueCrossingPossibility}
\begin{tikzpicture}[baseline=(current  bounding  box.center), scale=.55]
\coordinate (B1) at (0,-5);
\coordinate (B2) at (1,-5);
\coordinate (B3) at (2,-5);
\coordinate (B4) at (3,-5);
\coordinate (B5) at (4,-5);

\node[below=0 of B1] {$1$};
\node[below=0 of B2] {$2$};
\node[below=0 of B3] {$3$};
\node[below=0 of B4] {$4$};
\node[below=0 of B5] {$5$};

\coordinate (T1) at (0,0);
\coordinate (T2) at (1,0);
\coordinate (T3) at (2,0);
\coordinate (T4) at (3,0);
\coordinate (T5) at (4,0);

\node[above=0 of T1] {$1$};
\node[above=0 of T2] {$2$};
\node[above=0 of T3] {$3$};
\node[above=0 of T4] {$4$};
\node[above=0 of T5] {$5$};

\draw (B1) to[out=90,in=-90] (T5);
\draw (B2) to[out=90,in=-90] (0.3,-2.5);
\draw (0.3,-2.5) to[out=90,in=-90] (T2);
\draw (B3) to[out=90,in=-90] (3.8,-2.2);
\draw (3.8,-2.2) to[out=90,in=-90] (T4);
\draw (B4) to[out=90,in=-90] (T1);
\draw (B5) to[out=90,in=-90] (T3);

\def\s{10};

\coordinate (BB1) at (0+\s,-5);
\coordinate (BB2) at (1+\s,-5);
\coordinate (BB3) at (2+\s,-5);
\coordinate (BB4) at (3+\s,-5);
\coordinate (BB5) at (4+\s,-5);

\node[below=0 of BB1] {$1$};
\node[below=0 of BB2] {$2$};
\node[below=0 of BB3] {$3$};
\node[below=0 of BB4] {$4$};
\node[below=0 of BB5] {$5$};

\coordinate (TT1) at (0+\s,0);
\coordinate (TT2) at (1+\s,0);
\coordinate (TT3) at (2+\s,0);
\coordinate (TT4) at (3+\s,0);
\coordinate (TT5) at (4+\s,0);

\node[above=0 of TT1] {$1$};
\node[above=0 of TT2] {$2$};
\node[above=0 of TT3] {$3$};
\node[above=0 of TT4] {$4$};
\node[above=0 of TT5] {$5$};

\draw (BB1) to[out=90,in=-90] (TT5);
\draw (BB2) to[out=90,in=-90] (0.3+\s,-2.5);
\draw (0.3+\s,-2.5) to[out=90,in=-90] (TT2);
\draw[draw=none, name path=line3a] (BB3) to[out=90,in=-90] (3.8+\s,-2.2);
\draw (3.8+\s,-2.2) to[out=90,in=-90] (TT4);
\draw (BB4) to[out=90,in=-90] (TT1);
\draw[draw=none, name path=line5] (BB5) to[out=90,in=-90] (TT3);

\path[name intersections={of=line3a and line5,by=E}];
\coordinate[left=0.08 of E] (El); 
\coordinate[right=0.08 of E] (Er); 

\draw (BB5) to[out=90,in=-90] (Er);
\draw (Er) to[out=90,in=-90] (3.8+\s,-2.2);

\draw (BB3) to[out=90,in=-90] (El);
\draw (El) to[out=90,in=-90] (TT3);

\node at (7,-1.8) {Resolving $\kappa_5$};
\draw[-to] decorate[decoration=zigzag] {(4.5,-2.64) -- (9.5,-2.64)};
\end{tikzpicture}
\end{equation}
We record the weights of the crossings of $d_z$ in the following table:
\begin{center}
\begin{tabular}{|c||c|c|c|c|c|c|c|}
\hline
$i$ & $1$ & $2$ & $3$ & $4$ & $5$ & $6$ & $7$\\
\hline
$\mathrm{wt}(\kappa_i)$ & $t_4-t_5$ & $t_1-t_2$ & $t_3-t_5$ & $t_1-t_5$ & $t_3-t_4$ & $t_2-t_5$ & $t_1-t_4$\\
\hline
\end{tabular}
\end{center}
Thus, we have
\[
\iota_{z}^\ast (\mathrm{Stab}_{\mathfrak C_-}(w'))= (t_1-t_3+h)(t_2-t_3+h)(t_2-t_4+h) h\cdot \prod_{i\ne 5} \mathrm{wt}(\kappa_i) 
\]
which implies
\begin{equation}\label{eq:ExampleStableEnvelopesPartialFlag}
\eqref{eq:EquivariantMultiplicityParialExample}= (t_1-t_3+h)(t_2-t_3+h)(t_2-t_4+h) h(t_4-t_5)(t_1-t_2)(t_3-t_5)(t_1-t_5).
\end{equation}
\end{example}

\subsection{Transition invariance} 
Let $\mathcal D(d_1,\ldots,d_m;n)$ be the separated brane diagram from Subsection~\ref{subsec:TFlagAsBow} and $\Phi\circ\Psi\colon F(d_1,\ldots,d_m;n)\xrightarrow\sim \mathcal C(\mathcal D(d_1,\ldots,d_m;n))$ be the isomorphism from \eqref{eq:DefinitionIsoFlagBow} and \eqref{eq:DefinitionHWIsoFlagSeparated}. By Proposition~\ref{prop:HWTransition}, this isomorphism is $\rho$-equivariant, where
\[
\rho\colon\mathbb T\xrightarrow{\phantom{x}\sim\phantom{x}} \mathbb T, \quad (t_1,\ldots,t_n,h)\mapsto (t_1h,\ldots,t_nh,h).
\]
Thus, by Proposition~\ref{prop:MatchingStableEnvelopesHW}, we have
\begin{equation}\label{eq:EquivariantMultiplicitiesHWTwist}
\iota_{D_{wS_{\bm \delta}}}^\ast( \mathrm{Stab}_{\mathfrak C_-}(D_{w'S_{\bm \delta}})) =
\varphi(\iota_{wS_{\bm \delta}}^\ast(\mathrm{Stab}_{\mathfrak C_-}(w'S_{\bm \delta}))),
\quad
\textup{for all $w$, $w'\in S_n$.}
\end{equation}
Here $\varphi\colon\mathbb Q[t_1,\ldots,t_n,h]\xrightarrow\sim \mathbb Q[t_1,\ldots,t_n,h]$ is the $\mathbb Q[h]$-algebra automorphism given by $t_i\mapsto t_i-h$ for all $i$.

The localization formula implies that the localization coefficients of stable basis elements of $\mathcal C(\mathcal D(d_1,\ldots,d_m;n))$ are $\varphi$-invariant:

\begin{prop}\label{prop:StableEnvelopesPartialFlagVarietyInvariantUnderHW}
We have for all $w$, $w'\in S_n$
\[
\iota_{D_{wS_{\bm \delta}}}^\ast( \mathrm{Stab}_{\mathfrak C_-}(D_{w'S_{\bm \delta}})) =
\iota_{wS_{\bm \delta}}^\ast(\mathrm{Stab}_{\mathfrak C_-}(w'S_{\bm \delta})).
\]
\end{prop}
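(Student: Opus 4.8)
The plan is to deduce the statement from the identity \eqref{eq:EquivariantMultiplicitiesHWTwist},
\[
\iota_{D_{wS_{\bm \delta}}}^\ast( \mathrm{Stab}_{\mathfrak C_-}(D_{w'S_{\bm \delta}})) = \varphi\big(\iota_{wS_{\bm \delta}}^\ast(\mathrm{Stab}_{\mathfrak C_-}(w'S_{\bm \delta}))\big),
\]
which has already been established via Proposition~\ref{prop:MatchingStableEnvelopesHW} and the $\rho$-equivariance of $\Phi\circ\Psi$. Comparing this with the asserted equality, the proposition is equivalent to the claim that the equivariant multiplicity $\iota_{wS_{\bm\delta}}^\ast(\mathrm{Stab}_{\mathfrak C_-}(w'S_{\bm\delta}))$ is fixed by the $\mathbb Q[h]$-algebra automorphism $\varphi\colon t_i\mapsto t_i-h$. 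The crucial elementary remark is that $\varphi$ fixes every root difference $t_a-t_b$, hence also every element $\alpha$ and $\alpha+h$ with $\alpha\in R^+$, and that $\varphi$ commutes with relabeling variables, so $\varphi(z.\beta)=z.(\varphi\beta)$ for all $z\in S_n$ and $\beta\in\mathbb Q[t_1,\dots,t_n]$; all of this is immediate since $\varphi$ acts identically on each $t_i$.

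Next I would reduce to the full flag variety through Proposition~\ref{prop:PartialViaFullFlag}, which writes $\iota_{wS_{\bm\delta}}^\ast(\mathrm{Stab}_{\mathfrak C_-}(w'S_{\bm\delta}))$ as a sum over $z\in wS_{\bm\delta}$ of the terms $(-1)^{l(w'S_{\bm\delta})+l(w')}\iota_z^\ast(\mathrm{Stab}_{\mathfrak C_-}(w'))$ divided by $\prod_{\alpha\in R_{\bm\delta}}(z.\alpha)$. Each denominator factor $z.\alpha$ with $\alpha\in R_{\bm\delta}^+$ is a difference $t_{z(i)}-t_{z(j)}$, hence $\varphi$-fixed, so it suffices to show that each numerator $\iota_z^\ast(\mathrm{Stab}_{\mathfrak C_-}(w'))$ is $\varphi$-fixed. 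For this I apply Su's localization formula, Theorem~\ref{thm:SuLocalizationFormula}, to the (possibly non-minimal) coset representative $z$: choosing a reduced expression $z=\sigma_1\cdots\sigma_{l(z)}$, the formula expresses $\iota_z^\ast(\mathrm{Stab}_{\mathfrak C_-}(w'))$ as $\prod_{(i,j)\in L_z}(t_i-t_j+h)$ times a $\mathbb Q$-combination of monomials $h^{l(z)-k}\prod_j \beta_{i_j}$. By \eqref{eq:SetOfBetasIndependentFromReducedExpression} the $\beta_i$ lie in $R^+$, and $L_z\subset R^+$, so every building block that appears is $h$, a difference $t_a-t_b$, or $t_a-t_b+h$, each of which $\varphi$ fixes; since $\varphi$ is a ring homomorphism it therefore fixes the whole expression. (One could equally well run this argument through the diagrammatic version Proposition~\ref{prop:SuLocalizationFormulaDiagrammatic}, since the crossing weights $\mathrm{wt}(\kappa)$ and the factors $\alpha+h$ with $\alpha\in L_w'$ are all of this shape.) Assembling these facts, $\iota_{wS_{\bm\delta}}^\ast(\mathrm{Stab}_{\mathfrak C_-}(w'S_{\bm\delta}))$ is $\varphi$-invariant, and substituting this into \eqref{eq:EquivariantMultiplicitiesHWTwist} yields the proposition.

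I do not expect a genuine obstacle here: the entire content is the recognition that the localization formula produces a polynomial in $h$ and in the root differences $t_a-t_b$, a feature manifestly preserved by the uniform shift $t_i\mapsto t_i-h$. The only point demanding a little attention is bookkeeping — one must invoke Theorem~\ref{thm:SuLocalizationFormula} with the summation index $z$ of Proposition~\ref{prop:PartialViaFullFlag} playing the role of the fixed point, so that the relevant sets $L_z$ and $\{\beta_i\}$ are those attached to $z$ and not to $w$ — but this does not affect the structural argument.
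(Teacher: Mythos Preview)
Your proposal is correct and follows essentially the same approach as the paper: both argue that $\varphi$ fixes every expression of the form $t_i-t_j+mh$, invoke Theorem~\ref{thm:SuLocalizationFormula} and Proposition~\ref{prop:PartialViaFullFlag} to see that the equivariant multiplicity is built entirely from such pieces, and then conclude via \eqref{eq:EquivariantMultiplicitiesHWTwist}. Your write-up is simply more detailed; the paper compresses the same reasoning into two sentences.
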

\begin{proof}
Since we have $\varphi(t_i-t_j+mh)=t_i-t_j+mh$ for all $i,j\in\{1,\ldots,n\}$ and $m\in\mathbb Z$, Theorem~\ref{thm:SuLocalizationFormula} and Proposition~\ref{prop:PartialViaFullFlag} imply
\[
\varphi(\iota_{wS_{\bm \delta}}^\ast(\mathrm{Stab}_{\mathfrak C_-}(w'S_{\bm \delta})))=
\iota_{wS_{\bm \delta}}^\ast(\mathrm{Stab}_{\mathfrak C_-}(w'S_{\bm \delta})).
\]
Hence, the proposition follows from \eqref{eq:EquivariantMultiplicitiesHWTwist}.
\end{proof}

\section{Symmetric group calculus for bow varieties}\label{section:TieDiagramsAndDoubleCosets}

Let $\mathcal D$ be a brane diagram and recall the margin vectors $\textbf r$ and $\textbf c$ from Subsection~\ref{subsec:BCT}. Let $n=\sum_{i=1}^Mr_i =\sum_{j=1}^Nc_j$. We denote by
\[
S_{\textbf c}\coloneqq S_{c_1}\times \ldots\times S_{c_N}\subset S_n\quad\textup{and}\quad S_{\textbf r}\coloneqq S_{r_1}\times \ldots\times S_{r_M}\subset S_n
\]
the corresponding Young subgroups.

In this section, we describe a correspondence between the binary contingency tables of $\mathcal D$ and a special class of $(S_{\textbf c},S_{\textbf r})$-double cosets which we call \textit{fully separated}, see Definition~\ref{defi:FullySeparated}. As we will discuss in Subsection~\ref{subsection:UniquenessFullySeparatedness}, permutations that belong to fully separated double cosets satisfy strong uniqueness properties which distinguish fully separated double cosets from general double cosets.

\subsection{Fully separated double cosets}\label{subsection:TieDiagramsAndDoubleCosets}

The usual assignment of a $(S_{\textbf c},S_{\textbf r})$-double coset to a matrix leads to the following well-known bijection, see e.g.~\cite[Theorem~1.3.10]{james1981representation}:

\begin{theorem}\label{thm:BijectionDoubleCosetsMatrices}
Let $\Xi(\textbf r,\textbf c)$ be the set of all $M\times N$-matrices $A$ with entries in $\mathbb Z_{\geq0}$ satisfying
\[
\sum_{l=1}^N A_{i,l}=r_i,\quad \sum_{l=1}^M A_{l,j}=c_j,\quad \textit{for all $i,j$.}
\]
Then, the map $Z\colon S_n\rightarrow \Xi(\textbf c,\textbf r)$ given by
\[
Z(w)_{i,j}=|w(\{R_{i-1}+1,\ldots,R_i\})\cap \{C_{j-1}+1,\ldots,C_j\}|
\]
induces a bijection 
\begin{equation}\label{eq:DoubleCosetBijectionMatrices}
\bar Z\colon S_{\textbf c}\backslash S_n/S_{\textbf r}
\xrightarrow{\phantom{x}\sim\phantom{x}} 
\Xi(\textbf r,\textbf c),\quad S_{\textbf c}wS_{\textbf r}\mapsto Z(w).
\end{equation}
\end{theorem}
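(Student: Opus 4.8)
The plan is to reduce the double coset statement to the standard theory of $(S_{\mathbf c},S_{\mathbf r})$-double cosets as explained in, e.g., \cite[\S1.3]{james1981representation}. I would proceed in three steps: first establish that $Z$ is constant on double cosets, then establish surjectivity onto $\Xi(\mathbf r,\mathbf c)$, and finally establish injectivity of the induced map $\bar Z$.

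\emph{Step 1 (well-definedness).} Fix $w\in S_n$ and let $\sigma\in S_{\mathbf c}$, $\tau\in S_{\mathbf r}$; I want $Z(\sigma w\tau)=Z(w)$. Write $I_i=\{R_{i-1}+1,\dots,R_i\}$ and $J_j=\{C_{j-1}+1,\dots,C_j\}$, so that $S_{\mathbf r}=\prod_i \mathrm{Sym}(I_i)$ and $S_{\mathbf c}=\prod_j \mathrm{Sym}(J_j)$ by the very definitions of $R_i$, $C_j$. Since $\tau$ permutes each block $I_i$ among itself, $\tau(I_i)=I_i$, hence $(\sigma w\tau)(I_i)=\sigma(w(I_i))$. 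Likewise $\sigma(J_j)=J_j$, so $\sigma$ maps the set $w(I_i)\cap J_j$ bijectively onto $\sigma(w(I_i))\cap J_j=(\sigma w\tau)(I_i)\cap J_j$, and in particular preserves cardinalities. Therefore $Z(\sigma w\tau)_{i,j}=|(\sigma w\tau)(I_i)\cap J_j|=|w(I_i)\cap J_j|=Z(w)_{i,j}$. This shows $Z$ factors through $S_{\mathbf c}\backslash S_n/S_{\mathbf r}$, giving a well-defined map $\bar Z$; the row and column sums of $Z(w)$ are $\sum_j|w(I_i)\cap J_j|=|w(I_i)|=|I_i|=r_i$ and $\sum_i|w(I_i)\cap J_j|=|J_j|=c_j$ since the $I_i$ partition $\{1,\dots,n\}$, so indeed $Z(w)\in\Xi(\mathbf r,\mathbf c)$.

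\emph{Step 2 (surjectivity) and Step 3 (injectivity).} For surjectivity, given $A\in\Xi(\mathbf r,\mathbf c)$ I would construct an explicit permutation realizing it: partition each row-block $I_i$ into consecutive subintervals of sizes $A_{i,1},A_{i,2},\dots,A_{i,N}$ (legitimate since $\sum_j A_{i,j}=r_i=|I_i|$), partition each column-block $J_j$ into consecutive subintervals of sizes $A_{1,j},\dots,A_{M,j}$ (legitimate since $\sum_i A_{i,j}=c_j=|J_j|$), and let $w$ be any permutation carrying the $(i,j)$-subinterval of $I_i$ onto the $(i,j)$-subinterval of $J_j$ in the unique order-preserving way; then $|w(I_i)\cap J_j|=A_{i,j}$ by construction, so $Z(w)=A$. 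For injectivity, suppose $Z(w)=Z(w')=:A$. I want $w'\in S_{\mathbf c}wS_{\mathbf r}$. The key observation is that for each pair $(i,j)$ the sets $w(I_i)\cap J_j$ and $w'(I_i)\cap J_j$ have the same cardinality $A_{i,j}$; as $j$ ranges over $1,\dots,N$ these sets partition $w(I_i)$ (resp.\ $w'(I_i)$), so there is a permutation $\sigma_j\in\mathrm{Sym}(J_j)$ with $\sigma_j(w(I_i)\cap J_j)=w'(I_i)\cap J_j$ for every $i$; assembling the $\sigma_j$ gives $\sigma\in S_{\mathbf c}$ with $\sigma(w(I_i))=w'(I_i)$ for all $i$. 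Then $w'^{-1}\sigma w$ maps each $I_i$ to itself, hence lies in $S_{\mathbf r}$, say $w'^{-1}\sigma w=\tau$, i.e.\ $w'=\sigma w\tau^{-1}\in S_{\mathbf c}wS_{\mathbf r}$. Combining Steps 1--3, $\bar Z$ is a bijection.

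I do not expect a genuine obstacle here: the statement is the classical correspondence between double cosets of Young subgroups and nonnegative integer matrices with prescribed margins, and every step is a direct unravelling of definitions once one identifies $S_{\mathbf r}$ and $S_{\mathbf c}$ with the block-diagonal symmetric groups attached to the interval partitions determined by $R_i$ and $C_j$. The only point requiring a little care is bookkeeping in Step 3 — choosing the block permutations $\sigma_j$ compatibly across all $i$ simultaneously — but this is automatic because the sets $\{w(I_i)\cap J_j\}_i$ and $\{w'(I_i)\cap J_j\}_i$ are two partitions of the same block $J_j$ into pieces of matching sizes, so any size-respecting bijection between the parts extends to an element of $\mathrm{Sym}(J_j)$. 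Alternatively, one may simply cite \cite[Theorem~1.3.10]{james1981representation} directly, since $Z(w)$ coincides with the matrix assigned there to the double coset of $w$.
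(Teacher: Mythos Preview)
Your proof is correct. Note, however, that the paper does not actually prove this theorem: it is stated as a well-known fact with a direct citation to \cite[Theorem~1.3.10]{james1981representation}, exactly the reference you mention at the end of your proposal. Your Steps~1--3 constitute the standard unravelling of that result, so there is nothing to compare beyond observing that you have supplied the details the paper chose to omit.
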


By definition, the elements of $\mathrm{bct}(\mathcal D)$ are exactly the matrices in $\Xi(\textbf r,\textbf c)$ with all entries contained in $\{0,1\}$. The following notion characterizes the double cosets that correspond to $\mathrm{bct}(\mathcal D)$ under $\bar Z$:

\begin{definition}\label{defi:FullySeparated}
A permutation $w\in S_n$ is called \textit{fully separated (with respect to $(\textbf r,\textbf c)$)} if
\[
|w(\{R_{i-1}+1,\ldots,R_i\})\cap \{C_{j-1}+1,\ldots,C_j\}|\le 1,
\]
for all $i\in\{1,\ldots,M\}$, $j\in\{1,\ldots,N\}$.
\end{definition} 

If $w$ is fully separated then so is every element in $S_{\textbf c}wS_{\textbf r}$. Hence, we call a double coset $S_{\textbf c}wS_{\textbf r}$ fully separated if all its elements are fully separated. Likewise, we call a left $S_{\textbf r}$-coset (resp. right $S_\textbf c$-coset) fully separated if all its elements are fully separated.

Clearly, a permutation $w$ is fully separated if and only if $Z(w)$ is contained in $\mathrm{bct}(\mathcal D)$. Thus, we have the following corollary:

\begin{cor}\label{cor:DoubleCosetsFSep} The bijection $\bar Z$ from \eqref{eq:DoubleCosetBijectionMatrices} restricts to a bijection
\[
\mathrm{fsep_{\textbf c,\textbf r}}
\xrightarrow{\phantom{x}\sim \phantom{x}}
\mathrm{bct}(\mathcal D),
\]
where $\mathrm{fsep_{\textbf c,\textbf r}}$ denotes the set of fully separated $(S_{\textbf c},S_{\textbf r})$-double cosets.
\end{cor}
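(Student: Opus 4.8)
The plan is to observe that the statement is essentially a matter of matching, through the bijection $\bar Z$ of Theorem~\ref{thm:BijectionDoubleCosetsMatrices}, the defining condition of fully separated double cosets on the one side with the $\{0,1\}$-entry condition defining $\mathrm{bct}(\mathcal D)$ on the other. First I would recall that for \emph{every} $w\in S_n$ the matrix $Z(w)$ lies in $\Xi(\textbf r,\textbf c)$, and that by the discussion in Subsection~\ref{subsec:BCT} the set $\mathrm{bct}(\mathcal D)$ is precisely the subset of $\Xi(\textbf r,\textbf c)$ consisting of those matrices all of whose entries lie in $\{0,1\}$ (the row and column sum conditions defining $\mathrm{bct}(\mathcal D)$ are literally those defining $\Xi(\textbf r,\textbf c)$). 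So the only thing to pin down is which $w$ satisfy $Z(w)\in\mathrm{bct}(\mathcal D)$.

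Next I would unwind the formula $Z(w)_{i,j}=|w(\{R_{i-1}+1,\ldots,R_i\})\cap\{C_{j-1}+1,\ldots,C_j\}|$ and compare it directly with Definition~\ref{defi:FullySeparated}: $w$ is fully separated exactly when this cardinality is $\le 1$ for all $i\in\{1,\ldots,M\}$, $j\in\{1,\ldots,N\}$, which is precisely the statement that every entry of $Z(w)$ is $0$ or $1$, i.e.\ that $Z(w)\in\mathrm{bct}(\mathcal D)$. Combining this with the already-noted fact that full separatedness depends only on the double coset $S_{\textbf c}wS_{\textbf r}$, we conclude that $\bar Z$ carries $\mathrm{fsep_{\textbf c,\textbf r}}$ into $\mathrm{bct}(\mathcal D)$ and, more precisely, that the preimage $\bar Z^{-1}(\mathrm{bct}(\mathcal D))$ equals $\mathrm{fsep_{\textbf c,\textbf r}}$.

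Finally, since $\bar Z\colon S_{\textbf c}\backslash S_n/S_{\textbf r}\to\Xi(\textbf r,\textbf c)$ is a bijection, its restriction to the preimage $\bar Z^{-1}(\mathrm{bct}(\mathcal D))$ is automatically a bijection onto $\mathrm{bct}(\mathcal D)$; substituting $\bar Z^{-1}(\mathrm{bct}(\mathcal D))=\mathrm{fsep_{\textbf c,\textbf r}}$ yields the corollary. I do not expect any genuine obstacle here: the argument is a direct unwinding of definitions, and the only point that deserves an explicit word is the identification of the row/column-sum-plus-$\{0,1\}$ conditions for $\mathrm{bct}(\mathcal D)$ with the "$\le 1$'' cardinality condition of full separatedness, which is immediate from the formula for $Z(w)$.
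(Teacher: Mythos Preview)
Your proposal is correct and follows essentially the same approach as the paper: the paper simply observes (immediately before stating the corollary) that a permutation $w$ is fully separated if and only if $Z(w)\in\mathrm{bct}(\mathcal D)$, and deduces the corollary from this together with the bijection $\bar Z$ of Theorem~\ref{thm:BijectionDoubleCosetsMatrices}. Your write-up spells out the same observation in slightly more detail but is otherwise identical in content.
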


\begin{example} 
Let $n=5$ and $\textbf r=(2,2,1), \textbf c=(1,2,2)$.
The permutations $w_1=14253$ and $w_2=14235$ get assigned to the matrices
\[
Z(w_1)=\begin{pmatrix}
1&0&1\\
0&1&1\\
0&1&0
\end{pmatrix},\quad
Z(w_2)=
\begin{pmatrix}
1&0&1\\
0&2&0\\
0&0&1
\end{pmatrix}.
\]
Thus $w_1$ is fully separated, as all entries of $Z(w_1)$ are contained in $\{0,1\}$.
On the other hand, $Z(w_2)$ admits an entry equal to $2$, so $w_2$ is not fully separated.
\end{example}

In diagrammatic language the fully separatedness condition can be reformulated as follows: given $w\in S_N$ and a diagram $d_w$ for $w$. Then, $w$ is fully separated if and only if for all $i\in\{1,\ldots, M\},j\in\{1,\ldots, N\}$, there exists at most one strand with source in $\{R_{i-1}+1,\ldots,R_i\}$ and target in $\{C_{j-1}+1,\ldots,C_j\}$.

\begin{remark} In \cite{james1981representation} the fully separatedness condition is called \textit{trivial intersection property}.
\end{remark}

\subsection{Shortest double coset representatives}\label{subsection:ConstructionShortestRepresentative}

It is well-known, see e.g.~\cite[Section~5.12]{humphreys1992reflection}, that each left coset $wS_{\textbf r}$ (resp.~right coset $S_{\textbf c}w$) contains a unique representative of shortest Bruhat length $w_l$ (resp.~$w_r$) which is uniquely determined by the condition
$
w_l(R_{i-1}+1)<\ldots <w_l(R_{i})
$ for all $i$
(resp. 
$
w_r^{-1}(C_{j-1}+1)<\ldots <w_r^{-1}(C_{j})
$ for all $j$). Likewise, each double coset $S_{\textbf c}wS_{\textbf r}$ contains a unique representative of shortest Bruhat length $w_d$ which is uniquely characterized by the conditions
\[
w_d(R_{i-1}+1)<\ldots <w_d(R_{i})\quad\textup{and} \quad w_d^{-1}(C_{j-1}+1)<\ldots <w_d^{-1}(C_{j}),\quad\textup{for all $i,j$.}
\]

In the following, we describe the shortest representative of $(S_{\textbf c},S_{\textbf r})$-double cosets corresponding to binary contingency tables. We begin with a hopefully intuitive example:

\begin{example}\label{example:ShortestCosetRepresentative}
Let $n=10$, $\textbf r=(3,2,2,3)$, $\textbf c=(2,3,2,1,2)$ and 
\[
A=\begin{pmatrix}
1&1&0&0&1\\
0&0&1&0&1\\
1&1&0&0&0\\
0&1&1&1&0
\end{pmatrix}.
\]
We draw a diagram for the shortest double coset representative of ${\bar Z}^{-1}(A)$ following the next steps: at first, we define functions
\[
F_{A,i}\colon\{1,\ldots, r_i \}\xrightarrow{\phantom{xxx}} \{1,\ldots ,N\},\quad i=1,\ldots,M,
\]
where $F_{A,i}(l)$ is the column index of the $l$-th $1$-entry in the $i$-th row of $A$.
For instance, $F_{A,1}\colon\{1,2,3\}\rightarrow \{1,\ldots,5\}$ is given by $F_{A,1}(1)=1$, $F_{A,1}(2)=2$ and
$F_{A,1}(3)=5$. 

We start drawing our diagram by drawing strands $\lambda_l$ starting in $l=1,\ldots, r_1$ and ending in $C_{F_{A,1}(1)-1}+1,\ldots,C_{F_{A,1}(r_1)-1}+1$. Then, we draw strands $\lambda_{r_1+l}$ starting in $r_1+1,\ldots,r_1+r_2$ and the endpoint of $\lambda_{r_1+l}$ is the smallest element of
$\{C_{F_{A,2}(l)-1}+1,\ldots C_{F_{A,2}}(l)\}$ that is not already the endpoint of a strand. 
Continuing this procedure leads to the following permutation diagram:

\begin{center}
\begin{tikzpicture}[scale=.5]
\coordinate (B1) at (0,-3);
\coordinate (B2) at (1,-3);
\coordinate (B3) at (2,-3);
\coordinate (B4) at (3,-3);
\coordinate (B5) at (4,-3);
\coordinate (B6) at (5,-3);
\coordinate (B7) at (6,-3);
\coordinate (B8) at (7,-3);
\coordinate (B9) at (8,-3);
\coordinate (B10) at (9,-3);

\node[below=0 of B1] {$1$};
\node[below=0 of B2] {$2$};
\node[below=0 of B3] {$3$};
\node[below=0 of B4] {$4$};
\node[below=0 of B5] {$5$};
\node[below=0 of B6] {$6$};
\node[below=0 of B7] {$7$};
\node[below=0 of B8] {$8$};
\node[below=0 of B9] {$9$};
\node[below=0 of B10] {$10$};

\coordinate (T1) at (0,0);
\coordinate (T2) at (1,0);
\coordinate (T3) at (2,0);
\coordinate (T4) at (3,0);
\coordinate (T5) at (4,0);
\coordinate (T6) at (5,0);
\coordinate (T7) at (6,0);
\coordinate (T8) at (7,0);
\coordinate (T9) at (8,0);
\coordinate (T10) at (9,0);

\node[above=0 of T1] {$1$};
\node[above=0 of T2] {$2$};
\node[above=0 of T3] {$3$};
\node[above=0 of T4] {$4$};
\node[above=0 of T5] {$5$};
\node[above=0 of T6] {$6$};
\node[above=0 of T7] {$7$};
\node[above=0 of T8] {$8$};
\node[above=0 of T9] {$9$};
\node[above=0 of T10] {$10$};

\draw (B1) to[out=90,in=-90] (T1);
\draw (B2) to[out=90,in=-90] (T3);
\draw (B3) to[out=90,in=-90] (T9);

\draw[black, thick] (-0.3,1) -- (1.3,1);
\draw[black, thick] (-0.3,1) -- (-0.3,0.7);
\draw[black, thick] (1.3,1) -- (1.3,0.7);

\draw[black, thick] (1.7,1) -- (4.3,1);
\draw[black, thick] (1.7,1) -- (1.7,0.7);
\draw[black, thick] (4.3,1) -- (4.3,0.7);

\draw[black, thick] (4.7,1) -- (6.3,1);
\draw[black, thick] (4.7,1) -- (4.7,0.7);
\draw[black, thick] (6.3,1) -- (6.3,0.7);

\draw[black, thick] (6.7,1) -- (7.3,1);
\draw[black, thick] (6.7,1) -- (6.7,0.7);
\draw[black, thick] (7.3,1) -- (7.3,0.7);

\draw[black, thick] (7.7,1) -- (9.5,1);
\draw[black, thick] (7.7,1) -- (7.7,0.7);
\draw[black, thick] (9.5,1) -- (9.5,0.7);

\node at (0.5,1.5) {$c_1$};
\node at (3,1.5) {$c_2$};
\node at (5.5,1.5) {$c_3$};
\node at (7,1.5) {$c_4$};
\node at (8.6,1.5) {$c_5$};

\draw[black, thick] (-0.3,-4) -- (2.3,-4);
\draw[black, thick] (-0.3,-4) -- (-0.3,-3.7);
\draw[black, thick] (2.3,-4) -- (2.3,-3.7);

\draw[black, thick] (2.7,-4) -- (4.3,-4);
\draw[black, thick] (2.7,-4) -- (2.7,-3.7);
\draw[black, thick] (4.3,-4) -- (4.3,-3.7);

\draw[black, thick] (4.7,-4) -- (6.3,-4);
\draw[black, thick] (4.7,-4) -- (4.7,-3.7);
\draw[black, thick] (6.3,-4) -- (6.3,-3.7);

\draw[black, thick] (6.7,-4) -- (9.5,-4);
\draw[black, thick] (6.7,-4) -- (6.7,-3.7);
\draw[black, thick] (9.5,-4) -- (9.5,-3.7);

\node at (1,-4.5) {$r_1$};
\node at (3.5,-4.5) {$r_2$};
\node at (5.5,-4.5) {$r_3$};
\node at (8.1,-4.5) {$r_4$};

\draw[-to] decorate[decoration=zigzag] {(10,-1.5) -- (12,-1.5)};

\def \s {13};

\coordinate (B1) at (0+\s,-3);
\coordinate (B2) at (1+\s,-3);
\coordinate (B3) at (2+\s,-3);
\coordinate (B4) at (3+\s,-3);
\coordinate (B5) at (4+\s,-3);
\coordinate (B6) at (5+\s,-3);
\coordinate (B7) at (6+\s,-3);
\coordinate (B8) at (7+\s,-3);
\coordinate (B9) at (8+\s,-3);
\coordinate (B10) at (9+\s,-3);

\node[below=0 of B1] {$1$};
\node[below=0 of B2] {$2$};
\node[below=0 of B3] {$3$};
\node[below=0 of B4] {$4$};
\node[below=0 of B5] {$5$};
\node[below=0 of B6] {$6$};
\node[below=0 of B7] {$7$};
\node[below=0 of B8] {$8$};
\node[below=0 of B9] {$9$};
\node[below=0 of B10] {$10$};

\coordinate (T1) at (0+\s,0);
\coordinate (T2) at (1+\s,0);
\coordinate (T3) at (2+\s,0);
\coordinate (T4) at (3+\s,0);
\coordinate (T5) at (4+\s,0);
\coordinate (T6) at (5+\s,0);
\coordinate (T7) at (6+\s,0);
\coordinate (T8) at (7+\s,0);
\coordinate (T9) at (8+\s,0);
\coordinate (T10) at (9+\s,0);

\node[above=0 of T1] {$1$};
\node[above=0 of T2] {$2$};
\node[above=0 of T3] {$3$};
\node[above=0 of T4] {$4$};
\node[above=0 of T5] {$5$};
\node[above=0 of T6] {$6$};
\node[above=0 of T7] {$7$};
\node[above=0 of T8] {$8$};
\node[above=0 of T9] {$9$};
\node[above=0 of T10] {$10$};

\draw (B1) to[out=90,in=-90] (T1);
\draw (B2) to[out=90,in=-90] (T3);
\draw (B3) to[out=90,in=-90] (T9);
\draw (B4) to[out=90,in=-90] (T6);
\draw (B5) to[out=90,in=-90] (T10);

\draw[black, thick] (-0.3+\s,1) -- (1.3+\s,1);
\draw[black, thick] (-0.3+\s,1) -- (-0.3+\s,0.7);
\draw[black, thick] (1.3+\s,1) -- (1.3+\s,0.7);

\draw[black, thick] (1.7+\s,1) -- (4.3+\s,1);
\draw[black, thick] (1.7+\s,1) -- (1.7+\s,0.7);
\draw[black, thick] (4.3+\s,1) -- (4.3+\s,0.7);

\draw[black, thick] (4.7+\s,1) -- (6.3+\s,1);
\draw[black, thick] (4.7+\s,1) -- (4.7+\s,0.7);
\draw[black, thick] (6.3+\s,1) -- (6.3+\s,0.7);

\draw[black, thick] (6.7+\s,1) -- (7.3+\s,1);
\draw[black, thick] (6.7+\s,1) -- (6.7+\s,0.7);
\draw[black, thick] (7.3+\s,1) -- (7.3+\s,0.7);

\draw[black, thick] (7.7+\s,1) -- (9.5+\s,1);
\draw[black, thick] (7.7+\s,1) -- (7.7+\s,0.7);
\draw[black, thick] (9.5+\s,1) -- (9.5+\s,0.7);

\draw[black, thick] (-0.3+\s,-4) -- (2.3+\s,-4);
\draw[black, thick] (-0.3+\s,-4) -- (-0.3+\s,-3.7);
\draw[black, thick] (2.3+\s,-4) -- (2.3+\s,-3.7);

\draw[black, thick] (2.7+\s,-4) -- (4.3+\s,-4);
\draw[black, thick] (2.7+\s,-4) -- (2.7+\s,-3.7);
\draw[black, thick] (4.3+\s,-4) -- (4.3+\s,-3.7);

\draw[black, thick] (4.7+\s,-4) -- (6.3+\s,-4);
\draw[black, thick] (4.7+\s,-4) -- (4.7+\s,-3.7);
\draw[black, thick] (6.3+\s,-4) -- (6.3+\s,-3.7);

\draw[black, thick] (6.7+\s,-4) -- (9.5+\s,-4);
\draw[black, thick] (6.7+\s,-4) -- (6.7+\s,-3.7);
\draw[black, thick] (9.5+\s,-4) -- (9.5+\s,-3.7);

\draw[-to] decorate[decoration=zigzag] {(10+\s,-1.5) -- (12+\s,-1.5)};

\def \s {0};
\def \t {7};

\coordinate (B1) at (0+\s,-3-\t);
\coordinate (B2) at (1+\s,-3-\t);
\coordinate (B3) at (2+\s,-3-\t);
\coordinate (B4) at (3+\s,-3-\t);
\coordinate (B5) at (4+\s,-3-\t);
\coordinate (B6) at (5+\s,-3-\t);
\coordinate (B7) at (6+\s,-3-\t);
\coordinate (B8) at (7+\s,-3-\t);
\coordinate (B9) at (8+\s,-3-\t);
\coordinate (B10) at (9+\s,-3-\t);

\node[below=0 of B1] {$1$};
\node[below=0 of B2] {$2$};
\node[below=0 of B3] {$3$};
\node[below=0 of B4] {$4$};
\node[below=0 of B5] {$5$};
\node[below=0 of B6] {$6$};
\node[below=0 of B7] {$7$};
\node[below=0 of B8] {$8$};
\node[below=0 of B9] {$9$};
\node[below=0 of B10] {$10$};

\coordinate (T1) at (0+\s,0-\t);
\coordinate (T2) at (1+\s,0-\t);
\coordinate (T3) at (2+\s,0-\t);
\coordinate (T4) at (3+\s,0-\t);
\coordinate (T5) at (4+\s,0-\t);
\coordinate (T6) at (5+\s,0-\t);
\coordinate (T7) at (6+\s,0-\t);
\coordinate (T8) at (7+\s,0-\t);
\coordinate (T9) at (8+\s,0-\t);
\coordinate (T10) at (9+\s,0-\t);

\node[above=0 of T1] {$1$};
\node[above=0 of T2] {$2$};
\node[above=0 of T3] {$3$};
\node[above=0 of T4] {$4$};
\node[above=0 of T5] {$5$};
\node[above=0 of T6] {$6$};
\node[above=0 of T7] {$7$};
\node[above=0 of T8] {$8$};
\node[above=0 of T9] {$9$};
\node[above=0 of T10] {$10$};

\draw (B1) to[out=90,in=-90] (T1);
\draw (B2) to[out=90,in=-90] (T3);
\draw (B3) to[out=90,in=-90] (T9);
\draw (B4) to[out=90,in=-90] (T6);
\draw (B5) to[out=90,in=-90] (T10);
\draw (B6) to[out=90,in=-90] (T2);
\draw (B7) to[out=90,in=-90] (T4);

\draw[black, thick] (-0.3+\s,1-\t) -- (1.3+\s,1-\t);
\draw[black, thick] (-0.3+\s,1-\t) -- (-0.3+\s,0.7-\t);
\draw[black, thick] (1.3+\s,1-\t) -- (1.3+\s,0.7-\t);

\draw[black, thick] (1.7+\s,1-\t) -- (4.3+\s,1-\t);
\draw[black, thick] (1.7+\s,1-\t) -- (1.7+\s,0.7-\t);
\draw[black, thick] (4.3+\s,1-\t) -- (4.3+\s,0.7-\t);

\draw[black, thick] (4.7+\s,1-\t) -- (6.3+\s,1-\t);
\draw[black, thick] (4.7+\s,1-\t) -- (4.7+\s,0.7-\t);
\draw[black, thick] (6.3+\s,1-\t) -- (6.3+\s,0.7-\t);

\draw[black, thick] (6.7+\s,1-\t) -- (7.3+\s,1-\t);
\draw[black, thick] (6.7+\s,1-\t) -- (6.7+\s,0.7-\t);
\draw[black, thick] (7.3+\s,1-\t) -- (7.3+\s,0.7-\t);

\draw[black, thick] (7.7+\s,1-\t) -- (9.5+\s,1-\t);
\draw[black, thick] (7.7+\s,1-\t) -- (7.7+\s,0.7-\t);
\draw[black, thick] (9.5+\s,1-\t) -- (9.5+\s,0.7-\t);

\draw[black, thick] (-0.3+\s,-4-\t) -- (2.3+\s,-4-\t);
\draw[black, thick] (-0.3+\s,-4-\t) -- (-0.3+\s,-3.7-\t);
\draw[black, thick] (2.3+\s,-4-\t) -- (2.3+\s,-3.7-\t);

\draw[black, thick] (2.7+\s,-4-\t) -- (4.3+\s,-4-\t);
\draw[black, thick] (2.7+\s,-4-\t) -- (2.7+\s,-3.7-\t);
\draw[black, thick] (4.3+\s,-4-\t) -- (4.3+\s,-3.7-\t);

\draw[black, thick] (4.7+\s,-4-\t) -- (6.3+\s,-4-\t);
\draw[black, thick] (4.7+\s,-4-\t) -- (4.7+\s,-3.7-\t);
\draw[black, thick] (6.3+\s,-4-\t) -- (6.3+\s,-3.7-\t);

\draw[black, thick] (6.7+\s,-4-\t) -- (9.5+\s,-4-\t);
\draw[black, thick] (6.7+\s,-4-\t) -- (6.7+\s,-3.7-\t);
\draw[black, thick] (9.5+\s,-4-\t) -- (9.5+\s,-3.7-\t);

\def \s {13};
\def \t {7};

\draw[-to] decorate[decoration=zigzag] {(10,-1.5-\t) -- (12,-1.5-\t)};

\coordinate (B1) at (0+\s,-3-\t);
\coordinate (B2) at (1+\s,-3-\t);
\coordinate (B3) at (2+\s,-3-\t);
\coordinate (B4) at (3+\s,-3-\t);
\coordinate (B5) at (4+\s,-3-\t);
\coordinate (B6) at (5+\s,-3-\t);
\coordinate (B7) at (6+\s,-3-\t);
\coordinate (B8) at (7+\s,-3-\t);
\coordinate (B9) at (8+\s,-3-\t);
\coordinate (B10) at (9+\s,-3-\t);

\node[below=0 of B1] {$1$};
\node[below=0 of B2] {$2$};
\node[below=0 of B3] {$3$};
\node[below=0 of B4] {$4$};
\node[below=0 of B5] {$5$};
\node[below=0 of B6] {$6$};
\node[below=0 of B7] {$7$};
\node[below=0 of B8] {$8$};
\node[below=0 of B9] {$9$};
\node[below=0 of B10] {$10$};

\coordinate (T1) at (0+\s,0-\t);
\coordinate (T2) at (1+\s,0-\t);
\coordinate (T3) at (2+\s,0-\t);
\coordinate (T4) at (3+\s,0-\t);
\coordinate (T5) at (4+\s,0-\t);
\coordinate (T6) at (5+\s,0-\t);
\coordinate (T7) at (6+\s,0-\t);
\coordinate (T8) at (7+\s,0-\t);
\coordinate (T9) at (8+\s,0-\t);
\coordinate (T10) at (9+\s,0-\t);

\node[above=0 of T1] {$1$};
\node[above=0 of T2] {$2$};
\node[above=0 of T3] {$3$};
\node[above=0 of T4] {$4$};
\node[above=0 of T5] {$5$};
\node[above=0 of T6] {$6$};
\node[above=0 of T7] {$7$};
\node[above=0 of T8] {$8$};
\node[above=0 of T9] {$9$};
\node[above=0 of T10] {$10$};

\draw (B1) to[out=90,in=-90] (T1);
\draw (B2) to[out=90,in=-90] (T3);
\draw (B3) to[out=90,in=-90] (T9);
\draw (B4) to[out=90,in=-90] (T6);
\draw (B5) to[out=90,in=-90] (T10);
\draw (B6) to[out=90,in=-90] (T2);
\draw (B7) to[out=90,in=-90] (T4);
\draw (B8) to[out=90,in=-90] (T5);
\draw (B9) to[out=90,in=-90] (T7);
\draw (B10) to[out=90,in=-90] (T8);

\draw[black, thick] (-0.3+\s,1-\t) -- (1.3+\s,1-\t);
\draw[black, thick] (-0.3+\s,1-\t) -- (-0.3+\s,0.7-\t);
\draw[black, thick] (1.3+\s,1-\t) -- (1.3+\s,0.7-\t);

\draw[black, thick] (1.7+\s,1-\t) -- (4.3+\s,1-\t);
\draw[black, thick] (1.7+\s,1-\t) -- (1.7+\s,0.7-\t);
\draw[black, thick] (4.3+\s,1-\t) -- (4.3+\s,0.7-\t);

\draw[black, thick] (4.7+\s,1-\t) -- (6.3+\s,1-\t);
\draw[black, thick] (4.7+\s,1-\t) -- (4.7+\s,0.7-\t);
\draw[black, thick] (6.3+\s,1-\t) -- (6.3+\s,0.7-\t);

\draw[black, thick] (6.7+\s,1-\t) -- (7.3+\s,1-\t);
\draw[black, thick] (6.7+\s,1-\t) -- (6.7+\s,0.7-\t);
\draw[black, thick] (7.3+\s,1-\t) -- (7.3+\s,0.7-\t);

\draw[black, thick] (7.7+\s,1-\t) -- (9.5+\s,1-\t);
\draw[black, thick] (7.7+\s,1-\t) -- (7.7+\s,0.7-\t);
\draw[black, thick] (9.5+\s,1-\t) -- (9.5+\s,0.7-\t);

\draw[black, thick] (-0.3+\s,-4-\t) -- (2.3+\s,-4-\t);
\draw[black, thick] (-0.3+\s,-4-\t) -- (-0.3+\s,-3.7-\t);
\draw[black, thick] (2.3+\s,-4-\t) -- (2.3+\s,-3.7-\t);

\draw[black, thick] (2.7+\s,-4-\t) -- (4.3+\s,-4-\t);
\draw[black, thick] (2.7+\s,-4-\t) -- (2.7+\s,-3.7-\t);
\draw[black, thick] (4.3+\s,-4-\t) -- (4.3+\s,-3.7-\t);

\draw[black, thick] (4.7+\s,-4-\t) -- (6.3+\s,-4-\t);
\draw[black, thick] (4.7+\s,-4-\t) -- (4.7+\s,-3.7-\t);
\draw[black, thick] (6.3+\s,-4-\t) -- (6.3+\s,-3.7-\t);

\draw[black, thick] (6.7+\s,-4-\t) -- (9.5+\s,-4-\t);
\draw[black, thick] (6.7+\s,-4-\t) -- (6.7+\s,-3.7-\t);
\draw[black, thick] (9.5+\s,-4-\t) -- (9.5+\s,-3.7-\t);
\end{tikzpicture}
\end{center}
We denote the resulting permutation by $\tilde w_A$, i.e. $\tilde w_A=13961024578$. Our condition to pick always the smallest entry in
$\{C_{j-1}+1,\ldots,C_j \}$ that is not already the endpoint of a strand implies 
$
\tilde w_A^{-1}(C_{j-1}+1)<\ldots <\tilde w_A^{-1}(C_{j})
$, for all $j$. In addition, as the functions $F_{A,i}$ strictly increase, we also have
$
\tilde w_A(R_{i-1}+1)<\ldots <\tilde w_A(R_{i})
$, for all $i$. Thus, $\tilde w_A$ is a shortest $(S_{\textbf c},S_{\textbf r})$-double coset representative. As $\tilde w_A$ satisfies 
$\tilde w_A(R_{i-1}+l)\in \{C_{F_{A,i}(l)-1}+1,\ldots C_{F_{A,i}(l)}\}$ for all $i,l$, we conclude
\[
Z(\tilde w_A)_{R_{i-1}+l,j}= 
\begin{cases}
1 &\textup{if }j=F_{A,i}(l),\\
0 &\textup{if }j\ne F_{A,i}(l).
\end{cases}
\]
Therefore, $Z(\tilde w_A)=A$ which implies that $\tilde w_A$ is indeed the shortest representative of $\bar Z^{-1}(A)$.
\end{example}

We return to the general setup: let $\mathcal D$ be a brane diagram and $A\in\mathrm{bct}(\mathcal D)$. 

As in the previous example, let 
$
F_{A,i}\colon\{1,\ldots, r_i \}\rightarrow \{1,\ldots ,N\}
$
be the function assigning to $l$ the column index of the $l$-th $1$-entry in the $i$-th row of $A$.
Likewise, let $
G_{A,j}\colon\{1,\ldots, c_j \}\rightarrow \{1,\ldots ,M\}
$
be the function assigning to $l$ the row index of the $l$-th $1$-entry in the $j$-th column of $A$.
Furthermore, we set
$
n_{A,i,j}= \sum_{l=1}^{i}A_{l,j}
$. That is, $n_{A,i,j}$ is the number of $1$-entries that are in the $j$-th column of $A$ and strictly above the entry $A_{i+1,j}$.

\begin{definition}\label{definition:DefinitionTildeWA}
We define the permutation $\tilde w_A\in S_n$ as
\[
\tilde w_A(R_{i-1}+l)= C_{F_{A,i}(l)-1}+n_{A,i,F_{A,i(l)}},\quad\textup{for $i=1,\ldots,M$ and $l=1,\ldots, r_i$.}
\]
\end{definition}

To see that $\tilde w_A$ is a permutation, note that if we are given $j\in\{1,\ldots,N\}$ and $l\in \{1,\ldots, c_j\}$ then let $i=G_{A,j}(l)$ and $l'\in\{1,\ldots, r_i\}$ such that
$A_{i,j}$ is the $l'$-th $1$-entry in the $i$-th row of $A$. By construction, we have $\tilde w_A(R_{i-1}+l')=C_{j-1}+l$ which proves the surjectivity of $\tilde w_A$. Hence, $\tilde w_A$ is a permutation.

The next proposition shows that $\tilde w_A$ satisfies the desired properties.
\begin{prop}\label{prop:PropertiesOfTildeWA}
The following holds:
\begin{enumerate}[label=(\roman*)]
\item\label{item:tildeWAFullySep} $Z(\tilde w_A)=A$,
\item\label{item:tildeWAShortest} $\tilde w_A$ is the shortest representative of $\bar Z^{-1}(A)$,
\item\label{item:tildeWALength} we have $\ell(\tilde w_A)=|\mathrm{Inv}(A)|,$ where
\[
\mathrm{Inv}(A) =\{((i_1,j_1),(i_2,j_2))\mid A_{i_1,j_1}=A_{i_2,j_2}=1,i_1<i_2,j_2<j_1\}.
\]
\end{enumerate}
\end{prop}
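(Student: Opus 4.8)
The plan is to verify the three claims in order, using Definition~\ref{definition:DefinitionTildeWA} directly and the characterizations of shortest double coset representatives recalled in Subsection~\ref{subsection:ConstructionShortestRepresentative}. The guiding principle is that $\tilde w_A$ is built so that the block $\{R_{i-1}+1,\ldots,R_i\}$ of the source is sent, in increasing order of the position label $l$, into the columns indicated by the $1$-entries of the $i$-th row of $A$, and within a given column $j$ the various source blocks are fed into the slots $\{C_{j-1}+1,\ldots,C_j\}$ in top-to-bottom order (this is what the counter $n_{A,i,j}$ records). I would first note, as in Example~\ref{example:ShortestCosetRepresentative}, that the surjectivity argument already given after Definition~\ref{definition:DefinitionTildeWA} shows $\tilde w_A\in S_n$, and moreover that $\tilde w_A(R_{i-1}+l)\in\{C_{F_{A,i}(l)-1}+1,\ldots,C_{F_{A,i}(l)}\}$ for all $i,l$.

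For \ref{item:tildeWAFullySep}, I would compute $Z(\tilde w_A)_{R_{i-1}+l,\,j}$ — wait, more precisely $Z(\tilde w_A)$ is an $M\times N$ matrix, so I compute, for fixed $i\in\{1,\ldots,M\}$ and $j\in\{1,\ldots,N\}$,
\[
Z(\tilde w_A)_{i,j}=\big|\tilde w_A(\{R_{i-1}+1,\ldots,R_i\})\cap\{C_{j-1}+1,\ldots,C_j\}\big|.
\]
Since $\tilde w_A(R_{i-1}+l)$ lands in the block of columns $\{C_{F_{A,i}(l)-1}+1,\ldots,C_{F_{A,i}(l)}\}$, an element of the $i$-th source block maps into the $j$-th target block exactly when $F_{A,i}(l)=j$, i.e. when $A_{i,j}=1$, and then there is exactly one such $l$ because $F_{A,i}$ is injective. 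Hence $Z(\tilde w_A)_{i,j}=A_{i,j}$, giving \ref{item:tildeWAFullySep}. Claim \ref{item:tildeWAShortest} then follows from \ref{item:tildeWAFullySep} together with the characterization of the shortest double coset representative: I must check $\tilde w_A(R_{i-1}+1)<\cdots<\tilde w_A(R_i)$ for each $i$, which holds because $F_{A,i}$ is strictly increasing and the map $l\mapsto(F_{A,i}(l),n_{A,i,F_{A,i}(l)})$ composed with the lexicographic-to-linear embedding $(j,m)\mapsto C_{j-1}+m$ is strictly increasing; and $\tilde w_A^{-1}(C_{j-1}+1)<\cdots<\tilde w_A^{-1}(C_j)$ for each $j$, which holds because within column $j$ the preimages are the $R_{i-1}+l'$ with $i=G_{A,j}(m)$ running $m=1,\ldots,c_j$, and $G_{A,j}$ strictly increasing forces $R_{i-1}$ to increase. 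Uniqueness of the shortest representative then identifies $\tilde w_A$ with the shortest representative of $\bar Z^{-1}(A)$, using Theorem~\ref{thm:BijectionDoubleCosetsMatrices} to know $Z(\tilde w_A)=A$ pins down the right double coset.

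For \ref{item:tildeWALength}, I would use the inversion formula $l(\tilde w_A)=|\mathrm{Inv}(\tilde w_A)|$ from \eqref{eq:LengthVSInversions} and set up an explicit bijection between $\mathrm{Inv}(\tilde w_A)$ and $\mathrm{Inv}(A)$. Given a source inversion $(a,b)$ with $a<b$ and $\tilde w_A(a)>\tilde w_A(b)$, write $a=R_{i_1-1}+l_1$ and $b=R_{i_2-1}+l_2$; because $\tilde w_A$ is increasing on each source block, $a<b$ with an inversion forces $i_1<i_2$, and it sends $(a,b)$ to $((i_1,F_{A,i_1}(l_1)),(i_2,F_{A,i_2}(l_2)))$. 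The point to verify is that $\tilde w_A(a)>\tilde w_A(b)$ is equivalent to $F_{A,i_2}(l_2)<F_{A,i_1}(l_1)$ (i.e. $j_2<j_1$ in the notation of $\mathrm{Inv}(A)$): if the two target columns are distinct this is immediate from $\tilde w_A(R_{i-1}+l)\in\{C_{F_{A,i}(l)-1}+1,\ldots,C_{F_{A,i}(l)}\}$ together with the fact that the $C$'s are increasing; if $F_{A,i_1}(l_1)=F_{A,i_2}(l_2)=j$, then both values lie in $\{C_{j-1}+1,\ldots,C_j\}$ and equal $C_{j-1}+n_{A,i_1,j}$ and $C_{j-1}+n_{A,i_2,j}$ respectively, and since $i_1<i_2$ the counter satisfies $n_{A,i_1,j}\le n_{A,i_2,j}$, so $\tilde w_A(a)\le\tilde w_A(b)$ and there is no inversion — consistent with the column strict inequality $j_2<j_1$ also failing. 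This shows the map lands in $\mathrm{Inv}(A)$ and is injective; surjectivity is the reverse reading. I expect this last step, keeping careful track of the equal-column case via the counter $n_{A,i,j}$, to be the main technical point, though it is entirely elementary combinatorics.
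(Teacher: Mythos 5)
Your proposal is correct and follows essentially the same route as the paper's proof: the containment $\tilde w_A(R_{i-1}+l)\in\{C_{F_{A,i}(l)-1}+1,\ldots,C_{F_{A,i}(l)}\}$ gives $Z(\tilde w_A)=A$, the two monotonicity conditions (via $F_{A,i}$ and $G_{A,j}$ strictly increasing) give the shortest-representative property, and the length count comes from the same bijection $(R_{i_1-1}+l_1,R_{i_2-1}+l_2)\mapsto((i_1,F_{A,i_1}(l_1)),(i_2,F_{A,i_2}(l_2)))$ between inversions of $\tilde w_A$ and $\mathrm{Inv}(A)$. Your explicit treatment of the equal-column case via the counters $n_{A,i,j}$ is a point the paper leaves implicit, but it is the same argument.
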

\begin{proof} By construction, $\tilde w_A(R_{i-1}+l)\in \{C_{F_{A,i}(l)-1}+1,\ldots,C_{F_{A,i}(l)} \}$ which gives
\[
Z(\tilde w_A)_{R_{i-1}+l,j}= 
\begin{cases}
1 &\textup{if }j=F_{A,i}(l),\\
0 &\textup{otherwise.}
\end{cases}
\]
Hence, $Z(\tilde w_A)=A$. Moreover, we conclude
$
\tilde w_A(R_{i-1}+1)<\ldots<\tilde w_A(R_i)
$
for all $i$. The explicit description of $\tilde w_A$ above gives $\tilde w_A^{-1}(C_{j-1}+l)\in \{R_{G_{A,j}(l)-1}+1,\ldots,R_{G_{A,j}(l)} \}$ which implies $
\tilde w_A^{-1}(C_{j-1}+1)<\ldots<\tilde w_A^{-1}(C_j)
$
for all $j$. Thus, $\tilde w_A$ is the shortest representative of $\bar Z^{-1}(A)$.
Finally, note that since $\tilde w_A$ is a shortest left $S_{\textbf r}$-coset representative, the inversions of $\tilde w_A$ are exactly the ordered pairs $(R_{i_1}+l_1,R_{i_2}+l_2)$ with 
\[
1\le i_1<i_2\le M,\quad 1\le l_1 \le r_{i_1},\quad 1\le l_2 \le r_{i_2}, \quad F_{A,i_1}(l_1)>F_{A,i_2}(l_2).
\]
It follows that we have a bijection
$
\mathrm{Inv}(\tilde w_D)\xrightarrow\sim \mathrm{Inv}(D),
$
where an inversion $(R_{i_1}+l_1,R_{i_2}+l_2)$ of $\tilde w_A$ is mapped to $((i_1,F_{D,i_1}(l_1)),(i_2,F_{D,i_2}(l_2))$. Hence, $\ell(\tilde w_D)=|\mathrm{Inv}(D)|$.
\end{proof}

\begin{example}\label{ex:RealityCheckExercise} Let $w$, $w'\in S_5$ be as in Example~\ref{example:StabsOfCotangentPartialFlag} and choose $\textbf r=(2,2,1)$, $\textbf c=(2,1,2)$. Then, we have
\[
Z(w)= \begin{pmatrix}
1&0&1\\
1&0&1\\
0&1&0
\end{pmatrix},\quad Z(w')=
 \begin{pmatrix}
1&0&1\\
1&1&0\\
0&0&1
\end{pmatrix}.
\]
We leave it as an exercise to the reader to that $\tilde w_{Z(w)}=14253$ and $\tilde w_{Z(w')}=14235$.
\end{example}

\subsection{Uniqueness properties}\label{subsection:UniquenessFullySeparatedness}

In this subsection, we discuss strong uniqueness properties of fully separated permutations that distinguish them from general permutations. The central result is the following proposition:

\begin{prop}\label{prop:UniquenessOfPrePostComposition} Assume $w\in S_n$ is fully separated. Let $v$, $v'\in S_{\textbf r}$ and $u$, $u'\in S_{\textbf c}$ such that
$
uwv=u'wv'.
$
Then, $u=u'$ and $v=v'$.
\end{prop}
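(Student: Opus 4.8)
The plan is to argue that full separatedness forces the values of $w$ on each $S_{\textbf r}$-block, grouped by $S_{\textbf c}$-block, to be rigid enough that neither a left nor a right Young-subgroup element can move them without being detected. Concretely, write $I_i=\{R_{i-1}+1,\ldots,R_i\}$ for the $i$-th row block and $J_j=\{C_{j-1}+1,\ldots,C_j\}$ for the $j$-th column block. Full separatedness says $|w(I_i)\cap J_j|\le 1$ for all $i,j$. The key observation is that $v\in S_{\textbf r}$ permutes each $I_i$ within itself and $u\in S_{\textbf c}$ permutes each $J_j$ within itself; so from $uwv=u'wv'$, i.e. $(u')^{-1}u\, w = w\, v'v^{-1}$, I would set $\tilde u=(u')^{-1}u\in S_{\textbf c}$ and $\tilde v=v'v^{-1}\in S_{\textbf r}$ and reduce to showing: if $\tilde u w=w\tilde v$ with $\tilde u\in S_{\textbf c}$, $\tilde v\in S_{\textbf r}$ and $w$ fully separated, then $\tilde u=\tilde v=\mathrm{id}$.

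First I would fix a row block $I_i$ and examine the restriction of $w$ to it. Each element $R_{i-1}+l\in I_i$ is sent by $w$ into some column block $J_{j}$ with $j=F_{\cdot,i}(l)$ (in the notation of Subsection~\ref{subsection:ConstructionShortestRepresentative}, applied to the binary contingency table $Z(w)$), and by full separatedness these column blocks are \emph{pairwise distinct} as $l$ ranges over $\{1,\ldots,r_i\}$. Now the identity $\tilde u w=w\tilde v$ applied to $R_{i-1}+l$ gives $\tilde u(w(R_{i-1}+l))=w(\tilde v(R_{i-1}+l))$. Since $\tilde v$ preserves $I_i$, the right-hand side lies in $w(I_i)$, hence in the same union of column blocks $\bigsqcup_j J_{F(i,l)}$; and since $\tilde u$ preserves each column block individually, the left-hand side lies in the \emph{same} column block $J_{F(i,l)}$ as $w(R_{i-1}+l)$. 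Therefore $w(\tilde v(R_{i-1}+l))$ and $w(R_{i-1}+l)$ lie in the same column block; because the column blocks attached to the distinct elements of $I_i$ under $w$ are all different, this forces $\tilde v(R_{i-1}+l)=R_{i-1}+l$. As $i$ and $l$ were arbitrary, $\tilde v=\mathrm{id}$, and then $\tilde u w=w$ gives $\tilde u=\mathrm{id}$ since $w$ is a bijection.

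Unwinding, $\tilde v=v'v^{-1}=\mathrm{id}$ gives $v=v'$, and $\tilde u=(u')^{-1}u=\mathrm{id}$ gives $u=u'$, which is the claim. The only point that needs a little care — and the main (minor) obstacle — is the step where I conclude $\tilde v(R_{i-1}+l)=R_{i-1}+l$: one must check that $\tilde v$ cannot permute two elements of $I_i$ that happen to be sent by $w$ into the same column block, but full separatedness is exactly the hypothesis that rules this out, since no two elements of $I_i$ land in a common $J_j$. I would phrase this cleanly by noting that $l\mapsto F(i,l)$ is injective on $\{1,\ldots,r_i\}$, so the map $R_{i-1}+l\mapsto (\text{column block of }w(R_{i-1}+l))$ is injective on $I_i$; composing with $\tilde v$ (which fixes $I_i$ setwise) and with $w$ then $\tilde u$ (which fixes column blocks) must give back the same function, forcing $\tilde v|_{I_i}=\mathrm{id}$.
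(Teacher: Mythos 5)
Your argument is correct and is essentially the paper's own proof: the paper packages your ``column block of $w(x)$'' map as the function $F_w$ of Lemma~\ref{lemma:MatchingFunctionsProperties}, whose injectivity on each row block (exactly your full-separatedness step) combined with its invariance under left $S_{\textbf c}$- and right $S_{\textbf r}$-multiplication forces the $S_{\textbf r}$-factor to be trivial, with the companion function $G_w$ handling the $S_{\textbf c}$-factor. The only difference is cosmetic: once $\tilde v=\operatorname{id}$ you cancel $w$ directly to conclude $\tilde u=\operatorname{id}$, a slight shortcut compared to the paper's appeal to $G_w$, but the underlying mechanism is identical.
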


Before we prove Proposition~\ref{prop:UniquenessOfPrePostComposition}, we illustrate the idea of the proof in the following example:

\begin{example}\label{example:fsepReconstruction} Let $n,\textbf r,\textbf c$ and $A$ be as in Example~\ref{example:ShortestCosetRepresentative}. For a permutation $w\in S_n$, we define the function
\[
F_{w}\colon\{1,\ldots,n\}\xrightarrow{\phantom{xxx}} \{1,\ldots,N\}\quad i\mapsto F_w(i),
\]
where $F_w(i)$ is the unique element in $\{1,\ldots, N\}$ such that
$
C_{F_w(i)-1}+1\le w(i) \le C_{F_w(i)}.
$
In terms of diagrammatic calculus, the function $F_w$ can be characterized as follows: pick a diagram for $w$. Then, on the top, draw $N$ square brackets around the intervals $\{1,\ldots,C_1\},\{C_1+1,\ldots,C_2\},\ldots,\{C_{N-1}+1,\ldots,C_N\}$ and label them with $1,\ldots,N$ from left to right. Then, $F_w(i)$ is the index of the square bracket containing the endpoint of the unique strand starting in $i$.

Let for instance $v=v_1\times v_2\times v_3\times v_4 \in S_{\textbf r}$, where $v_1=312$, $v_2=21$, $v_3=12$, $v_4=231$.
A diagram for $\tilde w_Av$ is given by
\begin{center}
\begin{tikzpicture}[scale=.5]
\coordinate (B1) at (0,-3);
\coordinate (B2) at (1,-3);
\coordinate (B3) at (2,-3);
\coordinate (B4) at (3,-3);
\coordinate (B5) at (4,-3);
\coordinate (B6) at (5,-3);
\coordinate (B7) at (6,-3);
\coordinate (B8) at (7,-3);
\coordinate (B9) at (8,-3);
\coordinate (B10) at (9,-3);

\coordinate (BB1) at (0,-4.5);
\coordinate (BB2) at (1,-4.5);
\coordinate (BB3) at (2,-4.5);
\coordinate (BB4) at (3,-4.5);
\coordinate (BB5) at (4,-4.5);
\coordinate (BB6) at (5,-4.5);
\coordinate (BB7) at (6,-4.5);
\coordinate (BB8) at (7,-4.5);
\coordinate (BB9) at (8,-4.5);
\coordinate (BB10) at (9,-4.5);

\node[below=0 of BB1] {$1$};
\node[below=0 of BB2] {$2$};
\node[below=0 of BB3] {$3$};
\node[below=0 of BB4] {$4$};
\node[below=0 of BB5] {$5$};
\node[below=0 of BB6] {$6$};
\node[below=0 of BB7] {$7$};
\node[below=0 of BB8] {$8$};
\node[below=0 of BB9] {$9$};
\node[below=0 of BB10] {$10$};

\coordinate (T1) at (0,0);
\coordinate (T2) at (1,0);
\coordinate (T3) at (2,0);
\coordinate (T4) at (3,0);
\coordinate (T5) at (4,0);
\coordinate (T6) at (5,0);
\coordinate (T7) at (6,0);
\coordinate (T8) at (7,0);
\coordinate (T9) at (8,0);
\coordinate (T10) at (9,0);

\node[above=0 of T1] {$1$};
\node[above=0 of T2] {$2$};
\node[above=0 of T3] {$3$};
\node[above=0 of T4] {$4$};
\node[above=0 of T5] {$5$};
\node[above=0 of T6] {$6$};
\node[above=0 of T7] {$7$};
\node[above=0 of T8] {$8$};
\node[above=0 of T9] {$9$};
\node[above=0 of T10] {$10$};

\draw (B1) to[out=90,in=-90] (T1);
\draw (B2) to[out=90,in=-90] (T3);
\draw (B3) to[out=90,in=-90] (T9);
\draw (B4) to[out=90,in=-90] (T6);
\draw (B5) to[out=90,in=-90] (T10);
\draw (B6) to[out=90,in=-90] (T2);
\draw (B7) to[out=90,in=-90] (T4);
\draw (B8) to[out=90,in=-90] (T5);
\draw (B9) to[out=90,in=-90] (T7);
\draw (B10) to[out=90,in=-90] (T8);

\draw (BB1) to[out=90,in=-90] (B3);
\draw (BB2) to[out=90,in=-90] (B1);
\draw (BB3) to[out=90,in=-90] (B2);
\draw (BB4) to[out=90,in=-90] (B5);
\draw (BB5) to[out=90,in=-90] (B4);
\draw (BB6) to[out=90,in=-90] (B6);
\draw (BB7) to[out=90,in=-90] (B7);
\draw (BB8) to[out=90,in=-90] (B9);
\draw (BB9) to[out=90,in=-90] (B10);
\draw (BB10) to[out=90,in=-90] (B8);

\draw[black, thick] (-0.3,1) -- (1.3,1);
\draw[black, thick] (-0.3,1) -- (-0.3,0.7);
\draw[black, thick] (1.3,1) -- (1.3,0.7);

\draw[black, thick] (1.7,1) -- (4.3,1);
\draw[black, thick] (1.7,1) -- (1.7,0.7);
\draw[black, thick] (4.3,1) -- (4.3,0.7);

\draw[black, thick] (4.7,1) -- (6.3,1);
\draw[black, thick] (4.7,1) -- (4.7,0.7);
\draw[black, thick] (6.3,1) -- (6.3,0.7);

\draw[black, thick] (6.7,1) -- (7.3,1);
\draw[black, thick] (6.7,1) -- (6.7,0.7);
\draw[black, thick] (7.3,1) -- (7.3,0.7);

\draw[black, thick] (7.7,1) -- (9.5,1);
\draw[black, thick] (7.7,1) -- (7.7,0.7);
\draw[black, thick] (9.5,1) -- (9.5,0.7);

\node at (0.5,1.5) {$c_1$};
\node at (3,1.5) {$c_2$};
\node at (5.5,1.5) {$c_3$};
\node at (7,1.5) {$c_4$};
\node at (8.6,1.5) {$c_5$};
\end{tikzpicture}
\end{center}
The functions $F_{\tilde w_A}$ and $F_{\tilde w_Av}$ can be easily read of from their diagrams:
\begin{center}
\begin{tabular}{| c | c | c | c | c | c | c | c | c | c | c |}
  \hline			
  $i$ & $1$ & $2$ & $3$ & $4$ & $5$ & $6$ & $7$ & $8$  & $9$  & $10$ \\ \hline\hline
  $F_{\tilde w_A}(i)$  & $1$ & $2$ & $5$ & $3$ & $5$ & $1$ & $2$ & $2$ & $3$ & $4$ \\ \hline
  $F_{\tilde w_Av}(i)$ & $5$ & $1$ & $2$ & $5$ & $3$ & $1$ & $2$ & $3$ & $4$ & $5$\\
  \hline  
\end{tabular}
\end{center}
Next, we show that if we know $F_{\tilde w_Av}$ then we can reconstruct the permutation $v$. We begin with reconstructing the factor $v_1$. 
The first three letters in the row of $F_{\tilde w_Av}$ give the word $512$ then using the identification $1\mapsto 1$, $2\mapsto 2$, $5\mapsto 3$, we see that $512$ corresponds to $312=v_1$. Next, the fourth and the fifth letters in the row of $F_{\tilde w_Av}$ give the word $53$. Using the identification $3\mapsto 1$, $5\mapsto 2$, we get the word $21=v_2$. In the same way one can construct $v_3$ and $v_4$ and thus the permutation $v$.

In our reasoning, the fully separatedness property was essential because this property ensures that the restriction of $F_{\tilde w_Av}$ to $\{1,2,3\},\{4,5\},\{6,7\},\{8,9,10\}$ is injective.
\end{example}

We proceed with the general setup. As in Example~\ref{example:fsepReconstruction} we define for given
$w\in S_n$ the function
\begin{equation}\label{eq:DefinitionFW}
F_w\colon\{1,\ldots,n\}\xrightarrow{\phantom{xxx}} \{1,\ldots, N\},\quad i\mapsto F_w(i),
\end{equation}
where $F_w(i)$ is the unique element on $\{1,\ldots, N\}$ such that
\[
C_{F_w(i)-1}+1\le w(i) \le C_{F_w(i)}.
\]
Likewise, we define
\[
G_w\colon\{1,\ldots,n\}\xrightarrow{\phantom{xxx}} \{1,\ldots, M\},\quad j\mapsto G_w(j),
\]
where $G_w(j)$ is the unique element on $\{1,\ldots, M\}$ such that
\[
R_{G_w(j)-1}+1\le w^{-1}(j) \le R_{G_w(j)}.
\]
Similarly as $F_w$, the function $G_w$ admits the following diagrammatic interpretation: pick a diagram for $w$ and draw $M$ square brackets on the bottom around the discrete intervals $\{1,\ldots,R_1\},\{R_1+1,\ldots,R_2\},\ldots,\{R_{M-1}+1,\ldots,R_M\}$. Label the square brackets with $1,\ldots,M$ from left to right. Then, $G_w(j)$ is the index of the square bracket containing the starting point of the unique strand with endpoint $j$.

If $w$ is fully separated then the restrictions of $F_w$ to the sets
$
\{R_{i-1}+1,\ldots, R_i\}
$
is injective for $i=1,\ldots,M$. Likewise, the restriction of $G_w$ to
$
\{C_{j-1}+1,\ldots, C_j\}
$
is injective for $j=1,\ldots,N$. Moreover, the following properties are satisfied:

\begin{lemma}\label{lemma:MatchingFunctionsProperties} Assume $w\in S_n$ is fully separated.
Given $u\in S_{\textbf c}$ and $v\in S_{\textbf r}$ then we have 
\begin{enumerate}[label=(\roman*)]
\item\label{item:MatchingFProperty1} $F_{uw}=F_w$,
\item\label{item:MatchingGProperty1} $G_{wv}=G_w$,
\item\label{item:MatchingFProperty2} $F_{wv}=F_w$ if and only if $v=\operatorname{id}$,
\item\label{item:MatchingGProperty2} $G_{uw}=G_w$ if and only if $u=\operatorname{id}$.
\end{enumerate}
\end{lemma}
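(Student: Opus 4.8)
The plan is to reduce all four parts to two elementary facts. The first is purely formal: $F_w(i)$ depends only on which $\textbf c$-block $\{C_{j-1}+1,\dots,C_j\}$ the value $w(i)$ lies in, and dually $G_w(j)$ depends only on which $\textbf r$-block $\{R_{i-1}+1,\dots,R_i\}$ the value $w^{-1}(j)$ lies in. The second is a reformulation of Definition~\ref{defi:FullySeparated}: for a fully separated $w$, the restriction of $F_w$ to each row-block $\{R_{i-1}+1,\dots,R_i\}$ is injective, and the restriction of $G_w$ to each column-block $\{C_{j-1}+1,\dots,C_j\}$ is injective. This reformulation is immediate, since the condition $|w(\{R_{i-1}+1,\dots,R_i\})\cap\{C_{j-1}+1,\dots,C_j\}|\le 1$ for all $j$ says exactly that no two elements of the $i$-th row-block are carried by $w$ into the same $\textbf c$-block; I would record it as a short observation right before the proof (in fact it is already noted in the paragraph preceding the lemma).

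Parts \ref{item:MatchingFProperty1} and \ref{item:MatchingGProperty1} then need no fully separatedness at all. For \ref{item:MatchingFProperty1}, any $u\in S_{\textbf c}$ permutes within each $\textbf c$-block, so $u(w(i))$ lies in the same block as $w(i)$ and hence $F_{uw}(i)=F_w(i)$. For \ref{item:MatchingGProperty1}, write $(wv)^{-1}=v^{-1}w^{-1}$ and use that $v^{-1}\in S_{\textbf r}$ preserves the row-blocks, so $v^{-1}(w^{-1}(j))$ lies in the same row-block as $w^{-1}(j)$, giving $G_{wv}(j)=G_w(j)$.

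For \ref{item:MatchingFProperty2} and \ref{item:MatchingGProperty2} the ``if'' directions are trivial, so only the converses require work. Here I would first record the composition identities $F_{wv}(i)=F_w(v(i))$ and $G_{uw}(j)=G_w(u^{-1}(j))$, both immediate from $(wv)(i)=w(v(i))$ and $(uw)^{-1}(j)=w^{-1}(u^{-1}(j))$. Assuming $F_{wv}=F_w$, fix a row-block $B=\{R_{i-1}+1,\dots,R_i\}$; since $v\in S_{\textbf r}$ restricts to a bijection of $B$ and $F_w|_B$ is injective by fully separatedness, the equation $F_w\circ v=F_w$ on $B$ forces $v|_B=\operatorname{id}$, and running over all $i$ gives $v=\operatorname{id}$. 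The argument for \ref{item:MatchingGProperty2} is the mirror image, using that $u^{-1}\in S_{\textbf c}$ restricts to a bijection of each column-block and that $G_w$ is injective there. I do not anticipate a genuine obstacle: the lemma is a bookkeeping statement, and once the reformulation of fully separatedness and the two composition identities are in place, each part is a one-line argument. The only thing to stay careful about is the pre- versus post-composition bookkeeping — $F$ is insensitive to post-composition by $S_{\textbf c}$ but detects pre-composition by $S_{\textbf r}$, and symmetrically for $G$ — so that the correct one of $v$, $v^{-1}$ (resp. $u$, $u^{-1}$) appears in each step.
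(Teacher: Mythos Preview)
Your proposal is correct and follows essentially the same approach as the paper: use that $S_{\textbf c}$ preserves column-blocks and $S_{\textbf r}$ preserves row-blocks for (i)--(ii), then combine the composition identity $F_{wv}=F_w\circ v$ with the injectivity of $F_w$ on each row-block (the reformulation of fully separatedness already noted before the lemma) to force $v=\operatorname{id}$ in (iii), and dually for (iv). The only cosmetic difference is that the paper phrases (iii) as a contrapositive picking a single $l$ with $v(l)\ne l$, whereas you argue directly block by block; the content is identical.
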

\begin{proof}
Since $u$ leaves the sets $\{C_{j-1}+1,\ldots,C_j\}$ invariant, we get~\ref{item:MatchingFProperty1}. Likewise, $v$ leaves the sets $\{R_{i-1}+1,\ldots,R_i\}$ invariant which gives~\ref{item:MatchingGProperty1}.
For~\ref{item:MatchingFProperty2}, suppose that $v\ne\operatorname{id}$ and $F_{wv}=F_w$. Let $l\in\{1,\ldots,n\}$ such that $v(l)\ne l$. Choose $i\in\{1,\ldots,M\}$ such that $l\in \{R_{i-1}+1,\ldots, R_i\}$. As $v\in S_{\textbf r}$, we have that $v(l)$ is also contained in $\{R_{i-1}+1,\ldots, R_i\}$. In addition, we have by definition $F_{wv}(l)=F_{w}(v(l))$ and hence $F_{w}(v(l))=F_w(l)$ which contradicts the fact that the restriction of $F_w$ to $\{R_{i-1}+1,\ldots, R_i\}$ is injective. The proof of~\ref{item:MatchingGProperty2} is analogous.
\end{proof}

\begin{proof}[Proof of Proposition~\ref{prop:UniquenessOfPrePostComposition}]
Without loss of generality, $u'=\operatorname{id}$, $v'=\operatorname{id}$. By Lemma~\ref{lemma:MatchingFunctionsProperties}.\ref{item:MatchingFProperty1},
$
F_{w}=F_{uwv}=F_{wv}
$.
Thus, Lemma~\ref{lemma:MatchingFunctionsProperties}.\ref{item:MatchingFProperty2} implies $v=\operatorname{id}$. Likewise, Lemma~\ref{lemma:MatchingFunctionsProperties}.\ref{item:MatchingGProperty1} gives $G_w=G_{uwv}=G_{uw}$ which implies $u=\operatorname{id}$ by Lemma~\ref{lemma:MatchingFunctionsProperties}.\ref{item:MatchingGProperty2}.
\end{proof}

Given $A\in \mathrm{bct}(\mathcal D)$ then by construction, we have
\begin{equation}\label{eq:MatchingFunctionsFMatrixVSPermutation}
F_{\tilde w_A}(R_{i-1}+l)=F_{A,i}(l),\quad \textup{for $i=1,\ldots,M,$ $l=R_{i-1}+1,\ldots,R_i$.}
\end{equation}
This observation combined with Lemma~\ref{lemma:MatchingFunctionsProperties} leads to the following characterization of shortest representatives of fully separated left resp. right cosets:

\begin{cor}\label{cor:TildeWDGivesMinimalLeftAndRightCosets} Given $A\in \mathrm{bct}(\mathcal D)$ then the following holds:
\begin{enumerate}[label=(\roman*)]
\item\label{item:TildeWDGivesMinimalLeft} $u\tilde w_A$ is a shortest left $S_{\textbf r}$-coset representative for all $u\in S_{\textbf c}$,
\item\label{item:TildeWDGivesMinimalLeftExistence} if $wS_{\textbf r}$ is a fully separated left coset then there exist $A\in\mathrm{bct}(\mathcal D),u\in S_{\textbf c}$ such that $u\tilde w_A$ is the shortest representative of $wS_{\textbf r}$,
\item\label{item:TildeWDGivesMinimalRight} $\tilde w_Av$ is a shortest right $S_{\textbf c}$-coset representative for all $v\in S_{\textbf r}$,
\item\label{item:TildeWDGivesMinimalRightExistence} if $S_{\textbf c}w$ is a fully separated right coset then there exist $A\in\mathrm{bct}(\mathcal D),v\in S_{\textbf r}$ such that $\tilde w_Av$ is the shortest representative of $S_{\textbf c}w$.
\end{enumerate}
\end{cor}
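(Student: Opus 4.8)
The plan is to verify directly the combinatorial characterizations of shortest coset representatives recalled at the beginning of Subsection~\ref{subsection:ConstructionShortestRepresentative}, using the explicit formula for $\tilde w_A$ from Definition~\ref{definition:DefinitionTildeWA} together with the properties established in Proposition~\ref{prop:PropertiesOfTildeWA} and its proof. Throughout, the crucial input is that membership $A\in\mathrm{bct}(\mathcal D)$ means exactly that all entries of $A$ lie in $\{0,1\}$.

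For \ref{item:TildeWDGivesMinimalLeft}, I would argue as follows. By Proposition~\ref{prop:PropertiesOfTildeWA}, $\tilde w_A$ is the shortest $(S_{\textbf c},S_{\textbf r})$-double coset representative of $\bar Z^{-1}(A)$; in particular $\tilde w_A(R_{i-1}+1)<\dots<\tilde w_A(R_i)$ for every $i$, and by construction $\tilde w_A(R_{i-1}+l)\in\{C_{F_{A,i}(l)-1}+1,\dots,C_{F_{A,i}(l)}\}$ with $F_{A,i}$ strictly increasing. Since $A\in\mathrm{bct}(\mathcal D)$ has entries in $\{0,1\}$, the column indices $F_{A,i}(1)<\dots<F_{A,i}(r_i)$ are pairwise distinct, so the values $\tilde w_A(R_{i-1}+1),\dots,\tilde w_A(R_i)$ lie in pairwise distinct intervals $\{C_{j-1}+1,\dots,C_j\}$, in increasing order of $j$. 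Now any $u\in S_{\textbf c}$ maps each interval $\{C_{j-1}+1,\dots,C_j\}$ to itself and these intervals are linearly ordered, so applying $u$ preserves the strict inequalities, i.e. $(u\tilde w_A)(R_{i-1}+1)<\dots<(u\tilde w_A)(R_i)$ for all $i$. This is exactly the defining condition for a shortest left $S_{\textbf r}$-coset representative, giving \ref{item:TildeWDGivesMinimalLeft}. Part \ref{item:TildeWDGivesMinimalRight} is proved symmetrically with rows and columns exchanged: the proof of Proposition~\ref{prop:PropertiesOfTildeWA} gives $\tilde w_A^{-1}(C_{j-1}+l)\in\{R_{G_{A,j}(l)-1}+1,\dots,R_{G_{A,j}(l)}\}$ with $G_{A,j}$ strictly increasing, and any $v^{-1}\in S_{\textbf r}$ preserves each interval $\{R_{i-1}+1,\dots,R_i\}$, whence $(\tilde w_A v)^{-1}(C_{j-1}+1)<\dots<(\tilde w_A v)^{-1}(C_j)$ for all $j$.

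For \ref{item:TildeWDGivesMinimalLeftExistence}, take a fully separated left coset $wS_{\textbf r}$ and set $A:=Z(w)$; this is independent of the chosen representative, since each $v\in S_{\textbf r}$ stabilizes every $\{R_{i-1}+1,\dots,R_i\}$. As $w$ is fully separated, $A\in\mathrm{bct}(\mathcal D)$ (see the discussion around Corollary~\ref{cor:DoubleCosetsFSep}), and by Proposition~\ref{prop:PropertiesOfTildeWA} we have $Z(\tilde w_A)=A=Z(w)$. The bijectivity of $\bar Z$ in Theorem~\ref{thm:BijectionDoubleCosetsMatrices} then forces $w$ and $\tilde w_A$ to lie in the same $(S_{\textbf c},S_{\textbf r})$-double coset, so $w=u\tilde w_A v$ for some $u\in S_{\textbf c}$, $v\in S_{\textbf r}$, and hence $wS_{\textbf r}=u\tilde w_A S_{\textbf r}$. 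Thus $u\tilde w_A\in wS_{\textbf r}$, and it is a shortest left $S_{\textbf r}$-coset representative by \ref{item:TildeWDGivesMinimalLeft}; since the shortest representative of a left coset is unique, $u\tilde w_A$ is the shortest representative of $wS_{\textbf r}$. Part \ref{item:TildeWDGivesMinimalRightExistence} follows identically from \ref{item:TildeWDGivesMinimalRight}.

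The argument is essentially bookkeeping once Proposition~\ref{prop:PropertiesOfTildeWA} is in hand, and I do not expect a genuine obstacle. The one point to handle carefully is the distinctness of the intervals containing $\tilde w_A(R_{i-1}+1),\dots,\tilde w_A(R_i)$ in the proof of \ref{item:TildeWDGivesMinimalLeft}: this is precisely where full separatedness, i.e. $A\in\mathrm{bct}(\mathcal D)$, is used. Without it, two of these images could share an interval, and a suitable $u\in S_{\textbf c}$ acting on that block would reverse their order, so $u\tilde w_A$ would no longer be a shortest representative; the analogous remark applies to \ref{item:TildeWDGivesMinimalRight}. Hence the hypothesis cannot be dropped, but it is exactly what makes the bracket-permuting argument go through.
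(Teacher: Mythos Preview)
Your proof is correct and follows essentially the same approach as the paper. The only difference is notational: where you argue directly that the images $\tilde w_A(R_{i-1}+1),\dots,\tilde w_A(R_i)$ land in pairwise distinct $c$-intervals (because $F_{A,i}$ is strictly increasing) and that $u\in S_{\textbf c}$ preserves these intervals, the paper packages this into the function $F_w$ of \eqref{eq:DefinitionFW} together with Lemma~\ref{lemma:MatchingFunctionsProperties}\ref{item:MatchingFProperty1} and \eqref{eq:MatchingFunctionsFMatrixVSPermutation}; the content is identical.
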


\begin{proof} By Lemma~\ref{lemma:MatchingFunctionsProperties}.\ref{item:MatchingFProperty1} and \eqref{eq:MatchingFunctionsFMatrixVSPermutation}, we have
\[
F_{u\tilde w_A}(R_{i-1}+1)<F_{u\tilde w_A}(R_{i-1}+2)<\ldots < F_{u\tilde w_A}(R_{i}),\quad \textup{for $i=1,\ldots,M$.}
\]
This implies $u\tilde w_A(R_{i-1}+1)<u\tilde w_A(R_{i-1}+2)<\ldots < u\tilde w_A(R_{i})$ for all $i$. Hence, $u\tilde w_A$ is the shortest representative of $u\tilde w_AS_{\textbf r}$ which gives \ref{item:TildeWDGivesMinimalLeft}. For \ref{item:TildeWDGivesMinimalLeftExistence}, we use that if $wS_{\textbf r}$ is fully separated then $Z(w)\in \mathrm{bct}(\mathcal D)$ and there exist $u\in S_{\textbf c},v\in S_{\textbf r}$ such that $w=u\tilde w_{Z(w)}v$. Thus, \ref{item:TildeWDGivesMinimalLeft} gives that $u\tilde w_{Z(w)}$ is the shortest representative of $wS_{\textbf r}$. The proofs of~\ref{item:TildeWDGivesMinimalRight} and \ref{item:TildeWDGivesMinimalRightExistence} are analogous.
\end{proof}

\section{Equivariant resolution theorem}\label{section:EquivariantResolutionTheorem}

In this section, we discuss a version of the equivariant resolution theorem for bow varieties of Botta and Rim\'anyi~\cite[Theorem~6.13]{botta2023mirror}. This theorem allows to express localization coefficients of stable basis elements of bow varieties in terms of localization coefficients of stable basis elements of cotangent bundles of partial flag varieties.


We begin with describing the underlying combinatorics.

\subsection{Resolutions of brane and tie diagrams}

Let $\mathcal D$ be an essential separated brane diagram where we denote the numbers $d_X$ as follows:
\[
\begin{tikzpicture}
[scale=.475]
\draw[thick] (0,0)--(2,0);
\draw[thick] (3,0)--(5,0);
\draw[thick] (6,0)--(8,0);
\draw[dotted] (9,0)--(11,0);
\draw[thick] (12,0)--(14,0);
\draw[thick] (15,0)--(17,0);
\draw[thick] (18,0)--(20,0);
\draw[dotted] (21,0)--(23,0);
\draw[thick] (24,0)--(26,0);
\draw[thick] (27,0)--(29,0);
\draw[thick] (30,0)--(32,0);

\draw [thick,red] (2,-1) --(3,1); 
\draw [thick,red] (5,-1) --(6,1); 
\draw [thick,red] (8,-1) --(9,1);
\draw [thick,red] (11,-1) --(12,1);
\draw [thick,red] (14,-1) --(15,1);

\draw [thick,blue] (18,-1) --(17,1); 
\draw [thick,blue] (21,-1) --(20,1); 
\draw [thick,blue] (24,-1) --(23,1);
\draw [thick,blue] (27,-1) --(26,1); 
\draw [thick,blue] (30,-1) --(29,1); 

\node at (1,0.6){$0$};
\node at (4,0.6){$\bar R_{M-1}$};
\node at (7,0.6){$\bar R_{M-2}$};
\node at (13,0.6){$\bar R_1$};
\node at (16,0.6){$n$};
\node at (19,0.6){$\bar C_1$};
\node at (25,0.6){$\bar C_{N-2}$};
\node at (28,0.6){$\bar C_{N-1}$};
\node at (31,0.6){$0$};
\end{tikzpicture}
\]
We have
\[
\bar R_i=\sum_{l=M-i}^{M} r_{l},\quad i=1,\ldots,M-1,\quad \bar C_j=\sum_{l=j+1}^{N} c_{l},\quad
j=1,\ldots,N-1.
\]
Note that the assumption that $\mathcal D$ is admissible and essential gives
that all $r_i$ and $c_j$ are nonzero.

\begin{definition}
The \textit{resolution $\mathrm{Res}(\mathcal D)$ of $\mathcal D$} is the brane diagram defined as
\[
\begin{tikzpicture}
[scale=.475]
\draw[thick] (0,0)--(2,0);
\draw[thick] (3,0)--(5,0);
\draw[thick] (6,0)--(8,0);
\draw[dotted] (9,0)--(11,0);
\draw[thick] (12,0)--(14,0);
\draw[thick] (15,0)--(17,0);
\draw[thick] (18,0)--(20,0);
\draw[dotted] (21,0)--(23,0);
\draw[thick] (24,0)--(26,0);
\draw[thick] (27,0)--(29,0);
\draw[thick] (30,0)--(32,0);

\draw [thick,red] (2,-1) --(3,1); 
\draw [thick,red] (5,-1) --(6,1); 
\draw [thick,red] (8,-1) --(9,1);
\draw [thick,red] (11,-1) --(12,1);
\draw [thick,red] (14,-1) --(15,1);

\draw [thick,blue] (18,-1) --(17,1); 
\draw [thick,blue] (21,-1) --(20,1); 
\draw [thick,blue] (24,-1) --(23,1);
\draw [thick,blue] (27,-1) --(26,1); 
\draw [thick,blue] (30,-1) --(29,1); 

\node at (1,0.6){$0$};
\node at (4,0.6){$\bar R_{M-1}$};
\node at (7,0.6){$\bar R_{M-2}$};
\node at (13,0.6){$\bar R_1$};
\node at (16,0.6){$n$};
\node at (19,0.6){$n-1$};
\node at (25,0.6){$2$};
\node at (28,0.6){$1$};
\node at (31,0.6){$0$};
\end{tikzpicture}
\]
\end{definition}

That is, the resolved brane diagram $\mathrm{Res}(\mathcal D)$ is obtained from $\mathcal D$ by replacing the part
$n\textcolor{blue}{\backslash}\bar C_1\textcolor{blue}{\backslash}\ldots\textcolor{blue}{\backslash} \bar C_{N-1}\textcolor{blue}{\backslash}0$ with
$n\textcolor{blue}{\backslash}n-1\textcolor{blue}{\backslash}\ldots\textcolor{blue}{\backslash}2\textcolor{blue}{\backslash}1\textcolor{blue}{\backslash}0$. 
Thus, with the notation from Subsection~\ref{subsec:TFlagAsBow}, $\mathrm{Res}(\mathcal D)$ is equal to the brane diagram $\mathcal D(R_1,\ldots,R_{M-1};n)$.

Given $u=u_1\times \cdots\times u_N\in S_{\textbf c}$, we obtain an inclusion
\begin{equation}\label{eq:ResolutionForTieDiagrams}
\begin{tikzcd}
\mathrm{Res}_u\colon\mathrm{Tie}(\mathcal D) \arrow[hookrightarrow]{r} &\mathrm{Tie}(\mathrm{Res}(\mathcal D)),
\end{tikzcd}
\end{equation}
where for a tie diagram $D\in \mathrm{Tie}(\mathcal D)$ the resolved tie diagram $\mathrm{Res}_u(D)$ is obtained via performing at each blue line $U_i$ with ties the local move: 
\begin{center}
\begin{tikzpicture}[scale=.3]
\def\s{-20}
\draw[thick] (0+\s,0)--(2+\s,0);
\draw [thick,blue] (3+\s,-1) --(2+\s,1); 
\draw[thick] (3+\s,0)--(5+\s,0);

\draw[dashed] (2+\s,1) to[out=110, in=0] (-4+\s,5);
\draw[dashed] (2+\s,1) to[out=110, in=0] (-4+\s,5.6);
\draw[dashed] (2+\s,1) to[out=125, in=0] (-4+\s,3.2);
\draw[dotted] (-3.8+\s,3.5) to (-3.8+\s,4.7);

\draw[-to] decorate [decoration=zigzag]{(6+\s,1) -- (-9.6,1)};

\draw[thick] (0,0)--(2,0);
\draw[thick] (3,0)--(5,0);
\draw[dotted] (6,0)--(8,0);
\draw[thick] (9,0)--(11,0);
\draw[thick] (12,0)--(14,0);

\draw [thick,blue] (3,-1) --(2,1);
\draw [thick,blue] (6,-1) --(5,1);
\draw [thick,blue] (9,-1) --(8,1);
\draw [thick,blue] (12,-1) --(11,1);

\node[rotate=90] (w) at (-4,5) {$u_i$};
\draw[draw=black] (-6,3) rectangle ++(4,4);

\draw[dashed] (2,1) to[out=130, in=0] (-2,3.3);
\draw[dashed] (5,1) to[out=125, in=0] (-2,3.6);
\draw[dashed] (8,1) to[out=120, in=0] (-2,6.4);
\draw[dashed] (11,1) to[out=115, in=0] (-2,6.7);

\draw[dotted] (-1.6,6.1) to (-1.6,3.9);
\draw[dotted] (-6.4,6.1) to (-6.4,3.9);

\draw[dashed] (-6,3.3) to (-8,3.3);
\draw[dashed] (-6,3.6) to (-8,3.6);
\draw[dashed] (-6,6.4) to (-8,6.4);
\draw[dashed] (-6,6.7) to (-8,6.7);
\end{tikzpicture}
\end{center}
Here, the box around $u_i$ represents an arbitrary diagram for $u_i$.

\begin{example}
Let $D$ be the tie diagram:
\begin{center}
\begin{tikzpicture}
[scale=.3]
\draw[thick] (0,0)--(2,0);
\draw[thick] (3,0)--(5,0);
\draw[thick] (6,0)--(8,0);
\draw[thick] (9,0)--(11,0);
\draw[thick] (12,0)--(14,0);
\draw[thick] (15,0)--(17,0);
\draw[thick] (18,0)--(20,0);

\draw [thick,red] (2,-1) -- (3,1);
\draw [thick,red] (5,-1) -- (6,1);
\draw [thick,red] (8,-1) -- (9,1);
\draw [thick,red] (11,-1) -- (12,1);
\draw [thick,blue] (15,-1) -- (14,1);
\draw [thick,blue] (18,-1) -- (17,1);

\node at (1,0.7){$0$};
\node at (4,0.7){$1$};
\node at (7,0.7){$2$};
\node at (10,0.7){$3$};
\node at (13,0.7){$5$};
\node at (16,0.7){$3$};
\node at (19,0.7){$0$};

\draw[dashed] (3,1) to[out=45,in=135] (17,1);
\draw[dashed] (6,1) to[out=45,in=135] (14,1);
\draw[dashed] (9,1) to[out=45,in=135] (17,1);
\draw[dashed] (12,1) to[out=45,in=135] (17,1);
\draw[dashed] (12,1) to[out=45,in=135] (14,1);
\node at (28,0.5) {$\displaystyle {\begin{pmatrix}
 1 & 1 \\
 0 & 1 \\
 1 & 0 \\
 0 & 1
\end{pmatrix}}$};
\end{tikzpicture}
\end{center}
We choose $u_1=21$ and $u_2=231$. Then, $\mathrm{Res}_u(D)$ is given by

\begin{center}
\begin{tikzpicture}
[scale=.3]
\draw[thick] (0,0)--(2,0);
\draw[thick] (3,0)--(5,0);
\draw[thick] (6,0)--(8,0);
\draw[thick] (9,0)--(11,0);
\draw[thick] (12,0)--(16,0);
\draw[thick] (17,0)--(19,0);
\draw[thick] (20,0)--(22,0);
\draw[thick] (23,0)--(25,0);
\draw[thick] (26,0)--(28,0);
\draw[thick] (29,0)--(31,0);

\draw [thick,red] (2,-1) -- (3,1);
\draw [thick,red] (5,-1) -- (6,1);
\draw [thick,red] (8,-1) -- (9,1);
\draw [thick,red] (11,-1) -- (12,1);
\draw [thick,blue] (17,-1) -- (16,1);
\draw [thick,blue] (20,-1) -- (19,1);
\draw [thick,blue] (23,-1) -- (22,1);
\draw [thick,blue] (26,-1) -- (25,1);
\draw [thick,blue] (29,-1) -- (28,1);

\node at (1,0.7){$0$};
\node at (4,0.7){$1$};
\node at (7,0.7){$2$};
\node at (10,0.7){$3$};
\node at (14,0.7){$5$};

\node at (18,0.7){$4$};
\node at (21,0.7){$3$};
\node at (24,0.7){$2$};
\node at (27,0.7){$1$};
\node at (30,0.7){$0$};

\draw[dashed] (6,1) to[out=45,in=180] (13,2.5) to[out=0,in=180] (15,1.5) to[out=0,in=135] (16,1);
\draw[dashed] (12,1) to[out=45,in=180] (13,1.5) to[out=0,in=180] (15,2.5) to[out=0,in=135] (19,1);

\draw[dashed] (12,1) to[out=45,in=180] (15,3.5) to[out=0,in=180] (18,5.5) to[out=0,in=135] (28,1);
\draw[dashed] (9,1) to[out=45,in=180] (15,4.5) to[out=0,in=180] (18,3.5) to[out=0,in=135] (22,1);
\draw[dashed] (3,1) to[out=45,in=180] (15,5.5) to[out=0,in=180] (18,4.5) to[out=0,in=135] (25,1);

\node at (40,1.5) {$\displaystyle{\begin{pmatrix}
0 & 1 & 0 & 0 & 1\\
0 & 0 & 1 & 0 & 0\\
1 & 0 & 0 & 0 & 0\\
0 & 0 & 0 & 1 & 0
\end{pmatrix}}$};

\end{tikzpicture}
\end{center}
Here, we see the rotated diagrams for $u_1$ and $u_2$. The diagram for $u_1$  involves the ties of the first and the second blue line and the diagram for $u_2$ involves the ties of the third, fourth and fifth blue line.
\end{example}

For $D\in\mathrm{Tie}(\mathcal D)$, we set $\tilde w_D\coloneqq \tilde w_{M(D)}$, where $\tilde w_{M(D)}$ is the permutation defined in Definition~\ref{definition:DefinitionTildeWA}. 

In terms of permutations, we can characterize $\mathrm{Res}_u(D)$ as follows:

\begin{prop}\label{prop:ResolutionsAndPermutations}
We have
\[
\mathrm{Res}_u(D) = D_{u\tilde w_D S_{\textbf r}},
\]
where $D_{u\tilde w_D S_{\textbf r}}$ is defined as in Subsection~\ref{subsection:DepBraneDiagPartialFlag}.
\end{prop}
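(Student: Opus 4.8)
The plan is to reduce the asserted equality of tie diagrams to an equality of binary contingency tables via the bijection \eqref{eq:TieDiagramsVSBCTs}, and then to compute both sides explicitly. The crucial simplification is that all column sums of $\mathrm{Res}(\mathcal D)=\mathcal D(R_1,\dots,R_{M-1};n)$ equal $1$: since $\mathrm{Res}(\mathcal D)$ is separated, a tie diagram on it is determined by, and determines, the function $\tau\colon\{1,\dots,n\}\to\{1,\dots,M\}$ sending $k$ to the index of the unique red line tied to the $k$-th blue line, subject only to $|\tau^{-1}(i)|=r_i$. By the construction of $D_w$ in Subsection~\ref{subsection:DepBraneDiagPartialFlag} the function $\tau$ attached to $D_w$ is exactly $G_w$ from \eqref{eq:DefinitionFW}; that is, $(V_i,U_k)\in D_w$ iff $w^{-1}(k)\in\{R_{i-1}+1,\dots,R_i\}$. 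It therefore suffices to prove that $\tau$ for $\mathrm{Res}_u(D)$ coincides with $G_{u\tilde w_D}$.

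For $D_{u\tilde w_D S_{\textbf r}}$ the argument is as follows. By Definition~\ref{definition:DefinitionTildeWA} and the computation in the proof of Proposition~\ref{prop:PropertiesOfTildeWA}, the permutation $\tilde w_D=\tilde w_{M(D)}$ satisfies $\tilde w_D^{-1}(C_{j-1}+l)\in\{R_{G_{M(D),j}(l)-1}+1,\dots,R_{G_{M(D),j}(l)}\}$ for all $j$ and $1\le l\le c_j$, i.e.\ $G_{\tilde w_D}(C_{j-1}+l)=G_{M(D),j}(l)$. Since $u=u_1\times\cdots\times u_N\in S_{\textbf c}$ preserves each block $\{C_{j-1}+1,\dots,C_j\}$ and acts on it by $u_j$, one has $u^{-1}(C_{j-1}+l)=C_{j-1}+u_j^{-1}(l)$, and hence $(u\tilde w_D)^{-1}(C_{j-1}+l)=\tilde w_D^{-1}(C_{j-1}+u_j^{-1}(l))$ lies in $\{R_{G_{M(D),j}(u_j^{-1}(l))-1}+1,\dots,R_{G_{M(D),j}(u_j^{-1}(l))}\}$. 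This gives $G_{u\tilde w_D}(C_{j-1}+l)=G_{M(D),j}(u_j^{-1}(l))$.

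For $\mathrm{Res}_u(D)$ I would read off $\tau$ directly from the local move \eqref{eq:ResolutionForTieDiagrams}. In $D$ the blue line $U_j$ carries precisely the $c_j$ ties connecting it to $V_{G_{M(D),j}(1)},\dots,V_{G_{M(D),j}(c_j)}$, because $G_{M(D),j}(t)$ is by definition the row index of the $t$-th $1$-entry in the $j$-th column of $M(D)$. The local move replaces $U_j$ by the consecutive blue lines in positions $C_{j-1}+1,\dots,C_j$ of $\mathrm{Res}(\mathcal D)$ and inserts a diagram for $u_j$ between them and the (unchanged) red lines; unwinding the conventions of \eqref{eq:ResolutionForTieDiagrams}, the blue line in position $C_{j-1}+l$ ends up tied to $V_{G_{M(D),j}(u_j^{-1}(l))}$. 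Hence $\tau$ for $\mathrm{Res}_u(D)$ equals $k=C_{j-1}+l\mapsto G_{M(D),j}(u_j^{-1}(l))$, which is precisely the function obtained in the previous paragraph, so $\mathrm{Res}_u(D)=D_{u\tilde w_D S_{\textbf r}}$.

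The one genuinely delicate point — the rest being routine bookkeeping — is to match the orientation conventions in the local move \eqref{eq:ResolutionForTieDiagrams} (which side of the $u_j$-box corresponds to the resolved blue lines, and in which direction a permutation diagram is read) with the left multiplication by $u$ used to form $u\tilde w_D$, so that $u_j^{-1}$ appears consistently on both sides. Here it is convenient to use that $\tilde w_D$ is fully separated by Proposition~\ref{prop:PropertiesOfTildeWA}.\ref{item:tildeWAFullySep}, so that by Lemma~\ref{lemma:MatchingFunctionsProperties} the restriction of $G_{\tilde w_D}$ to each column block $\{C_{j-1}+1,\dots,C_j\}$ is a bijection onto the set of red-line indices occurring there; this makes the permutation-side and diagram-side descriptions the same reindexing, and the case $u=\operatorname{id}$ (where both sides equal $D_{\tilde w_D S_{\textbf r}}$) serves as a sanity check.
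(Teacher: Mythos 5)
Your argument is correct and is essentially the paper's own proof: both compare, for each resolved blue line $U_{C_{j-1}+l}$, the red line it is tied to, on the one hand by reading the local move defining $\mathrm{Res}_u(D)$ and on the other by locating $\tilde w_D^{-1}u^{-1}(C_{j-1}+l)$ in the row blocks $\{R_{i-1}+1,\ldots,R_i\}$, exactly as in the paper (your reformulation via the functions $G_w$ and the observation that all column sums of $\mathrm{Res}(\mathcal D)$ equal $1$ is just a slightly more formal packaging of the same comparison). The only divergence is the $u_j(l)$ versus $u_j^{-1}(l)$ bookkeeping in reading the rotated $u_j$-box, a convention issue you correctly flag and which is harmless, since the pictorial definition of $\mathrm{Res}_u$ fixes the convention so the two sides match (the paper's proof writes $V_{i_{u(l)}}$ on the diagram side and the corresponding index on the permutation side).
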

\begin{proof}
Suppose the blue line $U_j$ in $D$ is connected to the red lines $V_{i_1},\ldots V_{i_{c_j}}$, where $i_1<\ldots<i_{c_j}$. 
Then, in $\mathrm{Res}_u(D)$, the blue line $U_{C_{j-1}+l}$ is connected to $V_{i_{u(l)}}$ for $l=1,\ldots,c_j$. On the other hand, by construction of $\tilde w_D$, we have
$
\tilde w_{D}^{-1}(C_{j-1}+l)\in \{R_{i_l-1}+1,\ldots,R_{i_1}\}
$
for all $l$. Thus, $\tilde w_{D}^{-1}u^{-1}(C_{j-1}+l)\in \{R_{i_{u_j(l)}-1}+1,\ldots,R_{i_{u_j(l)}}\}$ which proves the claim.
\end{proof}

\subsection{Equivariant resolution theorem}\label{subsection:EquivariantResolutionTheorem}
The equivariant resolution theorem connects the localization coefficients of stable basis elements of $\mathcal D$ and $\mathrm{Res}(\mathcal D)$ as follows:

\begin{theorem}[Equivariant resolution theorem]\label{thm:EquivariantResolution}
Let $D$ and $D'$ be tie diagrams of $\mathcal D$. Then,
\[
\Big(\prod_{i=1}^N\prod_{j=1}^{c_i-1} (jh)^{c_i-j}\Big) \iota_{D}^\ast (\widetilde{\mathrm{Stab}}_{\mathfrak C_-}(D'))
= \Psi_{\mathcal D}( \iota_{\mathrm{Res}_{u_0}(D)}^\ast (\mathrm{Stab}_{\mathfrak C_-}(\mathrm{Res}_{u}(D')))),
\]
where $\widetilde{\mathrm{Stab}}_{\mathfrak C_-}$ is the normalized version of $\mathrm{Stab}_{\mathfrak C_-}$ from \eqref{equation:DefinitionStabTilde}, $u\in S_{\textbf c}$, $u_0=w_{0,c_1}\times \ldots \times w_{0,c_N}$ and $w_{0,l}$ denotes the longest element in $S_l$ for all $l$.
The $\mathbb Q[h]$-algebra homomorphism \[ 
\Psi_{\mathcal D}\colon\mathbb Q[t_1,\ldots,t_n,h]\xrightarrow{\phantom{xxx}} \mathbb Q[t_1,\ldots,t_N,h] \]
is given by
\[
\Psi_{\mathcal D}(t_{C_{i-1}+k})=t_i-(k-1)h,\quad \textit{for $i=1,\ldots,N$, $k=1,\ldots,c_i$}.
\]
\end{theorem}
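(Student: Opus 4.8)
The plan is to deduce this statement from the version of the equivariant resolution theorem of Botta and Rim\'anyi \cite[Theorem~6.13]{botta2023mirror} (as transported to equivariant cohomology, cf.\ \cite{wehrhanphd}), combined with the explicit description of the resolution map on tie diagrams from Proposition~\ref{prop:ResolutionsAndPermutations}. The reference result gives a geometric incarnation of the resolution: $\mathrm{Res}(\mathcal D) = \mathcal D(R_1,\ldots,R_{M-1};n)$, so $\mathcal C(\mathrm{Res}(\mathcal D))$ is (via Section~\ref{subsec:TFlagAsBow}) the cotangent bundle $T^\ast F(R_1,\ldots,R_{M-1};n)$, and there is a correspondence relating stable basis elements of $\mathcal C(\mathcal D)$ to those of $\mathcal C(\mathrm{Res}(\mathcal D))$. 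The first step is to recall precisely what \cite[Theorem~6.13]{botta2023mirror} asserts in cohomological terms: it should produce, for each choice of $u\in S_{\textbf c}$, an identity of the shape $\iota_D^\ast(\widetilde{\mathrm{Stab}}_{\mathfrak C_-}(D')) \cdot (\text{normalization}) = \Psi_{\mathcal D}\big(\iota_{\mathrm{Res}_{u_0}(D)}^\ast(\mathrm{Stab}_{\mathfrak C_-}(\mathrm{Res}_u(D')))\big)$, where the normalization factor comes from the Euler class of the normal bundle of an embedding $\mathcal C(\mathcal D)\hookrightarrow\mathcal C(\mathrm{Res}(\mathcal D))$ analogous to the one in Proposition~\ref{prop:EqMultiplicitiesEssentialBraneDiagram} and the formula \eqref{eq:NormalBundleResolutionDefinition}. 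I would then identify that normal-bundle Euler class with $\prod_{i=1}^N\prod_{j=1}^{c_i-1}(jh)^{c_i-j}$.

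The main computation is this normalization factor. By \eqref{eq:NormalBundleResolutionDefinition} the normal bundle $N_{\mathcal D}$ has $\mathbb T$-equivariant K-theory class whose factor coming from the $j$-th blue line, in the summand $h^l(\xi_{U_j^-}\otimes\mathbb C_{U_j}^\vee)$ and its dual, together with the weight substitution $\Psi_{\mathcal D}(t_{C_{j-1}+k}) = t_j-(k-1)h$, collapses the $t$-variables: after applying $\Psi_{\mathcal D}$, the Euler class of the portion of $N_{\mathcal D}$ supported at blue lines that get "unfolded" by the resolution becomes a pure power of $h$. Concretely, $\xi_{U_j^-}$ has, by Corollary~\ref{cor:PropertiesTautologicalBundlesSeparated}, weights $\{-l\,h + (\text{weights of }\mathbb C_{U_k}),\ k>j\}$, and once these are pushed through $\Psi_{\mathcal D}$ and paired against $\mathbb C_{U_j}$ the weight differences within a single block $\{C_{j-1}+1,\ldots,C_j\}$ become $\{(k-1)h : 1\le k\le c_j\}$-type quantities. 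Carefully bookkeeping multiplicities — the $h^l$ shift for $l=1,\ldots,c_j-1$ and the number of times each arises — yields exactly $\prod_{j=1}^N\prod_{l=1}^{c_j-1}(lh)^{c_j-l}$. This counting is the step I expect to be the main obstacle: it is a matter of correctly matching the indexing in \cite[Theorem~6.13]{botta2023mirror} (which is phrased for elliptic cohomology and with a possibly different normalization, cf.\ the remark after Theorem~\ref{thm:ComparisonOfChambers}) with the cohomological normalization used here, and of tracking the exponents $c_i-j$ rather than a flat power.

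For the compatibility of the combinatorics, I would invoke Proposition~\ref{prop:ResolutionsAndPermutations}, which shows $\mathrm{Res}_u(D) = D_{u\tilde w_D S_{\textbf r}}$, so that the right-hand side of the claimed identity is literally an equivariant multiplicity $\iota_{D_{u_0\tilde w_D S_{\textbf r}}}^\ast(\mathrm{Stab}_{\mathfrak C_-}(D_{u\tilde w_{D'} S_{\textbf r}}))$ of a stable basis element of the cotangent bundle of a partial flag variety, exactly the kind of quantity handled by Proposition~\ref{prop:PartialViaFullFlag} and Theorem~\ref{thm:SuLocalizationFormula}. The uniqueness results of Subsection~\ref{subsection:UniquenessFullySeparatedness} (Proposition~\ref{prop:UniquenessOfPrePostComposition} and Corollary~\ref{cor:TildeWDGivesMinimalLeftAndRightCosets}) guarantee that $u\tilde w_D$ and $u_0\tilde w_D$ are genuine shortest coset representatives, so that the formulas of the previous section apply without ambiguity; this is what makes the fully-separated hypothesis essential. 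Finally, I would check that the identity is independent of the auxiliary choice of $u$ (both sides change compatibly under replacing $u$ by another element of $S_{\textbf c}$, by the $S_N$-equivariance of Theorem~\ref{thm:ComparisonOfChambers} applied blockwise, or directly from the fact that the left-hand side does not involve $u$), and assemble the pieces. The only genuinely new content beyond citing \cite[Theorem~6.13]{botta2023mirror} is the explicit evaluation of the normalization constant and the translation through Proposition~\ref{prop:ResolutionsAndPermutations}; everything else is bookkeeping.
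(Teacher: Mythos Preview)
The paper does not give a self-contained proof of this theorem: it is stated as the cohomological analogue of \cite[Theorem~6.13]{botta2023mirror}, and the remark following Corollary~\ref{cor:ResolvedStableEnvelopesFormula} says only that the form stated here follows from that single-step result by an inductive procedure (resolving one blue line at a time), with details deferred to \cite{wehrhanphd}. Your high-level plan --- cite \cite[Theorem~6.13]{botta2023mirror}, identify the normalization factor, and use Proposition~\ref{prop:ResolutionsAndPermutations} to match the combinatorics --- is therefore consistent with what the paper does, and the remarks about Corollary~\ref{cor:TildeWDGivesMinimalLeftAndRightCosets} and about independence of $u$ (the right-hand side depends only on the coset $u\tilde w_{D'}S_{\textbf r}$) are correct.

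Where your sketch goes wrong is in the identification of the factor $\prod_{i=1}^N\prod_{j=1}^{c_i-1}(jh)^{c_i-j}$. You attribute it to the Euler class of the normal bundle of an embedding $\mathcal C(\mathcal D)\hookrightarrow\mathcal C(\mathrm{Res}(\mathcal D))$ ``analogous to \eqref{eq:NormalBundleResolutionDefinition}''. But $e_{\mathbb T}(N_{\mathcal D,\mathfrak C_-}^-)$ from \eqref{eq:NormalBundleResolutionDefinition} is computed explicitly just after Proposition~5.4: it is a product of factors $t_j-t_i+mh$ with $i\ne j$, not a pure power of $h$, and it is \emph{already} absorbed into $\widetilde{\mathrm{Stab}}$ via \eqref{equation:DefinitionStabTilde}. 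The extra factor in the theorem is a genuinely different object. Your computation also applies $\Psi_{\mathcal D}$ to weights of $\xi_{U_j^-}$, but $\Psi_{\mathcal D}$ is a map $\mathbb Q[t_1,\ldots,t_n,h]\to\mathbb Q[t_1,\ldots,t_N,h]$ and does not act on classes that already live over $\mathcal C(\mathcal D)$. The correct origin of the factor is visible in Proposition~\ref{prop:UPartVSNormalizationFactor} (which the paper proves \emph{after} stating the theorem, for later use): it is exactly $\Psi_{\mathcal D}$ applied to the product of crossing weights in the $w_{0,c_1}\times\cdots\times w_{0,c_N}$ part of a reduced diagram, i.e., it records the within-block degeneration when the $n$ resolved torus parameters $t_{C_{i-1}+1},\ldots,t_{C_i}$ are all specialized to shifts of the single parameter $t_i$. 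In the inductive picture of the remark, resolving the $j$-th blue line contributes $\prod_{l=1}^{c_j-1}(lh)^{c_j-l}$, and these multiply.
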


Applying Proposition~\ref{prop:StableEnvelopesPartialFlagVarietyInvariantUnderHW} and Proposition~\ref{prop:ResolutionsAndPermutations} then directly implies the following:

\begin{cor}\label{cor:ResolvedStableEnvelopesFormula}
With the notation of Theorem~\ref{thm:EquivariantResolution}, we have
\[
\Big(\prod_{i=1}^N\prod_{j=1}^{c_i-1} (jh)^{c_i-j}\Big) \iota_{D}^\ast (\widetilde{\mathrm{Stab}}_{\mathfrak C_-}(D'))
= \Psi_{\mathcal D}( \iota_{w_DS_{\textbf r}}^\ast (\mathrm{Stab}_{\mathfrak C_-}(w'S_{\textbf r}))),
\]
where the stable basis element on the right is on $T^\ast F(R_1,\ldots,R_{M-1};n)$, $w'\in S_{\textbf c} \tilde w_{D'} S_{\textbf r}$ and 
$
w_D=(w_{0,c_1}\times \ldots \times w_{0,c_N}) \tilde w_D
$.
\end{cor}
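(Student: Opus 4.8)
The plan is to deduce the statement directly from Theorem~\ref{thm:EquivariantResolution} by rewriting both sides of that identity in the language of Subsection~\ref{subsection:DepBraneDiagPartialFlag} and then applying the transition-invariance result, Proposition~\ref{prop:StableEnvelopesPartialFlagVarietyInvariantUnderHW}. No new geometry is needed; the argument is purely a matter of translating between tie diagrams on $\mathrm{Res}(\mathcal D)$ and cosets in $S_n$.

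First I would record the basic identification: $\mathrm{Res}(\mathcal D)=\mathcal D(R_1,\ldots,R_{M-1};n)$, and since $\mathcal D$ is admissible and essential all $r_i$ are strictly positive, so $R_1<\cdots<R_{M-1}<R_M=n$ is a genuine strictly increasing sequence. Hence the composition vector $\bm\delta$ attached to the partial flag variety $F(R_1,\ldots,R_{M-1};n)$ equals $\textbf r$, so $S_{\bm\delta}=S_{\textbf r}$, and the bijection of Subsection~\ref{subsection:DepBraneDiagPartialFlag} identifies $S_n/S_{\textbf r}$ with $\mathrm{Tie}(\mathrm{Res}(\mathcal D))$ via $wS_{\textbf r}\mapsto D_{wS_{\textbf r}}$.

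Next, given $w'\in S_{\textbf c}\tilde w_{D'}S_{\textbf r}$, I write $w'=u\tilde w_{D'}v$ with $u\in S_{\textbf c}$, $v\in S_{\textbf r}$, so that $w'S_{\textbf r}=u\tilde w_{D'}S_{\textbf r}$, and I apply Theorem~\ref{thm:EquivariantResolution} with this particular $u$ (which is legitimate, since the left-hand side of that theorem does not depend on $u$). Proposition~\ref{prop:ResolutionsAndPermutations} then yields $\mathrm{Res}_u(D')=D_{u\tilde w_{D'}S_{\textbf r}}=D_{w'S_{\textbf r}}$, and, using $w_D=u_0\tilde w_D$ with $u_0=w_{0,c_1}\times\cdots\times w_{0,c_N}\in S_{\textbf c}$, also $\mathrm{Res}_{u_0}(D)=D_{u_0\tilde w_D S_{\textbf r}}=D_{w_DS_{\textbf r}}$. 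Substituting into Theorem~\ref{thm:EquivariantResolution} rewrites its right-hand side as $\Psi_{\mathcal D}\big(\iota_{D_{w_DS_{\textbf r}}}^\ast(\mathrm{Stab}_{\mathfrak C_-}(D_{w'S_{\textbf r}}))\big)$, with the stable basis element taken on $\mathcal C(\mathrm{Res}(\mathcal D))$.

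Finally I invoke Proposition~\ref{prop:StableEnvelopesPartialFlagVarietyInvariantUnderHW} for $T^\ast F(R_1,\ldots,R_{M-1};n)$, whose separated bow realization is $\mathcal C(\mathrm{Res}(\mathcal D))$: it replaces $\iota_{D_{w_DS_{\textbf r}}}^\ast(\mathrm{Stab}_{\mathfrak C_-}(D_{w'S_{\textbf r}}))$ by $\iota_{w_DS_{\textbf r}}^\ast(\mathrm{Stab}_{\mathfrak C_-}(w'S_{\textbf r}))$, now computed on $T^\ast F(R_1,\ldots,R_{M-1};n)$ itself, which is exactly the claimed formula. I do not expect a genuine obstacle: the whole argument is bookkeeping, and the single point to verify is the coset arithmetic — that one choice of $u$ simultaneously realizes the prescribed coset $w'S_{\textbf r}$ and leaves $\Psi_{\mathcal D}$ (which depends only on the margin vector $\textbf c$) unchanged — which is immediate.
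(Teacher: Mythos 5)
Your argument is correct and follows essentially the same route as the paper's proof: apply Proposition~\ref{prop:ResolutionsAndPermutations} to identify $\mathrm{Res}_{u_0}(D)=D_{w_DS_{\textbf r}}$ and $\mathrm{Res}_{u}(D')=D_{w'S_{\textbf r}}$ (with $u$ chosen from $w'=u\tilde w_{D'}v$), then use Proposition~\ref{prop:StableEnvelopesPartialFlagVarietyInvariantUnderHW} to pass from the bow-variety multiplicities on $\mathcal C(\mathrm{Res}(\mathcal D))$ to those on $T^\ast F(R_1,\ldots,R_{M-1};n)$, and conclude by Theorem~\ref{thm:EquivariantResolution}. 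The extra bookkeeping you include (that $S_{\bm\delta}=S_{\textbf r}$ and that the particular choice of $u$ is legitimate) is implicit in the paper's proof and harmless.
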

\begin{proof} By Proposition~\ref{prop:ResolutionsAndPermutations}, $\mathrm{Res}_{u_0}(D)=D_{w_DS_{\textbf r}}$ and $\mathrm{Res}_{u}(D')=D_{w'S_{\textbf r}}$ for some $u'\in S_{\textbf c}$.
Thus, Proposition~\ref{prop:StableEnvelopesPartialFlagVarietyInvariantUnderHW} yields
\[
\iota_{w_DS_{\textbf r}}^\ast (\mathrm{Stab}_{\mathfrak C_-}(w'S_{\textbf r}))
=
\iota_{\mathrm{Res}_{u_0}(D)}^\ast (\mathrm{Stab}_{\mathfrak C_-}(\mathrm{Res}_{u}(D'))).
\]
Hence, the claim follows from the equivariant resolution theorem.
\end{proof}

\begin{example} Given the tie diagrams
\begin{center}
\begin{tikzpicture}
[scale=.3]
\draw[thick] (0,0)--(2,0);
\draw[thick] (3,0)--(5,0);
\draw[thick] (6,0)--(8,0);
\draw[thick] (9,0)--(11,0);
\draw[thick] (12,0)--(14,0);
\draw[thick] (15,0)--(17,0);
\draw[thick] (18,0)--(20,0);

\draw [thick,red] (2,-1) -- (3,1);
\draw [thick,red] (5,-1) -- (6,1);
\draw [thick,red] (8,-1) -- (9,1);

\draw [thick,blue] (12,-1) -- (11,1);
\draw [thick,blue] (15,-1) -- (14,1);
\draw [thick,blue] (18,-1) -- (17,1);

\draw[dashed] (3,1) to[out=45, in=135] (14,1);
\draw[dashed] (6,1) to[out=45, in=135] (11,1);
\draw[dashed] (6,1) to[out=45, in=135] (17,1);
\draw[dashed] (9,1) to[out=45, in=135] (11,1);
\draw[dashed] (9,1) to[out=45, in=135] (17,1);

\node at (1,0.7) {$0$};
\node at (4,0.7) {$1$};
\node at (7,0.7) {$3$};
\node at (10,0.7) {$5$};
\node at (13,0.7) {$3$};
\node at (16,0.7) {$2$};
\node at (19,0.7) {$0$};

\node at (-2,0) {$D=$};

\def\s{28};

\node at (-2+\s,0) {$D'=$};
\node at (1+\s,0.7) {$0$};
\node at (4+\s,0.7) {$1$};
\node at (7+\s,0.7) {$3$};
\node at (10+\s,0.7) {$5$};
\node at (13+\s,0.7) {$3$};
\node at (16+\s,0.7) {$2$};
\node at (19+\s,0.7) {$0$};

\draw[thick] (0+\s,0)--(2+\s,0);
\draw[thick] (3+\s,0)--(5+\s,0);
\draw[thick] (6+\s,0)--(8+\s,0);
\draw[thick] (9+\s,0)--(11+\s,0);
\draw[thick] (12+\s,0)--(14+\s,0);
\draw[thick] (15+\s,0)--(17+\s,0);
\draw[thick] (18+\s,0)--(20+\s,0);

\draw [thick,red] (2+\s,-1) -- (3+\s,1);
\draw [thick,red] (5+\s,-1) -- (6+\s,1);
\draw [thick,red] (8+\s,-1) -- (9+\s,1);

\draw [thick,blue] (12+\s,-1) -- (11+\s,1);
\draw [thick,blue] (15+\s,-1) -- (14+\s,1);
\draw [thick,blue] (18+\s,-1) -- (17+\s,1);

\draw[dashed] (3+\s,1) to[out=45, in=135] (17+\s,1);
\draw[dashed] (6+\s,1) to[out=45, in=135] (11+\s,1);
\draw[dashed] (6+\s,1) to[out=45, in=135] (14+\s,1);
\draw[dashed] (9+\s,1) to[out=45, in=135] (11+\s,1);
\draw[dashed] (9+\s,1) to[out=45, in=135] (17+\s,1);
\end{tikzpicture}
\end{center}
In the following, we compute the localization coefficient $\iota_{D}^\ast (\widetilde{\mathrm{Stab}}_{\mathfrak C_-}(D'))$. Note that $n=5$, $\textbf r=(2,2,1)$ and $\textbf c=(2,1,2)$.
Let $w$, $w'\in S_5$ be as in Example~\ref{example:StabsOfCotangentPartialFlag}. Then, we have $Z(w)=M(D)$ and $Z(w')=M(D')$. Hence, we know from Example~\ref{example:StabsOfCotangentPartialFlag} that
$\tilde w_D=14253$ and $\tilde w_{D'}=14235$. Thus, $w=(s\times \operatorname{id}\times s)\tilde w_D$, where $s=12\in S_2$ which gives $w=w_D$. Therefore, by Corollary~\ref{cor:ResolvedStableEnvelopesFormula}, we have 
\begin{equation}\label{eq:ExampleEquivariantMultiplicityViaResolution}
h^2\iota_{D}^\ast (\widetilde{\mathrm{Stab}}_{\mathfrak C_-}(D'))
=
\Psi_{\mathcal D}( \iota_{wS_{\textbf r}}^\ast ({\mathrm{Stab}}_{\mathfrak C_-}(w'S_{\textbf r}))),
\end{equation}
where \[
\Psi_{\mathcal D}\colon\mathbb Q[t_1,t_2,t_3,t_4,t_5,h]\xrightarrow{\phantom{xxx}}\mathbb Q[t_1,t_2,t_3,h]
\]
is the $\mathbb Q[h]$-algebra homomorphism given by
$t_1\mapsto t_1$, $t_2\mapsto t_1-h$, $t_3\mapsto t_2$, $t_4\mapsto t_3$, $t_5\mapsto t_3-h$. 
From~\eqref{eq:ExampleStableEnvelopesPartialFlag}, we know
\begin{equation*}
 \iota_{wS_{\textbf r}}^\ast ({\mathrm{Stab}}_{\mathfrak C_-}(w'S_{\textbf r}))
 = h
 (t_1-t_3+h)(t_2-t_3+h)(t_2-t_4+h) (t_4-t_5)(t_1-t_2)(t_3-t_5)(t_1-t_5),
\end{equation*}
which yields
\begin{equation}
\label{eq:ExampleEquivariantMultiplicityViaResolution2}
\Psi_{\mathcal D}( \iota_{wS_{\textbf r}}^\ast ({\mathrm{Stab}}_{\mathfrak C_-}(w'S_{\textbf r})))
=
h^3 (t_1-t_2+h)(t_1-t_2)(t_1-t_2+h)(t_2-t_3)(t_1-t_3).
\end{equation}
Finally, inserting~\eqref{eq:ExampleEquivariantMultiplicityViaResolution2} in \eqref{eq:ExampleEquivariantMultiplicityViaResolution} gives
\[
\iota_{D}^\ast (\widetilde{\mathrm{Stab}}_{\mathfrak C_-}(D'))
=
h (t_1-t_2+h)(t_1-t_2)(t_1-t_2+h)(t_2-t_3)(t_1-t_3).
\]
\end{example}

\begin{remark}
The equivariant resolution theorem as stated in \cite[Theorem~6.13]{botta2023mirror} connects the stable basis elements of different bow varieties which differ by resolving just one blue line. As explained in e.g.~\cite[Section~9.6]{wehrhanphd}, Theorem~\ref{thm:EquivariantResolution} can be deduced from \cite[Theorem~6.13]{botta2023mirror} by an inductive procedure.
\end{remark}

\subsection{Approximations of localization coefficients}\label{subsection:ApproximationReducedStableEnvelopes}
Next, we combine the diagrammatic localization formula and Corollary~\ref{cor:ResolvedStableEnvelopesFormula} to approximate localization coefficients of stable basis elements modulo powers of $h$.

For this, we like to choose the reduced diagrams for permutations of a particular form: let $w=u_0\circ\tilde w_D\circ (v_1\times\cdots\times v_M)$, where, as in Theorem~\ref{thm:EquivariantResolution}, $u_0=w_{0,c_1}\times \ldots\times w_{0,c_N}$ and $v_j\in S_{r_j}$ is an arbitrary element for $j=1,\ldots,M$. By Corollary~\ref{cor:TildeWDGivesMinimalLeftAndRightCosets}, we can choose a reduced diagram for $w$ of the form:
\begin{equation}\label{eq:ShapeOfDiagrams}
\begin{tikzpicture}[baseline=(current  bounding  box.center), scale=.6]
\node(ru1) at (0,0) {$w_{0,c_1}$};
\draw[draw=black] (-0.9,-0.6) rectangle ++(1.8,1.2);
\node (ru2) at (2.4,0) {$w_{0,c_2}$};
\draw[draw=black] (1.5,-0.6) rectangle ++(1.8,1.2);
\node (ruN) at (6.6,0) {$w_{0,c_N}$};
\draw[draw=black] (5.7,-0.6) rectangle ++(1.8,1.2);
\node (rw) at (3.3,-3) {$\tilde w_{D}$};
\draw[draw=black] (-0.9,-4.2) rectangle ++(8.4,2.4);
\node (rv1) at (0,-6) {$v_1$};
\draw[draw=black] (-0.9,-6.6) rectangle ++(1.8,1.2);
\node (rv2) at (2.4,-6) {$v_2$};
\draw[draw=black] (1.5,-6.6) rectangle ++(1.8,1.2);
\node (rvM) at (6.6,-6) {$v_M$};
\draw[draw=black] (5.7,-6.6) rectangle ++(1.8,1.2);

\draw (-0.6,-0.6) -- (-0.6,-1.8);
\draw (0.6,-0.6) -- (0.6,-1.8);

\draw (1.8,-0.6) -- (1.8,-1.8);
\draw (3.0,-0.6) -- (3.0,-1.8);

\draw (6,-0.6) -- (6,-1.8);
\draw (7.2,-0.6) -- (7.2,-1.8);

\draw (-0.6,-4.2) -- (-0.6,-5.4);
\draw (0.6,-4.2) -- (0.6,-5.4);

\draw (1.8,-4.2) -- (1.8,-5.4);
\draw (3.0,-4.2) -- (3.0,-5.4);

\draw (6,-4.2) -- (6,-5.4);
\draw (7.2,-4.2) -- (7.2,-5.4);

\draw[dotted] (-0.3,-1.2)--(0.3,-1.2);
\draw[dotted] (2.1,-1.2)--(2.7,-1.2);
\draw[dotted] (6.3,-1.2)--(6.9,-1.2);
\draw[dotted] (3.6,0)--(5.4,0);
\draw[dotted] (-0.3,-4.8)--(0.3,-4.8);
\draw[dotted] (2.1,-4.8)--(2.7,-4.8);
\draw[dotted] (6.3,-4.8)--(6.9,-4.8);
\draw[dotted] (3.6,-6)--(5.4,-6);

\def \s {10};
\def \t {1.5};

\coordinate (B1) at (0+\s,-3-\t);
\coordinate (B2) at (1+\s,-3-\t);
\coordinate (B3) at (2+\s,-3-\t);
\coordinate (B4) at (3+\s,-3-\t);
\coordinate (B5) at (4+\s,-3-\t);
\coordinate (B6) at (5+\s,-3-\t);
\coordinate (B7) at (6+\s,-3-\t);
\coordinate (B8) at (7+\s,-3-\t);
\coordinate (B9) at (8+\s,-3-\t);
\coordinate (B10) at (9+\s,-3-\t);

\coordinate (BP1) at (0+\s,-3.4-\t);
\coordinate (BP2) at (1+\s,-3.4-\t);
\coordinate (BP3) at (2+\s,-3.4-\t);
\coordinate (BP4) at (3+\s,-3.4-\t);
\coordinate (BP5) at (4+\s,-3.4-\t);
\coordinate (BP6) at (5+\s,-3.4-\t);
\coordinate (BP7) at (6+\s,-3.4-\t);
\coordinate (BP8) at (7+\s,-3.4-\t);
\coordinate (BP9) at (8+\s,-3.4-\t);
\coordinate (BP10) at (9+\s,-3.4-\t);

\coordinate (BB1) at (0+\s,-5.1-\t);
\coordinate (BB2) at (1+\s,-5.1-\t);
\coordinate (BB3) at (2+\s,-5.1-\t);
\coordinate (BB4) at (3+\s,-5.1-\t);
\coordinate (BB5) at (4+\s,-5.1-\t);
\coordinate (BB6) at (5+\s,-5.1-\t);
\coordinate (BB7) at (6+\s,-5.1-\t);
\coordinate (BB8) at (7+\s,-5.1-\t);
\coordinate (BB9) at (8+\s,-5.1-\t);
\coordinate (BB10) at (9+\s,-5.1-\t);

\foreach \x in {1,...,10}
	\draw[black] (B\x) -- (BP\x);
	
\draw[black] (BB1) to[out=90, in=-90] (BP3);
\draw[black] (BB2) to[out=90, in=-90] (BP1);
\draw[black] (BB3) to[out=90, in=-90] (BP2);

\draw[black] (BB4) to[out=90, in=-90] (BP4);
\draw[black] (BB5) to[out=90, in=-90] (BP5);

\draw[black] (BB6) to[out=90, in=-90] (BP7);
\draw[black] (BB7) to[out=90, in=-90] (BP6);

\draw[black] (BB8) to[out=90, in=-90] (BP9);
\draw[black] (BB9) to[out=90, in=-90] (BP10);
\draw[black] (BB10) to[out=90, in=-90] (BP8);

\coordinate (T1) at (0+\s,0-\t);
\coordinate (T2) at (1+\s,0-\t);
\coordinate (T3) at (2+\s,0-\t);
\coordinate (T4) at (3+\s,0-\t);
\coordinate (T5) at (4+\s,0-\t);
\coordinate (T6) at (5+\s,0-\t);
\coordinate (T7) at (6+\s,0-\t);
\coordinate (T8) at (7+\s,0-\t);
\coordinate (T9) at (8+\s,0-\t);
\coordinate (T10) at (9+\s,0-\t);

\coordinate (TP1) at (0+\s,0.4-\t);
\coordinate (TP2) at (1+\s,0.4-\t);
\coordinate (TP3) at (2+\s,0.4-\t);
\coordinate (TP4) at (3+\s,0.4-\t);
\coordinate (TP5) at (4+\s,0.4-\t);
\coordinate (TP6) at (5+\s,0.4-\t);
\coordinate (TP7) at (6+\s,0.4-\t);
\coordinate (TP8) at (7+\s,0.4-\t);
\coordinate (TP9) at (8+\s,0.4-\t);
\coordinate (TP10) at (9+\s,0.4-\t);

\foreach \x in {1,...,10}
	\draw[black] (T\x) -- (TP\x);

\coordinate (TT1) at (0+\s,2.1-\t);
\coordinate (TT2) at (1+\s,2.1-\t);
\coordinate (TT3) at (2+\s,2.1-\t);
\coordinate (TT4) at (3+\s,2.1-\t);
\coordinate (TT5) at (4+\s,2.1-\t);
\coordinate (TT6) at (5+\s,2.1-\t);
\coordinate (TT7) at (6+\s,2.1-\t);
\coordinate (TT8) at (7+\s,2.1-\t);
\coordinate (TT9) at (8+\s,2.1-\t);
\coordinate (TT10) at (9+\s,2.1-\t);

\coordinate (TTAux) at (3.8+\s,1.25-\t);

\draw[black] (TP1) to[out=90, in=-90]  (TT2);
\draw[black] (TP2) to[out=90, in=-90]  (TT1);

\draw[black] (TP3) to[out=90, in=-90]  (TT5);
\draw[black] (TP5) to[out=90, in=-90]  (TT3);
\draw[black] (TP4) to[out=90, in=-90]  (TTAux);
\draw[black] (TTAux) to[out=90, in=-90]  (TT4);

\draw[black] (TP6) to[out=90, in=-90]  (TT7);
\draw[black] (TP7) to[out=90, in=-90]  (TT6);

\draw[black] (TP8) to[out=90, in=-90]  (TT8);
\draw[black] (TP9) to[out=90, in=-90]  (TT10);
\draw[black] (TP10) to[out=90, in=-90]  (TT9);

\draw (B1) to[out=90,in=-90] (T1);
\draw (B2) to[out=90,in=-90] (T3);
\draw (B3) to[out=90,in=-90] (T9);
\draw (B4) to[out=90,in=-90] (T6);
\draw (B5) to[out=90,in=-90] (T10);
\draw (B6) to[out=90,in=-90] (T2);
\draw (B7) to[out=90,in=-90] (T4);
\draw (B8) to[out=90,in=-90] (T5);
\draw (B9) to[out=90,in=-90] (T7);
\draw (B10) to[out=90,in=-90] (T8);
\end{tikzpicture}
\end{equation}
Here, the boxes represent reduced diagrams of the respective permutations. 
The example on the right shows the permutation $u_0\circ \tilde w_D \circ v$, where $\tilde w_D$ is the shortest $(S_{\textbf c},S_{\textbf r})$-double coset representative from Example~\ref{example:ShortestCosetRepresentative} for $\textbf r=(3,2,2,3)$, $\textbf c=(2,3,2,1,2)$ and
$v=v_1\times v_2\times v_3\times v_4$, where
$v_1=312$, $v_2=12$, $v_3=21$, $v_4=231$.

If $d_w$ is a diagram of shape~\eqref{eq:ShapeOfDiagrams} then, according to their position in the diagram, we define the following subsets of crossings in $d_w$:
\begin{align*}
K_U(d_w) &= \{\kappa\in K(d_w)\mid \textup{$\kappa$ belongs to some $w_{0,c_i}$ for $i=1,\ldots,N$}\},\\
K_W(d_w) &= \{\kappa\in K(d_w)\mid \textup{$\kappa$ belongs to $\tilde w_D$}\},\\
K_V(d_w) &= \{\kappa\in K(d_w)\mid \textup{$\kappa$ belongs to some $v_i$ for $i=1,\ldots,M$}\}.
\end{align*}
The next proposition shows that the weights of crossings in $K_U(d_w)$ precisely contribute the normalization factor which appears in Corollary~\ref{cor:ResolvedStableEnvelopesFormula}.

\begin{prop}\label{prop:UPartVSNormalizationFactor}
We have
\[
\Psi_{\mathcal D}\Big( \prod_{\kappa\in K_U(d_w)} \mathrm{wt}(\kappa) \Big) = \prod_{i=1}^N \prod_{j=1}^{c_i-1}(jh)^{c_i-j}.
\]
\end{prop}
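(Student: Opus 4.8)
The plan is to pin down the crossings of $K_U(d_w)$ together with their weights explicitly, push them through $\Psi_{\mathcal D}$, and finish by a multiplicity count. First I would observe that, for a diagram of the shape~\eqref{eq:ShapeOfDiagrams}, the set $K_U(d_w)$ decomposes as the disjoint union over $i=1,\dots,N$ of the crossings lying inside the box labelled $w_{0,c_i}$, and that—since $u_0$ is applied on the left—these boxes sit at the very top of $d_w$, so the top endpoints of the strands passing through the box labelled $w_{0,c_i}$ are precisely the positions $C_{i-1}+1,\dots,C_i$. Because that box is a reduced diagram of the longest element of the symmetric group on those $c_i$ strands, every pair of them crosses exactly once (in a reduced diagram no two strands cross more than once, and the number of crossings is $l(w_{0,c_i})=\binom{c_i}{2}$, equal to the number of pairs). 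Hence, by the definition of $\mathrm{wt}$, the crossing between the two strands with top endpoints $p<q$ contributes the factor $t_p-t_q$, and
\[
\prod_{\kappa\in K_U(d_w)}\mathrm{wt}(\kappa)=\prod_{i=1}^N\ \prod_{C_{i-1}<p<q\le C_i}(t_p-t_q).
\]

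Next I would apply $\Psi_{\mathcal D}$. Writing $p=C_{i-1}+a$, $q=C_{i-1}+b$ with $1\le a<b\le c_i$, the identity $\Psi_{\mathcal D}(t_{C_{i-1}+k})=t_i-(k-1)h$ gives $\Psi_{\mathcal D}(t_p-t_q)=(b-a)h$, so that
\[
\Psi_{\mathcal D}\Bigl(\prod_{\kappa\in K_U(d_w)}\mathrm{wt}(\kappa)\Bigr)=\prod_{i=1}^N\ \prod_{1\le a<b\le c_i}(b-a)h .
\]
The last step is a reindexing: for fixed $i$ I would group the factors of the inner product according to the value $j=b-a\in\{1,\dots,c_i-1\}$; since there are exactly $c_i-j$ pairs $(a,b)$ with $1\le a<b\le c_i$ and $b-a=j$, the inner product equals $\prod_{j=1}^{c_i-1}(jh)^{c_i-j}$, and taking the product over $i$ yields the claim.

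The point I expect to require the most care is the first step: that in a reduced diagram of a longest element every pair of strands crosses exactly once, and that the weight of a crossing inside the box labelled $w_{0,c_i}$ involves only the variables $t_{C_{i-1}+1},\dots,t_{C_i}$ (which is what makes the substitution collapse to $(b-a)h$). The first assertion is the standard combinatorics of reduced words; alternatively it can be extracted from Proposition~\ref{prop:WeightVSBeta} and~\eqref{eq:SetOfBetasIndependentFromReducedExpression} applied to the subdiagram given by the box, using that the longest element of a parabolic subgroup sends every positive root of that parabolic to a negative root, so that the multiset of weights of the box is exactly $\{t_p-t_q : C_{i-1}<p<q\le C_i\}$. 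The second assertion is immediate, since both strands through such a crossing exit the top of the whole diagram within the block $\{C_{i-1}+1,\dots,C_i\}$.
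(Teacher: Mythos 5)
Your proof is correct, and it follows the same overall skeleton as the paper's: split $K_U(d_w)$ into the $N$ top boxes, note that all strands through the box of $w_{0,c_i}$ exit the diagram in the block $\{C_{i-1}+1,\ldots,C_i\}$ so that $\Psi_{\mathcal D}$ turns each such weight into a multiple of $h$, and multiply over $i$. The difference is in how the per-block product is evaluated. The paper isolates this as a separate lemma about a single longest element $w_{0,n}$ and proves it by induction on $n$: using \eqref{eq:SetOfBetasIndependentFromReducedExpression} it may choose a convenient reduced diagram in which one strand crosses all others and the rest form $w_{0,n-1}$, peeling off the factor $\prod_{i=1}^{n-1}(ih)$ at each step. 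You instead identify the weight multiset of a block in closed form: in a reduced diagram of $w_{0,c_i}$ every pair of strands crosses exactly once (equivalently, by Proposition~\ref{prop:WeightVSBeta} and \eqref{eq:SetOfBetasIndependentFromReducedExpression}, the weights are exactly the positive roots of the block, each once), so the block contributes $\prod_{C_{i-1}<p<q\le C_i}(t_p-t_q)$, and the substitution plus the count of pairs with $b-a=j$ gives $\prod_{j}(jh)^{c_i-j}$ directly, with no induction. Both routes ultimately rest on the same independence-of-reduced-expression fact; yours is slightly more direct and makes the multiset of block weights explicit, while the paper's induction avoids invoking the ``each pair crosses exactly once'' combinatorics by working with one specific diagram shape. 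Either argument is complete.
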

The proof is immediate from the following lemma:
\begin{lemma} Let $w_{0,n}\in S_n$ be the longest element and $d_{w_{0,n}}$ a reduced diagram of $w_{0,n}$. Then,
\[
\Psi\Big(\prod_{\kappa\in K(d_{w_{0,n}})} \mathrm{wt}(\kappa) \Big)=\prod_{j=1}^{n-1}(jh)^{n-j},
\]
where $\Psi\colon\mathbb Q[t_1,\ldots,t_n,h]\rightarrow \mathbb Q[t,h]$ is the $\mathbb Q[h]$-algebra homomorphism given by
\[
t_i\mapsto t-(i-1)h,\quad i=1,\ldots, n.
 \]
\end{lemma}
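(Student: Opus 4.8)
The plan is to reduce the left-hand side to a product of positive roots and then evaluate $\Psi$ by a simple counting argument. First I would recall that the product $\prod_{\kappa\in K(d_{w_{0,n}})}\mathrm{wt}(\kappa)$ is independent of the particular reduced diagram chosen: after a homotopy that makes the $y$-coordinates of all crossings pairwise distinct (which changes neither the set of crossings nor their weights, exactly as in the proof of Proposition~\ref{prop:SuLocalizationFormulaDiagrammatic}), Proposition~\ref{prop:WeightVSBeta} identifies the multiset of crossing weights with $\{\beta_1,\dots,\beta_{l(w_{0,n})}\}$ for the reduced expression read off from the diagram. Since the product does not depend on the order of the factors, \eqref{eq:SetOfBetasIndependentFromReducedExpression} gives
\[
\prod_{\kappa\in K(d_{w_{0,n}})}\mathrm{wt}(\kappa)=\prod_{\alpha\in S}\alpha,\qquad S:=\{\alpha\in R^+\mid w_{0,n}^{-1}.\alpha\in R^-\}.
\]

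Next I would identify $S$. Since $w_{0,n}$ is the longest element it is an involution and sends every positive root to a negative root, so $S=R^+$; as $|R^+|=\binom{n}{2}=l(w_{0,n})$, the multiset of crossing weights is precisely $R^+$, each root occurring once. Hence
\[
\prod_{\kappa\in K(d_{w_{0,n}})}\mathrm{wt}(\kappa)=\prod_{1\le i<j\le n}(t_i-t_j).
\]
(Alternatively, one can avoid Proposition~\ref{prop:WeightVSBeta} altogether: crossings of a reduced diagram of $w_{0,n}$ are in bijection with inversions of $w_{0,n}$, which for $w_{0,n}$ are all pairs $(i,j)$ with $i<j$, and tracking the endpoint convention for $\mathrm{wt}$ yields the same product.)

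Finally I would apply $\Psi$ and count. Since $\Psi(t_i-t_j)=\bigl(t-(i-1)h\bigr)-\bigl(t-(j-1)h\bigr)=(j-i)h$, we obtain
\[
\Psi\Big(\prod_{\kappa\in K(d_{w_{0,n}})}\mathrm{wt}(\kappa)\Big)=\prod_{1\le i<j\le n}(j-i)h.
\]
Collecting the factors for a fixed value $k=j-i\in\{1,\dots,n-1\}$, and noting there are exactly $n-k$ pairs $(i,i+k)$ with $1\le i\le n-k$, this product equals $\prod_{k=1}^{n-1}(kh)^{n-k}=\prod_{j=1}^{n-1}(jh)^{n-j}$, which is the asserted identity; Proposition~\ref{prop:UPartVSNormalizationFactor} then follows by applying this to each factor $w_{0,c_i}$ and multiplying. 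There is essentially no obstacle here: the computation is elementary once the product of weights has been recognized as $\prod_{i<j}(t_i-t_j)$, and the only mild point of care is the passage to a diagram with distinct crossing heights so that Proposition~\ref{prop:WeightVSBeta} applies, which is harmless since the product of weights is manifestly unchanged under such a perturbation.
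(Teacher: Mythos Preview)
Your proof is correct but takes a different route from the paper. The paper proceeds by induction on $n$: it fixes a specific reduced diagram for $w_{0,n}$ obtained by stacking a reduced diagram for $w_{0,n-1}$ on top of the strand carrying $1$ to $n$, splits the crossings accordingly, applies the induction hypothesis to the $w_{0,n-1}$-part, and computes the remaining factor $\Psi\big(\prod_{i=1}^{n-1}(t_i-t_n)\big)$ directly. You instead bypass induction entirely by recognizing at once that the multiset of crossing weights of any reduced diagram of $w_{0,n}$ is all of $R^+$ (since $w_{0,n}$ sends every positive root to a negative one), so the product equals $\prod_{i<j}(t_i-t_j)$; applying $\Psi$ and grouping by the difference $j-i$ then gives the result in one step. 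Your argument is more elementary and arguably cleaner, while the paper's inductive decomposition stays closer to the diagrammatic picture used elsewhere in the article. The closing remark about deducing Proposition~\ref{prop:UPartVSNormalizationFactor} is fine but extraneous to the lemma itself.
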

\begin{proof} By~\eqref{eq:SetOfBetasIndependentFromReducedExpression}, the product $\prod_{\kappa\in K(d_{w_{0,n}})} \mathrm{wt}(\kappa)$ does not depend on the choice of reduced diagram. We prove the statement by induction on $n$ where the case $n=1$ is clear. For $n>1$, we choose $d_{w_{0,n}}$ to be of the following shape:
\[
\begin{tikzpicture}[scale=.5]
\coordinate (B1) at (0,-3.9);
\coordinate (B2) at (1,-3.9);
\coordinate (B3) at (2,-3.9);
\coordinate (B4) at (3,-3.9);
\coordinate (B5) at (4,-3.9);
\coordinate (B6) at (5,-3.9);

\draw[dotted] (2.5,-3.7) -- (4.5,-3.7);

\coordinate (M1) at (0,-2.7);
\coordinate (M2) at (1,-2.7);
\coordinate (M3) at (2,-2.7);
\coordinate (M4) at (3,-2.7);
\coordinate (M5) at (4,-2.7);
\coordinate (M6) at (5,-2.7);

\coordinate (MM1) at (0,-0.5);
\coordinate (MM2) at (1,-0.5);
\coordinate (MM3) at (2,-0.5);
\coordinate (MM4) at (3,-0.5);
\coordinate (MM5) at (4,-0.5);
\coordinate (MM6) at (5,-0.5);

\coordinate (T1) at (0,0);
\coordinate (T2) at (1,0);
\coordinate (T3) at (2,0);
\coordinate (T4) at (3,0);
\coordinate (T5) at (4,0);
\coordinate (T6) at (5,0);

\draw (B1) to[out=90, in =-90] (M6) to[out=90, in=-90] (T6);
\draw (B2) to[out=90, in =-90] (M1);
\draw (B3) to[out=90, in =-90] (M2);
\draw (B6) to[out=90, in =-90] (M5);

\node[below=0 of B1] {\text{\footnotesize $1$}};
\node[below=0 of B2] {\text{\footnotesize $2$}};
\node[below=0 of B3] {\text{\footnotesize $3$}};
\node[below=0 of B6] {\text{\footnotesize $n$}};

\draw[dotted] (1.5,-0.3) -- (3.5,-0.3);

\draw (MM1) -- (T1);
\draw (MM2) -- (T2);
\draw (MM5) -- (T5);

\node[above=0 of T1] {\text{\footnotesize $1$}};
\node[above=0 of T2] {\text{\footnotesize $2$}};
\node[above=0 of T5] {\text{\footnotesize $n-1$}};
\node[above=0 of T6] {\text{\footnotesize $n$}};

\node (w) at (2,-1.6) {$w_{0,n-1}$};
\draw[draw=black] (-0.1,-2.7) rectangle ++(4.2,2.2);
\end{tikzpicture}
\]
Here, the box represents a reduced diagram for $w_{0,n-1}$. Let $K'$ be the set of crossings contained in the box of $w_{0,n-1}$ and $K''$ be the set of crossings outside of the box of $w_{0,n-1}$.
From the diagram $d_{w_{0,n}}$, we can read off that the crossings in $K''$ have weights $t_1-t_n,\ldots,t_{n-1}-t_n$. Thus, we have
$\Psi(\prod_{\kappa\in K''}\mathrm{wt}(\kappa))=\prod_{i=1}^n(ih)$.
Applying the induction hypothesis to $K'$ yields
\begin{align*}
\Psi\Big(\prod_{\kappa\in K(d_{w_{0,n}})}\mathrm{wt}(\kappa) \Big)&= 
\Psi\Big(\prod_{\kappa\in K'} \mathrm{wt}(\kappa)\Big)\cdot
\Psi\Big(\prod_{\kappa\in K''} \mathrm{wt}(\kappa) \Big)\\
&=
\Big(\prod_{j=1}^{n-2}(jh)^{n-1-j}\Big)\cdot \Psi\Big(\prod_{i=1}^{n-1}(t_i-t_n)\Big)\\
&= \prod_{j=1}^{n-1}(jh)^{n-j}
\end{align*}
which finishes the proof.
\end{proof}

\begin{prop}[Approximation]\label{prop:ApproximationStableEnvelopeModuloPowersOfH}
Under the assumptions as in Corollary~\ref{cor:ResolvedStableEnvelopesFormula}, we have
\begin{equation}\label{eq:ApproximationStableEnvelopeModuloPowersOfH}
\iota_{D}^\ast (\widetilde{\mathrm{Stab}}_{\mathfrak C_-}(D'))
\equiv
\sum_{z\in w_D S_{\textbf r}} \frac{(-1)^{\ell(w')+\ell(w'S_{\textbf r})}
\Big(\prod_{\alpha\in L'_z}\Psi_{\mathcal D}(\alpha+h) \Big)
\cdot P_{d_z,w',m} }
{\prod_{\beta\in R_{\textbf r}}\Psi_{\mathcal D}(z.\beta)}\quad\mathrm{mod}\;h^m,
\end{equation}
where $L'_z$ is defined as in Proposition~\ref{prop:SuLocalizationFormulaDiagrammatic},
\[
P_{d_z,w',m} = \sum_{K'\in K(d_z,w',m-1)}\biggl( h^{|K'\setminus K_U(d_z)|} f_{K'}\cdot\biggl( \prod_{\substack{\kappa \in K(d_z)\\ \kappa\notin K'\cup K_U(d_z)}}\Psi_{\mathcal D}(\mathrm{wt}(\kappa))\biggr)\biggr)
\]
and
\begin{align*}
 K(d_z,w',m-1) &= \{K'\in K_{d_z,w'}\mid  |K'\setminus K_U(d_z)| \le m-1 \},\\
 f_{K'} &= \frac{h^{|K'\cap K_U(d_z)|}\prod_{\kappa\in K_U(d_z)\setminus K'}\Psi_{\mathcal D}(\mathrm{wt}(\kappa)) }{\prod_{i=1}^N\prod_{j=1}^{c_i-1} (jh)^{c_i-j}}.
\end{align*}
\end{prop}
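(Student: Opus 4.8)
The plan is to feed the diagrammatic localization formula into the equivariant resolution theorem and then carefully divide out the normalization factor $\prod_{i=1}^N\prod_{j=1}^{c_i-1}(jh)^{c_i-j}$. The division is carried out in the field $\mathbb Q(t_1,\dots,t_N,h)$; the point of the argument below is that, after regrouping, the outcome lies in $\mathbb Q(t_1,\dots,t_N)[h]$, so that the asserted congruence is an equality in $\mathbb Q(t_1,\dots,t_N)[h]/(h^m)$, a ring in which $h$ is nilpotent while every linear form $t_a-t_b+kh$ with $a\neq b$ is a unit. First I would invoke Corollary~\ref{cor:ResolvedStableEnvelopesFormula}, which gives the polynomial identity
\[
\Big(\prod_{i=1}^N\prod_{j=1}^{c_i-1}(jh)^{c_i-j}\Big)\,\iota_D^\ast(\widetilde{\mathrm{Stab}}_{\mathfrak C_-}(D'))=\Psi_{\mathcal D}\big(\iota_{w_DS_{\textbf r}}^\ast(\mathrm{Stab}_{\mathfrak C_-}(w'S_{\textbf r}))\big);
\]
here the right-hand stable basis element lives on $T^\ast F(R_1,\dots,R_{M-1};n)$, whose associated Young subgroup is $S_{\textbf r}$. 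Applying Proposition~\ref{prop:PartialViaFullFlag} expands the right-hand side into a sum over $z\in w_DS_{\textbf r}$ of the full-flag classes $\iota_z^\ast(\mathrm{Stab}_{\mathfrak C_-}(w'))$, with coefficient $(-1)^{l(w')+l(w'S_{\textbf r})}$ divided by $\prod_{\beta\in R_{\textbf r}}\Psi_{\mathcal D}(z.\beta)$. Since $w_D=u_0\tilde w_D$ with $\tilde w_D$ a shortest $(S_{\textbf c},S_{\textbf r})$-double coset representative, each $z\in w_DS_{\textbf r}$ is $u_0\,\tilde w_D\,(v_1\times\dots\times v_M)$ with $l(z)=l(u_0)+l(\tilde w_D)+l(v_1\times\dots\times v_M)$, so by Corollary~\ref{cor:TildeWDGivesMinimalLeftAndRightCosets} (and the discussion around~\eqref{eq:ShapeOfDiagrams}) it has a reduced diagram $d_z$ of the stacked shape~\eqref{eq:ShapeOfDiagrams}. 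Fixing such a $d_z$ for every $z$ and applying the diagrammatic localization formula, Proposition~\ref{prop:SuLocalizationFormulaDiagrammatic}, followed by $\Psi_{\mathcal D}$, turns the right-hand side into an exact (not yet truncated) double sum over $z$ and over $K'\in K_{d_z,w'}$.

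The core of the argument is to divide this identity by $\prod_{i=1}^N\prod_{j=1}^{c_i-1}(jh)^{c_i-j}$ and record $h$-adic valuations. For fixed $z$ write $K(d_z)=K_U(d_z)\sqcup K_W(d_z)\sqcup K_V(d_z)$, and for $K'\in K_{d_z,w'}$ split $K'=(K'\cap K_U(d_z))\sqcup(K'\setminus K_U(d_z))$; then the $K'$-summand of Proposition~\ref{prop:SuLocalizationFormulaDiagrammatic} factors, after $\Psi_{\mathcal D}$, as
\[
\Big(h^{|K'\cap K_U(d_z)|}\prod_{\kappa\in K_U(d_z)\setminus K'}\Psi_{\mathcal D}(\mathrm{wt}(\kappa))\Big)\cdot h^{|K'\setminus K_U(d_z)|}\prod_{\kappa\in K(d_z)\setminus(K'\cup K_U(d_z))}\Psi_{\mathcal D}(\mathrm{wt}(\kappa)),
\]
the first factor being the ``$K_U$-part''. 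The decisive observation is that every crossing $\kappa\in K_U(d_z)$ lies inside one of the boxes $w_{0,c_i}$ of~\eqref{eq:ShapeOfDiagrams}, so both of its strands end in the single column block $\{C_{i-1}+1,\dots,C_i\}$; hence $\Psi_{\mathcal D}(\mathrm{wt}(\kappa))$ is a positive integer multiple of $h$. Consequently the $K_U$-part is $h^{|K_U(d_z)|}$ times a nonzero integer, while by Proposition~\ref{prop:UPartVSNormalizationFactor} the normalization factor equals $\Psi_{\mathcal D}\big(\prod_{\kappa\in K_U(d_z)}\mathrm{wt}(\kappa)\big)$, which is likewise $h^{|K_U(d_z)|}$ times a nonzero integer, the exponents agreeing because $|K_U(d_z)|=\sum_i l(w_{0,c_i})=\sum_i\binom{c_i}{2}$ is exactly the $h$-degree of $\prod_i\prod_j(jh)^{c_i-j}$. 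Dividing, the powers of $h$ cancel on the nose, so the quantity $f_{K'}$ from the statement is a rational \emph{constant}, and the $(z,K')$-summand becomes $f_{K'}\,h^{|K'\setminus K_U(d_z)|}$ times a genuine polynomial in $t_1,\dots,t_N,h$, divided by $\prod_{\beta\in R_{\textbf r}}\Psi_{\mathcal D}(z.\beta)$; in particular the whole expression now lies in $\mathbb Q(t_1,\dots,t_N)[h]$.

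Finally I would observe that any $z\in u_0\tilde w_DS_{\textbf r}$ is fully separated (for instance because $Z(z)=M(D)\in\mathrm{bct}(\mathcal D)$), so for $\beta=t_i-t_j\in R_{\textbf r}$ the values $z(i),z(j)$ lie in distinct column blocks; thus $\Psi_{\mathcal D}(z.\beta)$ is a linear form $t_a-t_b+kh$ with $a\neq b$, a unit in $\mathbb Q(t_1,\dots,t_N)[h]/(h^m)$. Hence in that ring every $(z,K')$-summand with $|K'\setminus K_U(d_z)|\ge m$ vanishes, leaving exactly the terms with $K'\in K(d_z,w',m-1)$; these reassemble into $P_{d_z,w',m}$ and give~\eqref{eq:ApproximationStableEnvelopeModuloPowersOfH}. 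The main obstacle is the bookkeeping in the second paragraph: one must check that the $h$-powers produced by the $K_U(d_z)$-crossings cancel the $h$-degree of $\prod_i\prod_j(jh)^{c_i-j}$ exactly, so that $f_{K'}$ is genuinely $h$-free and the truncation is legitimate, and that Proposition~\ref{prop:UPartVSNormalizationFactor} applies verbatim to each diagram $d_z$ of shape~\eqref{eq:ShapeOfDiagrams}; the remaining steps are a rearrangement of the sums from Propositions~\ref{prop:PartialViaFullFlag} and~\ref{prop:SuLocalizationFormulaDiagrammatic}.
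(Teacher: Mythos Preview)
Your proposal is correct and follows essentially the same approach as the paper: both derive the exact (untruncated) identity by chaining Corollary~\ref{cor:ResolvedStableEnvelopesFormula} with Propositions~\ref{prop:PartialViaFullFlag} and~\ref{prop:SuLocalizationFormulaDiagrammatic}, use Proposition~\ref{prop:UPartVSNormalizationFactor} to see that $f_{K'}$ is a rational constant (the $h$-powers from $K_U(d_z)$ match those of the normalization factor), and then discard the summands with $|K'\setminus K_U(d_z)|\ge m$. Your write-up is in fact more explicit than the paper's, which compresses the derivation of the exact formula into a single citation of Corollary~\ref{cor:ResolvedStableEnvelopesFormula} and records only the truncation step; your additional justification that $\Psi_{\mathcal D}(z.\beta)$ is a unit modulo $h^m$ is exactly the content of the paper's remark following the proposition.
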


\begin{remark} By Proposition~\ref{prop:UPartVSNormalizationFactor},  the factor $f_{K'}$ is always contained in $\mathbb Q$. Moreover, as $F_z(i)\ne F_z(j)$  for all $t_i-t_j\in R_{\textbf r}$ and $z\in S_{\textbf c}\tilde w_D S_{\textbf r}$, we deduce that for all $\alpha\in R_{\textbf r}$, the polynomial $\Psi_{\mathcal D}(z.\alpha)$ is contained in $S$, as defined in~\eqref{eq:DefinitionS}.
\end{remark}

\begin{proof}[Proof of Proposition~\ref{prop:ApproximationStableEnvelopeModuloPowersOfH}]
For $z\in w_DS_{\textbf r}$ with reduced diagram $d_z$ of shape \eqref{eq:ShapeOfDiagrams} let $m_0(z)=|K(d_z)\setminus K_U(d_z)|$. By Corollary~\ref{cor:ResolvedStableEnvelopesFormula}, we have
\[
\iota_{D}^\ast (\widetilde{\mathrm{Stab}}_{\mathfrak C_-}(D'))
=
\sum_{z\in w_D S_{\textbf r}} \frac{(-1)^{\ell(w')+\ell(w'S_{\textbf r})}
\Big(\prod_{\alpha\in L'_z}\Psi_{\mathcal D}(\alpha+h) \Big)
\cdot P_{d_z,w',m_0(z)} }
{\prod_{\beta\in R_{\textbf r}}\Psi_{\mathcal D}(z.\beta)}.
\]
If $K'\in K_{d_z,w'} \setminus K(d_z,w',m-1)$ then by Proposition~\ref{prop:UPartVSNormalizationFactor},
$
h^{|K'|}\prod_{\kappa\in K_U(d_z)\setminus K'}\Psi_{\mathcal D}(\mathrm{wt}(\kappa))
$
is divisible by $h^{\frac12(c_1(c_1-1)+\cdots +c_N(c_N-1))+m}$. Hence, the contribution of those $K'$ in $P_{d_z,w',m_0(z)}$ vanishes modulo $h^m$. Thus, we have
\[
P_{d_z,w',m_0(z)}\equiv P_{d_z,w',m}\quad\mathrm{mod}\;h^m,\quad\textup{for all $z\in w_DS_{\textbf r}$}
\]
which proves the proposition.
\end{proof}

\section{Chevalley--Monk formulas in the separated case}\label{section:CMSeparatedCase}

In this section, we state and prove Chevalley--Monk formulas for the tautological bundles of bow varieties corresponding to separated brane diagrams. In the upcoming section, we then derive Chevalley--Monk formulas for tautological bundles corresponding to general bow varieties using Hanany--Witten transition.

\begin{assumption}
For this section, we assume that $\mathcal D$ is a separated brane diagram.
\end{assumption}

Recall from Subsection~\ref{subsection:TautologicalBundles} that the tautological bundles $\xi_{M+1},\ldots,\xi_{M+N+1}$ are trivial. Hence, we focus on characterizing the multiplication of $c_1(\xi_1),\ldots,c_1(\xi_M)$.

\subsection{Chevalley--Monk formula for the antidominant chamber}
We first restrict our attention to the antidominant chamber $\mathfrak C_-$. In this case, the Chevalley--Monk formula is given as follows:

\begin{theorem}[Chevalley--Monk for antidominant chamber]\label{thm:CMAntidominantChamber}
Let $D\in\mathrm{Tie}(\mathcal D)$. Then, we have the following identity in $H_{\mathbb T}^\ast(\mathcal C(\mathcal D))_{\mathrm{loc}}$:
\[
c_1(\xi_i)\cup \mathrm{Stab}_{\mathfrak C_-}(D)=\iota_{D}^\ast(c_1(\xi_i))\cdot \mathrm{Stab}_{\mathfrak C_-}(D) + \sum_{D'\in \mathrm{SM}_{D,i}} \operatorname{sgn}(D,D')h\cdot\mathrm{Stab}_{\mathfrak C_-}(D'),
\]
for $i=1,\ldots,M$. Here, the set of simple moves $\mathrm{SM}_{D,i}$ is defined in~\eqref{eq:SMiDefinition} and the signs of simple moves $\operatorname{sgn}(D,D')\in\{\pm1\}$ are defined in Definition~\ref{definition:SignsOfSimpleMoves}.
\end{theorem}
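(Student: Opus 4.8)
The strategy is to reduce everything to a computation in the localized equivariant cohomology of $\mathcal{C}(\mathcal{D})$ via the orthogonality of stable envelopes (Theorem~\ref{thm:OrthogonalityOfStableEnvelopes}). Since the classes $\mathrm{Stab}_{\mathfrak C_-}(D')$ form a basis of $H_{\mathbb T}^\ast(\mathcal C(\mathcal D))_{\mathrm{loc}}$, we may write
\[
c_1(\xi_i)\cup\mathrm{Stab}_{\mathfrak C_-}(D)=\sum_{D'}a_{D,D'}\,\mathrm{Stab}_{\mathfrak C_-}(D'),
\qquad
a_{D,D'}=\big(c_1(\xi_i)\cup\mathrm{Stab}_{\mathfrak C_-}(D),\mathrm{Stab}_{\mathfrak C_-^{\mathrm{op}}}(D')\big)_{\mathrm{virt}},
\]
and by Proposition~\ref{prop:MatrixCoefficients} each $a_{D,D'}$ lies in $H_{\mathbb T}^\ast(\operatorname{pt})$, i.e.\ is an honest polynomial in $t_1,\dots,t_N,h$. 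The support condition \ref{item:Support} together with the fact that $\preceq_{\mathfrak C_-^{\mathrm{op}}}$ is opposite to $\preceq_{\mathfrak C_-}$ shows $a_{D,D'}=0$ unless $D'\preceq_{\mathfrak C_-} D$ or $D'=D$; by a degree count (the total degree of $c_1(\xi_i)\cup\mathrm{Stab}$ exceeds $d$ by one) together with the normalization \ref{item:Normalization} one gets that $a_{D,D'}$ has degree $1$ in the equivariant parameters, $a_{D,D}=\iota_D^\ast(c_1(\xi_i))$, and for $D'\prec D$ the smallness condition \ref{item:Smallness} forces $a_{D,D'}$ to be divisible by $h$, hence $a_{D,D'}=m_{D,D'}\,h$ for some $m_{D,D'}\in\mathbb{Q}$. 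The real content is to identify which $D'$ have $m_{D,D'}\neq 0$ and to compute the corresponding constant.

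\textbf{Key steps.} First, using the equivariant resolution theorem in the form of Corollary~\ref{cor:ResolvedStableEnvelopesFormula}, I would transport the pairing computation to the cotangent bundle $T^\ast F(R_1,\dots,R_{M-1};n)$: equivariant multiplicities $\iota_D^\ast(\widetilde{\mathrm{Stab}}_{\mathfrak C_-}(D'))$ become (after applying $\Psi_{\mathcal D}$ and clearing the normalization factor) equivariant multiplicities of stable basis elements on the partial flag variety, which in turn are governed by Su's localization formula (Theorem~\ref{thm:SuLocalizationFormula}, Proposition~\ref{prop:PartialViaFullFlag}) and its diagrammatic reformulation (Proposition~\ref{prop:SuLocalizationFormulaDiagrammatic}). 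Second, I would combine this with the explicit restriction formula \eqref{eq:RestrictionTautologicalBundles} for $\iota_{D'}^\ast(c_1(\xi_i))$; the coefficient $a_{D,D'}$ is then, after localization, an explicit alternating sum over $z\in w_DS_{\mathbf r}$ of diagrammatic contributions. Third, since we already know $a_{D,D'}=m_{D,D'}h$ with $m_{D,D'}\in\mathbb Q$, it suffices to compute $a_{D,D'}$ modulo $h^2$; this is exactly what the approximation Proposition~\ref{prop:ApproximationStableEnvelopeModuloPowersOfH} is designed for, with $m=2$. Working modulo $h^2$ drastically cuts down the diagrammatic sum: only subsets $K'$ of crossings with $|K'\setminus K_U|\le 1$ survive, so the whole computation reduces to comparing two diagrams $d_z$ (for $D$ and for $D'$) that differ by resolving a single crossing outside the $w_{0,c_i}$-boxes. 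Fourth, I would match this single-crossing-resolution condition on permutation diagrams with the combinatorial notion of ``simple move'' $\mathrm{SM}_{D,i}$ on tie diagrams defined in \eqref{eq:SMiDefinition} (creating one crossing of a skein diagram), and read off the sign $\operatorname{sgn}(D,D')$ from the orientation data encoded in Definition~\ref{definition:SignsOfSimpleMoves}; the $\varphi$-invariance (Proposition~\ref{prop:StableEnvelopesPartialFlagVarietyInvariantUnderHW}) ensures the Hanany--Witten twist introduced by passing to the separated diagram does not affect the leading $h$-coefficient.

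\textbf{Main obstacle.} The principal difficulty is the bookkeeping in the third and fourth steps: showing that modulo $h^2$ the alternating sum over the coset $w_D S_{\mathbf r}$ collapses to a single term per candidate $D'$, and that the resulting rational number is precisely $\pm1$. The subtlety is that many $z$ in the coset contribute, and the cancellations among them (controlled by the $(-1)^{l(w')+l(w'S_{\mathbf r})}$ signs and the denominators $\prod_{\beta\in R_{\mathbf r}}\Psi_{\mathcal D}(z.\beta)$) must conspire to leave exactly the stated answer; one must also verify that $D'$ is again a genuine tie diagram over $\mathcal D$ (not merely over $\mathrm{Res}(\mathcal D)$), which uses the uniqueness properties of fully separated double cosets from Subsection~\ref{subsection:UniquenessFullySeparatedness}, in particular Proposition~\ref{prop:UniquenessOfPrePostComposition}. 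A secondary technical point is handling chargeless blue lines: one first reduces to the essential case via Proposition~\ref{prop:EqMultiplicitiesEssentialBraneDiagram}, checking that the normal-bundle Euler factors $e_{\mathbb T}(N_{\iota,\mathfrak C_-}^\pm)$ are units that do not interfere with divisibility by $h$, and that $c_1(\xi_i)$ restricted to chargeless contributions behaves correctly under this reduction. Once the mod-$h^2$ computation is pinned down, the passage back from $T^\ast F$ to $\mathcal C(\mathcal D)$ and the reassembly into the stated formula is essentially formal.
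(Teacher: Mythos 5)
Your plan follows essentially the same route as the paper's proof: orthogonality and polynomiality reduce the theorem to evaluating the virtual pairings modulo $h^2$, and these are computed via the equivariant resolution theorem, the (diagrammatic) localization formula and the approximation proposition, with the surviving single-crossing resolutions matched to simple moves and their signs via the fully separated coset uniqueness, and chargeless lines handled by reduction to the essential diagram. The only ingredient left implicit is the control of the opposite-chamber multiplicities $\iota^\ast(\mathrm{Stab}_{\mathfrak C_+}(D'))$ entering the pairing, which the paper deduces from Theorem~\ref{thm:ComparisonOfChambers}; apart from that your outline coincides with the paper's argument.
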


The proof of Theorem~\ref{thm:CMAntidominantChamber} is given in Subsection~\ref{subsection:ProofOfTHeoremAntidominant}. We first give the definitions relevant for the theorem. We begin with introducing notion of moving ties:

\begin{definition}\label{definition:SimpleMoves}
Let $D$, $D'\in\mathrm{Tie}(\mathcal D)$. We say that $D'$ is obtained from $D$ via a \textit{simple move} if there exist $1\le i_1<i_2\le M$ and $1\le j_1 < j_2 \le N$ such that
 $(V_{i_1},U_{j_1})$, $(V_{i_2},U_{j_2})\in D$, $(V_{i_1},U_{j_2})$, $(V_{i_2},U_{j_1})\in D'$ and
\[
D\setminus\{(V_{i_1},U_{j_2}),(V_{i_2},U_{j_1})\} = D'\setminus\{(V_{i_1},U_{j_1}),(V_{i_2},U_{j_2})\}.
\]
We call $(V_{i_1},U_{j_1})$ the \textit{right moving tie} and $(V_{i_2},U_{j_2})$ the \textit{left moving tie of} $D$. Let $\mathrm{SM}_{D}$ be the set of all tie diagrams that are obtained from $D$ via a simple move.
\end{definition}

Pictorially, simple moves can be described as switching two ties as illustrated:
\begin{center}
\begin{tikzpicture}
[scale=.225]
\draw[thick] (0,0)--(2,0);
\draw[thick] (3,0)--(5,0);
\draw[dotted] (5,0) -- (8,0);
\draw[thick] (8,0)--(10,0);
\draw[thick] (11,0)--(13,0);
\draw[dotted] (13,0) -- (16,0);
\draw[thick] (16,0)--(18,0);
\draw[thick] (19,0)--(21,0);
\draw[dotted] (21,0) -- (24,0);
\draw[thick] (24,0)--(26,0);
\draw[thick] (27,0)--(29,0);

\draw [thick,red] (2,-1) --(3,1); 
\draw [thick,red] (10,-1) --(11,1); 
\node[black] at (2,-2.5) {$V_{i_2}$};
\node[black] at (10,-2.5) {$V_{i_1}$};

\draw [thick,blue] (27,-1) --(26,1); 
\draw [thick,blue] (19,-1) --(18,1);
\node[black] at (19,-2.5) {$U_{j_1}$};
\node[black] at (27,-2.5) {$U_{j_2}$};

\draw[dashed] (3,1) to[out=45,in=135] (26,1);
\draw[dashed] (11,1) to[out=45,in=135] (18,1);

\draw[-to] decorate[decoration=zigzag] {(31,0)--(34,0)};
\def\s {36};
\def\t {9};
\draw[thick] (0+\s,-9+\t)--(2+\s,-9+\t);
\draw[thick] (3+\s,-9+\t)--(5+\s,-9+\t);
\draw[dotted] (5+\s,-9+\t) -- (8+\s,-9+\t);
\draw[thick] (8+\s,-9+\t)--(10+\s,-9+\t);
\draw[thick] (11+\s,-9+\t)--(13+\s,-9+\t);
\draw[dotted] (13+\s,-9+\t) -- (16+\s,-9+\t);
\draw[thick] (16+\s,-9+\t)--(18+\s,-9+\t);
\draw[thick] (19+\s,-9+\t)--(21+\s,-9+\t);
\draw[dotted] (21+\s,-9+\t) -- (24+\s,-9+\t);
\draw[thick] (24+\s,-9+\t)--(26+\s,-9+\t);
\draw[thick] (27+\s,-9+\t)--(29+\s,-9+\t);

\draw [thick,red] (2+\s,-10+\t) --(3+\s,-8+\t); 
\draw [thick,red] (10+\s,-10+\t) --(11+\s,-8+\t); 
\node[black] at (2+\s,-11.5+\t) {$V_{i_2}$};
\node[black] at (10+\s,-11.5+\t) {$V_{i_1}$};

\draw [thick,blue] (27+\s,-10+\t) --(26+\s,-8+\t); 
\draw [thick,blue] (19+\s,-10+\t) --(18+\s,-8+\t);
\node[black] at (19+\s,-11.5+\t) {$U_{j_1}$};
\node[black] at (27+\s,-11.5+\t) {$U_{j_2}$};

\draw[dashed] (3+\s,-8+\t) to[out=45,in=135] (18+\s,-8+\t);
\draw[dashed] (11+\s,-8+\t) to[out=45,in=135](26+\s,-8+\t) ;

\end{tikzpicture}
\end{center}

Translating between tie diagrams and binary contingency tables immediately gives the following equivalent characterization of simple moves:
\begin{lemma}\label{lemma:SImpleMovesBCT}
Let $D$, $D'\in \mathrm{Tie}(\mathcal D)$. Then, $D'$ is obtained from $D$ via a simple move if and only if there exist $1\le i_1<i_2\le M$ and $1\le j_1 < j_2 \le N$ such that
\begin{enumerate}[label=(\roman*)]
\item $M(D)_{i_1,j_1}=M(D)_{i_2,j_2}=1$ and $M(D)_{i_1,j_2}=M(D)_{i_1,j_2}=0$,
\item $M(D')_{i_1,j_1}=M(D')_{i_2,j_2}=0$ and $M(D')_{i_1,j_2}=M(D')_{i_1,j_2}=1$,
\item $M(D)_{l,k}=M(D')_{l,k}$, for $(l,k)\notin \{(i_1,j_1),(i_2,j_1),(i_1,j_2),(i_2,j_2)\}$.
\end{enumerate}
\end{lemma}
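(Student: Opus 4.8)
The plan is to prove the lemma by directly transporting Definition~\ref{definition:SimpleMoves} through the bijection $M\colon\mathrm{Tie}(\mathcal D)\xrightarrow{\ \sim\ }\mathrm{bct}(\mathcal D)$ of \eqref{eq:TieDiagramsVSBCTs}. The first step is to record what the standing separatedness assumption buys us: since $\mathcal D$ is separated, every red line lies to the left of every blue line, so $V_i\triangleleft U_j$ for all $i\in\{1,\dots,M\}$ and $j\in\{1,\dots,N\}$. In particular each of the four pairs $(V_{i_1},U_{j_1}),(V_{i_2},U_{j_1}),(V_{i_1},U_{j_2}),(V_{i_2},U_{j_2})$ occurring in the statement is a legitimate potential tie, and the defining case distinction for $M$ collapses to the single rule
\[
M(E)_{i,j}=1\iff (V_i,U_j)\in E,\qquad E\in\mathrm{Tie}(\mathcal D).
\]
Thus the entries of $M(D)$ and $M(D')$ at the four positions $(i_1,j_1),(i_2,j_1),(i_1,j_2),(i_2,j_2)$ record exactly which of the four pairs lie in $D$ and in $D'$, while the remaining entries record the rest of $D$ and of $D'$.

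Next I would treat the forward implication. Assuming $D'$ is obtained from $D$ by a simple move with indices $i_1<i_2$, $j_1<j_2$, Definition~\ref{definition:SimpleMoves} gives $(V_{i_1},U_{j_1}),(V_{i_2},U_{j_2})\in D$ and $(V_{i_1},U_{j_2}),(V_{i_2},U_{j_1})\in D'$; I would then extract from the set-difference equation in that definition that $(V_{i_1},U_{j_2}),(V_{i_2},U_{j_1})\notin D$, that $(V_{i_1},U_{j_1}),(V_{i_2},U_{j_2})\notin D'$, and that $D$ and $D'$ coincide outside these four pairs. Feeding these memberships into the displayed rule yields conditions~(i) and~(ii), and the coincidence away from the four pairs yields~(iii). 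For the converse I would read the same equivalences backwards: conditions~(i) and~(ii) translate into $(V_{i_1},U_{j_1}),(V_{i_2},U_{j_2})\in D$ and $(V_{i_1},U_{j_2}),(V_{i_2},U_{j_1})\in D'$ (with these pairs absent from $D'$ resp.\ $D$), and~(iii) says that $D$ and $D'$ agree off the four positions; together these are exactly the requirements of Definition~\ref{definition:SimpleMoves}, so $D'$ is obtained from $D$ by a simple move.

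I do not expect a genuine obstacle here: the whole argument is the bookkeeping of matching the set-difference condition of Definition~\ref{definition:SimpleMoves} with the ``off-support'' condition~(iii). The only points that demand attention are the separatedness reduction of the formula for $M$ and the index conventions $i_1<i_2$ (recall that red lines are numbered from right to left) and $j_1<j_2$, which one checks are compatible with the indexing used in~(i)--(iii).
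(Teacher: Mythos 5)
Your argument is correct and is exactly the translation the paper has in mind: the paper offers no proof beyond declaring the equivalence immediate, and your bookkeeping via $M(E)_{i,j}=1\iff (V_i,U_j)\in E$ (valid because separatedness gives $V_i\triangleleft U_j$ for all $i,j$) together with matching the set-difference condition against (iii) is the intended one. The only caveat is that the displayed set-difference in Definition~\ref{definition:SimpleMoves} appears with the two deleted pairs interchanged (as literally printed it is unsatisfiable alongside the membership conditions), so your extraction tacitly uses the evidently intended reading $D\setminus\{(V_{i_1},U_{j_1}),(V_{i_2},U_{j_2})\}=D'\setminus\{(V_{i_1},U_{j_2}),(V_{i_2},U_{j_1})\}$; with that reading both directions go through exactly as you describe.
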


For $X_i\in\mathrm{h}(\mathcal D)$ with $i\in \{1,\ldots,M\}$, we define the \textit{set of simple moves relative to $X_i$} as
\begin{equation}\label{eq:SMiDefinition}
\mathrm{SM}_{D,i}=\{D'\in \mathrm{Tie}(\mathcal D)\mid \textup{$D'$ satisfies \ref{item:SMi}, \ref{item:SMi1} and \ref{item:SMi2}}\},
\end{equation}
where
\begin{enumerate}[label=(\alph*)]
\item\label{item:SMi} $D'$ is obtained from $D$ via a simple move,
\item\label{item:SMi1} if $(V_{i_1},U_{j_1})$ is the right moving tie of $D$ then $X_i \triangleleft V_{i_1} $,
\item\label{item:SMi2} if $(V_{i_2},U_{j_2})$ is the left moving tie of $D$ then $V_{i_2}\triangleleft X_i$.
\end{enumerate}

Next, we define the sign of a simple move:
\begin{definition}\label{definition:SignsOfSimpleMoves} Given $D'\in \mathrm{SM}_D$ with right moving tie $(V_{i_1},U_{j_1})$ and left moving tie $(V_{i_2},U_{j_2})$. Then, we define
\[
\operatorname{sgn}(D,D')\coloneqq \begin{cases}
1 &\textup{if $n_1+n_2$ is even,}\\
-1 &\textup{if $n_1+n_2$ is odd,}
\end{cases}
\]
where
\[
n_1\coloneqq |\{(V_{i_1},U_j)\in D\mid j_1<j<j_2\}|,\quad n_2\coloneqq |\{(V_{i_2},U_j)\in D\mid j_1<j<j_2\}|.
\]
We call $\operatorname{sgn}(D,D')$ the \textit{sign of the simple move between $D$ and $D'$}.
\end{definition}

Thus, all notions appearing in Theorem~\ref{thm:CMAntidominantChamber} are introduced.

\begin{example}\label{example:SimpleMovesTieDiagrams} Let $\mathcal D=0
\textcolor{red}{\slash} 1
\textcolor{red}{\slash} 3
\textcolor{red}{\slash} 4
\textcolor{red}{\slash} 5
\textcolor{blue}{\backslash} 4
\textcolor{blue}{\backslash} 3
\textcolor{blue}{\backslash} 1
\textcolor{blue}{\backslash} 0$ 
and 
\begin{center}
\begin{tikzpicture}
[scale=.45]
\node at (-2,0){$D=$};
\draw[thick] (0,0)--(2,0);
\draw[thick] (3,0)--(5,0);
\draw[thick] (6,0)--(8,0);
\draw[thick] (9,0)--(11,0);
\draw[thick] (12,0)--(14,0);
\draw[thick] (15,0)--(17,0);
\draw[thick] (18,0)--(20,0);
\draw[thick] (21,0)--(23,0);
\draw[thick] (24,0)--(26,0);

\draw [thick,red] (2,-1) --(3,1); 
\draw [thick,red] (5,-1) --(6,1); 
\draw [thick,red] (8,-1) --(9,1);
\draw [thick,red] (11,-1) --(12,1);

\draw [thick,blue] (15,-1) --(14,1); 
\draw [thick,blue] (18,-1) --(17,1); 
\draw [thick,blue] (21,-1) --(20,1); 
\draw [thick,blue] (24,-1) --(23,1); 

\node at (1,0.5){$0$};
\node at (4,0.5){$1$};
\node at (7,0.5){$3$};
\node at (10,0.5){$4$};
\node at (13,0.5){$5$};
\node at (16,0.5){$4$};
\node at (19,0.5){$3$};
\node at (22,0.5){$1$};
\node at (25,0.5){$0$};

\draw[dashed] (3,1) to[out=45,in=135] (20,1);
\draw[dashed] (6,1) to[out=45,in=135] (17,1);
\draw[dashed] (6,1) to[out=45,in=135] (23,1);
\draw[dashed] (9,1) to[out=45,in=135] (14,1);
\draw[dashed] (12,1) to[out=45,in=135] (20,1);

\end{tikzpicture}
\end{center}
In the following, we determine $c_1(\xi_i)\cup \mathrm{Stab}_{\mathfrak C_-}(D)$ for $i=3$. The simple moves that are contained in $\mathrm{SM}_{D,i}$ are illustrated as follows:
\begin{center}
\begin{tikzpicture}
[scale=.275]

\def\z{0.2};
\node at (0,0) {$D$};
\draw[thick] (3,0)--(5,0);
\draw[thick] (6,0)--(8,0);
\draw[thick] (9,0)--(11,0);
\draw[thick] (12,0)--(14,0);
\draw[thick] (15,0)--(17,0);
\draw[thick] (18,0)--(20,0);
\draw[thick] (21,0)--(23,0);

\draw [thick,red] (2,-1) --(3,1); 
\draw [thick,red] (5,-1) --(6,1); 
\draw [thick,red] (8,-1) --(9,1);
\draw [thick,red] (11,-1) --(12,1);

\draw [thick,blue] (15,-1) --(14,1); 
\draw [thick,blue] (18,-1) --(17,1); 
\draw [thick,blue] (21,-1) --(20,1); 
\draw [thick,blue] (24,-1) --(23,1); 

\node at (4,0.7){$1$};
\node at (7,0.7){$3$};
\node at (10,0.7){$4$};
\node at (13,0.7){$5$};
\node at (16,0.7){$4$};
\node at (19,0.7){$3$};
\node at (22,0.7){$1$};

\draw[dashed] (6,1) to[out=45,in=135] (17,1);
\draw[dashed] (6,1) to[out=45,in=135] (23,1);

\draw[dashed] (12,1) to[out=45,in=135] (20,1);

\draw[dashed, very thick, ForestGreen] (3,1) to[out=45,in=135] (20,1);
\draw[dashed, very thick, ForestGreen] (9,1) to[out=45,in=135] (14,1);

\draw[-to] decorate[decoration=zigzag] {(25-\z,0)--(27+\z,0)};
\def\s {26};

\draw[thick] (3+\s,0)--(5+\s,0);
\draw[thick] (6+\s,0)--(8+\s,0);
\draw[thick] (9+\s,0)--(11+\s,0);
\draw[thick] (12+\s,0)--(14+\s,0);
\draw[thick] (15+\s,0)--(17+\s,0);
\draw[thick] (18+\s,0)--(20+\s,0);
\draw[thick] (21+\s,0)--(23+\s,0);
\node at (27+\s,0) {$D_1$};

\draw [thick,red] (2+\s,-1) --(3+\s,1); 
\draw [thick,red] (5+\s,-1) --(6+\s,1); 
\draw [thick,red] (8+\s,-1) --(9+\s,1);
\draw [thick,red] (11+\s,-1) --(12+\s,1);

\draw [thick,blue] (15+\s,-1) --(14+\s,1); 
\draw [thick,blue] (18+\s,-1) --(17+\s,1); 
\draw [thick,blue] (21+\s,-1) --(20+\s,1); 
\draw [thick,blue] (24+\s,-1) --(23+\s,1); 

\node at (4+\s,0.7){$1$};
\node at (7+\s,0.7){$3$};
\node at (10+\s,0.7){$4$};
\node at (13+\s,0.7){$5$};
\node at (16+\s,0.7){$4$};
\node at (19+\s,0.7){$3$};
\node at (22+\s,0.7){$1$};

\draw[dashed] (6+\s,1) to[out=45,in=135] (17+\s,1);
\draw[dashed] (6+\s,1) to[out=45,in=135] (23+\s,1);
\draw[dashed] (12+\s,1) to[out=45,in=135] (20+\s,1);

\draw[dashed, very thick, ForestGreen] (3+\s,1) to[out=45,in=135] (14+\s,1);
\draw[dashed, very thick, ForestGreen] (9+\s,1) to[out=45,in=135] (20+\s,1);

\def\t{-7};
\node at (0,0+\t) {$D$};
\draw[thick] (3,0+\t)--(5,0+\t);
\draw[thick] (6,0+\t)--(8,0+\t);
\draw[thick] (9,0+\t)--(11,0+\t);
\draw[thick] (12,0+\t)--(14,0+\t);
\draw[thick] (15,0+\t)--(17,0+\t);
\draw[thick] (18,0+\t)--(20,0+\t);
\draw[thick] (21,0+\t)--(23,0+\t);

\draw [thick,red] (2,-1+\t) --(3,1+\t); 
\draw [thick,red] (5,-1+\t) --(6,1+\t); 
\draw [thick,red] (8,-1+\t) --(9,1+\t);
\draw [thick,red] (11,-1+\t) --(12,1+\t);

\draw [thick,blue] (15,-1+\t) --(14,1+\t); 
\draw [thick,blue] (18,-1+\t) --(17,1+\t); 
\draw [thick,blue] (21,-1+\t) --(20,1+\t); 
\draw [thick,blue] (24,-1+\t) --(23,1+\t); 

\node at (4,0.7+\t){$1$};
\node at (7,0.7+\t){$3$};
\node at (10,0.7+\t){$4$};
\node at (13,0.7+\t){$5$};
\node at (16,0.7+\t){$4$};
\node at (19,0.7+\t){$3$};
\node at (22,0.7+\t){$1$};

\draw[dashed] (3,1+\t) to[out=45,in=135] (20,1+\t);
\draw[dashed] (6,1+\t) to[out=45,in=135] (23,1+\t);
\draw[dashed] (12,1+\t) to[out=45,in=135] (20,1+\t);

\draw[dashed, very thick, ForestGreen] (6,1+\t) to[out=45,in=135] (17,1+\t);
\draw[dashed, very thick, ForestGreen] (9,1+\t) to[out=45,in=135] (14,1+\t);

\draw[-to] decorate[decoration=zigzag] {(25-\z,0+\t)--(27+\z,0+\t)};

\draw[thick] (3+\s,-7)--(5+\s,-7);
\draw[thick] (6+\s,-7)--(8+\s,-7);
\draw[thick] (9+\s,-7)--(11+\s,-7);
\draw[thick] (12+\s,-7)--(14+\s,-7);
\draw[thick] (15+\s,-7)--(17+\s,-7);
\draw[thick] (18+\s,-7)--(20+\s,-7);
\draw[thick] (21+\s,-7)--(23+\s,-7);

\node at (27+\s,-7){$D_2$};

\draw [thick,red] (2+\s,-8) --(3+\s,-6); 
\draw [thick,red] (5+\s,-8) --(6+\s,-6); 
\draw [thick,red] (8+\s,-8) --(9+\s,-6);
\draw [thick,red] (11+\s,-8) --(12+\s,-6);

\draw [thick,blue] (15+\s,-8) --(14+\s,-6); 
\draw [thick,blue] (18+\s,-8) --(17+\s,-6); 
\draw [thick,blue] (21+\s,-8) --(20+\s,-6); 
\draw [thick,blue] (24+\s,-8) --(23+\s,-6); 

\node at (4+\s,-6.3){$1$};
\node at (7+\s,-6.3){$3$};
\node at (10+\s,-6.3){$4$};
\node at (13+\s,-6.3){$5$};
\node at (16+\s,-6.3){$4$};
\node at (19+\s,-6.3){$3$};
\node at (22+\s,-6.3){$1$};

\draw[dashed] (3+\s,-6) to[out=45,in=135] (20+\s,-6);
\draw[dashed] (6+\s,-6) to[out=45,in=135] (23+\s,-6);
\draw[dashed] (12+\s,-6) to[out=45,in=135] (20+\s,-6);

\draw[dashed, very thick, ForestGreen] (6+\s,-6) to[out=45,in=135] (14+\s,-6);
\draw[dashed, very thick, ForestGreen] (9+\s,-6) to[out=45,in=135] (17+\s,-6);
\def\t{-14};
\node at (0,0+\t) {$D$};
\draw[thick] (3,0+\t)--(5,0+\t);
\draw[thick] (6,0+\t)--(8,0+\t);
\draw[thick] (9,0+\t)--(11,0+\t);
\draw[thick] (12,0+\t)--(14,0+\t);
\draw[thick] (15,0+\t)--(17,0+\t);
\draw[thick] (18,0+\t)--(20,0+\t);
\draw[thick] (21,0+\t)--(23,0+\t);

\draw [thick,red] (2,-1+\t) --(3,1+\t); 
\draw [thick,red] (5,-1+\t) --(6,1+\t); 
\draw [thick,red] (8,-1+\t) --(9,1+\t);
\draw [thick,red] (11,-1+\t) --(12,1+\t);

\draw [thick,blue] (15,-1+\t) --(14,1+\t); 
\draw [thick,blue] (18,-1+\t) --(17,1+\t); 
\draw [thick,blue] (21,-1+\t) --(20,1+\t); 
\draw [thick,blue] (24,-1+\t) --(23,1+\t); 

\node at (4,0.7+\t){$1$};
\node at (7,0.7+\t){$3$};
\node at (10,0.7+\t){$4$};
\node at (13,0.7+\t){$5$};
\node at (16,0.7+\t){$4$};
\node at (19,0.7+\t){$3$};
\node at (22,0.7+\t){$1$};

\draw[dashed] (3,1+\t) to[out=45,in=135] (20,1+\t);
\draw[dashed] (6,1+\t) to[out=45,in=135] (17,1+\t);
\draw[dashed] (12,1+\t) to[out=45,in=135] (20,1+\t);

\draw[dashed, very thick, ForestGreen] (6,1+\t) to[out=45,in=135] (23,1+\t);
\draw[dashed, very thick, ForestGreen] (9,1+\t) to[out=45,in=135] (14,1+\t);

\draw[-to] decorate[decoration=zigzag] {(25-\z,0+\t)--(27+\z,0+\t)};
\draw[thick] (3+\s,-14)--(5+\s,-14);
\draw[thick] (6+\s,-14)--(8+\s,-14);
\draw[thick] (9+\s,-14)--(11+\s,-14);
\draw[thick] (12+\s,-14)--(14+\s,-14);
\draw[thick] (15+\s,-14)--(17+\s,-14);
\draw[thick] (18+\s,-14)--(20+\s,-14);
\draw[thick] (21+\s,-14)--(23+\s,-14);
\node at (27+\s,-14){$D_3$};

\draw [thick,red] (2+\s,-15) --(3+\s,-13); 
\draw [thick,red] (5+\s,-15) --(6+\s,-13); 
\draw [thick,red] (8+\s,-15) --(9+\s,-13);
\draw [thick,red] (11+\s,-15) --(12+\s,-13);

\draw [thick,blue] (15+\s,-15) --(14+\s,-13); 
\draw [thick,blue] (18+\s,-15) --(17+\s,-13); 
\draw [thick,blue] (21+\s,-15) --(20+\s,-13); 
\draw [thick,blue] (24+\s,-15) --(23+\s,-13); 

\node at (4+\s,-13.3){$1$};
\node at (7+\s,-13.3){$3$};
\node at (10+\s,-13.3){$4$};
\node at (13+\s,-13.3){$5$};
\node at (16+\s,-13.3){$4$};
\node at (19+\s,-13.3){$3$};
\node at (22+\s,-13.3){$1$};

\draw[dashed] (3+\s,-13) to[out=45,in=135] (20+\s,-13);
\draw[dashed] (6+\s,-13) to[out=45,in=135] (17+\s,-13);
\draw[dashed] (12+\s,-13) to[out=45,in=135] (20+\s,-13);

\draw[dashed, very thick, ForestGreen] (6+\s,-13) to[out=45,in=135] (14+\s,-13);
\draw[dashed, very thick, ForestGreen] (9+\s,-13) to[out=45,in=135] (23+\s,-13);

\def\t{-21};
\node at (0,0+\t) {$D$};
\draw[thick] (3,0+\t)--(5,0+\t);
\draw[thick] (6,0+\t)--(8,0+\t);
\draw[thick] (9,0+\t)--(11,0+\t);
\draw[thick] (12,0+\t)--(14,0+\t);
\draw[thick] (15,0+\t)--(17,0+\t);
\draw[thick] (18,0+\t)--(20,0+\t);
\draw[thick] (21,0+\t)--(23,0+\t);

\draw [thick,red] (2,-1+\t) --(3,1+\t); 
\draw [thick,red] (5,-1+\t) --(6,1+\t); 
\draw [thick,red] (8,-1+\t) --(9,1+\t);
\draw [thick,red] (11,-1+\t) --(12,1+\t);

\draw [thick,blue] (15,-1+\t) --(14,1+\t); 
\draw [thick,blue] (18,-1+\t) --(17,1+\t); 
\draw [thick,blue] (21,-1+\t) --(20,1+\t); 
\draw [thick,blue] (24,-1+\t) --(23,1+\t);

\node at (4,0.7+\t){$1$};
\node at (7,0.7+\t){$3$};
\node at (10,0.7+\t){$4$};
\node at (13,0.7+\t){$5$};
\node at (16,0.7+\t){$4$};
\node at (19,0.7+\t){$3$};
\node at (22,0.7+\t){$1$};

\draw[dashed] (3,1+\t) to[out=45,in=135] (20,1+\t);
\draw[dashed] (6,1+\t) to[out=45,in=135] (17,1+\t);
\draw[dashed] (9,1+\t) to[out=45,in=135] (14,1+\t);
\draw[dashed, very thick, ForestGreen] (12,1+\t) to[out=45,in=135] (20,1+\t);

\draw[dashed, very thick, ForestGreen] (6,1+\t) to[out=45,in=135] (23,1+\t);

\draw[-to] decorate[decoration=zigzag] {(25-\z,0+\t)--(27+\z,0+\t)};

\draw[thick] (3+\s,-21)--(5+\s,-21);
\draw[thick] (6+\s,-21)--(8+\s,-21);
\draw[thick] (9+\s,-21)--(11+\s,-21);
\draw[thick] (12+\s,-21)--(14+\s,-21);
\draw[thick] (15+\s,-21)--(17+\s,-21);
\draw[thick] (18+\s,-21)--(20+\s,-21);
\draw[thick] (21+\s,-21)--(23+\s,-21);

\node at (27+\s,-21){$D_4$};

\draw [thick,red] (2+\s,-22) --(3+\s,-20); 
\draw [thick,red] (5+\s,-22) --(6+\s,-20); 
\draw [thick,red] (8+\s,-22) --(9+\s,-20);
\draw [thick,red] (11+\s,-22) --(12+\s,-20);

\draw [thick,blue] (15+\s,-22) --(14+\s,-20); 
\draw [thick,blue] (18+\s,-22) --(17+\s,-20); 
\draw [thick,blue] (21+\s,-22) --(20+\s,-20); 
\draw [thick,blue] (24+\s,-22) --(23+\s,-20); 

\node at (4+\s,-20.3){$1$};
\node at (7+\s,-20.3){$3$};
\node at (10+\s,-20.3){$4$};
\node at (13+\s,-20.3){$5$};
\node at (16+\s,-20.3){$4$};
\node at (19+\s,-20.3){$3$};
\node at (22+\s,-20.3){$1$};

\draw[dashed] (3+\s,-20) to[out=45,in=135] (20+\s,-20);
\draw[dashed] (6+\s,-20) to[out=45,in=135] (17+\s,-20);

\draw[dashed] (9+\s,-20) to[out=45,in=135] (14+\s,-20);
\draw[dashed, very thick, ForestGreen] (12+\s,-20) to[out=45,in=135] (23+\s,-20);

\draw[dashed, very thick, ForestGreen] (6+\s,-20) to[out=45,in=135] (20+\s,-20);
\end{tikzpicture}
\end{center}
That is $\mathrm{SM}_{D,i}=\{D_1,D_2,D_3,D_4\}$, where
\begin{equation}\label{eq:ExamplesOfSimpleMoves}
\begin{split}
D_1&= (D\cup \{(V_2,U_3),(V_4,U_1)\})\setminus \{(V_2,U_1),(V_4,U_3)\}, \\
D_2&= (D\cup \{(V_2,U_2),(V_3,U_1)\})\setminus \{(V_2,U_1),(V_3,U_2)\}, \\
D_3&= (D\cup \{(V_2,U_4),(V_3,U_1)\})\setminus \{(V_2,U_1),(V_3,U_4)\}, \\
D_4&= (D\cup \{(V_1,U_4),(V_3,U_3)\})\setminus \{(V_1,U_3),(V_3,U_4)\}.
\end{split}
\end{equation}
Note that in the illustration of the simple moves, we omitted the horizontal black lines on the boundary of the tie diagrams. From the diagrams one can easily read off the respective signs:
\[
\operatorname{sgn}(D,D_1)=\operatorname{sgn}(D,D_2)=\operatorname{sgn}(D,D_4)=1,\quad \operatorname{sgn}(D,D_3)=-1.
\]
By~\eqref{eq:RestrictionTautologicalBundles}, we have an isomorphism of $\mathbb T$-representations
$
\iota_{D}^\ast(\xi_i)\cong \mathbb C_{U_2}\oplus \mathbb C_{U_3}\oplus \mathbb C_{U_4}.
$
Thus, $\iota_{D}^\ast(c_1(\xi_i))=t_2+t_3+t_4$. Hence, Theorem~\ref{thm:CMAntidominantChamber} gives 
\begin{align*}
c_1(\xi_i)\cup \mathrm{Stab}_{\mathfrak C_-}(D)&=
(t_2+t_3+t_4)\mathrm{Stab}_{\mathfrak C_-}(D)
+h\mathrm{Stab}_{\mathfrak C_-}({D_1})+h\mathrm{Stab}_{\mathfrak C_-}({D_2})\\ &\phantom{=}-h\mathrm{Stab}_{\mathfrak C_-}({D_3})+h\mathrm{Stab}_{\mathfrak C_-}({D_4})
.
\end{align*}
\end{example}

\begin{remark} The notion of simple moves on brane diagrams also appeared in \cite{foster2023tangent}, where they are called \textit{swap moves}. There these moves were used to classify torus invariant curves on bow varieties.
\end{remark}

\subsection{The sign}\label{subsection:TheSign} In this subsection, we give an interpretation of $\operatorname{sgn}(D,D')$ in terms of permutations assigned to the double cosets of $D$ and $D'$. From this, we deduce that after appropriate normalization of the stable basis all off-diagonal entries in the Chevalley--Monk formula become equal to $-h$.

Given a tie diagram $D$ and $D'\in \mathrm{SM}_D$ with left moving tie $(V_{i_1},U_{j_1})$ and right moving tie $(V_{i_2}, U_{j_2})$. Let  
$
\tilde w_{D'}=\tilde w_{M(D')}\in S_n
$
be the shortest $(S_{\textbf c},S_{\textbf c})$-double coset representative from  Definition~\ref{definition:DefinitionTildeWA}. By definition, there exist a unique
$f_1\in \{ R_{i_1-1}+1,\ldots, R_{i_1}\}$ such that $\tilde w_{D'}(f_1)\in \{C_{j_2-1}+1,\ldots,C_{j_2}\}$
and a unique $f_2\in \{ R_{i_2-1}+1,\ldots, R_{i_2}\}$ with $\tilde w_{D'}(f_2)\in \{C_{j_1-1}+1,\ldots,C_{j_1}\}$. We set
\begin{equation}\label{eq:DefinitionTildeYD}
\tilde y_D\coloneqq \tilde w_{D'} \circ (f_1,f_2).
\end{equation}
Then,  by construction, $\tilde y_D\in \tilde w_D S_{\textbf r}$.

The permutation $\tilde y_D$ has the following diagrammatic interpretation: let
$d_{\tilde w_{D'}}$ be  a reduced diagram of $\tilde w_{D'}$.
Since $(V_{i_1},U_{j_2})\in D'$ there exists a unique strand $\lambda_1$ in $d_{\tilde w_{D'}}$ starting in $\{R_{i_1-1}+1,\ldots,R_{i_1} \}$ and ending in $\{C_{j_2-1}+1,\ldots, C_{j_2}\}$. Likewise, as $(V_{i_2},U_{j_1})\in D'$ there is also a unique strand $\lambda_2$ in $d_{w_{D'}}$ which starts in $\{R_{i_2-1}+1,\ldots,R_{i_2} \}$ and ends in $\{C_{j_1-1}+1,\ldots, C_{j_1}\}$. As $i_1<i_2$ and $j_1<j_2$, the strands $\lambda_1$ and $\lambda_2$ intersect exactly once. Resolving the crossing of $\lambda_1$ and $\lambda_2$ then gives a diagram for the permutation $\tilde y_D$.

\begin{example}\label{example:ConstructionYDTilde} Consider the simple move:
\begin{center}
\begin{tikzpicture}
[scale=.2625]
\draw[thick] (0,0)--(2,0);
\draw[thick] (3,0)--(5,0);
\draw[thick] (6,0)--(8,0);
\draw[thick] (9,0)--(11,0);
\draw[thick] (12,0)--(14,0);
\draw[thick] (15,0)--(17,0);
\draw[thick] (18,0)--(20,0);
\draw[thick] (21,0)--(23,0);

\draw [thick,red] (-1,-1) --(0,1); 
\draw [thick,red] (2,-1) --(3,1); 
\draw [thick,red] (5,-1) --(6,1); 
\draw [thick,red] (8,-1) --(9,1);

\draw [thick,blue] (12,-1) --(11,1); 
\draw [thick,blue] (15,-1) --(14,1); 
\draw [thick,blue] (18,-1) --(17,1); 
\draw [thick,blue] (21,-1) --(20,1); 
\draw [thick,blue] (24,-1) --(23,1); 

\node at (1,0.7){$3$};
\node at (4,0.7){$5$};
\node at (7,0.7){$7$};
\node at (10,0.7){$10$};
\node at (13,0.7){$8$};
\node at (16,0.7){$5$};
\node at (19,0.7){$3$};
\node at (22,0.7){$2$};

\draw[dashed] (9,1) to[out=45, in=135] (11,1);
\draw[dashed] (9,1) to[out=45, in=135] (14,1);
\draw[dashed] (9,1) to[out=45, in=135] (23,1);
\draw[dashed] (6,1) to[out=45, in=135] (17,1);
\draw[dashed] (3,1) to[out=45, in=135] (14,1);
\draw[dashed] (0,1) to[out=45, in=135] (14,1);
\draw[dashed] (0,1) to[out=45, in=135] (17,1);
\draw[dashed] (0,1) to[out=45, in=135] (20,1);

\draw[dashed, very thick, ForestGreen] (3,1) to[out=45, in=135] (23,1);
\draw[dashed, very thick, ForestGreen] (6,1) to[out=45, in=135] (11,1);

\draw[-to] decorate[decoration=zigzag] {(25,0)--(27,0)};
\def\s{29};

\draw[thick] (0+\s,0)--(2+\s,0);
\draw[thick] (3+\s,0)--(5+\s,0);
\draw[thick] (6+\s,0)--(8+\s,0);
\draw[thick] (9+\s,0)--(11+\s,0);
\draw[thick] (12+\s,0)--(14+\s,0);
\draw[thick] (15+\s,0)--(17+\s,0);
\draw[thick] (18+\s,0)--(20+\s,0);
\draw[thick] (21+\s,0)--(23+\s,0);

\draw [thick,red] (-1+\s,-1) --(0+\s,1); 
\draw [thick,red] (2+\s,-1) --(3+\s,1); 
\draw [thick,red] (5+\s,-1) --(6+\s,1); 
\draw [thick,red] (8+\s,-1) --(9+\s,1);

\draw [thick,blue] (12+\s,-1) --(11+\s,1); 
\draw [thick,blue] (15+\s,-1) --(14+\s,1); 
\draw [thick,blue] (18+\s,-1) --(17+\s,1); 
\draw [thick,blue] (21+\s,-1) --(20+\s,1); 
\draw [thick,blue] (24+\s,-1) --(23+\s,1); 

\node at (1+\s,0.7){$3$};
\node at (4+\s,0.7){$5$};
\node at (7+\s,0.7){$7$};
\node at (10+\s,0.7){$10$};
\node at (13+\s,0.7){$8$};
\node at (16+\s,0.7){$5$};
\node at (19+\s,0.7){$3$};
\node at (22+\s,0.7){$2$};

\draw[dashed] (9+\s,1) to[out=45, in=135] (11+\s,1);
\draw[dashed] (9+\s,1) to[out=45, in=135] (14+\s,1);
\draw[dashed] (9+\s,1) to[out=45, in=135] (23+\s,1);
\draw[dashed] (6+\s,1) to[out=45, in=135] (17+\s,1);
\draw[dashed] (3+\s,1) to[out=45, in=135] (14+\s,1);
\draw[dashed] (0+\s,1) to[out=45, in=135] (14+\s,1);
\draw[dashed] (0+\s,1) to[out=45, in=135] (17+\s,1);
\draw[dashed] (0+\s,1) to[out=45, in=135] (20+\s,1);

\draw[dashed, very thick, ForestGreen] (3+\s,1) to[out=45, in=135] (11+\s,1);
\draw[dashed, very thick, ForestGreen] (6+\s,1) to[out=45, in=135] (23+\s,1);
\end{tikzpicture}
\end{center}
We denote the tie diagram on the left by $D$ and the one on the right by $D'$. The moving ties are $(V_2,U_1)$ and $(V_3,U_5)$. Note that $n=10$, $\textbf r=(3,2,2,3)$ and $\textbf c=(2,3,2,1,2)$.
By construction, the binary contingency table $M(D')$ equals the matrix $A$ from Example~\ref{example:ShortestCosetRepresentative} where we also constructed the corresponding shortest $(S_{\textbf c},S_{\textbf r})$-double coset representative $\tilde w_{D'}=13961024578$. We have
$\tilde w_{D'}(5)\in\{9,10\}=\{C_3+1,\ldots,C_4\}$ and $\tilde w_{D'}(6)\in\{1,2\}=\{C_0+1,\ldots,C_1\}$
, so $f_1=6$ and $f_2=7$. This gives $\tilde y_D=13962104578$. The diagrammatic construction of $\tilde y_D$ is illustrated as follows:
\begin{center}
\begin{tikzpicture}[scale=.5]

\def \s {0};
\def \t {0};

\coordinate (B1) at (0+\s,-3-\t);
\coordinate (B2) at (1+\s,-3-\t);
\coordinate (B3) at (2+\s,-3-\t);
\coordinate (B4) at (3+\s,-3-\t);
\coordinate (B5) at (4+\s,-3-\t);
\coordinate (B6) at (5+\s,-3-\t);
\coordinate (B7) at (6+\s,-3-\t);
\coordinate (B8) at (7+\s,-3-\t);
\coordinate (B9) at (8+\s,-3-\t);
\coordinate (B10) at (9+\s,-3-\t);

\node[below=0 of B1] {$1$};
\node[below=0 of B2] {$2$};
\node[below=0 of B3] {$3$};
\node[below=0 of B4] {$4$};
\node[below=0 of B5] {$5$};
\node[below=0 of B6] {$6$};
\node[below=0 of B7] {$7$};
\node[below=0 of B8] {$8$};
\node[below=0 of B9] {$9$};
\node[below=0 of B10] {$10$};

\coordinate (T1) at (0+\s,0-\t);
\coordinate (T2) at (1+\s,0-\t);
\coordinate (T3) at (2+\s,0-\t);
\coordinate (T4) at (3+\s,0-\t);
\coordinate (T5) at (4+\s,0-\t);
\coordinate (T6) at (5+\s,0-\t);
\coordinate (T7) at (6+\s,0-\t);
\coordinate (T8) at (7+\s,0-\t);
\coordinate (T9) at (8+\s,0-\t);
\coordinate (T10) at (9+\s,0-\t);

\node[above=0 of T1] {$1$};
\node[above=0 of T2] {$2$};
\node[above=0 of T3] {$3$};
\node[above=0 of T4] {$4$};
\node[above=0 of T5] {$5$};
\node[above=0 of T6] {$6$};
\node[above=0 of T7] {$7$};
\node[above=0 of T8] {$8$};
\node[above=0 of T9] {$9$};
\node[above=0 of T10] {$10$};

\draw (B1) to[out=90,in=-90] (T1);
\draw (B2) to[out=90,in=-90] (T3);
\draw (B3) to[out=90,in=-90] (T9);
\draw (B4) to[out=90,in=-90] (T6);
\draw (B7) to[out=90,in=-90] (T4);
\draw (B8) to[out=90,in=-90] (T5);
\draw (B9) to[out=90,in=-90] (T7);
\draw (B10) to[out=90,in=-90] (T8);

\draw[thick, ForestGreen] (B5) to[out=90,in=-90] (T10);
\draw[thick, ForestGreen] (B6) to[out=90,in=-90] (T2);

\draw[black, thick] (-0.3+\s,1-\t) -- (1.3+\s,1-\t);
\draw[black, thick] (-0.3+\s,1-\t) -- (-0.3+\s,0.7-\t);
\draw[black, thick] (1.3+\s,1-\t) -- (1.3+\s,0.7-\t);

\draw[black, thick] (1.7+\s,1-\t) -- (4.3+\s,1-\t);
\draw[black, thick] (1.7+\s,1-\t) -- (1.7+\s,0.7-\t);
\draw[black, thick] (4.3+\s,1-\t) -- (4.3+\s,0.7-\t);

\draw[black, thick] (4.7+\s,1-\t) -- (6.3+\s,1-\t);
\draw[black, thick] (4.7+\s,1-\t) -- (4.7+\s,0.7-\t);
\draw[black, thick] (6.3+\s,1-\t) -- (6.3+\s,0.7-\t);

\draw[black, thick] (6.7+\s,1-\t) -- (7.3+\s,1-\t);
\draw[black, thick] (6.7+\s,1-\t) -- (6.7+\s,0.7-\t);
\draw[black, thick] (7.3+\s,1-\t) -- (7.3+\s,0.7-\t);

\draw[black, thick] (7.7+\s,1-\t) -- (9.5+\s,1-\t);
\draw[black, thick] (7.7+\s,1-\t) -- (7.7+\s,0.7-\t);
\draw[black, thick] (9.5+\s,1-\t) -- (9.5+\s,0.7-\t);

\draw[black, thick] (-0.3+\s,-4-\t) -- (2.3+\s,-4-\t);
\draw[black, thick] (-0.3+\s,-4-\t) -- (-0.3+\s,-3.7-\t);
\draw[black, thick] (2.3+\s,-4-\t) -- (2.3+\s,-3.7-\t);

\draw[black, thick] (2.7+\s,-4-\t) -- (4.3+\s,-4-\t);
\draw[black, thick] (2.7+\s,-4-\t) -- (2.7+\s,-3.7-\t);
\draw[black, thick] (4.3+\s,-4-\t) -- (4.3+\s,-3.7-\t);

\draw[black, thick] (4.7+\s,-4-\t) -- (6.3+\s,-4-\t);
\draw[black, thick] (4.7+\s,-4-\t) -- (4.7+\s,-3.7-\t);
\draw[black, thick] (6.3+\s,-4-\t) -- (6.3+\s,-3.7-\t);

\draw[black, thick] (6.7+\s,-4-\t) -- (9.5+\s,-4-\t);
\draw[black, thick] (6.7+\s,-4-\t) -- (6.7+\s,-3.7-\t);
\draw[black, thick] (9.5+\s,-4-\t) -- (9.5+\s,-3.7-\t);

\node at (4.5,-5) {$\tilde w_{D'}$};

\draw[-to] decorate[decoration=zigzag] {(11,-1.5) -- (13,-1.5)};

\def \s {15};
\def \t {0};

\node at (4.5+\s,-5) {$\tilde y_{D}$};

\coordinate (B1) at (0+\s,-3-\t);
\coordinate (B2) at (1+\s,-3-\t);
\coordinate (B3) at (2+\s,-3-\t);
\coordinate (B4) at (3+\s,-3-\t);
\coordinate (B5) at (4+\s,-3-\t);
\coordinate (B6) at (5+\s,-3-\t);
\coordinate (B7) at (6+\s,-3-\t);
\coordinate (B8) at (7+\s,-3-\t);
\coordinate (B9) at (8+\s,-3-\t);
\coordinate (B10) at (9+\s,-3-\t);

\node[below=0 of B1] {$1$};
\node[below=0 of B2] {$2$};
\node[below=0 of B3] {$3$};
\node[below=0 of B4] {$4$};
\node[below=0 of B5] {$5$};
\node[below=0 of B6] {$6$};
\node[below=0 of B7] {$7$};
\node[below=0 of B8] {$8$};
\node[below=0 of B9] {$9$};
\node[below=0 of B10] {$10$};

\coordinate (T1) at (0+\s,0-\t);
\coordinate (T2) at (1+\s,0-\t);
\coordinate (T3) at (2+\s,0-\t);
\coordinate (T4) at (3+\s,0-\t);
\coordinate (T5) at (4+\s,0-\t);
\coordinate (T6) at (5+\s,0-\t);
\coordinate (T7) at (6+\s,0-\t);
\coordinate (T8) at (7+\s,0-\t);
\coordinate (T9) at (8+\s,0-\t);
\coordinate (T10) at (9+\s,0-\t);

\node[above=0 of T1] {$1$};
\node[above=0 of T2] {$2$};
\node[above=0 of T3] {$3$};
\node[above=0 of T4] {$4$};
\node[above=0 of T5] {$5$};
\node[above=0 of T6] {$6$};
\node[above=0 of T7] {$7$};
\node[above=0 of T8] {$8$};
\node[above=0 of T9] {$9$};
\node[above=0 of T10] {$10$};

\draw (B1) to[out=90,in=-90] (T1);
\draw (B2) to[out=90,in=-90] (T3);
\draw (B3) to[out=90,in=-90] (T9);
\draw (B4) to[out=90,in=-90] (T6);
\draw (B7) to[out=90,in=-90] (T4);
\draw (B8) to[out=90,in=-90] (T5);
\draw (B9) to[out=90,in=-90] (T7);
\draw (B10) to[out=90,in=-90] (T8);

\draw[draw=none,name path=line5] (B5) to[out=90,in=-90] (T10);
\draw[draw=none,name path=line6] (B6) to[out=90,in=-90] (T2);

\path [name intersections={of=line5 and line6,by=Y}];
\coordinate[below left=0.18 and 0.08 of Y] (Y1);
\coordinate[above left =0 and 0.08 of Y] (Y2);

\coordinate[below right=0.18 and 0.08 of Y] (Y3);
\coordinate[above right =0 and 0.08 of Y] (Y4);

\draw[thick, ForestGreen] (B5) to[out=90,in=-135] (Y1) to[out=45,in=-30] (Y2) to[out=150,in=-90] (T2);
\draw[thick, ForestGreen] (B6) to[out=90,in=-45] (Y3) to[out=135,in=-150] (Y4) to[out=30,in=-90] (T10);

\draw[black, thick] (-0.3+\s,1-\t) -- (1.3+\s,1-\t);
\draw[black, thick] (-0.3+\s,1-\t) -- (-0.3+\s,0.7-\t);
\draw[black, thick] (1.3+\s,1-\t) -- (1.3+\s,0.7-\t);

\draw[black, thick] (1.7+\s,1-\t) -- (4.3+\s,1-\t);
\draw[black, thick] (1.7+\s,1-\t) -- (1.7+\s,0.7-\t);
\draw[black, thick] (4.3+\s,1-\t) -- (4.3+\s,0.7-\t);

\draw[black, thick] (4.7+\s,1-\t) -- (6.3+\s,1-\t);
\draw[black, thick] (4.7+\s,1-\t) -- (4.7+\s,0.7-\t);
\draw[black, thick] (6.3+\s,1-\t) -- (6.3+\s,0.7-\t);

\draw[black, thick] (6.7+\s,1-\t) -- (7.3+\s,1-\t);
\draw[black, thick] (6.7+\s,1-\t) -- (6.7+\s,0.7-\t);
\draw[black, thick] (7.3+\s,1-\t) -- (7.3+\s,0.7-\t);

\draw[black, thick] (7.7+\s,1-\t) -- (9.5+\s,1-\t);
\draw[black, thick] (7.7+\s,1-\t) -- (7.7+\s,0.7-\t);
\draw[black, thick] (9.5+\s,1-\t) -- (9.5+\s,0.7-\t);

\draw[black, thick] (-0.3+\s,-4-\t) -- (2.3+\s,-4-\t);
\draw[black, thick] (-0.3+\s,-4-\t) -- (-0.3+\s,-3.7-\t);
\draw[black, thick] (2.3+\s,-4-\t) -- (2.3+\s,-3.7-\t);

\draw[black, thick] (2.7+\s,-4-\t) -- (4.3+\s,-4-\t);
\draw[black, thick] (2.7+\s,-4-\t) -- (2.7+\s,-3.7-\t);
\draw[black, thick] (4.3+\s,-4-\t) -- (4.3+\s,-3.7-\t);

\draw[black, thick] (4.7+\s,-4-\t) -- (6.3+\s,-4-\t);
\draw[black, thick] (4.7+\s,-4-\t) -- (4.7+\s,-3.7-\t);
\draw[black, thick] (6.3+\s,-4-\t) -- (6.3+\s,-3.7-\t);

\draw[black, thick] (6.7+\s,-4-\t) -- (9.5+\s,-4-\t);
\draw[black, thick] (6.7+\s,-4-\t) -- (6.7+\s,-3.7-\t);
\draw[black, thick] (9.5+\s,-4-\t) -- (9.5+\s,-3.7-\t);
\end{tikzpicture}

\end{center}
Here, we highlighted the strands whose resolution gives $\tilde y_D$.
\end{example}

Comparing the length of $\tilde w_D$ and $\tilde y_D$ gives the sign we attached to $D$ and $D'$:

\begin{prop}\label{prop:SignSMandPermutations} We have
$
(-1)^{\ell(\tilde w_D)+\ell(\tilde y_D)}=\operatorname{sgn}(D,D').
$
\end{prop}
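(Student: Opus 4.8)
The plan is to compare the two permutations $\tilde w_D$ and $\tilde y_D$ directly, exploiting the explicit formula for $\tilde w_{M(D)}$ from Definition~\ref{definition:DefinitionTildeWA} together with its length formula from Proposition~\ref{prop:PropertiesOfTildeWA}\ref{item:tildeWALength}. Recall that $\tilde y_D = \tilde w_{D'}\circ(f_1,f_2)$, and since $\tilde w_{D'}$ is a shortest left $S_{\textbf r}$-coset representative (Proposition~\ref{prop:PropertiesOfTildeWA}), the transposition $(f_1,f_2)$ swaps two points lying in adjacent $\textbf r$-blocks (the $i_1$-th and $i_2$-th), so $\tilde y_D$ differs from $\tilde w_{D'}$ by a single inversion-counting adjustment \emph{relative to} the $S_{\textbf r}$-block structure. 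Concretely, I would first show $l(\tilde y_D) - l(\tilde w_{D'})$ equals the parity contribution coming from strands of $\tilde w_{D'}$ that start between the $i_1$-th and $i_2$-th row blocks and end, column-wise, strictly between the $j_1$-th and $j_2$-th column blocks — this is exactly the set of crossings that the resolution picture in Example~\ref{example:ConstructionYDTilde} makes visible.

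The key computational step is to express both $l(\tilde w_D)$ and $l(\tilde y_D)$ as counts of $\mathrm{Inv}(\cdot)$-type pairs and cancel the common terms. By Proposition~\ref{prop:PropertiesOfTildeWA}\ref{item:tildeWALength}, $l(\tilde w_D) = |\mathrm{Inv}(M(D))|$, where an inversion of the binary contingency table is a pair of $1$-entries $((i,j_1'),(i',j_2'))$ with $i<i'$ and $j_2'<j_1'$. Since $M(D)$ and $M(D')$ differ only in the $2\times 2$ block at rows $i_1,i_2$ and columns $j_1,j_2$ (Lemma~\ref{lemma:SImpleMovesBCT}), the symmetric difference $\mathrm{Inv}(M(D))\triangle\mathrm{Inv}(M(D'))$ consists precisely of pairs in which one member is one of the four toggled entries. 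Carefully enumerating these — distinguishing whether the partner entry lies in rows strictly between $i_1,i_2$, in columns strictly between $j_1,j_2$, or outside — one finds that $l(\tilde w_D)$ and $l(\tilde w_{D'})$ differ by $1 + 2n_1' + 2n_2'$ for suitable counts $n_1',n_2'$ coming from the $V_{i_1}$- and $V_{i_2}$-ties lying between $U_{j_1}$ and $U_{j_2}$; hence their lengths have opposite parity, and the ``$+1$'' is absorbed by the extra transposition $(f_1,f_2)$ so that $(-1)^{l(\tilde w_D)+l(\tilde y_D)}$ is governed entirely by $n_1'+n_2'$. Finally I would identify $n_1'$ with $n_1 = |\{(V_{i_1},U_j)\in D \mid j_1 < j < j_2\}|$ and $n_2'$ with $n_2$ from Definition~\ref{definition:SignsOfSimpleMoves}; this identification is a direct translation between the tie-diagram description of $D$ and its binary contingency table $M(D)$, using the column-interval bookkeeping $C_{j-1}+1,\dots,C_j$.

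Alternatively — and this may be the cleaner route to write up — I would argue entirely diagrammatically. Fix a reduced diagram $d_{\tilde w_{D'}}$ and recall that $\tilde y_D$ is obtained by resolving the unique crossing of the two strands $\lambda_1$ (from the $i_1$-th row block to the $j_2$-th column block) and $\lambda_2$ (from the $i_2$-th row block to the $j_1$-th column block). Resolving a single crossing changes the length by exactly $\pm 1$ in general, but here one must check that the resulting diagram is still \emph{reduced}; this follows because $\tilde w_{D'}$ is a shortest double-coset representative and $\lambda_1,\lambda_2$ cross only once, so no cascade of Reidemeister-II simplifications is possible. Thus $l(\tilde y_D) = l(\tilde w_{D'}) \pm 1$, and the parity of $l(\tilde w_D) + l(\tilde y_D)$ reduces to the parity of $l(\tilde w_D) - l(\tilde w_{D'})$ shifted by one. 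I expect the main obstacle to be precisely this bookkeeping: showing that the four toggled entries of the contingency table contribute, after all cancellations, exactly the parity $n_1+n_2+1$ and nothing else. This is where one must be careful with the roles of ``strictly between'' versus ``weakly between'' for both row and column indices, and with the distinction between the right moving tie $(V_{i_1},U_{j_1})$ and left moving tie $(V_{i_2},U_{j_2})$; a small sign error here propagates to the final Chevalley--Monk formula. Once the parity count is pinned down, the proposition follows immediately.
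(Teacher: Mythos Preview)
Your approach would work, but it takes an unnecessary detour through $\tilde w_{D'}$. The paper's argument is much more direct: since $\tilde y_D \in \tilde w_D S_{\textbf r}$ (stated just before the proposition) and $\tilde w_D$ is the shortest representative of this left coset, one has immediately
\[
l(\tilde y_D) = l(\tilde w_D) + \bigl|\{(i,j)\mid R_{l-1}+1\le i<j\le R_l \text{ for some }l,\ \tilde y_D(i)>\tilde y_D(j)\}\bigr|.
\]
The set on the right is then identified explicitly as $\{(f_1,f_1+1),\dots,(f_1,f_1+n_1)\}\cup\{(f_2-n_2,f_2),\dots,(f_2-1,f_2)\}$, because outside of positions $f_1,f_2$ the permutation $\tilde y_D$ agrees with $\tilde w_{D'}$, which has no within-block inversions. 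Hence $l(\tilde y_D)=l(\tilde w_D)+n_1+n_2$ and the proposition follows.

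Your route instead compares $l(\tilde w_D)=|\mathrm{Inv}(M(D))|$ with $l(\tilde w_{D'})=|\mathrm{Inv}(M(D'))|$ and then appends the transposition $(f_1,f_2)$. This is valid in principle, but the symmetric-difference enumeration $\mathrm{Inv}(M(D))\triangle\mathrm{Inv}(M(D'))$ involves not only the entries along rows $i_1,i_2$ that give $n_1,n_2$, but also all $1$-entries in rows strictly between $i_1$ and $i_2$ paired against the four toggled positions; you must verify that those extra contributions cancel in pairs. They do, but it is a longer check than the paper's single count. Two small corrections: (i) $f_1$ and $f_2$ lie in the $i_1$-th and $i_2$-th $\textbf r$-blocks, which need not be \emph{adjacent}; (ii) in your alternative diagrammatic route, resolving the crossing of $\lambda_1,\lambda_2$ in a reduced diagram of $\tilde w_{D'}$ does not in general produce a reduced diagram of $\tilde y_D$ --- the transposition only gives you the parity relation $(-1)^{l(\tilde y_D)}=-(-1)^{l(\tilde w_{D'})}$, not $l(\tilde y_D)=l(\tilde w_{D'})-1$, so you are forced back onto the first route anyway.
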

\begin{proof} By construction, $\tilde w_{D'}(R_{i-1}+l)\in \{ C_{F_{M(D'),i}(l)-1}+1,\ldots C_{F_{M(D'),i}(l)} \}$ for all $i,l$, where we used the notation from Subsection~\ref{subsection:ConstructionShortestRepresentative}. This directly implies that the set
\[
\{(i,j)\mid 
\textup{there exists $l$ with $R_{l-1}+1\le i<j\le R_l$ and $\tilde y_D(i)>\tilde y_D(j)$}\}
\]
equals
\[
\{(f_1,f_1+1),\ldots ,(f_1,f_1+n_1)\} \cup \{(f_2-1,f_2),\ldots ,(f_2-n_2,f_2)\}.
\]
Here, $n_1$ and $n_2$ are defined as is Definition~\ref{definition:SignsOfSimpleMoves}.
Since $\tilde w_D$ is the shortest representative of $\tilde y_DS_{\textbf r}$, we conclude
$\ell(\tilde y_D)=\ell(\tilde w_D)+n_1+n_2$ which proves the proposition.
\end{proof}

Theorem~\ref{thm:CMAntidominantChamber} implies now a Chevalley--Monk formula with simplified signs:

\begin{cor}\label{cor:CMAntidominantWithoutSigns}
Let $D\in\mathrm{Tie}(\mathcal D)$. Then, the following identity holds in $H_{\mathbb T}^\ast(\mathcal C(\mathcal D))_{\mathrm{loc}}:$
\[
c_1(\xi_i)\cup \mathrm{Stab}'_{\mathfrak C_-}(D)=\iota_{D}^\ast(c_1(\xi_i))\cdot \mathrm{Stab}'_{\mathfrak C_-}(D) -h\cdot \Big( \sum_{D'\in \mathrm{SM}_{D,i}} \mathrm{Stab}'_{\mathfrak C_-}({D'})\Big),
\]
for $i=1,\ldots,M$, where
$
\mathrm{Stab'}_{\mathfrak C_-}(T)=(-1)^{\ell(\tilde w_T)}\mathrm{Stab}_{\mathfrak C_-}(T)
$
for $T\in\mathrm{Tie}(\mathcal D)$.
\end{cor}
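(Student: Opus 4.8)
The plan is to obtain Corollary~\ref{cor:CMAntidominantWithoutSigns} directly from Theorem~\ref{thm:CMAntidominantChamber} by a purely formal renormalization of the stable basis, the only substantive input being the sign identity of Proposition~\ref{prop:SignSMandPermutations} together with an elementary parity remark. First I would fix $D\in\mathrm{Tie}(\mathcal D)$ and $i\in\{1,\ldots,M\}$ and multiply the identity of Theorem~\ref{thm:CMAntidominantChamber} by the scalar $(-1)^{l(\tilde w_D)}\in\{\pm1\}$. By the definition $\mathrm{Stab}'_{\mathfrak C_-}(T)=(-1)^{l(\tilde w_T)}\mathrm{Stab}_{\mathfrak C_-}(T)$, the left-hand side turns into $c_1(\xi_i)\cup\mathrm{Stab}'_{\mathfrak C_-}(D)$ and the diagonal term into $\iota_D^\ast(c_1(\xi_i))\cdot\mathrm{Stab}'_{\mathfrak C_-}(D)$, so the whole statement reduces to checking, for each $D'\in\mathrm{SM}_{D,i}$, the scalar identity $\operatorname{sgn}(D,D')\,(-1)^{l(\tilde w_D)}=-(-1)^{l(\tilde w_{D'})}$; granting this, the summand $\operatorname{sgn}(D,D')\,h\cdot\mathrm{Stab}_{\mathfrak C_-}(D')$ becomes $-h\cdot\mathrm{Stab}'_{\mathfrak C_-}(D')$, which is exactly the claimed formula.

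To prove that scalar identity I would use that $\mathrm{SM}_{D,i}\subseteq\mathrm{SM}_D$ (by condition~\ref{item:SMi} in \eqref{eq:SMiDefinition}), so Proposition~\ref{prop:SignSMandPermutations} applies and gives $\operatorname{sgn}(D,D')=(-1)^{l(\tilde w_D)+l(\tilde y_D)}$, where $\tilde y_D=\tilde w_{D'}\circ(f_1,f_2)$ is the permutation from \eqref{eq:DefinitionTildeYD}. Cancelling the common factor $(-1)^{l(\tilde w_D)}$, the remaining claim is $(-1)^{l(\tilde y_D)}=-(-1)^{l(\tilde w_{D'})}$. This is immediate: $w\mapsto(-1)^{l(w)}$ is the sign character of $S_n$, hence multiplicative, and the transposition $(f_1,f_2)$ has sign $-1$, so $(-1)^{l(\tilde y_D)}=(-1)^{l(\tilde w_{D'})}\cdot(-1)^{l((f_1,f_2))}=-(-1)^{l(\tilde w_{D'})}$. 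Substituting back into the renormalized identity, and noting that each $D'\in\mathrm{SM}_{D,i}$ appears exactly once in the sum, produces the formula in the statement; as in Theorem~\ref{thm:CMAntidominantChamber} it holds in $H_{\mathbb T}^\ast(\mathcal C(\mathcal D))_{\mathrm{loc}}$.

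I do not expect any serious obstacle here: the entire argument is bookkeeping of signs, with the one conceptual point being that passing from the target diagram $D'$ to the auxiliary permutation $\tilde y_D$ flips the parity of the length, which is precisely what the transposition in \eqref{eq:DefinitionTildeYD} encodes and what Proposition~\ref{prop:SignSMandPermutations} records. The only thing to be careful about is applying the renormalization consistently on the two sides — using $\tilde w_D$ for the source diagram and $\tilde w_{D'}$ for each target diagram — and making sure the cancellation of $(-1)^{l(\tilde w_D)}$ is legitimate, which it is since it is a unit.
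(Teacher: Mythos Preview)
Your proposal is correct and follows essentially the same approach as the paper: both arguments reduce the corollary to the sign identity $\operatorname{sgn}(D,D')=(-1)^{l(\tilde w_D)+l(\tilde w_{D'})+1}$, obtained by combining Proposition~\ref{prop:SignSMandPermutations} with the fact that $\tilde y_D$ differs from $\tilde w_{D'}$ by a single transposition. The paper's proof is more terse, but the substance is identical.
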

\begin{proof} By construction, $\tilde y_D$ is obtained from $\tilde w_{D'}$ by pre\-com\-po\-si\-tion with a transposition. Hence, we have  $(-1)^{\ell(\tilde y_D)}=(-1)^{\ell(\tilde w_{D'})+1}$. Thus, Pro\-po\-si\-tion~\ref{prop:SignSMandPermutations} yields
\[
\operatorname{sgn}(D,D')
=
(-1)^{\ell(\tilde w_D)+\ell(\tilde y_D)}
= 
(-1)^{\ell(\tilde w_D)+\ell(\tilde w_{D'})+1}
\]
which proves the corollary.
\end{proof}

\subsection{Divisibility and Approximation} \label{subsection:DivAndApprox}
We now consider divisibility and approximation results for localization coefficients of stable basis elements. These results are essential ingredients of the proof of Theorem~\ref{thm:CMAntidominantChamber}. We first formulate the results and deduce some consequences. The proofs are then given in Subsections~\ref{subsection:ProofPropDivisibility} and~\ref{subsection:ProofPropClosetsNeighbors}.

\begin{prop}[$h^2$-Divisibility]\label{prop:DivisibilityResult}
The localization coefficient $\iota_{D'}^\ast (\mathrm{Stab}_{\mathfrak C_-}(D))$ is divisible by $h^2$, for $D\in\mathrm{Tie}(\mathcal D)$ and $D'\notin \mathrm{SM}_{D}\cup\{D\}$.
\end{prop}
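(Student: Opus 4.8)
The plan is to reduce the divisibility statement about equivariant multiplicities of stable basis elements on $\mathcal C(\mathcal D)$ to a divisibility statement about equivariant multiplicities of stable basis elements on a cotangent bundle of a partial flag variety, using the equivariant resolution theorem. Concretely, since $\mathcal D$ is separated (and we may assume essential by Proposition~\ref{prop:EqMultiplicitiesEssentialBraneDiagram}, whose normalization factor is a product of linear forms of the shape $t_j-t_k+mh$ and hence coprime to $h$), Corollary~\ref{cor:ResolvedStableEnvelopesFormula} gives
\[
\Big(\prod_{i=1}^N\prod_{j=1}^{c_i-1} (jh)^{c_i-j}\Big)\,\iota_{D'}^\ast (\widetilde{\mathrm{Stab}}_{\mathfrak C_-}(D))
= \Psi_{\mathcal D}\big( \iota_{w_{D'}S_{\textbf r}}^\ast (\mathrm{Stab}_{\mathfrak C_-}(w S_{\textbf r}))\big),
\]
where $w\in S_{\textbf c}\tilde w_{D}S_{\textbf r}$. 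First I would note that $\widetilde{\mathrm{Stab}}_{\mathfrak C_-}$ and $\mathrm{Stab}_{\mathfrak C_-}$ differ by the Euler class of $N_{\mathcal D,\mathfrak C_-}^-$, which by the explicit formula in the Proposition preceding \eqref{equation:DefinitionStabTilde} is a product of linear forms $t_j-t_i+(k-l+1)h$; again these are coprime to $h$, so it suffices to prove $h^2\mid \iota_{D'}^\ast(\widetilde{\mathrm{Stab}}_{\mathfrak C_-}(D))$. Since $\Psi_{\mathcal D}$ is a $\mathbb Q[h]$-algebra homomorphism sending $t_{C_{i-1}+k}\mapsto t_i-(k-1)h$, it sends a polynomial divisible by $h^r$ to one divisible by $h^r$; and the left-hand normalization factor is a unit times a power of $h$. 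So the whole problem becomes: show that the power of $h$ dividing $\iota_{w_{D'}S_{\textbf r}}^\ast (\mathrm{Stab}_{\mathfrak C_-}(w S_{\textbf r}))$ exceeds by at least $2$ the power of $h$ coming from the normalization factor $\prod_{i,j}(jh)^{c_i-j}$, unless $D'\in\mathrm{SM}_D\cup\{D\}$.

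To make that precise I would use the Approximation Proposition~\ref{prop:ApproximationStableEnvelopeModuloPowersOfH} with $m=2$: modulo $h^2$, the equivariant multiplicity $\iota_{D'}^\ast(\widetilde{\mathrm{Stab}}_{\mathfrak C_-}(D))$ is a sum over $z\in w_{D'}S_{\textbf r}$ of terms whose $h$-order is controlled by $|K'\setminus K_U(d_z)|$, the number of resolved crossings lying outside the $w_{0,c_i}$-blocks of a reduced diagram $d_z$ of the shape \eqref{eq:ShapeOfDiagrams}. A subset $K'\in K_{d_z,w}$ contributes a nonzero term modulo $h^2$ only if $|K'\setminus K_U(d_z)|\le 1$, i.e.\ one obtains a diagram for $w\in S_{\textbf c}\tilde w_{D}S_{\textbf r}$ from $d_z$ by resolving crossings all but at most one of which lie inside the permutation blocks $w_{0,c_i}$. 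The key combinatorial step is then to analyze, for $z=u_0\circ\tilde w_{D'}\circ(v_1\times\cdots\times v_M)$, when such a resolution into a diagram for some $w$ with $Z(w)=M(D)$ exists: resolving crossings inside the $w_{0,c_i}$-blocks only permutes endpoints within each interval $\{C_{i-1}+1,\ldots,C_i\}$ and hence does not change the associated binary contingency table, while resolving a single crossing outside these blocks changes $Z$ precisely by a single "swap" of two $1$-entries in the matrix — this is exactly a simple move in the sense of Definition~\ref{definition:SimpleMoves} and Lemma~\ref{lemma:SImpleMovesBCT}. Resolving zero crossings outside the blocks forces $M(D)=M(D')$, i.e.\ $D=D'$; resolving exactly one forces $D'\in\mathrm{SM}_D$. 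So if $D'\notin\mathrm{SM}_D\cup\{D\}$, every term in the approximation vanishes modulo $h^2$, which is the claim.

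The main obstacle I anticipate is the careful bookkeeping of the $h$-powers: one must check that the normalization factor $f_{K'}\in\mathbb Q$ (using Proposition~\ref{prop:UPartVSNormalizationFactor}) genuinely absorbs the entire $\prod_{i,j}(jh)^{c_i-j}$ coming from the $K_U$-crossings, so that the residual $h$-order of a term is exactly $|K'\setminus K_U(d_z)|$ plus the $h$-order of $\prod_{\kappa\notin K'\cup K_U(d_z)}\Psi_{\mathcal D}(\mathrm{wt}(\kappa))$, and that this last product contributes no further $h$ unless one of the weights $\mathrm{wt}(\kappa)=t_a-t_b$ is sent by $\Psi_{\mathcal D}$ to a multiple of $h$ — which happens exactly when $a,b$ lie in the same $\Psi_{\mathcal D}$-block $\{C_{i-1}+1,\ldots,C_i\}$, i.e.\ for crossings inside a $w_{0,c_i}$-block, already accounted for. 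The second delicate point is the claim that resolving one off-block crossing from a shape-\eqref{eq:ShapeOfDiagrams} diagram changes the binary contingency table by exactly a simple move and nothing more exotic; here I would argue diagrammatically, tracking how the two strands through that crossing reconnect, and invoke the fully separatedness/trivial-intersection property (Proposition~\ref{prop:UniquenessOfPrePostComposition} and Lemma~\ref{lemma:MatchingFunctionsProperties}) to rule out the resolved diagram representing a permutation whose matrix $Z$ is not a binary contingency table — i.e.\ to ensure the "swap" lands back in $\mathrm{bct}(\mathcal D)$. Once these two points are nailed down, the proof is a short assembly of Corollary~\ref{cor:ResolvedStableEnvelopesFormula}, Proposition~\ref{prop:ApproximationStableEnvelopeModuloPowersOfH}, and the simple-move dictionary.
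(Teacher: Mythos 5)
Your proposal is correct and follows essentially the same route as the paper: reduce to essential $\mathcal D$, pass to $\widetilde{\mathrm{Stab}}_{\mathfrak C_-}$ (the normalization factors being coprime to $h$), apply Corollary~\ref{cor:ResolvedStableEnvelopesFormula} together with Proposition~\ref{prop:ApproximationStableEnvelopeModuloPowersOfH} for $m=2$, and show that a nonzero contribution forces $D'\in\mathrm{SM}_D\cup\{D\}$, which is exactly the paper's Lemma~\ref{lemma:CloseNeighborDiffersBySimpleMove}. The only slight imprecision is your claim that resolving the single off-block crossing always produces a swap of two $1$-entries: that crossing may instead lie in one of the $v_i$-blocks, in which case the double coset (hence the binary contingency table) is unchanged and one gets $D=D'$ rather than a simple move --- this is the paper's case distinction, and it does not affect your conclusion since the target is the union $\mathrm{SM}_D\cup\{D\}$.
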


We prove Proposition~\ref{prop:DivisibilityResult} in Subsection~\ref{subsection:ProofPropDivisibility}. By applying Theorem~\ref{thm:ComparisonOfChambers}, we deduce the analogous result for the chamber $\mathfrak C_+$:

\begin{cor}\label{cor:DivisibilityResultOppositeChamber}
We have that $\iota_{D}^\ast (\mathrm{Stab}_{\mathfrak C_+}(D'))$ is divisible by $h^2$ for $D\in\mathrm{Tie}(\mathcal D)$ and $D'\notin \mathrm{SM}_{D}\cup\{D\}$.
\end{cor}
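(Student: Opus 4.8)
The plan is to deduce Corollary~\ref{cor:DivisibilityResultOppositeChamber} from Proposition~\ref{prop:DivisibilityResult} by transporting the $h^2$-divisibility statement for $\mathfrak C_-$ to the opposite chamber $\mathfrak C_+ = \mathfrak C_-^{\mathrm{op}}$ via the symmetric group action of Theorem~\ref{thm:ComparisonOfChambers}. Recall that $\mathfrak C_+ = w_0.\mathfrak C_-$, where $w_0 \in S_N$ is the longest element; thus applying Theorem~\ref{thm:ComparisonOfChambers} with this $w_0$ relates $\iota_{D}^\ast(\mathrm{Stab}_{\mathfrak C_+}(D'))$ on $\mathcal D$ to an equivariant multiplicity of a stable basis element with respect to $\mathfrak C_-$ on the permuted brane diagram $w_0.\mathcal D$, up to multiplication by Euler classes of the constant normal bundles $N^-_{\mathcal D,\mathfrak C_+}$ and $N^-_{w_0.\mathcal D,\mathfrak C_-}$, and up to the ring automorphism $w_0$ of $\mathbb Q[t_1,\ldots,t_N,h]$ fixing $h$.

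First I would observe that the $S_N$-action on brane and tie diagrams carries simple moves to simple moves: if $D' \in \mathrm{SM}_D$ then $w_0.D' \in \mathrm{SM}_{w_0.D}$, and conversely $D' \notin \mathrm{SM}_D \cup \{D\}$ if and only if $w_0.D' \notin \mathrm{SM}_{w_0.D} \cup \{w_0.D\}$ — this is immediate from Lemma~\ref{lemma:SImpleMovesBCT} together with Proposition~\ref{prop:HWFixedPointMatching}-style compatibility (the $S_N$-action permutes blue lines with their ties, hence permutes columns of the binary contingency table, preserving the defining conditions (i)–(iii) of a simple move). Second, since $N_{w_0.\mathcal D,\mathfrak C_-}^-$ is a \emph{constant} $\mathbb T$-equivariant bundle whose weights lie in $S$, its equivariant Euler class is a product of linear forms $t_i - t_j + mh$ with $m \ge 1$ (by the explicit description preceding Theorem~\ref{thm:ComparisonOfChambers}, specifically the antidominant formula for $N^\pm_{\mathcal D,\mathfrak C_-}$), and in particular $e_{\mathbb T}(N^-_{w_0.\mathcal D,\mathfrak C_-})$ is \emph{not} divisible by $h$; the same holds for $e_{\mathbb T}(N^-_{\mathcal D,\mathfrak C_+})$ since $\mathfrak C_+$ is again a chamber and the analysis is symmetric. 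Therefore multiplying or dividing by these Euler classes does not change $h$-divisibility.

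Assembling these: by Proposition~\ref{prop:DivisibilityResult} applied on the brane diagram $w_0.\mathcal D$ to the pair $w_0.D, w_0.D'$ (which is admissible since $D' \notin \mathrm{SM}_D\cup\{D\}$ gives $w_0.D' \notin \mathrm{SM}_{w_0.D}\cup\{w_0.D\}$), the quantity $\iota_{w_0.D}^\ast(\mathrm{Stab}_{\mathfrak C_-}(w_0.D'))$ is divisible by $h^2$. Hence so is $e_{\mathbb T}(N^-_{w_0.\mathcal D,\mathfrak C_-})\,\iota_{w_0.D}^\ast(\mathrm{Stab}_{\mathfrak C_-}(w_0.D'))$, and applying the $h$-preserving automorphism $w_0^{-1}$ keeps it divisible by $h^2$. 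By Theorem~\ref{thm:ComparisonOfChambers} this equals $e_{\mathbb T}(N^-_{\mathcal D,\mathfrak C_+})\,\iota_{D}^\ast(\mathrm{Stab}_{\mathfrak C_+}(D'))$. Since $e_{\mathbb T}(N^-_{\mathcal D,\mathfrak C_+})$ is coprime to $h$ in $\mathbb Q[t_1,\ldots,t_N,h]$ — a UFD — we conclude $h^2 \mid \iota_{D}^\ast(\mathrm{Stab}_{\mathfrak C_+}(D'))$, which is the claim. The main obstacle, such as it is, is the bookkeeping in the first step: verifying cleanly that the $S_N$-action respects the notion of simple move and the auxiliary sets $\mathrm{SM}_D$ (independent of any fixed $X_i$), so that Proposition~\ref{prop:DivisibilityResult} is genuinely applicable on $w_0.\mathcal D$; once that combinatorial compatibility is in hand, the rest is a formal divisibility argument in a polynomial ring.
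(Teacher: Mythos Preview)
Your overall strategy is exactly the paper's: apply Theorem~\ref{thm:ComparisonOfChambers} with the longest element $w_0=w_{0,N}$ to transport Proposition~\ref{prop:DivisibilityResult} from $\mathfrak C_-$ to $\mathfrak C_+$, using that the Euler classes $e_{\mathbb T}(N^-)$ are coprime to $h$. The normal-bundle bookkeeping is fine.

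However, the combinatorial step contains a genuine error. You assert that the $S_N$-action preserves the simple-move relation, i.e.\ $D'\in\mathrm{SM}_D\iff w_0.D'\in\mathrm{SM}_{w_0.D}$, because ``permuting columns preserves (i)--(iii)''. This fails for $w_0$: since $j_1<j_2$ forces $w_0(j_1)>w_0(j_2)$, the diagonal pattern $M(D)_{i_1,j_1}=M(D)_{i_2,j_2}=1$ in $D$ becomes the \emph{anti-diagonal} pattern in $w_0.D$. The relation $\mathrm{SM}$ is not symmetric, and the correct statement (which the paper uses) is
\[
D'\in\mathrm{SM}_D \iff w_0.D\in\mathrm{SM}_{w_0.D'},
\]
with the roles of $D$ and $D'$ swapped. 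This swap is precisely what is needed: to apply Proposition~\ref{prop:DivisibilityResult} to $\iota_{w_0.D}^\ast(\mathrm{Stab}_{\mathfrak C_-}(w_0.D'))$ one must verify that the \emph{restriction point} $w_0.D$ is not in $\mathrm{SM}_{w_0.D'}\cup\{w_0.D'\}$, not the condition $w_0.D'\notin\mathrm{SM}_{w_0.D}$ that you supply. With this correction your argument goes through and is identical to the paper's.
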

\begin{proof} As before, let $w_{0,N}\in S_N$ be the longest element.
By Theorem~\ref{thm:ComparisonOfChambers}, we have
\begin{equation}\label{eq:DivisibilityOppositeChamber}
\iota_{D}^\ast (\widetilde{\mathrm{Stab}}_{\mathfrak C_+}({D'}))
=
w_{0,N}.(\iota_{w_{0,N}.D}^\ast (\widetilde{\mathrm{Stab}}_{\mathfrak C_-}(w_{0,N}.D'))).
\end{equation}
Since $D'\notin \mathrm{SM}_{D}$ if and only if $w_{0,N}.D\notin \mathrm{SM}_{w_{0,N}.D'}$, Proposition~\ref{prop:DivisibilityResult} implies that the right hand side of~\eqref{eq:DivisibilityOppositeChamber} is divisible by $h^2$. Thus, $\iota_{D}^\ast (\widetilde{\mathrm{Stab}}_{\mathfrak C_+}(D'))$ is divisible by $h^2$ and hence also $\iota_{D}^\ast (\mathrm{Stab}_{\mathfrak C_+}({D'}))$.
\end{proof}

Combining Proposition~\ref{prop:DivisibilityResult} and Corollary~\ref{cor:DivisibilityResultOppositeChamber} gives the following divisibility :
\begin{cor}[$h^2$-Divisibility of products]\label{cor:RestrictionCongruence} Let $D$, $D'$, $T\in\mathrm{Tie}(\mathcal D)$ such that $T\notin \{D,{D'}\}$ or $D'\notin \mathrm{SM}_D\cup\{D\}$. Then, we have
\begin{equation}\label{eq:CorollaryRestrictionCongruence}
\iota_T^\ast(\mathrm{Stab}_{\mathfrak C_-}(D)\cup \mathrm{Stab}_{\mathfrak C_+}(D')) \equiv 0\quad\mathrm{mod}\;h^2.
\end{equation}
\end{cor}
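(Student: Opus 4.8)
The statement is an immediate consequence of the two preceding divisibility results, so the plan is simply to feed them into the definition of the virtual intersection pairing restricted to a single fixed point, being careful about which hypothesis applies in which case. Concretely, I would write $\iota_T^\ast(\mathrm{Stab}_{\mathfrak C_-}(D)\cup\mathrm{Stab}_{\mathfrak C_+}(D')) = \iota_T^\ast(\mathrm{Stab}_{\mathfrak C_-}(D))\cdot\iota_T^\ast(\mathrm{Stab}_{\mathfrak C_+}(D'))$, since restriction to a point is a ring homomorphism, and then argue that at least one of the two factors is already divisible by $h^2$.

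The case analysis splits along the disjunction in the hypothesis. First suppose $D'\notin\mathrm{SM}_D\cup\{D\}$. Then by symmetry of the simple-move relation (a simple move is an involution on the pair of tie diagrams it relates), we also have $D\notin\mathrm{SM}_{D'}\cup\{D'\}$, so Corollary~\ref{cor:DivisibilityResultOppositeChamber} applied with the roles of $D,D'$ swapped gives that $\iota_T^\ast(\mathrm{Stab}_{\mathfrak C_+}(D'))$ — or rather, more directly, one applies Corollary~\ref{cor:DivisibilityResultOppositeChamber} as stated: it asserts $h^2\mid\iota_{D}^\ast(\mathrm{Stab}_{\mathfrak C_+}(D'))$ whenever $D'\notin\mathrm{SM}_D\cup\{D\}$, but here we need the restriction at an \emph{arbitrary} $T$, so the cleaner route is to use Proposition~\ref{prop:DivisibilityResult}: if $T\ne D$ then either $T\notin\mathrm{SM}_D\cup\{D\}$, giving $h^2\mid\iota_T^\ast(\mathrm{Stab}_{\mathfrak C_-}(D))$ directly, or $T\in\mathrm{SM}_D$. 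Second, suppose instead $T\notin\{D,D'\}$. Then $T\ne D$, and either $T\notin\mathrm{SM}_D\cup\{D\}$ so Proposition~\ref{prop:DivisibilityResult} gives $h^2\mid\iota_T^\ast(\mathrm{Stab}_{\mathfrak C_-}(D))$, or $T\in\mathrm{SM}_D$; symmetrically $T\ne D'$, so either $T\notin\mathrm{SM}_{D'}\cup\{D'\}$ giving $h^2\mid\iota_T^\ast(\mathrm{Stab}_{\mathfrak C_+}(D'))$ via Corollary~\ref{cor:DivisibilityResultOppositeChamber}, or $T\in\mathrm{SM}_{D'}$.

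The one subtlety — and the place the argument needs a genuine idea rather than pure bookkeeping — is the situation where $T\in\mathrm{SM}_D$ and $T\in\mathrm{SM}_{D'}$ simultaneously (when $T\notin\{D,D'\}$), or where $T\in\mathrm{SM}_D$ (when $D'\notin\mathrm{SM}_D\cup\{D\}$). In both of these residual cases neither factor is individually forced to be $h^2$-divisible by the quoted corollaries, but each factor \emph{is} divisible by $h$: the smallness condition \ref{item:Smallness} for $\mathrm{Stab}_{\mathfrak C_-}(D)$ combined with the fact that $T\prec D$ or $T\succ D$ in the order $\preceq_{\mathfrak C_-}$ (a simple move strictly changes the fixed point, and one checks using the support condition \ref{item:Support} that $T$ and $D$ are comparable when $T\in\mathrm{SM}_D$) yields $h\mid\iota_T^\ast(\mathrm{Stab}_{\mathfrak C_-}(D))$; similarly $h\mid\iota_T^\ast(\mathrm{Stab}_{\mathfrak C_+}(D'))$. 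So the product of the two factors is divisible by $h\cdot h=h^2$. I expect verifying this comparability claim — that a tie diagram obtained from $D$ by a simple move is $\preceq_{\mathfrak C_-}$-comparable to $D$ — to be the main obstacle; it should follow from the description of attracting cells together with the explicit combinatorics of simple moves, but it is the only non-formal input. In all cases one concludes $h^2\mid\iota_T^\ast(\mathrm{Stab}_{\mathfrak C_-}(D))\cdot\iota_T^\ast(\mathrm{Stab}_{\mathfrak C_+}(D'))$, which is \eqref{eq:CorollaryRestrictionCongruence}. $\qed$
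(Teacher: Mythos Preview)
Your overall strategy is right, but you are manufacturing a difficulty that is not there. The ``main obstacle'' you flag --- verifying that $T$ and $D$ are $\preceq_{\mathfrak C_-}$-comparable when $T\in\mathrm{SM}_D$ --- is a red herring. The support condition \ref{item:Support} forces $\iota_T^\ast(\mathrm{Stab}_{\mathfrak C_-}(D))=0$ whenever $T\not\preceq_{\mathfrak C_-} D$, and the smallness condition \ref{item:Smallness} forces $h\mid\iota_T^\ast(\mathrm{Stab}_{\mathfrak C_-}(D))$ whenever $T\prec_{\mathfrak C_-} D$. Taken together these say: for \emph{every} $T\ne D$, regardless of comparability or of simple moves, one has $h\mid\iota_T^\ast(\mathrm{Stab}_{\mathfrak C_-}(D))$. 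The same holds for the $\mathfrak C_+$ factor with $D'$ in place of $D$. So your elaborate case split on whether $T\in\mathrm{SM}_D$ is unnecessary.

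With that observation the proof collapses to three lines, which is exactly what the paper does. If $T\notin\{D,D'\}$, both factors are $h$-divisible by the above, so the product is $h^2$-divisible. Otherwise $T\in\{D,D'\}$, and the hypothesis forces $D'\notin\mathrm{SM}_D\cup\{D\}$; if $T=D$ then Corollary~\ref{cor:DivisibilityResultOppositeChamber} gives $h^2\mid\iota_D^\ast(\mathrm{Stab}_{\mathfrak C_+}(D'))$, and if $T=D'$ then Proposition~\ref{prop:DivisibilityResult} gives $h^2\mid\iota_{D'}^\ast(\mathrm{Stab}_{\mathfrak C_-}(D))$.

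As an aside, your claim that the simple-move relation is symmetric is false for the definition in this paper: a simple move takes a $2\times 2$ submatrix with $1$'s on the main diagonal to one with $1$'s on the anti-diagonal (Lemma~\ref{lemma:SImpleMovesBCT}), and only that same submatrix distinguishes $D$ from $D'$, so there is no simple move from $D'$ back to $D$. You abandoned that line anyway, but it would not have worked.
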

\begin{proof} If $T\notin \{D,D'\}$ then the smallness condition implies that both $\iota_T^\ast(\mathrm{Stab}_{\mathfrak C_-}(D))$ and $\iota_T^\ast(\mathrm{Stab}_{\mathfrak C_+}(D'))$ are divisible by $h$ which gives that \eqref{eq:CorollaryRestrictionCongruence} is divisible by $h^2$. If $T=D$ and $D'\notin \mathrm{SM}_D\cup \{D\}$ then, by Corollary~\ref{cor:DivisibilityResultOppositeChamber}, $\iota_T^\ast(\mathrm{Stab}_{\mathfrak C_+}(D'))$ is divisible by $h^2$ and so is \eqref{eq:CorollaryRestrictionCongruence}. Likewise, if $T=D'$ and $D'\notin \mathrm{SM}_D\cup \{D\}$ then Proposition~\ref{prop:DivisibilityResult} implies that $\iota_T^\ast(\mathrm{Stab}_{\mathfrak C_-}(D))$ is divisible by $h^2$ and hence also  \eqref{eq:CorollaryRestrictionCongruence}.
\end{proof}

We proceed with the following statement about $h^2$-approximations of localization coefficients:

\begin{prop}[$h^2$-Approximation]\label{prop:ClosestNeighbors} Let $D\in\mathrm{Tie}(\mathcal D)$ and $D'\in \mathrm{SM}_D$ with $(V_{i_1},U_{j_1})$ be the right moving tie and $(V_{i_2},U_{j_2})$ be the left moving tie of $D$. Then, we have
\[
\frac{\iota_{D'}^\ast (\mathrm{Stab}_{\mathfrak C_-}(D))}{e_{\mathbb T}(T_{D'}\mathcal C(\mathcal D)_{\mathfrak C_-}^- )} \equiv  \operatorname{sgn}(D,D')\frac{h}{t_{j_1}-t_{j_2}}\quad\mathrm{mod}\;h^2
\]
in $S^{-1}H^\ast_{\mathbb T}(\mathcal C(\mathcal D))$. Here, $S$ is defined as in~\eqref{eq:DefinitionS}.
\end{prop}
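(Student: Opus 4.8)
\emph{Strategy.} The plan is to reduce to an essential separated brane diagram, transport the quotient $\iota_{D'}^\ast(\mathrm{Stab}_{\mathfrak C_-}(D))/e_{\mathbb T}(T_{D'}\mathcal C(\mathcal D)_{\mathfrak C_-}^-)$ to the cotangent bundle $T^\ast F(R_1,\dots,R_{M-1};n)$ via the equivariant resolution theorem, and there extract the order-$h$ part from the diagrammatic localisation formula.

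\emph{Reduction to the essential case.} Since no tie of a tie diagram of $\mathcal D$ touches a chargeless line, forgetting the chargeless lines identifies $\mathrm{SM}_D$ with $\mathrm{SM}_{f(D)}$ and leaves $\operatorname{sgn}(D,D')$ and the weight $t_{j_1}-t_{j_2}$ unchanged. Applying Proposition~\ref{prop:EqMultiplicitiesEssentialBraneDiagram}(iii) to the numerator and, with $D$ replaced by $D'$, to $e_{\mathbb T}(T_{D'}\mathcal C(\mathcal D)_{\mathfrak C_-}^-)=\iota_{D'}^\ast(\mathrm{Stab}_{\mathfrak C_-}(D'))$, the constant factor $e_{\mathbb T}(N_{\iota,\mathfrak C_-}^-)$ cancels in the quotient, so we may assume $\mathcal D$ essential. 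In that case, by the normalisation condition~\ref{item:Normalization} and~\eqref{equation:DefinitionStabTilde}, the left-hand side of the proposition equals $\iota_{D'}^\ast(\widetilde{\mathrm{Stab}}_{\mathfrak C_-}(D))\,/\,\iota_{D'}^\ast(\widetilde{\mathrm{Stab}}_{\mathfrak C_-}(D'))$, the constant $e_{\mathbb T}(N_{\mathcal D,\mathfrak C_-}^-)$ cancelling as well.

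\emph{Transport and leading term.} I would then apply Corollary~\ref{cor:ResolvedStableEnvelopesFormula} to numerator and denominator; the prefactor $\prod_{i=1}^N\prod_{j=1}^{c_i-1}(jh)^{c_i-j}$ and the homomorphism $\Psi_{\mathcal D}$ occur on both sides, reducing the quotient to $\Psi_{\mathcal D}\bigl(\iota_{w_{D'}S_{\textbf r}}^\ast(\mathrm{Stab}_{\mathfrak C_-}(\tilde y_D S_{\textbf r}))\bigr)\,/\,\Psi_{\mathcal D}\bigl(\iota_{w_{D'}S_{\textbf r}}^\ast(\mathrm{Stab}_{\mathfrak C_-}(w_{D'}S_{\textbf r}))\bigr)$, where $w_{D'}=u_0\tilde w_{D'}$, the coset $\tilde w_DS_{\textbf r}$ is represented by $\tilde y_D\in\tilde w_DS_{\textbf r}$ from~\eqref{eq:DefinitionTildeYD}, and $w_{D'}$ is chosen as the representative of its own coset so that, by~\ref{item:Normalization}, the denominator becomes $\Psi_{\mathcal D}\bigl(e_{\mathbb T}(T_{w_{D'}S_{\textbf r}}(T^\ast F(R_1,\dots,R_{M-1};n))_{\mathfrak C_-}^-)\bigr)$. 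Next I would evaluate the numerator modulo $h^2$ with Proposition~\ref{prop:ApproximationStableEnvelopeModuloPowersOfH} (case $m=2$, $w'=\tilde y_D$): for $z\in w_{D'}S_{\textbf r}$ with a reduced diagram $d_z$ of shape~\eqref{eq:ShapeOfDiagrams}, whose middle block represents $\tilde w_{D'}$, the diagrammatic picture of $\tilde y_D$ from Subsection~\ref{subsection:TheSign} shows that every $K'\in K(d_z,\tilde y_D,1)$ resolves precisely one crossing outside $K_U(d_z)$, namely the unique crossing $\kappa$ of the two middle strands terminating in the $U_{j_1}$- and $U_{j_2}$-column groups; there is no contribution with $|K'\setminus K_U(d_z)|=0$, because $\tilde y_D\notin S_{\textbf c}\tilde w_{D'}S_{\textbf r}$ (distinct double cosets correspond to distinct binary contingency tables, Corollary~\ref{cor:DoubleCosetsFSep}). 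As $\mathrm{wt}(\kappa)$ is a difference of an index in the $C_{j_1}$-group and an index in the $C_{j_2}$-group, one gets $\Psi_{\mathcal D}(\mathrm{wt}(\kappa))\equiv t_{j_1}-t_{j_2}$ modulo $h$. Cancelling against the denominator the weights $\Psi_{\mathcal D}(\mathrm{wt}(\kappa'))$ of the unresolved crossings $\kappa'\neq\kappa$, the normalisation factors $f_{K'}$ (which are rational by Proposition~\ref{prop:UPartVSNormalizationFactor}), and the $\prod_{\alpha\in L'_z}\Psi_{\mathcal D}(\alpha+h)$ and $\prod_{\beta\in R_{\textbf r}}\Psi_{\mathcal D}(z.\beta)$ factors, one is left with $(-1)^{l(\tilde y_D)+l(\tilde w_D)}\,h/(t_{j_1}-t_{j_2})$ modulo $h^2$, and $(-1)^{l(\tilde y_D)+l(\tilde w_D)}=\operatorname{sgn}(D,D')$ by Proposition~\ref{prop:SignSMandPermutations}.

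\emph{Main obstacle.} The delicate part is the last cancellation: checking that, summed over the $z\in w_{D'}S_{\textbf r}$ dominating $\tilde y_D$, the data attached to the surviving crossing-sets $K'$ away from $\kappa$ reproduces exactly the self-intersection Euler class $e_{\mathbb T}(T_{w_{D'}S_{\textbf r}}(T^\ast F(R_1,\dots,R_{M-1};n))_{\mathfrak C_-}^-)$ appearing in the denominator, and that the accumulated signs — the factor $(-1)^{l(w')+l(w'S_{\textbf r})}$ together with those hidden in the $f_{K'}$ and in the $K_U$-weights — collapse to $\operatorname{sgn}(D,D')$. This is a finite but intricate bookkeeping; the organising principle is that resolving $\kappa$ is the only way, modulo $h^2$, to pass from the double coset of $D'$ to that of $D$, while all of the remaining diagrammatic data is common to numerator and denominator.
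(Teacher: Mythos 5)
Your overall strategy is the paper's: reduce to the essential case, rewrite the left-hand side as $\iota_{D'}^\ast(\widetilde{\mathrm{Stab}}_{\mathfrak C_-}(D))\,/\,\iota_{D'}^\ast(\widetilde{\mathrm{Stab}}_{\mathfrak C_-}(D'))$, expand via the resolution/approximation machinery at the restriction point $w_{D'}S_{\textbf r}$, and identify the sign via Proposition~\ref{prop:SignSMandPermutations} and the leading term via $\Psi_{\mathcal D}(\mathrm{wt}(\kappa_0))\equiv t_{j_1}-t_{j_2}\ \mathrm{mod}\ h$. The genuine gap is your choice of representative $w'=\tilde y_D$ for the numerator. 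The diagrams $d_z$ of shape~\eqref{eq:ShapeOfDiagrams} carry the block $u_0=w_{0,c_1}\times\cdots\times w_{0,c_N}$ on top, so resolving only the middle crossing $\kappa_0$ yields a diagram for $u_0\tilde y_D$, \emph{not} for $\tilde y_D$. Consequently, for $w'=\tilde y_D$ the unique element of $K(d_{w_{D'}},\tilde y_D,1)$ is $K'=K_U(d_{w_{D'}})\cup\{\kappa_0\}$ (by Proposition~\ref{prop:UniquenessOfPrePostComposition} all $u_0$-crossings must be resolved as well, and $z=w_{D'}$ is forced), and for this $K'$ one computes $f_{K'}=h^{|K_U(d_{w_{D'}})|}\big/\prod_{i=1}^N\prod_{j=1}^{c_i-1}(jh)^{c_i-j}=\prod_{i=1}^N\prod_{j=1}^{c_i-1}j^{-(c_i-j)}$, which is different from $1$ as soon as some $c_i\ge 3$. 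This constant does not cancel against your denominator (which is evaluated at the $u_0$-representative $w_{D'}$, where the corresponding factor is $1$), so the route as written produces $\operatorname{sgn}(D,D')\bigl(\prod_{i,j}j^{-(c_i-j)}\bigr)h/(t_{j_1}-t_{j_2})$ rather than the claimed answer; the ``delicate cancellation'' you defer in your last paragraph is exactly where this breaks. (If instead you meant $K'=\{\kappa\}$ alone, then your identification of $K(d_z,\tilde y_D,1)$ is incorrect for the reason above.) Note also that your evaluation and the paper's would differ by this nonzero constant, so the approximation formula cannot be applied interchangeably with an arbitrary representative of the double coset; throughout the paper it is used only with representatives whose $S_{\textbf c}$-part is $u_0$.

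The repair is precisely the paper's: take $w'=y_D=u_0\tilde y_D$, whose $S_{\textbf c}$-block matches the $u_0$-block of $d_z$. Then (this is Lemma~\ref{lemma:CrossingsAndSimpleMoves}, proved from the full-separatedness uniqueness Proposition~\ref{prop:UniquenessOfPrePostComposition}) $K(d_z,y_D,1)$ is empty for $z\ne w_{D'}$ and equals the single set $\{\kappa_0\}$ for $z=w_{D'}$, with $f_{\{\kappa_0\}}=1$ by Proposition~\ref{prop:UPartVSNormalizationFactor} and the same prefactor sign $(-1)^{l(y_D)+l(w_D)}=\operatorname{sgn}(D,D')$. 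Your remaining ``main obstacle'' then disappears: instead of matching the leftover numerator data against the Euler class on the flag side, expand the denominator $\iota_{D'}^\ast(\widetilde{\mathrm{Stab}}_{\mathfrak C_-}(D'))$ modulo $h$ by the same approximation formula at the same $z=w_{D'}$ (all crossings unresolved); numerator and denominator then share every factor except that $\Psi_{\mathcal D}(\mathrm{wt}(\kappa_0))$ in the denominator is replaced by $h$ in the numerator, which gives the statement directly.
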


As before, Theorem~\ref{thm:ComparisonOfChambers} gives the analogous statement for $\mathfrak C_+$:

\begin{cor}\label{cor:ClosestNeighborsOppositeChamber}
Let $D\in\mathrm{Tie}(\mathcal D)$ and $D'\in \mathrm{SM}_D$ with $(V_{i_1},U_{j_1})$ be the right moving tie and $(V_{i_2},U_{j_2})$ be the left moving tie of $D$. Then, we have
\[
\frac{\iota_{D}^\ast (\mathrm{Stab}_{\mathfrak C_+}(D'))}{e_{\mathbb T}(T_{D}\mathcal C(\mathcal D)_{\mathfrak C_+}^- )} \equiv  \operatorname{sgn}(D,D')\frac{h}{t_{j_2}-t_{j_1}}\quad\mathrm{mod}\;h^2
\]
in $S^{-1}H^\ast_{\mathbb T}(\mathcal C(\mathcal D))$.
\end{cor}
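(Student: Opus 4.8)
The plan is to transport the $h^2$-approximation of Proposition~\ref{prop:ClosestNeighbors} from the antidominant chamber to the dominant chamber via the $S_N$-symmetry of Theorem~\ref{thm:ComparisonOfChambers}, exactly as Corollary~\ref{cor:DivisibilityResultOppositeChamber} was deduced from Proposition~\ref{prop:DivisibilityResult}. Write $w_{0,N}\in S_N$ for the longest element, so that $w_{0,N}.\mathfrak C_+=\mathfrak C_-$ and $w_{0,N}^{-1}=w_{0,N}$. Since $N_{\mathcal D}$ is a constant bundle, the normalization condition~\ref{item:Normalization} gives
\[
\frac{\iota_{D}^\ast (\mathrm{Stab}_{\mathfrak C_+}(D'))}{e_{\mathbb T}(T_{D}\mathcal C(\mathcal D)_{\mathfrak C_+}^- )}
= \frac{\iota_{D}^\ast (\widetilde{\mathrm{Stab}}_{\mathfrak C_+}(D'))}{\iota_{D}^\ast (\widetilde{\mathrm{Stab}}_{\mathfrak C_+}(D))},
\]
and applying Theorem~\ref{thm:ComparisonOfChambers} with $\mathfrak C=\mathfrak C_+$ and $w=w_{0,N}$ (the instance already used in the proof of Corollary~\ref{cor:DivisibilityResultOppositeChamber}), together with~\ref{item:Normalization} on the brane diagram $w_{0,N}.\mathcal D$, rewrites this as
\[
w_{0,N}.\left( \frac{\iota_{w_{0,N}.D}^\ast (\mathrm{Stab}_{\mathfrak C_-}(w_{0,N}.D'))}{e_{\mathbb T}(T_{w_{0,N}.D}\mathcal C(w_{0,N}.\mathcal D)_{\mathfrak C_-}^- )}\right).
\]

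The remaining input is the behaviour of simple moves under the $S_N$-action. Here I would check that, because $w_{0,N}$ reverses the linear order of the blue-line labels and hence interchanges the ``nested'' and ``crossed'' tie patterns of Definition~\ref{definition:SimpleMoves}, the hypothesis $D'\in\mathrm{SM}_D$ with right moving tie $(V_{i_1},U_{j_1})$ and left moving tie $(V_{i_2},U_{j_2})$ (so $i_1<i_2$, $j_1<j_2$) implies $w_{0,N}.D\in\mathrm{SM}_{w_{0,N}.D'}$, where $w_{0,N}.D'$ has right moving tie $(V_{i_1},U_{N+1-j_2})$ and left moving tie $(V_{i_2},U_{N+1-j_1})$. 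Since $D$ and $D'$ coincide away from the four affected slots, the integers $n_1,n_2$ of Definition~\ref{definition:SignsOfSimpleMoves} are unchanged by this relabeling, so $\operatorname{sgn}(w_{0,N}.D',w_{0,N}.D)=\operatorname{sgn}(D,D')$. Feeding the pair $w_{0,N}.D'$, $w_{0,N}.D$ over $w_{0,N}.\mathcal D$ into Proposition~\ref{prop:ClosestNeighbors} then yields
\[
\frac{\iota_{w_{0,N}.D}^\ast (\mathrm{Stab}_{\mathfrak C_-}(w_{0,N}.D'))}{e_{\mathbb T}(T_{w_{0,N}.D}\mathcal C(w_{0,N}.\mathcal D)_{\mathfrak C_-}^- )}
\equiv \operatorname{sgn}(D,D')\,\frac{h}{t_{N+1-j_2}-t_{N+1-j_1}}\quad\mathrm{mod}\;h^2.
\]
As $w_{0,N}.t_i=t_{N+1-i}$, $w_{0,N}.h=h$, and $w_{0,N}$ preserves the multiplicative set $S$ of~\eqref{eq:DefinitionS} and therefore the relation of congruence modulo $h^2$, applying $w_{0,N}$ to this congruence and substituting into the two displays above turns $t_{N+1-j_2}-t_{N+1-j_1}$ into $t_{j_2}-t_{j_1}$ and leaves the sign untouched, which is the asserted formula.

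The only step that is not a purely formal adaptation of Corollary~\ref{cor:DivisibilityResultOppositeChamber} is the combinatorial bookkeeping of the second paragraph: that $w_{0,N}.D$ is again a simple move of $w_{0,N}.D'$ with the indicated moving ties, and that the sign is preserved. This is elementary but deserves care, since passing to $w_{0,N}.\mathcal D$ reverses the blue lines and therefore exchanges which of $D,D'$ is the ``source'' of a simple move; the cleanest way to make it rigorous is to pass through Lemma~\ref{lemma:SImpleMovesBCT} and track the four affected entries of the binary contingency tables $M(D)$ and $M(D')$ under the column permutation induced by $w_{0,N}$, if preferred using Proposition~\ref{prop:PropertiesOfTildeWA} to phrase the sign comparison at the level of shortest double-coset representatives. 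Everything else --- the passage to $\widetilde{\mathrm{Stab}}$, the transport by $w_{0,N}$, and the substitution $t_i\mapsto t_{N+1-i}$ --- is a verbatim repetition of the mechanism in Corollary~\ref{cor:DivisibilityResultOppositeChamber}, now carrying a mod-$h^2$ approximation instead of a divisibility statement.
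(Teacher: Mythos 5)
Your proposal is correct and follows essentially the same route as the paper's proof: normalize to $\widetilde{\mathrm{Stab}}$, transport by $w_{0,N}$ via Theorem~\ref{thm:ComparisonOfChambers}, apply Proposition~\ref{prop:ClosestNeighbors} to the pair $w_{0,N}.D'$, $w_{0,N}.D$ with the relabeled moving ties, and substitute $t_i\mapsto t_{N+1-i}$. The only difference is that you make explicit the combinatorial check (via the binary contingency tables) that $w_{0,N}.D\in\mathrm{SM}_{w_{0,N}.D'}$ with the stated moving ties and that $\operatorname{sgn}(w_{0,N}.D',w_{0,N}.D)=\operatorname{sgn}(D,D')$, which the paper asserts without proof; your verification of it is accurate.
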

\begin{proof} 
By the definition of $\widetilde{\mathrm{Stab}}$, see \eqref{equation:DefinitionStabTilde}, we have
\begin{equation}\label{eq:ProofCorClosestNeighborsOpposite1}
\frac{\iota_{D}^\ast(\mathrm{Stab}_{\mathfrak C_+}(D'))}{e_{\mathbb T}(T_D\mathcal C(\mathcal D)_{\mathfrak C_+}^-)}
=\frac{\iota_{D}^\ast(\widetilde{\mathrm{Stab}}_{\mathfrak C_+}(D'))}
{\iota_{D}^\ast(\widetilde{\mathrm{Stab}}_{\mathfrak C_+}(D))},
\quad
\frac{\iota_{D}^\ast(\mathrm{Stab}_{\mathfrak C_-}(D'))}{e_{\mathbb T}(T_D\mathcal C(\mathcal D)_{\mathfrak C_-}^-)}
=\frac{\iota_{D}^\ast(\widetilde{\mathrm{Stab}}_{\mathfrak C_-}(D'))}
{\iota_{D}^\ast(\widetilde{\mathrm{Stab}}_{\mathfrak C_-}(D))}.
\end{equation}
In addition, Theorem~\ref{thm:ComparisonOfChambers} yields
\begin{equation}\label{eq:ProofCorClosestNeighborsOpposite2}
\frac{\iota_{D}^\ast(\widetilde{\mathrm{Stab}}_{\mathfrak C_+}(D'))}
{\iota_{D}^\ast(\widetilde{\mathrm{Stab}}_{\mathfrak C_+}(D))}
=
w_{0,N}.\bigg( \frac{\iota_{w_{0,N}.D }^\ast(\widetilde{\mathrm{Stab}}_{\mathfrak C_-}(w_{0,N}.D'))}
{\iota_{w_{0,N}.D}^\ast(\widetilde{\mathrm{Stab}}_{\mathfrak C_-}(w_{0,N}.D))} \bigg).
\end{equation}
The tie diagram $w_{0,N}.D$ is obtained from $w_{0,N}.D'$ via a simple move where $(V_{i_1},U_{N-j_2+1})$ is the right moving tie and $(V_{i_2},U_{N-j_1+1})$ is the left moving tie of $w_{0,N}.D'$.
Thus, we have
\begin{align*}
\frac{\iota_{D}^\ast(\mathrm{Stab}_{\mathfrak C_+}(D'))}{e_{\mathbb T}(T_D\mathcal C(\mathcal D)_{\mathfrak C_+}^-)} &=
w_{0,N}.\bigg( \frac{\iota_{w_{0,N}.D}^\ast(\mathrm{Stab}_{\mathfrak C_-}(w_{0,N}.D'))}{e_{\mathbb T}(T_{w_{0,N}.D}\mathcal C(\mathcal D)_{\mathfrak C_-}^- )}\bigg) \\
&\equiv w_{0,N}.\bigg(\operatorname{sgn}(w_{0,N}.D',w_{0,N}.D)\frac{h}{t_{N-j_2+1}-t_{N-j_1+1}} \bigg) \quad \mathrm{mod}\;h^2\\
&\equiv  \operatorname{sgn}(D,D')\frac{h}{t_{j_2}-t_{j_1}} \quad\mathrm{mod}\;h^2,
\end{align*}
where the first equality follows from \eqref{eq:ProofCorClosestNeighborsOpposite1} and \eqref{eq:ProofCorClosestNeighborsOpposite2}, the subsequent congruence from Proposition~\ref{prop:ClosestNeighbors} and the final congruence from  $
\operatorname{sgn}(D,D')=\operatorname{sgn}(w_{0,N}.D',w_{0,N}.D).
$
\end{proof}

\begin{remark}
In the framework of partial flag varieties,  the results of this subsection are contained in~\cite[Corollary~3.11]{su2017restriction}.
\end{remark}
\subsection{Proof of Theorem~\ref{thm:CMAntidominantChamber}} \label{subsection:ProofOfTHeoremAntidominant}
We begin with the following auxiliary statement:

\begin{lemma}\label{lemma:ProofAntiDomCMSideComputation}
Let $D\in \mathrm{Tie}(\mathcal D)$, $D'\in \mathrm{SM}_D$ with right moving tie $(V_{i_1},U_{j_1})$ and left moving tie $(V_{i_2},U_{j_2})$. Then, we have
\[
\iota_{D'}^\ast(c_1(\xi_i))-\iota_{D}^\ast(c_1(\xi_i))
\equiv
\begin{cases}
t_{j_1}-t_{j_2}\;\mathrm{mod}\;h &\textit{if $D\in \mathrm{SM}_{D,i}$,}\\
0\;\mathrm{mod}\;h &\textit{if $D\notin \mathrm{SM}_{D,i}$.}
\end{cases}
\]
\end{lemma}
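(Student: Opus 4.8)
The statement is a purely combinatorial computation with the explicit restriction formula \eqref{eq:RestrictionTautologicalBundles} as its only input. My plan is to compare $\iota_D^\ast(\xi_i)$ and $\iota_{D'}^\ast(\xi_i)$ weight by weight, using the fact that $D$ and $D'$ differ only by the four ties $(V_{i_1},U_{j_1}),(V_{i_2},U_{j_2})\in D$ versus $(V_{i_1},U_{j_2}),(V_{i_2},U_{j_1})\in D'$, and that $\xi_i$ is supported (via the sum over $U\in\mathrm{b}(\mathcal D)$ in \eqref{eq:RestrictionTautologicalBundles}) at black line $X_i$. Concretely, I first observe that for a blue line $U\ne U_{j_1},U_{j_2}$ the numbers $d_{D,U,X}$ and the auxiliary indices $c_{D,U,j}$ coincide for $D$ and $D'$, since the simple move does not touch any tie attached to $U$; hence these $U$-summands in \eqref{eq:RestrictionTautologicalBundles} are identical and contribute nothing to the difference $\iota_{D'}^\ast(c_1(\xi_i))-\iota_D^\ast(c_1(\xi_i))$. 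So only the $U_{j_1}$- and $U_{j_2}$-contributions matter.

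Next I analyze those two contributions. Write $d=d_{D,U_{j_1},X_i}$ and $d'=d_{D,U_{j_2},X_i}$ for the number of ties at $U_{j_1}$, resp. $U_{j_2}$, covering $X_i$ in $D$; likewise $\bar d,\bar d'$ for $D'$. Because $(V_{i_1},U_{j_1})$ is the right moving tie and $(V_{i_2},U_{j_2})$ the left moving tie, the conditions \ref{item:SMi1}--\ref{item:SMi2} defining $\mathrm{SM}_{D,i}$ translate precisely into whether the swapped ties cover $X_i$: when $D\in\mathrm{SM}_{D,i}$ exactly one of the two swapped ties covers $X_i$, so $\bar d+\bar d'=d+d'$ but $\{\bar d,\bar d'\}\ne\{d,d'\}$ (one goes up by one, the other down by one); when $D\notin\mathrm{SM}_{D,i}$ either both or neither swapped ties cover $X_i$, so the multisets of ranks $\{d,d'\}$ and $\{\bar d,\bar d'\}$ agree. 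The total rank $d_{X_i}$ of $\xi_i$ is of course the same for $D$ and $D'$. Tracking the exponents $c_{D,U_{j_1},j}$ and $c_{D,U_{j_2},j}$ through their recursive definition, and using that $\xi_i$ is trivialized mod $h$ (i.e. $\iota_D^\ast(c_1(\xi_i))\equiv\sum_{U}d_{D,U,X_i}\,t_U\pmod h$, since the $h$-exponents are irrelevant modulo $h$), the difference collapses: in the case $D\in\mathrm{SM}_{D,i}$ one $t_{j_1}$ is gained and one $t_{j_2}$ is lost (or vice versa according to which tie moves into covering position), giving $\pm(t_{j_1}-t_{j_2})\equiv t_{j_1}-t_{j_2}\pmod h$ after checking the orientation is the one claimed; in the case $D\notin\mathrm{SM}_{D,i}$ the $t_U$-parts cancel and the difference is $\equiv 0\pmod h$.

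The main obstacle, and the only place requiring care, is bookkeeping the sign/orientation in the first case — i.e. verifying that it is $t_{j_1}-t_{j_2}$ rather than $t_{j_2}-t_{j_1}$ that appears, with $j_1<j_2$ the columns of the right resp. left moving tie. This comes down to the asymmetry built into the definition of $\mathrm{SM}_{D,i}$: condition \ref{item:SMi1} forces $X_i\triangleleft V_{i_1}$ and \ref{item:SMi2} forces $V_{i_2}\triangleleft X_i$, so it is the $(V_{i_1},\cdot)$-tie (the right moving one, attached to $U_{j_1}$ in $D$) that can cover $X_i$, while the $(V_{i_2},\cdot)$-tie (left moving, attached to $U_{j_2}$ in $D$) cannot. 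After the move, the tie $(V_{i_1},U_{j_2})$ attached to $U_{j_2}$ in $D'$ may now cover $X_i$ and the tie $(V_{i_2},U_{j_1})$ at $U_{j_1}$ does not; so going from $D$ to $D'$, the $X_i$-covering count at $U_{j_1}$ drops by $1$ and at $U_{j_2}$ rises by $1$, hence $\iota_{D'}^\ast(c_1(\xi_i))-\iota_D^\ast(c_1(\xi_i))\equiv t_{j_2}-t_{j_1}+\text{(sign from }h\text{-shifts, irrelevant mod }h)$ — I would double-check this against the worked data in the example $0\textcolor{red}{\slash}1\textcolor{red}{\slash}3\textcolor{red}{\slash}4\textcolor{red}{\slash}5\textcolor{blue}{\backslash}4\textcolor{blue}{\backslash}3\textcolor{blue}{\backslash}1\textcolor{blue}{\backslash}0$ to pin down the sign convention, and adjust the statement's $t_{j_1}-t_{j_2}$ versus $t_{j_2}-t_{j_1}$ accordingly. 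Everything else is a routine unwinding of \eqref{eq:RestrictionTautologicalBundles} modulo $h$, where all $h$-powers can simply be set to $1$.
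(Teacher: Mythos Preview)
Your approach is essentially identical to the paper's: reduce \eqref{eq:RestrictionTautologicalBundles} modulo $h$ to obtain
\[
\iota_T^\ast(c_1(\xi_i))\equiv \sum_{U\in\mathrm b(\mathcal D)} d_{T,U,X_i}\,t_U\pmod h,
\]
note that only the $U_{j_1}$- and $U_{j_2}$-summands can change, and then split into cases according to the position of $X_i$ relative to $V_{i_1}$ and $V_{i_2}$ (the paper's Figure~\ref{figure:RelativePositionSM}).

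However, your coverage analysis in the crucial case is inverted. You write that the $(V_{i_1},\cdot)$-tie ``can cover $X_i$'' while the $(V_{i_2},\cdot)$-tie ``cannot''. This is backwards: in the separated setting a tie $(V,U)$ covers $X$ precisely when $V\triangleleft X\triangleleft U$. Condition~\ref{item:SMi1} says $X_i\triangleleft V_{i_1}$, so $X_i$ lies to the \emph{left} of $V_{i_1}$ and hence no tie of the form $(V_{i_1},\cdot)$ can cover it. Conversely, $V_{i_2}\triangleleft X_i$ by~\ref{item:SMi2}, so the tie $(V_{i_2},U_{j_2})\in D$ \emph{does} cover $X_i$. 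After the move this becomes $(V_{i_2},U_{j_1})\in D'$, still covering $X_i$ but now attached to $U_{j_1}$. So going from $D$ to $D'$ the covering count at $U_{j_1}$ \emph{increases} by $1$ and at $U_{j_2}$ \emph{decreases} by $1$, giving
\[
\iota_{D'}^\ast(c_1(\xi_i))-\iota_D^\ast(c_1(\xi_i))\equiv t_{j_1}-t_{j_2}\pmod h,
\]
exactly as stated. Your planned check against Example~\ref{example:SimpleMovesTieDiagrams} would have caught this (take $D_1$ there: $i_1=2$, $i_2=4$, $j_1=1$, $j_2=3$, and the tie covering $X_3$ moves from $U_3$ to $U_1$). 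Once this orientation is corrected, your argument is complete and matches the paper's. (Incidentally, the condition in the statement should read $D'\in\mathrm{SM}_{D,i}$, not $D\in\mathrm{SM}_{D,i}$; this is a typo in the lemma that the paper's own proof silently corrects.)
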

\begin{proof} From \eqref{eq:RestrictionTautologicalBundles}, we immediately obtain
\begin{equation}\label{eq:ProofAntiDomCMSideComputation1}
\iota_T^\ast (c_1(\xi_i))=\sum_{U\in\mathrm{b}(\mathcal D)} d_{T,U,X_i}t_i \quad\mathrm{mod}\;h,
\quad\textup{for all $T\in\mathrm{Tie}(\mathcal D)$,}
\end{equation}
where $d_{T,U,X_i}$ is defined as in Subsection~\ref{subsection:TautologicalBundles}. According to the relative position of $X_i$ with respect to $V_{i_1}$ and $V_{i_2}$ we have the three cases illustrated in Figure~\ref{figure:RelativePositionSM}.
\begin{figure}
\begin{tikzpicture}[scale=.275]
	\node at (-2,0) {(1)};
	
	\draw[dotted] (0,0) -- (2,0);
	\draw[black, thick] (2,0) -- (4,0);
	\draw[dotted] (4,0) -- (6,0);
	\draw[dotted] (7,0) -- (9,0);
	\draw[dotted] (10,0) -- (13,0);
	\draw[dotted] (14,0) -- (16,0);
	\draw[dotted] (17,0) -- (19,0);
	
	\node at (3,-1) {$X_i$};

	\draw[red, thick] (6,-1) -- (7,1);
	\node at (6,-2) {$V_{i_2}$};
	\draw[red, thick] (9,-1) -- (10,1);
	\node at (9,-2) {$V_{i_1}$};
	\draw[blue, thick] (14,-1) -- (13,1);
	\node at (14,-2) {$U_{j_1}$};
	\draw[blue, thick] (17,-1) -- (16,1);
	\node at (17,-2) {$U_{j_2}$};

	\draw[dashed] (7,1) to[out=45, in=135] (16,1);
	\draw[dashed] (10,1) to[out=45, in=135] (13,1);
	
	\draw[-to] decorate[decoration=zigzag] {(20,0) -- (23,0)};
	
	\def\x{24};

	\draw[dotted] (0+\x,0) -- (2+\x,0);
	\draw[black, thick] (2+\x,0) -- (4+\x,0);
	\draw[dotted] (4+\x,0) -- (6+\x,0);
	\draw[dotted] (7+\x,0) -- (9+\x,0);
	\draw[dotted] (10+\x,0) -- (13+\x,0);
	\draw[dotted] (14+\x,0) -- (16+\x,0);
	\draw[dotted] (17+\x,0) -- (19+\x,0);

	\draw[red, thick] (6+\x,-1) -- (7+\x,1);
	\node at (6+\x,-2) {$V_{i_2}$};
	\draw[red, thick] (9+\x,-1) -- (10+\x,1);
	\node at (9+\x,-2) {$V_{i_1}$};
	\draw[blue, thick] (14+\x,-1) -- (13+\x,1);
	\node at (14+\x,-2) {$U_{j_1}$};
	\draw[blue, thick] (17+\x,-1) -- (16+\x,1);
	\node at (17+\x,-2) {$U_{j_2}$};
	
	\draw[dashed] (7+\x,1) to[out=45, in=135] (13+\x,1);
	\draw[dashed] (10+\x,1) to[out=45, in=135] (16+\x,1);
	
	\node at (3+\x,-1) {$X_i$};
	
	\def\t {-9};
	\node at (-2,0+\t) {(2)};
	
	\draw[dotted] (0,0+\t) -- (2,0+\t);
	\draw[black, thick] (5,0+\t) -- (7,0+\t);
	\draw[dotted] (3,0+\t) -- (5,0+\t);
	\draw[dotted] (7,0+\t) -- (9,0+\t);
	\draw[dotted] (10,0+\t) -- (13,0+\t);
	\draw[dotted] (14,0+\t) -- (16,0+\t);
	\draw[dotted] (17,0+\t) -- (19,0+\t);

	\draw[red, thick] (2,-1+\t) -- (3,1+\t);
	\node at (2,-2+\t) {$V_{i_2}$};
	\draw[red, thick] (9,-1+\t) -- (10,1+\t);
	\node at (9,-2+\t) {$V_{i_1}$};
	\draw[blue, thick] (14,-1+\t) -- (13,1+\t);
	\node at (14,-2+\t) {$U_{j_1}$};
	\draw[blue, thick] (17,-1+\t) -- (16,1+\t);
	\node at (17,-2+\t) {$U_{j_2}$};

	\draw[dashed] (3,1+\t) to[out=45, in=135] (16,1+\t);
	\draw[dashed] (10,1+\t) to[out=45, in=135] (13,1+\t);
	
	\node at (6,-1+\t) {$X_i$};
	
	\draw[-to] decorate[decoration=zigzag] {(20,0+\t) -- (23,0+\t)};

	\draw[dotted] (0+\x,0+\t) -- (2+\x,0+\t);
	\draw[black, thick] (5+\x,0+\t) -- (7+\x,0+\t);
	\draw[dotted] (3+\x,0+\t) -- (5+\x,0+\t);
	\draw[dotted] (7+\x,0+\t) -- (9+\x,0+\t);
	\draw[dotted] (10+\x,0+\t) -- (13+\x,0+\t);
	\draw[dotted] (14+\x,0+\t) -- (16+\x,0+\t);
	\draw[dotted] (17+\x,0+\t) -- (19+\x,0+\t);

	\draw[red, thick] (2+\x,-1+\t) -- (3+\x,1+\t);
	\node at (2+\x,-2+\t) {$V_{i_2}$};
	\draw[red, thick] (9+\x,-1+\t) -- (10+\x,1+\t);
	\node at (9+\x,-2+\t) {$V_{i_1}$};
	\draw[blue, thick] (14+\x,-1+\t) -- (13+\x,1+\t);
	\node at (14+\x,-2+\t) {$U_{j_1}$};
	\draw[blue, thick] (17+\x,-1+\t) -- (16+\x,1+\t);
	\node at (17+\x,-2+\t) {$U_{j_2}$};

	\draw[dashed] (3+\x,1+\t) to[out=45, in=135] (13+\x,1+\t);
	\draw[dashed] (10+\x,1+\t) to[out=45, in=135] (16+\x,1+\t);
	
	\node at (6+\x,-1+\t) {$X_i$};
	
	\def\t {-18};
	\node at (-2,0+\t) {(3)};

	\draw[dotted] (0,0+\t) -- (2,0+\t);
	\draw[black, thick] (9,0+\t) -- (11,0+\t);
	\draw[dotted] (3,0+\t) -- (6,0+\t);
	\draw[dotted] (7,0+\t) -- (9,0+\t);
	\draw[dotted] (11,0+\t) -- (16,0+\t);
	\draw[dotted] (17,0+\t) -- (19,0+\t);

	\draw[red, thick] (2,-1+\t) -- (3,1+\t);
	\node at (2,-2+\t) {$V_{i_2}$};
	\draw[red, thick] (6,-1+\t) -- (7,1+\t);
	\node at (6,-2+\t) {$V_{i_1}$};
	\draw[blue, thick] (14,-1+\t) -- (13,1+\t);
	\node at (14,-2+\t) {$U_{j_1}$};
	\draw[blue, thick] (17,-1+\t) -- (16,1+\t);
	\node at (17,-2+\t) {$U_{j_2}$};

	\draw[dashed] (3,1+\t) to[out=45, in=135] (16,1+\t);
	\draw[dashed] (7,1+\t) to[out=45, in=135] (13,1+\t);
	
	\node at (10,-1+\t) {$X_i$};
	
	\draw[-to] decorate[decoration=zigzag] {(20,0+\t) -- (23,0+\t)};
	
	\draw[dotted] (0+\x,0+\t) -- (2+\x,0+\t);
	\draw[black, thick] (9+\x,0+\t) -- (11+\x,0+\t);
	\draw[dotted] (3+\x,0+\t) -- (6+\x,0+\t);
	\draw[dotted] (7+\x,0+\t) -- (9+\x,0+\t);
	\draw[dotted] (11+\x,0+\t) -- (16+\x,0+\t);
	\draw[dotted] (17+\x,0+\t) -- (19+\x,0+\t);

	\draw[red, thick] (2+\x,-1+\t) -- (3+\x,1+\t);
	\node at (2+\x,-2+\t) {$V_{i_2}$};
	\draw[red, thick] (6+\x,-1+\t) -- (7+\x,1+\t);
	\node at (6+\x,-2+\t) {$V_{i_1}$};
	\draw[blue, thick] (14+\x,-1+\t) -- (13+\x,1+\t);
	\node at (14+\x,-2+\t) {$U_{j_1}$};
	\draw[blue, thick] (17+\x,-1+\t) -- (16+\x,1+\t);
	\node at (17+\x,-2+\t) {$U_{j_2}$};

	\draw[dashed] (3+\x,1+\t) to[out=45, in=135] (13+\x,1+\t);
	\draw[dashed] (7+\x,1+\t) to[out=45, in=135] (16+\x,1+\t);
	
	\node at (10+\x,-1+\t) {$X_i$};
\end{tikzpicture}
\caption{Relative positions of simple moves.}\label{figure:RelativePositionSM}
\end{figure}
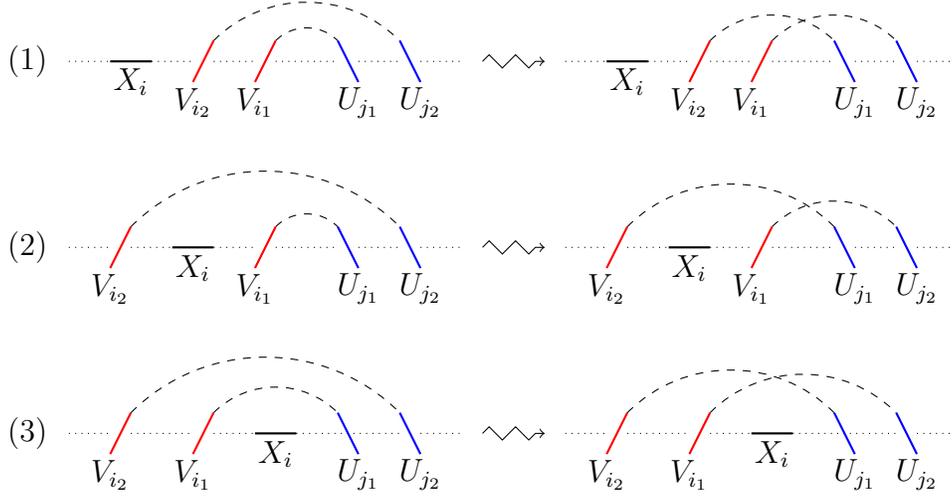
By construction, in the first and third case holds $d_{D,U,X_i}=d_{D',U,X_i}$ for all $U\in \mathrm{b}(\mathcal D)$. Hence, by \eqref{eq:ProofAntiDomCMSideComputation1}, we have $
\iota_{D'}^\ast(c_1(\xi_i))-\iota_{D}^\ast(c_1(\xi_i)) \equiv0\;\mathrm{mod}\;h
$
for $D'\notin \mathrm{SM}_{D,i}$. The second case is equivalent to $D'\in\mathrm{SM}_{D,i}$. In this case, we have 
\[
d_{D,U,X_i}=\begin{cases} d_{D',U,X_i} &\textup{if $U\in \mathrm{b}(\mathcal D)\setminus\{U_{j_1},U_{j_2}\}$,} \\
d_{D',U,X_i}-1  &\textup{if $U=U_{j_1}$,} \\
d_{D',U,X_i}+1  &\textup{if $U=U_{j_2}$.}
\end{cases}
\]
Thus, \eqref{eq:ProofAntiDomCMSideComputation1} gives
$
\iota_{D'}^\ast(c_1(\xi_i))-\iota_{D}^\ast(c_1(\xi_i)) \equiv t_{j_1}-t_{j_2}\;\mathrm{mod}\;h
$
for $D'\in \mathrm{SM}_{D,i}$.
\end{proof}

\begin{proof}[Proof of Theorem~\ref{thm:CMAntidominantChamber}]
By Theorem~\ref{thm:OrthogonalityOfStableEnvelopes}, we have to show 
\begin{equation}\label{eq:ProofOfCMAntidominantAim}
(c_{1}(\xi_i)\cup \mathrm{Stab}_{\mathfrak C_-}(D), \mathrm{Stab}_{\mathfrak C_+}(D'))_{\mathrm{virt}} =\begin{cases}
\iota_{D}^\ast(c_1(\xi_i)) &\textup{if $D'=D$,}\\
\operatorname{sgn}(D,D') h &\textup{if $D'\in \mathrm{SM}_{D,i}$,}\\
0 &\textup{otherwise.}
\end{cases}
\end{equation}
By Proposition~\ref{prop:MatrixCoefficients}, the virtual scalar product in~\eqref{eq:ProofOfCMAntidominantAim} is a linear polynomial in the equivariant parameters. Hence, it suffices to prove the above equality in $H_{\mathbb T}^\ast (\operatorname{pt})/(h^2)$. 
Suppose first that $D=D'$. As the partial orders $\preceq_+=\preceq_{\mathfrak C_+}$ and $\preceq_-=\preceq_{\mathfrak C_-}$ defined in Subsection~\ref{subsection:AttractionCells} are opposite, the support condition for stable basis elements implies that
$
\iota_T^\ast (\mathrm{Stab}_{\mathfrak C_-}(D))=0 
$
for $T\not\preceq_- D$ and $
\iota_T^\ast (\mathrm{Stab}_{\mathfrak C_+}(D))=0 
$
for $T\prec_- D$. Thus, we have 
\begin{equation}\label{eq:ProofOfAntiDOmCMDiagonalCase1}
\begin{split}
(c_{1}(\xi_i)\cup \mathrm{Stab}_{\mathfrak C_-}(D), \mathrm{Stab}_{\mathfrak C_+}(D))_{\mathrm{virt}} &= \sum_{D''\in \mathrm C(\mathcal D)^{\mathbb T}} \frac{\iota_{D''}^\ast(c_{1}(\xi_i)\cup \mathrm{Stab}_{\mathfrak C_-}(D)\cup \mathrm{Stab}_{\mathfrak C_+}(D))}{e_{\mathbb T}(T_{D''}\mathcal C(\mathcal D))}\\
&= \frac{\iota_{D}^\ast(c_{1}(\xi_i)\cup \mathrm{Stab}_{\mathfrak C_-}(D)\cup \mathrm{Stab}_{\mathfrak C_+}(D))}{e_{\mathbb T}(T_{D}\mathcal C(\mathcal D))}.
\end{split}
\end{equation}
Then, the normalization condition yields
\[
\eqref{eq:ProofOfAntiDOmCMDiagonalCase1}=\frac{\iota_{D}^\ast(c_{1}(\xi_i))\cup e_{\mathbb T}(T_{D}\mathcal C(\mathcal D)_{\mathfrak C_-}^-) \cup e_{\mathbb T}(T_{D}\mathcal C(\mathcal D)_{\mathfrak C_+}^-)}{e_{\mathbb T}(T_{D}\mathcal C(\mathcal D))}
= \iota_{D}^\ast(c_{1}(\xi_i)).
\]
This proves~\eqref{eq:ProofOfCMAntidominantAim} for $D=D'$. Next, we assume $D'\in\mathrm{SM}_D$ and $(V_{i_1},U_{j_1})$ be the right moving tie and $(V_{i_2},U_{j_2})$ be the left moving tie of $D$. By
Corollary~\ref{cor:RestrictionCongruence}, we have modulo $h^2$:
\begin{equation}\label{eq:SimplificationVirtualPairing}
\begin{split}
(c_{1}(\xi_i)\cup \mathrm{Stab}_{\mathfrak C_-}(D), \mathrm{Stab}_{\mathfrak C_+}(D'))_{\mathrm{virt}} &\equiv 
\frac{\iota_{D}^\ast(c_{1}(\xi_i)\cup \mathrm{Stab}_{\mathfrak C_-}(D)\cup \mathrm{Stab}_{\mathfrak C_+}(D'))}{e_{\mathbb T}(T_{D}\mathcal C(\mathcal D))} \\
&{\phantom{\equiv}}+\frac{\iota_{D'}^\ast(c_{1}(\xi_i)\cup \mathrm{Stab}_{\mathfrak C_-}(D)\cup \mathrm{Stab}_{\mathfrak C_+}(D'))}{e_{\mathbb T}(T_{D'}\mathcal C(\mathcal D))}.
\end{split}
\end{equation} 
Then, Corollary~\ref{cor:DivisibilityResultOppositeChamber} and the normalization condition imply 
\begin{equation}\label{eq:SimplificationVirtualPairingI}
\frac{\iota_{D}^\ast(c_{1}(\xi_i)\cup \mathrm{Stab}_{\mathfrak C_-}(D)\cup \mathrm{Stab}_{\mathfrak C_+}({D'}))}{e_{\mathbb T}(T_{D}\mathcal C(\mathcal D))}
\equiv \operatorname{sgn}(D,D')h\frac{\iota_{D}^\ast(c_{1}(\xi_i))}{t_{j_2}-t_{j_1}}\;\mathrm{mod}\;h^2,
\end{equation}
whereas Proposition~\ref{prop:ClosestNeighbors} combined with the normalization condition gives
\begin{equation}\label{eq:SimplificationVirtualPairingII}
\frac{\iota_{{D'}}^\ast(c_{1}(\xi_i)\cup \mathrm{Stab}_{\mathfrak C_-}(D)\cup \mathrm{Stab}_{\mathfrak C_+}({D'}))}{e_{\mathbb T}(T_{{D'}}\mathcal C(\mathcal D))}
\equiv
\operatorname{sgn}(D,D')h\frac{\iota_{{D'}}^\ast(c_{1}(\xi_i))}{t_{j_1}-t_{j_2}}
\;\mathrm{mod}\;h^2.
\end{equation}
Inserting \eqref{eq:SimplificationVirtualPairingI} and \eqref{eq:SimplificationVirtualPairingII} in \eqref{eq:SimplificationVirtualPairing} yields
\begin{equation}\label{eq:SimplificationVirtualPairingIII}
\eqref{eq:SimplificationVirtualPairing} \equiv 
\operatorname{sgn}(D,D')h\frac{\iota_{D}^\ast(c_{1}(\xi_i))}{t_{j_2}-t_{j_1}}\;\mathrm{mod}\;h^2.
\end{equation}
Now, Lemma~\ref{lemma:ProofAntiDomCMSideComputation} implies
\[
\eqref{eq:SimplificationVirtualPairingIII}\equiv \begin{cases}
\operatorname{sgn}(D,D')h\;\mathrm{mod}\;h^2 &\textup{if $D\in\mathrm{SM}_{D,i}$,} \\
0 \;\mathrm{mod}\;h^2 & \textup{if $D\notin\mathrm{SM}_{D,i}$.}
\end{cases}
\]
Thus, we proved \eqref{eq:ProofOfCMAntidominantAim} for $D'\in \mathrm{SM}_D$. Finally, it remains to prove \eqref{eq:ProofOfCMAntidominantAim} for $D'\notin \mathrm{SM}_D\cup\{D\}$. By Corollary~\ref{cor:RestrictionCongruence}, this assumption implies that $h^2$ divides all localization coefficients
$\iota_T^\ast(\mathrm{Stab}_{\mathfrak C_-}(D)\cup\mathrm{Stab}_{\mathfrak C_+}(D'))$. Thus, we conclude
\[
(c_{1}(\xi_i)\cup \mathrm{Stab}_{\mathfrak C_-}(D), \mathrm{Stab}_{\mathfrak C_+}(D'))_{\mathrm{virt}} \equiv 0\quad\mathrm{mod}\;h^2
\]
which proves \eqref{eq:ProofOfCMAntidominantAim} for $D'\notin \mathrm{SM}_D\cup\{D\}$ and hence completes the proof of Theorem~\ref{thm:CMAntidominantChamber}.
\end{proof}

\subsection{Proof of Proposition~\ref{prop:DivisibilityResult}}\label{subsection:ProofPropDivisibility}
Note that by Proposition~\ref{prop:EqMultiplicitiesEssentialBraneDiagram}, it suffices to show Proposition~\ref{prop:DivisibilityResult} for essential brane diagrams as defined in Subsection~\ref{subsection:ForgettingChargelessLines}. Hence, we assume throughout this subsection that $\mathcal D$ is essential.

Our crucial tool is the following:
\begin{lemma}\label{lemma:CloseNeighborDiffersBySimpleMove}
Let $D$, $D'\in\mathrm{Tie}(\mathcal D)$, $z\in w_{D'}S_{\textbf r}$ and $d_z$ a reduced diagram of shape~\eqref{eq:ShapeOfDiagrams}. Suppose $K(d_z,w_D,1)\ne \emptyset$, where $K(d_z,w_D,1)$ is defined as in Proposition~\ref{prop:ApproximationStableEnvelopeModuloPowersOfH}. Then, we have $D'\in\mathrm{SM}_D\cup\{ D\}$.
\end{lemma}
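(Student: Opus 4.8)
The statement asserts that if a reduced diagram $d_z$ of shape~\eqref{eq:ShapeOfDiagrams} (for some $z\in w_{D'}S_{\textbf r}$) admits a subset $K'\subseteq K(d_z)$ with $|K'\setminus K_U(d_z)|\le 1$ whose resolution yields a diagram for $w_D$, then $D$ and $D'$ differ by at most a simple move. The first thing I would do is unpack what $z\in w_{D'}S_{\textbf r}$ and the shape~\eqref{eq:ShapeOfDiagrams} give us: $z = u_0\,\tilde w_{D'}\,(v_1\times\cdots\times v_M)$ with $u_0=w_{0,c_1}\times\cdots\times w_{0,c_N}$, the crossings of $d_z$ split as $K(d_z)=K_U(d_z)\sqcup K_W(d_z)\sqcup K_V(d_z)$, and crucially resolving \emph{all} crossings in $K_U(d_z)\cup K_V(d_z)$ recovers a reduced diagram for $\tilde w_{D'}$ (which is fully separated since $M(D')\in\mathrm{bct}(\mathcal D)$). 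The set $K(d_z,w_D,1)$ being nonempty means: either $w_D$ is obtained by resolving only crossings inside $K_U(d_z)$, or $w_D$ is obtained by resolving some crossings of $K_U(d_z)$ together with exactly one crossing from $K_W(d_z)\cup K_V(d_z)$.

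The plan is to treat these two cases. \emph{Case 1: $K'\subseteq K_U(d_z)$.} Here $w_D$ is obtained from $z$ by resolving crossings lying only in the blocks $w_{0,c_i}$. Resolving crossings inside these blocks can only replace each $w_{0,c_i}$ by some shorter permutation in $S_{c_i}$; since $z$ factors through these blocks on the left, the resulting permutation is $u'\,\tilde w_{D'}\,v$ for some $u'\in S_{\textbf c}$, $v\in S_{\textbf r}$ — that is, $w_D$ lies in the double coset $S_{\textbf c}\tilde w_{D'}S_{\textbf r}$. But $w_D\in S_{\textbf c}\tilde w_D S_{\textbf r}$ as well (using $w_D=u_0\tilde w_D\cdot(\text{stuff})$ and Corollary~\ref{cor:TildeWDGivesMinimalLeftAndRightCosets}). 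By the bijection $\bar Z$ (Theorem~\ref{thm:BijectionDoubleCosetsMatrices}, Corollary~\ref{cor:DoubleCosetsFSep}), this forces $M(D)=M(D')$, hence $D=D'$. \emph{Case 2: $K'$ contains exactly one crossing $\kappa_0\in K_W(d_z)\cup K_V(d_z)$ (and possibly crossings in $K_U(d_z)$).} First dispatch the sub-case $\kappa_0\in K_V(d_z)$: resolving crossings only in $K_U\cup K_V$ keeps us in the double coset $S_{\textbf c}\tilde w_{D'}S_{\textbf r}$, so by the argument above $D=D'$ again. The genuinely new sub-case is $\kappa_0\in K_W(d_z)$: now the diagram obtained by resolving $K'\setminus\{\kappa_0\}$ (all in $K_U\cup K_W$) is some reduced diagram for an element $y$ with $\tilde w_D = $ (resolve $\kappa_0$ out of this diagram), and one checks $y$ differs from the fully separated $\tilde w_{D'}$-part by pre/post-composition within $S_{\textbf c},S_{\textbf r}$. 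Resolving the \emph{single} crossing $\kappa_0$ in the $\tilde w_{D'}$-block corresponds, under $\bar Z$, to moving a single $1$-entry of the matrix $M(D')$ — more precisely, because the crossing $\kappa_0$ is between two strands of $\tilde w_{D'}$ whose endpoints lie in distinct column-groups and whose sources lie in distinct row-groups (full separatedness), resolving it swaps the column-group membership of those two strands, which is exactly the matrix operation characterizing a simple move in Lemma~\ref{lemma:SImpleMovesBCT}. Hence $M(D)$ is obtained from $M(D')$ by a single simple-move swap, i.e.\ $D\in\mathrm{SM}_{D'}\subseteq \mathrm{SM}_D\cup\{D\}$ (using that $\mathrm{SM}$ is symmetric).

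The main obstacle, I expect, is the bookkeeping in Case 2 with $\kappa_0\in K_W(d_z)$: one must argue carefully that resolving a \emph{single} crossing lying inside the $\tilde w_{D'}$-block — as opposed to inside a $u_{0}$- or $v_i$-block — genuinely changes the double coset and does so by precisely the combinatorial move of Lemma~\ref{lemma:SImpleMovesBCT}, rather than by something more drastic. The key technical point is that the $\tilde w_{D'}$-block, being the diagram of a fully separated shortest double-coset representative, has the property that any two of its strands cross at most once and lie in distinct row/column groups, so resolving one of its crossings transposes exactly one pair of strand-endpoints across group boundaries; translating this through $Z(\cdot)$ (cf.\ the explicit description $Z(\tilde w_A)_{R_{i-1}+l,j}$ in Proposition~\ref{prop:PropertiesOfTildeWA}) shows the matrix changes in exactly four entries of the pattern in Lemma~\ref{lemma:SImpleMovesBCT}. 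A minor additional care is needed to confirm that adjoining crossings from $K_U(d_z)$ to $K'$ does not alter the double coset (they only act within the $S_{\textbf c}$-factor), which follows from Lemma~\ref{lemma:MatchingFunctionsProperties}\ref{item:MatchingFProperty1} exactly as in the proof of Corollary~\ref{cor:TildeWDGivesMinimalLeftAndRightCosets}.
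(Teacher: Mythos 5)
Your case analysis and the main mechanism are the same as the paper's: split $K'$ according to whether it meets the $\tilde w_{D'}$-box, use that resolving crossings inside the $w_{0,c_i}$- and $v_j$-boxes does not change the $(S_{\textbf c},S_{\textbf r})$-double coset (so $Z(w_D)=Z(w_{D'})$ and hence $D=D'$ by Corollary~\ref{cor:DoubleCosetsFSep}), and, when exactly one crossing $\kappa_0$ lies in the $\tilde w_{D'}$-box, observe that resolving it exchanges the column-group membership of the two strands involved, which on binary contingency tables is exactly the $2\times 2$ swap of Lemma~\ref{lemma:SImpleMovesBCT}. Up to that point your argument is correct and matches the paper.

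The flaw is in the last step of your second case. You conclude ``$M(D)$ is obtained from $M(D')$ by a single simple-move swap, i.e.\ $D\in\mathrm{SM}_{D'}$'' and then pass to $D'\in\mathrm{SM}_D\cup\{D\}$ ``using that $\mathrm{SM}$ is symmetric.'' The relation $\mathrm{SM}$ is \emph{not} symmetric: by Definition~\ref{definition:SimpleMoves}/Lemma~\ref{lemma:SImpleMovesBCT} a simple move always replaces the ``diagonal'' pair of $1$-entries at $(i_1,j_1),(i_2,j_2)$ by the ``antidiagonal'' pair at $(i_1,j_2),(i_2,j_1)$ (equivalently, the right moving tie moves right and the left moving tie moves left), so $D'\in\mathrm{SM}_D$ in fact precludes $D\in\mathrm{SM}_{D'}$. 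Moreover your directional reading is backwards: the two strands of $\tilde w_{D'}$ that cross at $\kappa_0$ run from row groups $i_1<i_2$ to column groups $j_2>j_1$ respectively, so it is $M(D')$ that carries the antidiagonal $1$s, and the resolved permutation, whose double coset is that of $w_D$, carries the diagonal $1$s in $M(D)$; since both matrices have entries in $\{0,1\}$, this is precisely the pattern of Lemma~\ref{lemma:SImpleMovesBCT} stating that $D'$ is obtained from $D$ by a simple move, i.e.\ $D'\in\mathrm{SM}_D$, which is what the lemma requires --- no symmetry is needed, and none is available. So the gap is easily repaired from your own computation, but the appeal to symmetry as written is false. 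A small additional remark: the fact that the two strands crossing at $\kappa_0$ start in distinct row groups and end in distinct column groups follows from $\tilde w_{D'}$ being the shortest $(S_{\textbf c},S_{\textbf r})$-double coset representative (strands within one row or column group do not cross in a reduced diagram), not from full separatedness; full separatedness is what you need to ensure the entries of $M(D)$ and $M(D')$ are in $\{0,1\}$ so that the increment/decrement argument yields exactly the simple-move pattern.
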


\begin{proof}
Given $K\in K(d_z,w_D,1)$ then, as $d_z$ is of shape \eqref{eq:ShapeOfDiagrams}, we distinguish between the following two cases:
\begin{enumerate}[label=(\arabic*)]
\item\label{item:CloseNeighborDiffersBySimpleMoveCase1} all crossings in $K$ are contained in the boxes corresponding to $w_{0,c_1},\ldots,w_{0,c_N}$ and $v_1,\ldots,v_M$,
\item\label{item:CloseNeighborDiffersBySimpleMoveCase2} exactly one crossing $\kappa_0\in K$ is contained in the box corresponding to $\tilde w_{D'}$ and the remaining crossings of $K$ are contained in the boxes corresponding to $w_{0,c_1},\ldots,w_{0,c_N}$.
\end{enumerate}
If \ref{item:CloseNeighborDiffersBySimpleMoveCase1} is satisfied then resolving all crossings contained in $K$ from $d_z$ still gives a permutation in $S_{\textbf c}w_{D'}S_{\textbf r}$. Thus, we have $S_{\textbf c}w_{D}S_{\textbf r}$=$S_{\textbf c}w_{D'}S_{\textbf r}$ which implies $D=D'$ by Corrolary~\ref{cor:DoubleCosetsFSep}.
Assume now that \ref{item:CloseNeighborDiffersBySimpleMoveCase2} is satisfied. 
Let $d_{\tilde w_{D'}}$ be the reduced diagram for $\tilde w_{D'}$ contained in $d_z$. 
We denote by $\lambda_1$, $\lambda_2$ the strands in $d_{\tilde w_{D'}}$ that intersect in $\kappa_0$ and let $y\in S_n$ be the permutation that is obtained from $d_{\tilde w_{D'}}$ by resolving the crossing $\kappa_0$. In pictures, $y$ is obtained as follows:
\begin{center}
\begin{tikzpicture}[scale=.55]


\draw (0.15,0) -- (0.15,0.5);
\draw (1.05,0) -- (1.05,0.5);

\draw (1.95+0.6,0) -- (1.95+0.6,0.5);
\draw (2.85+0.6,0) -- (2.85+0.6,0.5);

\draw (2.4+0.6,0) -- (2.4+0.6,0.5);
\draw (4.2+1.2,0) -- (4.2+1.2,0.5);

\draw (3.75+1.2,0) -- (3.75+1.2,0.5);
\draw (4.65+1.2,0) -- (4.65+1.2,0.5);

\draw (5.55+1.8,0) -- (5.55+1.8,0.5);
\draw (6.45+1.8,0) -- (6.45+1.8,0.5);

\draw (0,0.7) -- (1.2,0.7);
\draw (0,0.7) -- (0,0.4);
\draw (1.2,0.7) -- (1.2,0.4);

\draw (1.8+0.6,0.7) -- (3+0.6,0.7);
\draw (1.8+0.6,0.7) -- (1.8+0.6,0.4);
\draw (3+0.6,0.7) -- (3+0.6,0.4);

\draw (3.6+1.2,0.7) -- (4.8+1.2,0.7);
\draw (3.6+1.2,0.7) -- (3.6+1.2,0.4);
\draw (4.8+1.2,0.7) -- (4.8+1.2,0.4);

\draw (5.4+1.8,0.7) -- (6.6+1.8,0.7);
\draw (5.4+1.8,0.7) -- (5.4+1.8,0.4);
\draw (6.6+1.8,0.7) -- (6.6+1.8,0.4);

\node at (0.6,1.1) {$c_1$};
\node at (2.4+0.6,1.1) {$c_{j_1}$};
\node at (4.2+1.2,1.1) {$c_{j_2}$};
\node at (6+1.8,1.1) {$c_{N}$};

\draw (2.4+0.6,-2.4) to[out=90, in=-90] (4.2+1.2,0);
\draw (4.2+1.2,-2.4) to[out=90, in=-90] (2.4+0.6,0);

\fill (4.2,-1.2) circle[radius=0.25pt];
\node at (4.2,-0.8) {$\kappa_0$};
\node at (2.6,-1.7) {$\lambda_1$};
\node at (5.8,-1.7) {$\lambda_2$};

\draw[dotted] (0.25,0.25) -- (0.95,0.25);
\draw[dotted] (2.05+0.6,0.25) -- (2.35+0.6,0.25);
\draw[dotted] (2.5+0.6,0.25) -- (2.75+0.6,0.25);
\draw[dotted] (3.85+1.2,0.25) -- (4.05+1.2,0.25);
\draw[dotted] (4.2+1.2,0.25) -- (4.55+1.2,0.25);
\draw[dotted] (5.65+1.8,0.25) -- (6.35+1.8,0.25);

\draw[dotted] (1.35,0.25) -- (2.25,0.25);
\draw[dotted] (3.75,0.25) -- (4.65,0.25);
\draw[dotted] (6.15,0.25) -- (7.05,0.25);


\draw (0.15,0-2.9) -- (0.15,0.5-2.9);
\draw (1.05,0-2.9) -- (1.05,0.5-2.9);

\draw (1.95+0.6,0-2.9) -- (1.95+0.6,0.5-2.9);
\draw (2.85+0.6,0-2.9) -- (2.85+0.6,0.5-2.9);

\draw (2.4+0.6,0-2.9) -- (2.4+0.6,0.5-2.9);
\draw (4.2+1.2,0-2.9) -- (4.2+1.2,0.5-2.9);

\draw (3.75+1.2,0-2.9) -- (3.75+1.2,0.5-2.9);
\draw (4.65+1.2,0-2.9) -- (4.65+1.2,0.5-2.9);

\draw (5.55+1.8,0-2.9) -- (5.55+1.8,0.5-2.9);
\draw (6.45+1.8,0-2.9) -- (6.45+1.8,0.5-2.9);

\draw (0,0.7-3.8) -- (1.2,0.7-3.8);
\draw (0,0.7-3.8) -- (0,0.4-3.2);
\draw (1.2,0.7-3.8) -- (1.2,0.4-3.2);

\draw (1.8+0.6,0.7-3.8) -- (3+0.6,0.7-3.8);
\draw (1.8+0.6,0.7-3.8) -- (1.8+0.6,0.4-3.2);
\draw (3+0.6,0.7-3.8) -- (3+0.6,0.4-3.2);

\draw (3.6+1.2,0.7-3.8) -- (4.8+1.2,0.7-3.8);
\draw (3.6+1.2,0.7-3.8) -- (3.6+1.2,0.4-3.2);
\draw (4.8+1.2,0.7-3.8) -- (4.8+1.2,0.4-3.2);

\draw (5.4+1.8,0.7-3.8) -- (6.6+1.8,0.7-3.8);
\draw (5.4+1.8,0.7-3.8) -- (5.4+1.8,0.4-3.2);
\draw (6.6+1.8,0.7-3.8) -- (6.6+1.8,0.4-3.2);

\node at (0.6,1.1-4.6) {$r_1$};
\node at (2.4+0.6,1.1-4.6) {$r_{i_1}$};
\node at (4.2+1.2,1.1-4.6) {$r_{i_2}$};
\node at (6+1.8,1.1-4.6) {$r_{M}$};

\draw[dotted] (0.25,0.25-2.9) -- (0.95,0.25-2.9);
\draw[dotted] (2.05+0.6,0.25-2.9) -- (2.35+0.6,0.25-2.9);
\draw[dotted] (2.5+0.6,0.25-2.9) -- (2.75+0.6,0.25-2.9);
\draw[dotted] (3.85+1.2,0.25-2.9) -- (4.05+1.2,0.25-2.9);
\draw[dotted] (4.2+1.2,0.25-2.9) -- (4.55+1.2,0.25-2.9);
\draw[dotted] (5.65+1.8,0.25-2.9) -- (6.35+1.8,0.25-2.9);

\draw[dotted] (1.35,0.25-2.9) -- (2.25,0.25-2.9);
\draw[dotted] (3.75,0.25-2.9) -- (4.65,0.25-2.9);
\draw[dotted] (6.15,0.25-2.9) -- (7.05,0.25-2.9);

\draw[draw=black] (0,-2.4) rectangle ++(8.4,2.4);

\draw[-to] decorate[decoration=zigzag] {(9,-1.2) -- (12,-1.2)};

\def\s{12.4};

\draw (0.15+\s,0) -- (0.15+\s,0.5);
\draw (1.05+\s,0) -- (1.05+\s,0.5);

\draw (1.95+0.6+\s,0) -- (1.95+0.6+\s,0.5);
\draw (2.85+0.6+\s,0) -- (2.85+0.6+\s,0.5);

\draw (2.4+0.6+\s,0) -- (2.4+0.6+\s,0.5);
\draw (4.2+1.2+\s,0) -- (4.2+1.2+\s,0.5);

\draw (3.75+1.2+\s,0) -- (3.75+1.2+\s,0.5);
\draw (4.65+1.2+\s,0) -- (4.65+1.2+\s,0.5);

\draw (5.55+1.8+\s,0) -- (5.55+1.8+\s,0.5);
\draw (6.45+1.8+\s,0) -- (6.45+1.8+\s,0.5);

\draw (0+\s,0.7) -- (1.2+\s,0.7);
\draw (0+\s,0.7) -- (0+\s,0.4);
\draw (1.2+\s,0.7) -- (1.2+\s,0.4);

\draw (1.8+0.6+\s,0.7) -- (3+0.6+\s,0.7);
\draw (1.8+0.6+\s,0.7) -- (1.8+0.6+\s,0.4);
\draw (3+0.6+\s,0.7) -- (3+0.6+\s,0.4);

\draw (3.6+1.2+\s,0.7) -- (4.8+1.2+\s,0.7);
\draw (3.6+1.2+\s,0.7) -- (3.6+1.2+\s,0.4);
\draw (4.8+1.2+\s,0.7) -- (4.8+1.2+\s,0.4);

\draw (5.4+1.8+\s,0.7) -- (6.6+1.8+\s,0.7);
\draw (5.4+1.8+\s,0.7) -- (5.4+1.8+\s,0.4);
\draw (6.6+1.8+\s,0.7) -- (6.6+1.8+\s,0.4);

\node at (0.6+\s,1.1) {$c_1$};
\node at (2.4+0.6+\s,1.1) {$c_{j_1}$};
\node at (4.2+1.2+\s,1.1) {$c_{j_2}$};
\node at (6+1.8+\s,1.1) {$c_{N}$};

\draw (2.4+0.6+\s,-2.4) to[out=90, in=-90] (4.1+\s,-1.2) to[out=90, in=-90] (2.4+0.6+\s,0);
\draw (4.2+1.2+\s,-2.4) to[out=90, in=-90] (4.3+\s,-1.2) to[out=90, in=-90] (4.2+1.2+\s,0);

\draw[dotted] (0.25+\s,0.25) -- (0.95+\s,0.25);
\draw[dotted] (2.05+0.6+\s,0.25) -- (2.35+0.6+\s,0.25);
\draw[dotted] (2.5+0.6+\s,0.25) -- (2.75+0.6+\s,0.25);
\draw[dotted] (3.85+1.2+\s,0.25) -- (4.05+1.2+\s,0.25);
\draw[dotted] (4.3+1.2+\s,0.25) -- (4.55+1.2+\s,0.25);
\draw[dotted] (5.65+1.8+\s,0.25) -- (6.35+1.8+\s,0.25);

\draw[dotted] (1.35+\s,0.25) -- (2.25+\s,0.25);
\draw[dotted] (3.75+\s,0.25) -- (4.65+\s,0.25);
\draw[dotted] (6.15+\s,0.25) -- (7.05+\s,0.25);


\draw (0.15+\s,0-2.9) -- (0.15+\s,0.5-2.9);
\draw (1.05+\s,0-2.9) -- (1.05+\s,0.5-2.9);

\draw (1.95+0.6+\s,0-2.9) -- (1.95+0.6+\s,0.5-2.9);
\draw (2.85+0.6+\s,0-2.9) -- (2.85+0.6+\s,0.5-2.9);

\draw (2.4+0.6+\s,0-2.9) -- (2.4+0.6+\s,0.5-2.9);
\draw (4.2+1.2+\s,0-2.9) -- (4.2+1.2+\s,0.5-2.9);

\draw (3.75+1.2+\s,0-2.9) -- (3.75+1.2+\s,0.5-2.9);
\draw (4.65+1.2+\s,0-2.9) -- (4.65+1.2+\s,0.5-2.9);

\draw (5.55+1.8+\s,0-2.9) -- (5.55+1.8+\s,0.5-2.9);
\draw (6.45+1.8+\s,0-2.9) -- (6.45+1.8+\s,0.5-2.9);

\draw (0+\s,0.7-3.8) -- (1.2+\s,0.7-3.8);
\draw (0+\s,0.7-3.8) -- (0+\s,0.4-3.2);
\draw (1.2+\s,0.7-3.8) -- (1.2+\s,0.4-3.2);

\draw (1.8+0.6+\s,0.7-3.8) -- (3+0.6+\s,0.7-3.8);
\draw (1.8+0.6+\s,0.7-3.8) -- (1.8+0.6+\s,0.4-3.2);
\draw (3+0.6+\s,0.7-3.8) -- (3+0.6+\s,0.4-3.2);

\draw (3.6+1.2+\s,0.7-3.8) -- (4.8+1.2+\s,0.7-3.8);
\draw (3.6+1.2+\s,0.7-3.8) -- (3.6+1.2+\s,0.4-3.2);
\draw (4.8+1.2+\s,0.7-3.8) -- (4.8+1.2+\s,0.4-3.2);

\draw (5.4+1.8+\s,0.7-3.8) -- (6.6+1.8+\s,0.7-3.8);
\draw (5.4+1.8+\s,0.7-3.8) -- (5.4+1.8+\s,0.4-3.2);
\draw (6.6+1.8+\s,0.7-3.8) -- (6.6+1.8+\s,0.4-3.2);

\node at (0.6+\s,1.1-4.6) {$r_1$};
\node at (2.4+0.6+\s,1.1-4.6) {$r_{i_1}$};
\node at (4.2+1.2+\s,1.1-4.6) {$r_{i_2}$};
\node at (6+1.8+\s,1.1-4.6) {$r_{M}$};

\draw[dotted] (0.25+\s,0.25-2.9) -- (0.95+\s,0.25-2.9);
\draw[dotted] (2.05+0.6+\s,0.25-2.9) -- (2.35+0.6+\s,0.25-2.9);
\draw[dotted] (2.5+0.6+\s,0.25-2.9) -- (2.75+0.6+\s,0.25-2.9);
\draw[dotted] (3.85+1.2+\s,0.25-2.9) -- (4.05+1.2+\s,0.25-2.9);
\draw[dotted] (4.3+1.2+\s,0.25-2.9) -- (4.55+1.2+\s,0.25-2.9);
\draw[dotted] (5.65+1.8+\s,0.25-2.9) -- (6.35+1.8+\s,0.25-2.9);

\draw[dotted] (1.35+\s,0.25-2.9) -- (2.25+\s,0.25-2.9);
\draw[dotted] (3.75+\s,0.25-2.9) -- (4.65+\s,0.25-2.9);
\draw[dotted] (6.15+\s,0.25-2.9) -- (7.05+\s,0.25-2.9);

\draw[draw=black] (0+\s,-2.4) rectangle ++(8.4,2.4);

\node at (8.9+\s,-1.2) {$y$};
\node at (-0.7,-1.2) {$\tilde w_{D'}$};
\end{tikzpicture}
\end{center}
Let $f_1$, $f_2$ resp.~$g_1$, $g_2$ the starting resp. endpoints of $\lambda_1$, $\lambda_2$ in $d_{\tilde w_{D'}}$. As in the above picture, we assume $f_1<f_2$. As $\tilde w_{D'}$ is the shortest element in $S_{\textbf c}\tilde w_{D'} S_{\textbf r}$ there exist $i_1<i_2$ and $j_1<j_2$ such that
\[
R_{i_1-1}<f_1\le R_{i_1},\quad
R_{i_2-1}<f_2\le R_{i_2},\quad
C_{j_1-1}<g_2\le C_{j_1},\quad
C_{j_2-1}<g_1\le C_{j_2}.
\]
Thus, we conclude
\begin{equation}\label{eq:CloseNeighborDiffersBySimpleMoveProof}
F_y(f_1)=F_{\tilde w_{D'}}(f_2)=j_1,\quad
F_y(f_2)=F_{\tilde w_{D'}}(f_1)=j_2,\quad
F_y(i)=F_{\tilde w_{D'}}(i),\quad \textup{for $i\ne f_1$, $f_2$.}
\end{equation}
Here, $F_y$, $F_{\tilde w_{D'}}$ are defined as in \eqref{eq:DefinitionFW}. By assumption, $y\in S_{\textbf c}\tilde w_D$. Thus, we have $F_y=F_{\tilde w_D}$. Hence, by passing to the associated matrices of these double cosets, we deduce that \eqref{eq:CloseNeighborDiffersBySimpleMoveProof} is equivalent to
\[
\begin{split}
    M(D)_{i_1,j_1}&=M(D)_{i_2,j_2}=1, \\
    M(D)_{i_1,j_2}&=M(D)_{i_1,j_2}=0,
\end{split}
\quad\quad
\begin{split}
    M(D')_{i_1,j_1}&=M(D')_{i_2,j_2}=0, \\
    M(D')_{i_1,j_2}&=M(D')_{i_1,j_2}=1,
\end{split}
\]
as well as $M(D)_{l,k}=M(D')_{l,k}$ for $(l,k)\notin \{(i_1,j_1),(i_1,j_2),(i_2,j_1),(i_2,j_2)\}$.
Therefore, by Lemma~\ref{lemma:SImpleMovesBCT}, $D'$ is obtained from $D$ via a simple move.
\end{proof}

The proof of Proposition~\ref{prop:DivisibilityResult} follows now from Proposition~\ref{prop:ApproximationStableEnvelopeModuloPowersOfH} and Lemma~\ref{lemma:CloseNeighborDiffersBySimpleMove}:

\begin{proof}[Proof of Proposition~\ref{prop:DivisibilityResult}]
We have to show that the localization coefficient $\iota_{D'}^\ast (\mathrm{Stab}_{\mathfrak C_-}(D))$ is divisible by $h^2$ for all $D'\notin \mathrm{SM}_D\cup\{D\}$. 
Assume $\iota_{D'}^\ast (\mathrm{Stab}_{\mathfrak C_-}(D))$ is not divisible by $h^2$ for some $D'\notin \mathrm{SM}_D\cup\{D\}$. By definition, this implies that $\iota_{D'}^\ast (\widetilde{\mathrm{Stab}}_{\mathfrak C_-}(D))$ is also not divisible by $h^2$. 
Thus, by Proposition~\ref{prop:ApproximationStableEnvelopeModuloPowersOfH}, there exists $z\in w_{D'}S_{\textbf r}$ with reduced diagram $d_z$ of shape~\eqref{eq:ShapeOfDiagrams} such that $K(d_z,w_D,1)\ne\emptyset$.
Hence, Lemma~\ref{lemma:CloseNeighborDiffersBySimpleMove} yields $D'\in \mathrm{SM}_D\cup\{D\}$ which contradicts our assumption $D'\notin \mathrm{SM}_D\cup\{D\}$.
\end{proof}

\subsection{Proof of Proposition~\ref{prop:ClosestNeighbors}}\label{subsection:ProofPropClosetsNeighbors}
Again, we can assume that $\mathcal D$ is essential.

We need some further notation: let $D\in\mathrm{Tie}(\mathcal D)$ and $D'\in\mathrm{SM}_D$ with right moving tie $(V_{i_1},U_{j_1})$ and left moving tie $(V_{i_2},U_{j_2})$. Given $z\in w_{D'}S_{\textbf r}$ with a reduced diagram $d_z$ of shape~\eqref{eq:ShapeOfDiagrams} then there exist unique strands $\lambda_1$, $\lambda_2$ in $d_z$ with starting points $f_1$, $f_2$ and endpoints $g_1$, $g_2$ such that
\[
R_{i_1-1}<f_1\le R_{i_1},\quad
R_{i_2-1}<f_2\le R_{i_2},\quad
C_{j_1-1}<g_2\le C_{j_1},\quad
C_{j_2-1}<g_1\le C_{j_2}.
\]
Let $\kappa_0$ denote the crossing of $\lambda_1$ and $\lambda_2$.

To prove Proposition~\ref{prop:ClosestNeighbors} we utilize the approximation formula of Proposition~\ref{prop:ApproximationStableEnvelopeModuloPowersOfH}. To apply this formula appropriately, we use the following lemma:

\begin{lemma}\label{lemma:CrossingsAndSimpleMoves} Let $\tilde y_D\in S_n$ be as in \eqref{eq:DefinitionTildeYD} and $y_D=(w_{0,c_1}\times\ldots\times w_{0,c_N})\tilde y_D$. Then, we have
\[
K(d_z,y_D,1)= \begin{cases}
\{\kappa_0\} &\textit{if $z=w_{D'}$,}\\
\emptyset &\textit{if $z\ne w_{D'}$,}
\end{cases}
\]
where $K(d_z,y_D,1)$ is defined as in Proposition~\ref{prop:ApproximationStableEnvelopeModuloPowersOfH}.
\end{lemma}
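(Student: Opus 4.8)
The statement asserts that the only way to resolve a single non-normalization crossing from a shape-\eqref{eq:ShapeOfDiagrams} diagram $d_z$ and land on the permutation $y_D$ is by resolving $\kappa_0$ in the case $z=w_{D'}$. I would first set up notation exactly as in the proof of Lemma~\ref{lemma:CloseNeighborDiffersBySimpleMove}: a subset $K\in K(d_z,y_D,1)$ consists of crossings whose resolution turns $d_z$ into a diagram for $y_D$, with at most one crossing outside the boxes of $w_{0,c_1},\ldots,w_{0,c_N}$ (i.e.\ outside $K_U(d_z)$). Using that $d_z$ is a stacking of the form $(w_{0,c_1}\times\cdots\times w_{0,c_N})\circ\tilde w_{D'}\circ(v_1\times\cdots\times v_M)$, I split into the same two cases: either \emph{(1)} all crossings of $K$ lie in the boxes of the $w_{0,c_i}$'s and the $v_j$'s, or \emph{(2)} exactly one crossing $\kappa\in K$ lies in the $\tilde w_{D'}$ box and the rest lie in the $w_{0,c_i}$ boxes.

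In case \emph{(1)}, resolving crossings inside the bottom and top Young-subgroup boxes only modifies $z$ within its $(S_{\mathbf c},S_{\mathbf r})$-double coset, so the resulting permutation lies in $S_{\mathbf c}w_{D'}S_{\mathbf r}$; hence its associated matrix is $M(D')$. But $y_D$ is obtained from $\tilde w_{D'}$ by precomposing with the transposition $(f_1,f_2)$ that moves a single matrix entry, so $Z(y_D)=Z(\tilde y_D)=M(D)\ne M(D')$ (since $D'\in\mathrm{SM}_D$ means $D\ne D'$). This contradiction rules out case \emph{(1)}. In case \emph{(2)}, arguing as in Lemma~\ref{lemma:CloseNeighborDiffersBySimpleMove}, the single crossing $\kappa$ in the $\tilde w_{D'}$ box is between two strands $\mu_1,\mu_2$ with start points in row-blocks $i_1'<i_2'$ and endpoints in column-blocks $j_1'<j_2'$, and resolving it produces a permutation $y$ with $Z(y)$ differing from $M(D')$ exactly by a simple move at positions $(i_1',j_1'),(i_1',j_2'),(i_2',j_1'),(i_2',j_2')$. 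Since the resolved permutation must equal $y_D$ and $Z(y_D)=M(D)$, we get $(i_1',j_1')=(i_1,j_1)$ etc.\ (using that $M(D)$ and $M(D')$ differ by the unique simple move at $(i_1,j_1),(i_1,j_2),(i_2,j_1),(i_2,j_2)$). Consequently $\mu_1,\mu_2$ must be precisely $\lambda_1,\lambda_2$ and $\kappa=\kappa_0$; moreover the crossings of $K$ inside the $w_{0,c_i}$ boxes must resolve away trivially, forcing $z=w_{D'}$. Thus $K=\{\kappa_0\}$ is the unique element of $K(d_z,y_D,1)$ when $z=w_{D'}$, and $K(d_z,y_D,1)=\emptyset$ otherwise.

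The remaining point is to check the converse inclusion: that $\{\kappa_0\}$ genuinely lies in $K(d_{w_{D'}},y_D,1)$, i.e.\ resolving $\kappa_0$ in $d_{w_{D'}}$ yields a diagram for $y_D$, and that $|\{\kappa_0\}\setminus K_U(d_{w_{D'}})|=1\le 1$. The first is immediate from the diagrammatic description of $\tilde y_D$ recalled right before Example~\ref{example:ConstructionYDTilde}: resolving the crossing of $\lambda_1$ and $\lambda_2$ in $d_{\tilde w_{D'}}$ gives $\tilde y_D$, hence stacking with the $w_{0,c_i}$ boxes gives $y_D$. Since $\kappa_0$ lies in the $\tilde w_{D'}$ box it is not in $K_U$, so the size condition holds.

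\textbf{Main obstacle.} The delicate step is case \emph{(2)}: I must argue that the auxiliary crossings of $K$ sitting inside the $w_{0,c_i}$ boxes cannot ``help'' produce $y_D$ together with a $\tilde w_{D'}$-box crossing other than $\kappa_0$ — in other words, that once the matrix data pins down which two row/column blocks get swapped, the strand pair and the crossing are uniquely determined, and no residual freedom remains in the $w_{0,c_i}$ boxes. This relies on the fully-separatedness of $\tilde w_{D'}$ (each row block meets each column block in at most one strand, Corollary~\ref{cor:DoubleCosetsFSep} and the injectivity of $F_{\tilde w_{D'}}$ on row blocks from Lemma~\ref{lemma:MatchingFunctionsProperties}), which makes the strand pair $\lambda_1,\lambda_2$ unique and forces $z=w_{D'}$; I expect the careful bookkeeping there — exactly as in the matrix translation at the end of the proof of Lemma~\ref{lemma:CloseNeighborDiffersBySimpleMove} — to be the bulk of the work.
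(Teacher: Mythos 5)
Your proposal is correct and follows essentially the same route as the paper's proof: the case split according to whether the unique crossing of $K$ outside $K_U(d_z)$ lies in the $v_j$-boxes or in the $\tilde w_{D'}$-box, the exclusion of the first case because resolving crossings in the Young-subgroup boxes cannot change the double coset while $M(D)\ne M(D')$, and the matrix bookkeeping pinning the middle crossing down to $\kappa_0$ via fully separatedness. The only point to tighten is the step you flag as the main obstacle: uniqueness of the strand pair $\lambda_1,\lambda_2$ in $\tilde w_{D'}$ by itself does not control the bottom boxes, and the paper closes this cleanly by writing $z=w_{D'}v$ with $v\in S_{\mathbf r}$, noting that resolving $\kappa_0$ from $d_z$ yields a diagram for $y_Dv$, and then applying Proposition~\ref{prop:UniquenessOfPrePostComposition} to the fully separated permutation $\tilde y_D$ to force $v=\operatorname{id}$ (hence $z=w_{D'}$) and $K\cap K_U(d_z)=\emptyset$ in one stroke.
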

\begin{proof}
Let $z=w_{D'}v$ where $v\in S_{\textbf r}$ and suppose $K\in K(d_z,y_D,1)$. As in Subsection~\ref{subsection:ApproximationReducedStableEnvelopes}, we denote by $K_U(d_z)$ the crossings in $d_z$ corresponding to the boxes of $w_{0,c_1},\ldots,w_{0,c_N}$. By assumption $|K\setminus K_U(d_z)|\le 1$. Thus, as $z$ is fully separated, we have $K\setminus K_U(d_z)=\{\kappa_0\}$. By construction, resolving the crossing $\kappa_0$ from $d_z$ gives a diagram for $y_Dv$. Hence, Proposition~\ref{prop:UniquenessOfPrePostComposition} implies that $v=\operatorname{id}$ and $K\cap K_U(d_z)=\emptyset$ which proves the lemma.
\end{proof}

By combining Proposition~\ref{prop:ApproximationStableEnvelopeModuloPowersOfH} and Lemma~\ref{lemma:CrossingsAndSimpleMoves}, we obtain the following consequence:

\begin{cor}\label{cor:ApproximationEMOfSimpleMove} We have that $\iota_{D'}^\ast(\widetilde{\mathrm{Stab}}_{\mathfrak C_-}(D))$ is congruent modulo $h^2$ to
\[
\frac{ \operatorname{sgn}(D,D')\cdot h \cdot
\Big(\prod_{\alpha \in L'_{w_{D'}}}\Psi_{\mathcal D}(\alpha+h) \Big)
\cdot \Big( \prod_{\kappa\in K_W(d_{w_{D'}})\setminus\{\kappa_0\}} \Psi_{\mathcal D}(\mathrm{wt}(\kappa))  \Big) }
{ \prod_{\beta \in R_{\textbf r}}\Psi_{\mathcal D}(w_{D'}.\beta)}.
\]
Here, we used the notation from Proposition~\ref{prop:ApproximationStableEnvelopeModuloPowersOfH} and $ K_W(d_{w_{D'}})$ is defined as in Subsection~\ref{subsection:ApproximationReducedStableEnvelopes}.
\end{cor}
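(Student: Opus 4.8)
The plan is to deduce Corollary~\ref{cor:ApproximationEMOfSimpleMove} by specializing the general approximation formula of Proposition~\ref{prop:ApproximationStableEnvelopeModuloPowersOfH} to the case $m=2$ and then reading off which terms survive modulo $h^2$ using Lemma~\ref{lemma:CrossingsAndSimpleMoves}. Concretely, I would apply Proposition~\ref{prop:ApproximationStableEnvelopeModuloPowersOfH} with $D'$ in place of $D$ and $D$ in place of $D'$ (so the roles are swapped: we are approximating $\iota_{D'}^\ast(\widetilde{\mathrm{Stab}}_{\mathfrak C_-}(D))$), with $m=2$ and $w'\in S_{\textbf c}\tilde w_D S_{\textbf r}$ chosen to be precisely $y_D = (w_{0,c_1}\times\cdots\times w_{0,c_N})\tilde y_D$. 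Note $y_D$ lies in the right double coset of $\tilde w_D$ because $\tilde y_D\in \tilde w_D S_{\textbf r}$ by construction (see the line after \eqref{eq:DefinitionTildeYD}), so this is a legitimate choice of representative $w'$.

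First I would observe that since $\tilde y_D$ differs from $\tilde w_{D'}$ by a single transposition, we have $l(\tilde y_D)=l(\tilde w_{D'})\pm 1$, and combined with Proposition~\ref{prop:SignSMandPermutations} the sign $(-1)^{l(w')+l(w'S_{\textbf r})}$ appearing in \eqref{eq:ApproximationStableEnvelopeModuloPowersOfH} becomes $\operatorname{sgn}(D,D')$ up to the usual comparison of lengths of double coset representatives; I would track these signs carefully, using that $l(w'S_{\textbf r})$ is the length of the shortest coset representative. Second, the sum over $z\in w_D S_{\textbf r}$ in \eqref{eq:ApproximationStableEnvelopeModuloPowersOfH} collapses: by Lemma~\ref{lemma:CrossingsAndSimpleMoves}, $K(d_z,y_D,1)=\emptyset$ unless $z=w_{D'}$, and $K(d_z,w_D,\cdot)$ plays no role here since our target permutation is $y_D$, not $w_D$. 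Wait — I must be careful: in the corollary's statement the sum has already been evaluated, and the relevant index set in Proposition~\ref{prop:ApproximationStableEnvelopeModuloPowersOfH} is $K(d_z,w',m-1)=K(d_z,y_D,1)$. By Lemma~\ref{lemma:CrossingsAndSimpleMoves} this is $\{\kappa_0\}$ for $z=w_{D'}$ (the diagram $d_z$ of shape~\eqref{eq:ShapeOfDiagrams}) and empty otherwise, so only the term $z=w_{D'}$ contributes and within it only the subset $K'=\{\kappa_0\}$.

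Third, I would substitute $K'=\{\kappa_0\}$ into the definition of $P_{d_z,w',m}$ from Proposition~\ref{prop:ApproximationStableEnvelopeModuloPowersOfH}. Since $\kappa_0\notin K_U(d_{w_{D'}})$ (it lies in the $\tilde w_{D'}$-box), we have $|K'\setminus K_U(d_{w_{D'}})|=1$, giving the factor $h^1$; the factor $f_{K'}$ reduces to $1$ because $K'\cap K_U(d_{w_{D'}})=\emptyset$ and $\prod_{\kappa\in K_U}\Psi_{\mathcal D}(\mathrm{wt}(\kappa))=\prod_{i}\prod_{j=1}^{c_i-1}(jh)^{c_i-j}$ by Proposition~\ref{prop:UPartVSNormalizationFactor}; and the remaining product over $\kappa\in K(d_{w_{D'}})\setminus(\{\kappa_0\}\cup K_U(d_{w_{D'}}))$ is exactly $\prod_{\kappa\in K_W(d_{w_{D'}})\setminus\{\kappa_0\}}\Psi_{\mathcal D}(\mathrm{wt}(\kappa))$ together with the contributions from the $v$-boxes — but those $v$-box crossings would, if present, force $|K'\setminus K_U|\ge 2$ upon any alternative resolution, so they contribute their full weight and I would need to check (again via Proposition~\ref{prop:UniquenessOfPrePostComposition} / Corollary~\ref{cor:TildeWDGivesMinimalLeftAndRightCosets}) that for $z=w_{D'}$ the shape-\eqref{eq:ShapeOfDiagrams} diagram has $v_j=\operatorname{id}$, so $K_V(d_{w_{D'}})=\emptyset$. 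Finally the denominator $\prod_{\beta\in R_{\textbf r}}\Psi_{\mathcal D}(w_{D'}.\beta)$ carries over verbatim, and the factor $\prod_{\alpha\in L'_{w_{D'}}}\Psi_{\mathcal D}(\alpha+h)$ comes straight from \eqref{eq:ApproximationStableEnvelopeModuloPowersOfH}. Assembling these pieces yields the displayed congruence.

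The main obstacle I anticipate is bookkeeping rather than conceptual: correctly matching the sign conventions (the $(-1)^{l(w')+l(w'S_{\textbf r})}$ in Proposition~\ref{prop:ApproximationStableEnvelopeModuloPowersOfH} versus the $\operatorname{sgn}(D,D')$ via Proposition~\ref{prop:SignSMandPermutations}), and making sure that the representative $z=w_{D'}$ is the \emph{unique} one with a shape-\eqref{eq:ShapeOfDiagrams} reduced diagram contributing, which requires invoking that $w_{D'}$ itself is of the form $u_0\circ\tilde w_{D'}\circ(\operatorname{id}\times\cdots\times\operatorname{id})$ so that all $v$-boxes are trivial — I would cite Corollary~\ref{cor:TildeWDGivesMinimalLeftAndRightCosets}\ref{item:TildeWDGivesMinimalLeft} and the definition of $w_D$ in Corollary~\ref{cor:ResolvedStableEnvelopesFormula}. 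Everything else is a direct, if somewhat tedious, substitution into the formula of Proposition~\ref{prop:ApproximationStableEnvelopeModuloPowersOfH}.
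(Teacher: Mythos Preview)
Your proposal is correct and follows essentially the same route as the paper's proof: apply Proposition~\ref{prop:ApproximationStableEnvelopeModuloPowersOfH} with $m=2$ and representative $w'=y_D$, use Lemma~\ref{lemma:CrossingsAndSimpleMoves} to reduce the sum to the single term $z=w_{D'}$ with $K'=\{\kappa_0\}$, and identify the sign via Proposition~\ref{prop:SignSMandPermutations}. Your additional remarks (that $K_V(d_{w_{D'}})=\emptyset$ since $w_{D'}=u_0\tilde w_{D'}$ has trivial $v$-boxes, and that $f_{\{\kappa_0\}}=1$ by Proposition~\ref{prop:UPartVSNormalizationFactor}) are exactly the bookkeeping details the paper leaves implicit.
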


\begin{proof} 
If we choose $w'=y_D$ in \eqref{eq:ApproximationStableEnvelopeModuloPowersOfH} then, by Lemma~\ref{lemma:CrossingsAndSimpleMoves}, the only set of crossings that contributes to \eqref{eq:ApproximationStableEnvelopeModuloPowersOfH} is $K(d_{w_{D'}},y_D,1)=\{ \kappa_0 \}$. Thus, 
by Proposition~\ref{prop:ApproximationStableEnvelopeModuloPowersOfH}, $\iota_{D'}^\ast(\widetilde{\mathrm{Stab}}_{\mathfrak C_-}(D))$ is congruent modulo $h^2$ to
\[
\frac{ (-1)^{\ell(y_D)+\ell(w_{D})} \cdot h \cdot
\Big(\prod_{\alpha \in L'_{w_{D'}}}\Psi_{\mathcal D}(\alpha+h) \Big)
\cdot \Big( \prod_{\kappa\in K_W(d_{w_{D'}})\setminus\{\kappa_0\}} \Psi_{\mathcal D}(\mathrm{wt}(\kappa))  \Big) }
{ \prod_{\beta \in R_{\textbf r}}\Psi_{\mathcal D}(w_{D'}.\beta)}.
\]
By Proposition~\ref{prop:SignSMandPermutations}, $(-1)^{\ell(w_D)+\ell(y_D)}=\operatorname{sgn}(D,D')$ which proves the corollary.
\end{proof}

\begin{proof}[Proof of Proposition~\ref{prop:ClosestNeighbors}]
By definition, we have
\begin{equation}\label{eq:ProofOfClosetsNeighbors1}
\frac{\iota_{D'}^\ast(\mathrm{Stab}_{\mathfrak C_-}(D))}{ e_{\mathbb T}(T_{D'}\mathcal C(\mathcal D)_{\mathfrak C_-}^-)}
=
\frac{\iota_{D'}^\ast(\mathrm{Stab}_{\mathfrak C_-}(D))}{\iota_{D'}^\ast(\mathrm{Stab}_{\mathfrak C_-}(D'))}
=
\frac{\iota_{D'}^\ast(\widetilde{\mathrm{Stab}}_{\mathfrak C_-}(D))}{\iota_{D'}^\ast(\widetilde{\mathrm{Stab}}_{\mathfrak C_-}(D'))}
.
\end{equation}
Proposition~\ref{prop:ApproximationStableEnvelopeModuloPowersOfH} gives that $\iota_{D'}^\ast(\widetilde{\mathrm{Stab}}_{\mathfrak C_-}(D'))$ is congruent modulo $h$ to
\begin{equation}\label{eq:ProofOfClosetsNeighbors2}
\frac{
\Big(\prod_{\alpha \in L'_{w_{D'}}}\Psi_{\mathcal D}(\alpha) \Big)
\cdot \Big( \prod_{\substack{\kappa\in K_W(d_{w_{D'}}) \\ } } \Psi_{\mathcal D}(\mathrm{wt}(\kappa))  \Big) }
{\prod_{\beta \in R_{\textbf r}}\Psi_{\mathcal D}(w_{D'}.\beta}.
\end{equation}
Combining~\eqref{eq:ProofOfClosetsNeighbors2} and Corollary~\ref{cor:ApproximationEMOfSimpleMove} then yields
\[
\eqref{eq:ProofOfClosetsNeighbors1}\equiv 
\frac{\operatorname{sgn}(D,D')\cdot h}{\Psi_{\mathcal D}(\mathrm{wt}(\kappa_0))}
\equiv
\frac{\operatorname{sgn}(D,D')\cdot h}{t_{j_1}-t_{j_2}}
\quad \mathrm{mod}\;h^2,
\]
which proves Proposition~\ref{prop:ClosestNeighbors}.
\end{proof}

\subsection{Chevalley--Monk formula for arbitrary chamber}

Employing Theorem~\ref{thm:ComparisonOfChambers}, generalizes the Chevalley--Monk formula for the antidominant chamber from Theorem~\ref{thm:CMAntidominantChamber} to any choice of chamber:

\begin{theorem}\label{thm:CMArbitraryChamber} Let $\mathfrak C=z^{-1}.\mathfrak C_-$ for $z\in S_N$, $D$ be a tie diagram of $\mathcal D$ and $i\in\{1,\ldots,M\}$. Then, the following identity holds in $H_{\mathbb T}^\ast(\mathcal C(\mathcal D))_{\mathrm{loc}}$:
\[
c_1(\xi_i)\cup \mathrm{Stab}_{\mathfrak C}(D)=\iota_{D}^\ast(c_1(\xi_i))\cdot \mathrm{Stab}_{\mathfrak C}(D) + \sum_{D'\in \mathrm{SM}_{D,z,i}} \operatorname{sgn}_z(D,D')h\cdot\mathrm{Stab}_{\mathfrak C}(D'),
\]
where
$
\mathrm{SM}_{D,z,i} = \{D'\in\mathrm{Tie}(\mathcal D)\mid z.D'\in \mathrm{SM}_{z.D,i} \}
$
and
$
 \operatorname{sgn}_z(D,D')= \operatorname{sgn}(z.D,z.D').
$
\end{theorem}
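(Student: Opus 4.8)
The plan is to deduce Theorem~\ref{thm:CMArbitraryChamber} from Theorem~\ref{thm:CMAntidominantChamber} by transporting the antidominant-chamber formula along the $S_N$-action using Theorem~\ref{thm:ComparisonOfChambers}. Concretely, write $\mathfrak C=z^{-1}.\mathfrak C_-$ and apply the chamber-comparison identity
\[
e_{\mathbb T}(N^-_{\mathcal D,\mathfrak C})\iota_{D'}^\ast(\mathrm{Stab}_{\mathfrak C}(D))
=
z^{-1}.\bigl( e_{\mathbb T}(N^-_{z.\mathcal D,\mathfrak C_-}) \iota_{z.D'}^\ast(\mathrm{Stab}_{\mathfrak C_-}(z.D))\bigr)
\]
to every equivariant multiplicity appearing in the desired equation. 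Since the assertion of Theorem~\ref{thm:CMArbitraryChamber} is an equality in the localized cohomology $H_{\mathbb T}^\ast(\mathcal C(\mathcal D))_{\mathrm{loc}}$, by the localization theorem it suffices to check it after applying $\iota_{D''}^\ast$ for every $D''\in\mathrm{Tie}(\mathcal D)$. So the first step is to reduce the statement to a family of scalar identities in $H_{\mathbb T}^\ast(\mathrm{pt})_{\mathrm{loc}}$, one for each fixed point $D''$.

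Next I would rewrite each such scalar identity using Theorem~\ref{thm:ComparisonOfChambers}. The key point is that the $S_N$-action on $\mathbb Q[t_1,\ldots,t_N,h]$ is $h$-linear and permutes the $t_i$, so it commutes with the structure needed here: for the multiplication operator one has, on the left side, that $c_1(\xi_i)$ is a genuine (non-localized) class whose restriction satisfies $\iota_{z.D''}^\ast(c_1(\xi_i))$ related to $\iota_{D''}^\ast(c_1(\xi_i))$ under the $S_N$-action — here I would invoke the transformation behaviour of the tautological bundle restrictions from~\eqref{eq:RestrictionTautologicalBundles}, noting that under swapping blue lines the characters $\mathbb C_{U_k}$ get permuted accordingly, so $z^{-1}.\iota_{z.D''}^\ast(c_1(\xi_i))=\iota_{D''}^\ast(c_1(\xi_i))$. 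Then one applies $z^{-1}.(-)$ to the entire antidominant-chamber identity of Theorem~\ref{thm:CMAntidominantChamber} for the tie diagram $z.D$ over the brane diagram $z.\mathcal D$, localized at $z.D''$, and uses the definition $\mathrm{SM}_{D,z,i}=\{D'\mid z.D'\in\mathrm{SM}_{z.D,i}\}$ together with $\mathrm{sgn}_z(D,D')=\mathrm{sgn}(z.D,z.D')$ to identify the summands. The normalization factors $e_{\mathbb T}(N^-_{z.\mathcal D,\mathfrak C_-})$ multiply every term on both sides of the antidominant formula uniformly, so after applying $z^{-1}.(-)$ and dividing by $e_{\mathbb T}(N^-_{\mathcal D,\mathfrak C})$ (which is invertible in the localization) they cancel and we recover exactly the claimed relation after $\iota_{D''}^\ast$.

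The one point that needs genuine care — and which I expect to be the main obstacle — is the bookkeeping of the normalization bundles $N_{\mathcal D}$ versus $N_{z.\mathcal D}$ and the non-localized class $c_1(\xi_i)$ in the chamber-comparison formula, which as stated in Theorem~\ref{thm:ComparisonOfChambers} is phrased only for equivariant multiplicities of stable basis elements, not for products with $c_1(\xi_i)$. To get around this I would not try to extend Theorem~\ref{thm:ComparisonOfChambers} to products; instead I would argue purely on the level of the already-proven scalar identity: Theorem~\ref{thm:CMAntidominantChamber} applied to $z.\mathcal D$ and $z.D$ gives, for each $D''$,
\[
\iota_{z.D''}^\ast(c_1(\xi_i))\,\iota_{z.D''}^\ast(\mathrm{Stab}_{\mathfrak C_-}(z.D))
+\sum_{E\in\mathrm{SM}_{z.D,i}}\mathrm{sgn}(z.D,E)\,h\,\iota_{z.D''}^\ast(\mathrm{Stab}_{\mathfrak C_-}(E))
=\iota_{z.D''}^\ast\bigl(c_1(\xi_i)\cup\mathrm{Stab}_{\mathfrak C_-}(z.D)\bigr),
\]
multiply through by $e_{\mathbb T}(N^-_{z.\mathcal D,\mathfrak C_-})$, apply $z^{-1}.(-)$, use Theorem~\ref{thm:ComparisonOfChambers} termwise together with the transformation of $\iota^\ast(c_1(\xi_i))$ under the $S_N$-action, re-index the sum via $E=z.D'$, divide by $e_{\mathbb T}(N^-_{\mathcal D,\mathfrak C})$, and finally invoke the localization theorem to conclude the unrestricted identity. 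The details are routine once the behaviour of tautological restrictions under the blue-line swap is pinned down from~\eqref{eq:RestrictionTautologicalBundles}, so this amounts essentially to a careful but straightforward transport of structure.
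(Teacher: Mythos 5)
Your proposal is correct and follows essentially the same route as the paper: it transports the antidominant Chevalley--Monk formula (Theorem~\ref{thm:CMAntidominantChamber}) along the $S_N$-action via Theorem~\ref{thm:ComparisonOfChambers}, working termwise on equivariant multiplicities, using the $S_N$-equivariance of the tautological restrictions from~\eqref{eq:RestrictionTautologicalBundles}, and concluding with the localization theorem. Your explicit multiplication by $e_{\mathbb T}(N^-_{z.\mathcal D,\mathfrak C_-})$ and later division by $e_{\mathbb T}(N^-_{\mathcal D,\mathfrak C})$ is exactly the paper's use of the normalized classes $\widetilde{\mathrm{Stab}}$, so the arguments coincide.
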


Let $\mathrm{SM}_{D,z}=\bigcup_{i=1}^M \mathrm{SM}_{D,z,i}$.
If $D'\in\mathrm{SM}_{D,z}$ then we say that $D'$ \textit{is obtained from $D$ via a $z$-twisted simple move}.

\begin{proof}[Proof of Theorem~\ref{thm:CMArbitraryChamber}] 
At first, note that \eqref{eq:RestrictionTautologicalBundles} implies 
\begin{equation}\label{eq:InnocentChernClassesTautologicalBundles}
\iota_T^\ast(\xi_i(\mathcal D))= w^{-1}.(\iota_{w.T}^\ast(\xi_i(w.\mathcal D))),\quad
\textup{for all $w\in S_N$ and $T\in \mathrm{Tie}(\mathcal D)$.}
\end{equation}
Employing~\eqref{eq:InnocentChernClassesTautologicalBundles} and Theorem~\ref{thm:ComparisonOfChambers} for a given $T\in\mathrm{Tie}(\mathcal D)$ yields
\begin{equation}\label{eq:ProofOfCMArbitraryChamberInnocent}
\iota_T^\ast(c_1(\xi(\mathcal D))\cup \widetilde{\mathrm{Stab}}_{\mathfrak C}(D))=z^{-1}.(\iota_{z.T}^\ast( c_1(\xi_i(z.\mathcal D))\cup \widetilde{\mathrm{Stab}}_{\mathfrak C_-}(z.D))).
\end{equation}
Then, Theorem~\ref{thm:CMAntidominantChamber} gives that \eqref{eq:ProofOfCMArbitraryChamberInnocent} is equal to
\begin{equation}\label{eq:ProofOfCMArbitraryChamberInnocen2}
z^{-1}.\Big(
\iota_{z.T}^\ast\Big(
\iota_{z.D}^\ast(c_1(\xi_i(z.\mathcal D)))\cdot( \widetilde{\mathrm{Stab}}_{\mathfrak C_-}(z.D))
+ \sum_{D'\in \mathrm{SM}_{z.D,i}} \operatorname{sgn}(z.D,D')h\cdot\widetilde{\mathrm{Stab}}_{\mathfrak C_-}(D')
\Big)
\Big).
\end{equation}
Applying again \eqref{eq:InnocentChernClassesTautologicalBundles} and Theorem~\ref{thm:ComparisonOfChambers} then gives
\[
\eqref{eq:ProofOfCMArbitraryChamberInnocen2}
=
\iota_{T}^\ast\Big( \iota_D^\ast( c_1(\xi_i(\mathcal D)))\cdot \mathrm{Stab}_{\mathfrak C}(D) + \sum_{D'\in \mathrm{SM}_{D,z,i}} \operatorname{sgn}_z(D,D')h\cdot\mathrm{Stab}_{\mathfrak C}(D')\Big).
\]
Therefore, we conclude Theorem~\ref{thm:CMArbitraryChamber} by the localization theorem.
\end{proof}

\section{Chevalley--Monk formulas in the general case}

In the previous section, we proved Chevalley--Monk formulas for bow varieties with separated brane diagram (Theorem~\ref{thm:CMArbitraryChamber}). Via Hanany--Witten transition, we finally deduce Chevalley--Monk formulas for bow varieties corresponding to arbitrary choices of brane diagram and chamber.

\subsection{Simple moves for general brane diagrams}

Fix a brane diagram $\mathcal D$. First, we generalize the notion of (twisted) simple moves:

\begin{definition}
For $D\in\mathrm{Tie}(\mathcal D)$, we define the \textit{set of simple moves} $\mathrm{SM}_{D}$ as the set of all $D'\in\mathrm{Tie}(\mathcal D)$ such that there exist $1\le i_1< i_2\le M$ and $1\le j_1<j_2\le N$ satisfying
\begin{enumerate}[label=(\roman*)]
\item\label{item:GeneralSimpleMove1} $M(D)_{i_1,j_1}=M(D)_{i_2,j_2}=1$ and $M(D)_{i_1,j_2}=M(D)_{i_1,j_2}=0$,
\item\label{item:GeneralSimpleMove2} $M(D')_{i_1,j_1}=M(D')_{i_2,j_2}=0$ and $M(D')_{i_1,j_2}=M(D')_{i_1,j_2}=1$,
\item\label{item:GeneralSimpleMove3}
$M(D)_{l,k}=M(D')_{l,k}$ for all $(l,k))\notin \{(i_1,j_1),(i_2,j_1),(i_1,j_2),(i_2,j_2)\}$.
\end{enumerate}
If $D'\in \mathrm{SM}_{D}$ we say that \textit{$D'$ is obtained from $D$ via a simple move}. 
\end{definition}

Given additionally $i\in \{1,\ldots, M\}$, we define the \textit{set of simple move relative to $i$} $\mathrm{SM}_{D,i}$ as the set of all tie diagrams $D'$ of $\mathcal D$ such that there exists $1\le i_1\le M-i+1\le i_2\le M$ as well as $1\le j_1<j_2\le N$ satisfying \ref{item:GeneralSimpleMove1}-\ref{item:GeneralSimpleMove3}.

The graphical illustration of simple moves depends on the position of the separating line relative to the respective $2\times 2$ submatrix where the simple move is performed. The six possible cases are recorded in Figure~\ref{figure:IllustrationSMGeneralCase}.

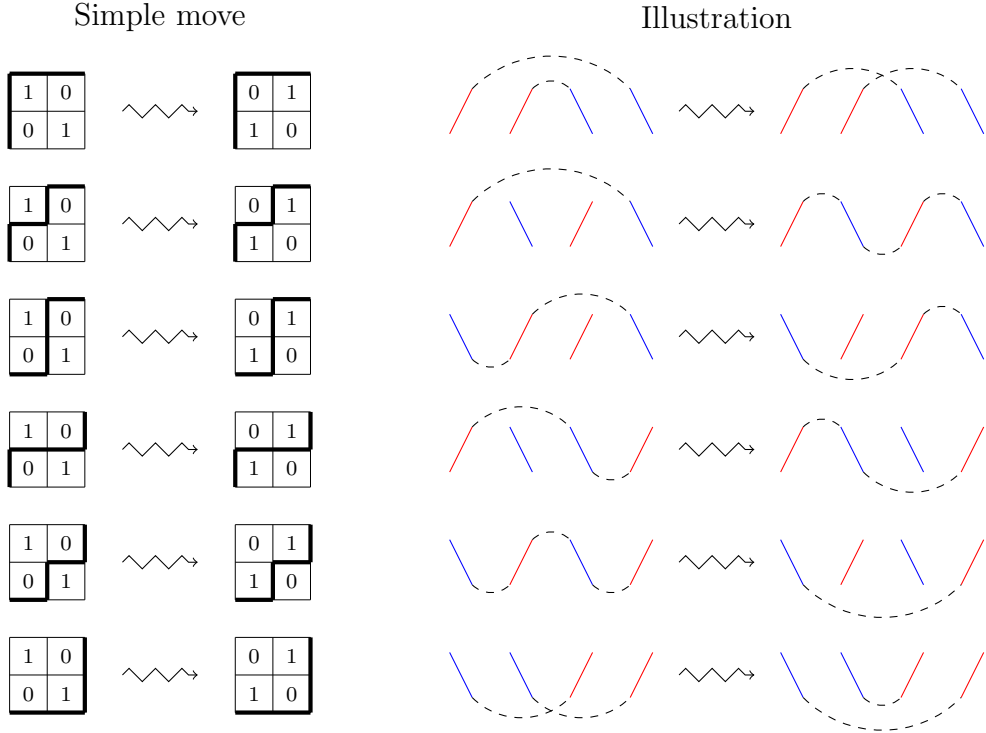
\begin{figure}
\begin{tikzpicture}
\coordinate (O) at (0,0);
\node at (O) {Simple move};

\coordinate (D11) at (-2,-0.75);
\coordinate[right=0.5 of D11] (D12);
\coordinate[right=1 of D11] (D13);
\coordinate[below=0.5 of D11] (D14);
\coordinate[below right=0.5 and 1 of D11] (D15);
\coordinate[below=1 of D11] (D16);
\coordinate[below right=1 and 0.5 of D11] (D17);
\coordinate[below right=1 and 1 of D11] (D18);

\coordinate[below right=0.25 and 0.25 of D11] (DE11);
\node at (DE11) {\tiny $1$};
\coordinate[below right=0.25 and 0.75 of D11] (DE12);
\node at (DE12) {\tiny $0$};
\coordinate[below right=0.75 and 0.25 of D11] (DE13);
\node at (DE13) {\tiny $0$};
\coordinate[below right=0.75 and 0.75 of D11] (DE14);
\node at (DE14) {\tiny $1$};

\draw[ultra thin] (D11) -- (D13);
\draw[ultra thin] (D14) -- (D15);
\draw[ultra thin] (D16) -- (D18);
\draw[ultra thin] (D11) -- (D16);
\draw[ultra thin] (D12) -- (D17);
\draw[ultra thin] (D13) -- (D18);

\draw[ultra thick] (D16) -- (D11);
\draw[ultra thick] (D11) -- (D13);

\coordinate[right = 0.5 of D15] (ADS1);
\coordinate[right = 1 of ADS1] (ADT1);
\draw[-to] decorate[decoration=zigzag] {(ADS1) -- (ADT1)};

\coordinate[above right = 0.5 and 0.5 of ADT1] (DP11);
\coordinate[right=0.5 of DP11] (DP12);
\coordinate[right=1 of DP11] (DP13);
\coordinate[below=0.5 of DP11] (DP14);
\coordinate[below right=0.5 and 1 of DP11] (DP15);
\coordinate[below=1 of DP11] (DP16);
\coordinate[below right=1 and 0.5 of DP11] (DP17);
\coordinate[below right=1 and 1 of DP11] (DP18);

\coordinate[below right=0.25 and 0.25 of DP11] (DPE11);
\node at (DPE11) {\tiny $0$};
\coordinate[below right=0.25 and 0.75 of DP11] (DPE12);
\node at (DPE12) {\tiny $1$};
\coordinate[below right=0.75 and 0.25 of DP11] (DPE13);
\node at (DPE13) {\tiny $1$};
\coordinate[below right=0.75 and 0.75 of DP11] (DPE14);
\node at (DPE14) {\tiny $0$};

\draw[ultra thin] (DP11) -- (DP13);
\draw[ultra thin] (DP14) -- (DP15);
\draw[ultra thin] (DP16) -- (DP18);
\draw[ultra thin] (DP11) -- (DP16);
\draw[ultra thin] (DP12) -- (DP17);
\draw[ultra thin] (DP13) -- (DP18);

\draw[ultra thick] (DP16) -- (DP11);
\draw[ultra thick] (DP11) -- (DP13);

\coordinate[below=1.5 of D11] (D21);
\coordinate[right=0.5 of D21] (D22);
\coordinate[right=1 of D21] (D23);
\coordinate[below=0.5 of D21] (D24);
\coordinate[below right=0.5 and 1 of D21] (D25);
\coordinate[below=1 of D21] (D26);
\coordinate[below right=1 and 0.5 of D21] (D27);
\coordinate[below right=1 and 1 of D21] (D28);

\coordinate[below right=0.25 and 0.25 of D21] (DE21);
\node at (DE21) {\tiny $1$};
\coordinate[below right=0.25 and 0.75 of D21] (DE22);
\node at (DE22) {\tiny $0$};
\coordinate[below right=0.75 and 0.25 of D21] (DE23);
\node at (DE23) {\tiny $0$};
\coordinate[below right=0.75 and 0.75 of D21] (DE24);
\node at (DE24) {\tiny $1$};

\draw[ultra thin] (D21) -- (D23);
\draw[ultra thin] (D24) -- (D25);
\draw[ultra thin] (D26) -- (D28);
\draw[ultra thin] (D21) -- (D26);
\draw[ultra thin] (D22) -- (D27);
\draw[ultra thin] (D23) -- (D28);

\coordinate[right = 0.5 of D24] (Sep2);
\draw[ultra thick] (D26) -- (D24);
\draw[ultra thick] (D24) -- (Sep2);
\draw[ultra thick] (Sep2) -- (D22);
\draw[ultra thick] (D22) -- (D23);

\coordinate[right = 0.5 of D25] (ADS2);
\coordinate[right = 1 of ADS2] (ADT2);
\draw[-to] decorate [decoration=zigzag] {(ADS2) -- (ADT2)};

\coordinate[above right = 0.5 and 0.5 of ADT2] (DP21);
\coordinate[right=0.5 of DP21] (DP22);
\coordinate[right=1 of DP21] (DP23);
\coordinate[below=0.5 of DP21] (DP24);
\coordinate[below right=0.5 and 1 of DP21] (DP25);
\coordinate[below=1 of DP21] (DP26);
\coordinate[below right=1 and 0.5 of DP21] (DP27);
\coordinate[below right=1 and 1 of DP21] (DP28);

\coordinate[below right=0.25 and 0.25 of DP21] (DPE21);
\node at (DPE21) {\tiny $0$};
\coordinate[below right=0.25 and 0.75 of DP21] (DPE22);
\node at (DPE22) {\tiny $1$};
\coordinate[below right=0.75 and 0.25 of DP21] (DPE23);
\node at (DPE23) {\tiny $1$};
\coordinate[below right=0.75 and 0.75 of DP21] (DPE24);
\node at (DPE24) {\tiny $0$};

\draw[ultra thin] (DP21) -- (DP23);
\draw[ultra thin] (DP24) -- (DP25);
\draw[ultra thin] (DP26) -- (DP28);
\draw[ultra thin] (DP21) -- (DP26);
\draw[ultra thin] (DP22) -- (DP27);
\draw[ultra thin] (DP23) -- (DP28);

\coordinate[right = 0.5 of DP24] (SepP2);
\draw[ultra thick] (DP26) -- (DP24);
\draw[ultra thick] (DP24) -- (SepP2);
\draw[ultra thick] (SepP2) -- (DP22);
\draw[ultra thick] (DP22) -- (DP23);

\coordinate[below=1.5 of D21] (D31);
\coordinate[right=0.5 of D31] (D32);
\coordinate[right=1 of D31] (D33);
\coordinate[below=0.5 of D31] (D34);
\coordinate[below right=0.5 and 1 of D31] (D35);
\coordinate[below=1 of D31] (D36);
\coordinate[below right=1 and 0.5 of D31] (D37);
\coordinate[below right=1 and 1 of D31] (D38);

\coordinate[below right=0.25 and 0.25 of D31] (DE31);
\node at (DE31) {\tiny $1$};
\coordinate[below right=0.25 and 0.75 of D31] (DE32);
\node at (DE32) {\tiny $0$};
\coordinate[below right=0.75 and 0.25 of D31] (DE33);
\node at (DE33) {\tiny $0$};
\coordinate[below right=0.75 and 0.75 of D31] (DE34);
\node at (DE34) {\tiny $1$};

\draw[ultra thin] (D31) -- (D33);
\draw[ultra thin] (D34) -- (D35);
\draw[ultra thin] (D36) -- (D38);
\draw[ultra thin] (D31) -- (D36);
\draw[ultra thin] (D32) -- (D37);
\draw[ultra thin] (D33) -- (D38);

\draw[ultra thick] (D36) -- (D37);
\draw[ultra thick] (D37) -- (D32);
\draw[ultra thick] (D32) -- (D33);

\coordinate[right = 0.5 of D35] (ADS3);
\coordinate[right = 1 of ADS3] (ADT3);
\draw[-to] decorate[decoration=zigzag] {(ADS3) -- (ADT3)};

\coordinate[above right = 0.5 and 0.5 of ADT3] (DP31);
\coordinate[right=0.5 of DP31] (DP32);
\coordinate[right=1 of DP31] (DP33);
\coordinate[below=0.5 of DP31] (DP34);
\coordinate[below right=0.5 and 1 of DP31] (DP35);
\coordinate[below=1 of DP31] (DP36);
\coordinate[below right=1 and 0.5 of DP31] (DP37);
\coordinate[below right=1 and 1 of DP31] (DP38);

\coordinate[below right=0.25 and 0.25 of DP31] (DPE31);
\node at (DPE31) {\tiny $0$};
\coordinate[below right=0.25 and 0.75 of DP31] (DPE32);
\node at (DPE32) {\tiny $1$};
\coordinate[below right=0.75 and 0.25 of DP31] (DPE33);
\node at (DPE33) {\tiny $1$};
\coordinate[below right=0.75 and 0.75 of DP31] (DPE34);
\node at (DPE34) {\tiny $0$};

\draw[ultra thin] (DP31) -- (DP33);
\draw[ultra thin] (DP34) -- (DP35);
\draw[ultra thin] (DP36) -- (DP38);
\draw[ultra thin] (DP31) -- (DP36);
\draw[ultra thin] (DP32) -- (DP37);
\draw[ultra thin] (DP33) -- (DP38);

\draw[ultra thick] (DP36) -- (DP37);
\draw[ultra thick] (DP37) -- (DP32);
\draw[ultra thick] (DP32) -- (DP33);

\coordinate[below=1.5 of D31] (D41);
\coordinate[right=0.5 of D41] (D42);
\coordinate[right=1 of D41] (D43);
\coordinate[below=0.5 of D41] (D44);
\coordinate[below right=0.5 and 1 of D41] (D45);
\coordinate[below=1 of D41] (D46);
\coordinate[below right=1 and 0.5 of D41] (D47);
\coordinate[below right=1 and 1 of D41] (D48);

\coordinate[below right=0.25 and 0.25 of D41] (DE41);
\node at (DE41) {\tiny $1$};
\coordinate[below right=0.25 and 0.75 of D41] (DE42);
\node at (DE42) {\tiny $0$};
\coordinate[below right=0.75 and 0.25 of D41] (DE43);
\node at (DE43) {\tiny $0$};
\coordinate[below right=0.75 and 0.75 of D41] (DE44);
\node at (DE44) {\tiny $1$};

\draw[ultra thin] (D41) -- (D43);
\draw[ultra thin] (D44) -- (D45);
\draw[ultra thin] (D46) -- (D48);
\draw[ultra thin] (D41) -- (D46);
\draw[ultra thin] (D42) -- (D47);
\draw[ultra thin] (D43) -- (D48);

\draw[ultra thick] (D46) -- (D44);
\draw[ultra thick] (D44) -- (D45);
\draw[ultra thick] (D45) -- (D43);

\coordinate[right = 0.5 of D45] (ADS4);
\coordinate[right = 1 of ADS4] (ADT4);
\draw[-to] decorate[decoration=zigzag] {(ADS4) -- (ADT4)};

\coordinate[above right = 0.5 and 0.5 of ADT4] (DP41);
\coordinate[right=0.5 of DP41] (DP42);
\coordinate[right=1 of DP41] (DP43);
\coordinate[below=0.5 of DP41] (DP44);
\coordinate[below right=0.5 and 1 of DP41] (DP45);
\coordinate[below=1 of DP41] (DP46);
\coordinate[below right=1 and 0.5 of DP41] (DP47);
\coordinate[below right=1 and 1 of DP41] (DP48);

\coordinate[below right=0.25 and 0.25 of DP41] (DPE41);
\node at (DPE41) {\tiny $0$};
\coordinate[below right=0.25 and 0.75 of DP41] (DPE42);
\node at (DPE42) {\tiny $1$};
\coordinate[below right=0.75 and 0.25 of DP41] (DPE43);
\node at (DPE43) {\tiny $1$};
\coordinate[below right=0.75 and 0.75 of DP41] (DPE44);
\node at (DPE44) {\tiny $0$};

\draw[ultra thin] (DP41) -- (DP43);
\draw[ultra thin] (DP44) -- (DP45);
\draw[ultra thin] (DP46) -- (DP48);
\draw[ultra thin] (DP41) -- (DP46);
\draw[ultra thin] (DP42) -- (DP47);
\draw[ultra thin] (DP43) -- (DP48);

\draw[ultra thick] (DP46) -- (DP44);
\draw[ultra thick] (DP44) -- (DP45);
\draw[ultra thick] (DP45) -- (DP43);

\coordinate[below=1.5 of D41] (D51);
\coordinate[right=0.5 of D51] (D52);
\coordinate[right=1 of D51] (D53);
\coordinate[below=0.5 of D51] (D54);
\coordinate[below right=0.5 and 1 of D51] (D55);
\coordinate[below=1 of D51] (D56);
\coordinate[below right=1 and 0.5 of D51] (D57);
\coordinate[below right=1 and 1 of D51] (D58);

\coordinate[below right=0.25 and 0.25 of D51] (DE51);
\node at (DE51) {\tiny $1$};
\coordinate[below right=0.25 and 0.75 of D51] (DE52);
\node at (DE52) {\tiny $0$};
\coordinate[below right=0.75 and 0.25 of D51] (DE53);
\node at (DE53) {\tiny $0$};
\coordinate[below right=0.75 and 0.75 of D51] (DE54);
\node at (DE54) {\tiny $1$};

\draw[ultra thin] (D51) -- (D53);
\draw[ultra thin] (D54) -- (D55);
\draw[ultra thin] (D56) -- (D58);
\draw[ultra thin] (D51) -- (D56);
\draw[ultra thin] (D52) -- (D57);
\draw[ultra thin] (D53) -- (D58);

\coordinate[right = 0.5 of D54] (Sep5);

\draw[ultra thick] (D56) -- (D57);
\draw[ultra thick] (D57) -- (Sep5);
\draw[ultra thick] (Sep5) -- (D55);
\draw[ultra thick] (D55) -- (D53);

\coordinate[right = 0.5 of D55] (ADS5);
\coordinate[right = 1 of ADS5] (ADT5);
\draw[-to] decorate[decoration=zigzag] {(ADS5) -- (ADT5)};

\coordinate[above right = 0.5 and 0.5 of ADT5] (DP51);
\coordinate[right=0.5 of DP51] (DP52);
\coordinate[right=1 of DP51] (DP53);
\coordinate[below=0.5 of DP51] (DP54);
\coordinate[below right=0.5 and 1 of DP51] (DP55);
\coordinate[below=1 of DP51] (DP56);
\coordinate[below right=1 and 0.5 of DP51] (DP57);
\coordinate[below right=1 and 1 of DP51] (DP58);

\coordinate[below right=0.25 and 0.25 of DP51] (DPE51);
\node at (DPE51) {\tiny $0$};
\coordinate[below right=0.25 and 0.75 of DP51] (DPE52);
\node at (DPE52) {\tiny $1$};
\coordinate[below right=0.75 and 0.25 of DP51] (DPE53);
\node at (DPE53) {\tiny $1$};
\coordinate[below right=0.75 and 0.75 of DP51] (DPE54);
\node at (DPE54) {\tiny $0$};

\draw[ultra thin] (DP51) -- (DP53);
\draw[ultra thin] (DP54) -- (DP55);
\draw[ultra thin] (DP56) -- (DP58);
\draw[ultra thin] (DP51) -- (DP56);
\draw[ultra thin] (DP52) -- (DP57);
\draw[ultra thin] (DP53) -- (DP58);

\coordinate[right = 0.5 of DP54] (SepP5);

\draw[ultra thick] (DP56) -- (DP57);
\draw[ultra thick] (DP57) -- (SepP5);
\draw[ultra thick] (SepP5) -- (DP55);
\draw[ultra thick] (DP55) -- (DP53);

\coordinate[below=1.5 of D51] (D61);
\coordinate[right=0.5 of D61] (D62);
\coordinate[right=1 of D61] (D63);
\coordinate[below=0.5 of D61] (D64);
\coordinate[below right=0.5 and 1 of D61] (D65);
\coordinate[below=1 of D61] (D66);
\coordinate[below right=1 and 0.5 of D61] (D67);
\coordinate[below right=1 and 1 of D61] (D68);

\coordinate[below right=0.25 and 0.25 of D61] (DE61);
\node at (DE61) {\tiny $1$};
\coordinate[below right=0.25 and 0.75 of D61] (DE62);
\node at (DE62) {\tiny $0$};
\coordinate[below right=0.75 and 0.25 of D61] (DE63);
\node at (DE63) {\tiny $0$};
\coordinate[below right=0.75 and 0.75 of D61] (DE64);
\node at (DE64) {\tiny $1$};

\draw[ultra thin] (D61) -- (D63);
\draw[ultra thin] (D64) -- (D65);
\draw[ultra thin] (D66) -- (D68);
\draw[ultra thin] (D61) -- (D66);
\draw[ultra thin] (D62) -- (D67);
\draw[ultra thin] (D63) -- (D68);

\draw[ultra thick] (D66) -- (D68);
\draw[ultra thick] (D68) -- (D63);

\coordinate[right = 0.5 of D65] (ADS6);
\coordinate[right = 1 of ADS6] (ADT6);
\draw[-to] decorate[decoration=zigzag] {(ADS6) -- (ADT6)};

\coordinate[above right = 0.5 and 0.5 of ADT6] (DP61);
\coordinate[right=0.5 of DP61] (DP62);
\coordinate[right=1 of DP61] (DP63);
\coordinate[below=0.5 of DP61] (DP64);
\coordinate[below right=0.5 and 1 of DP61] (DP65);
\coordinate[below=1 of DP61] (DP66);
\coordinate[below right=1 and 0.5 of DP61] (DP67);
\coordinate[below right=1 and 1 of DP61] (DP68);

\coordinate[below right=0.25 and 0.25 of DP61] (DPE61);
\node at (DPE61) {\tiny $0$};
\coordinate[below right=0.25 and 0.75 of DP61] (DPE62);
\node at (DPE62) {\tiny $1$};
\coordinate[below right=0.75 and 0.25 of DP61] (DPE63);
\node at (DPE63) {\tiny $1$};
\coordinate[below right=0.75 and 0.75 of DP61] (DPE64);
\node at (DPE64) {\tiny $0$};

\draw[ultra thin] (DP61) -- (DP63);
\draw[ultra thin] (DP64) -- (DP65);
\draw[ultra thin] (DP66) -- (DP68);
\draw[ultra thin] (DP61) -- (DP66);
\draw[ultra thin] (DP62) -- (DP67);
\draw[ultra thin] (DP63) -- (DP68);

\draw[ultra thick] (DP66) -- (DP68);
\draw[ultra thick] (DP68) -- (DP63);

\coordinate[right = 2 of DP15] (T11);
\coordinate[right = 0.8 of T11] (T12);
\coordinate[right = 0.8 of T12] (T13);
\coordinate[right = 0.8 of T13] (T14);

\coordinate[below left = 0.3 and 0.15 of T11] (B11B);
\coordinate[above right = 0.3 and 0.15 of T11] (B11T);
\coordinate[below left = 0.3 and 0.15 of T12] (B12B);
\coordinate[above right = 0.3 and 0.15 of T12] (B12T);

\coordinate[below right = 0.3 and 0.15 of T13] (B13B);
\coordinate[above left = 0.3 and 0.15 of T13] (B13T);
\coordinate[below right = 0.3 and 0.15 of T14] (B14B);
\coordinate[above left = 0.3 and 0.15 of T14] (B14T);

\draw[red] (B11B) -- (B11T);
\draw[red] (B12B) -- (B12T);
\draw[blue] (B13B) -- (B13T);
\draw[blue] (B14B) -- (B14T);

\draw[dashed] (B12T) to[out=45, in = 135] (B13T);
\draw[dashed] (B11T) to[out=45, in = 135] (B14T);

\coordinate[right=0.5 of T14] (AT1S);
\coordinate[right=1 of AT1S] (AT1T);
\draw[-to] decorate[decoration=zigzag] {(AT1S) -- (AT1T)};

\coordinate[right = 0.5 of AT1T] (TP11);
\coordinate[right = 0.8 of TP11] (TP12);
\coordinate[right = 0.8 of TP12] (TP13);
\coordinate[right = 0.8 of TP13] (TP14);

\coordinate[below left = 0.3 and 0.15 of TP11] (BP11B);
\coordinate[above right = 0.3 and 0.15 of TP11] (BP11T);
\coordinate[below left = 0.3 and 0.15 of TP12] (BP12B);
\coordinate[above right = 0.3 and 0.15 of TP12] (BP12T);

\coordinate[below right = 0.3 and 0.15 of TP13] (BP13B);
\coordinate[above left = 0.3 and 0.15 of TP13] (BP13T);
\coordinate[below right = 0.3 and 0.15 of TP14] (BP14B);
\coordinate[above left = 0.3 and 0.15 of TP14] (BP14T);

\draw[red] (BP11B) -- (BP11T);
\draw[red] (BP12B) -- (BP12T);
\draw[blue] (BP13B) -- (BP13T);
\draw[blue] (BP14B) -- (BP14T);

\draw[dashed] (BP12T) to[out=45, in = 135] (BP14T);
\draw[dashed] (BP11T) to[out=45, in = 135] (BP13T);

\coordinate[above right = 1.25 and 0.5 of AT1S] (I);
\node at (I) {Illustration};

\coordinate[right = 2 of DP25] (T21);
\coordinate[right = 0.8 of T21] (T22);
\coordinate[right = 0.8 of T22] (T23);
\coordinate[right = 0.8 of T23] (T24);

\coordinate[below left = 0.3 and 0.15 of T21] (B21B);
\coordinate[above right = 0.3 and 0.15 of T21] (B21T);
\coordinate[below right = 0.3 and 0.15 of T22] (B22B);
\coordinate[above left = 0.3 and 0.15 of T22] (B22T);

\coordinate[below left = 0.3 and 0.15 of T23] (B23B);
\coordinate[above right = 0.3 and 0.15 of T23] (B23T);
\coordinate[below right = 0.3 and 0.15 of T24] (B24B);
\coordinate[above left = 0.3 and 0.15 of T24] (B24T);

\draw[red] (B21B) -- (B21T);
\draw[blue] (B22B) -- (B22T);
\draw[red] (B23B) -- (B23T);
\draw[blue] (B24B) -- (B24T);

\draw[dashed] (B21T) to[out=45, in = 135] (B24T);

\coordinate[right=0.5 of T24] (AT2S);
\coordinate[right=1 of AT2S] (AT2T);
\draw[-to] decorate[decoration=zigzag] {(AT2S) -- (AT2T)};

\coordinate[right = 0.5 of AT2T] (TP21);
\coordinate[right = 0.8 of TP21] (TP22);
\coordinate[right = 0.8 of TP22] (TP23);
\coordinate[right = 0.8 of TP23] (TP24);

\coordinate[below left = 0.3 and 0.15 of TP21] (BP21B);
\coordinate[above right = 0.3 and 0.15 of TP21] (BP21T);
\coordinate[below right = 0.3 and 0.15 of TP22] (BP22B);
\coordinate[above left = 0.3 and 0.15 of TP22] (BP22T);

\coordinate[below left = 0.3 and 0.15 of TP23] (BP23B);
\coordinate[above right = 0.3 and 0.15 of TP23] (BP23T);
\coordinate[below right = 0.3 and 0.15 of TP24] (BP24B);
\coordinate[above left = 0.3 and 0.15 of TP24] (BP24T);

\draw[red] (BP21B) -- (BP21T);
\draw[blue] (BP22B) -- (BP22T);
\draw[red] (BP23B) -- (BP23T);
\draw[blue] (BP24B) -- (BP24T);

\draw[dashed] (BP21T) to[out=45, in = 135] (BP22T);
\draw[dashed] (BP22B) to[out=-45, in = -135] (BP23B);
\draw[dashed] (BP23T) to[out=45, in = 135] (BP24T);

\coordinate[right = 2 of DP35] (T31);
\coordinate[right = 0.8 of T31] (T32);
\coordinate[right = 0.8 of T32] (T33);
\coordinate[right = 0.8 of T33] (T34);

\coordinate[below right = 0.3 and 0.15 of T31] (B31B);
\coordinate[above left = 0.3 and 0.15 of T31] (B31T);
\coordinate[below left = 0.3 and 0.15 of T32] (B32B);
\coordinate[above right = 0.3 and 0.15 of T32] (B32T);

\coordinate[below left = 0.3 and 0.15 of T33] (B33B);
\coordinate[above right = 0.3 and 0.15 of T33] (B33T);
\coordinate[below right = 0.3 and 0.15 of T34] (B34B);
\coordinate[above left = 0.3 and 0.15 of T34] (B34T);

\draw[blue] (B31B) -- (B31T);
\draw[red] (B32B) -- (B32T);
\draw[red] (B33B) -- (B33T);
\draw[blue] (B34B) -- (B34T);

\draw[dashed] (B32T) to[out=45, in = 135] (B34T);
\draw[dashed] (B31B) to[out=-45, in = -135] (B32B);

\coordinate[right=0.5 of T34] (AT3S);
\coordinate[right=1 of AT3S] (AT3T);
\draw[-to] decorate[decoration=zigzag] {(AT3S) -- (AT3T)};

\coordinate[right = 0.5 of AT3T] (TP31);
\coordinate[right = 0.8 of TP31] (TP32);
\coordinate[right = 0.8 of TP32] (TP33);
\coordinate[right = 0.8 of TP33] (TP34);

\coordinate[below right = 0.3 and 0.15 of TP31] (BP31B);
\coordinate[above left = 0.3 and 0.15 of TP31] (BP31T);
\coordinate[below left = 0.3 and 0.15 of TP32] (BP32B);
\coordinate[above right = 0.3 and 0.15 of TP32] (BP32T);

\coordinate[below left = 0.3 and 0.15 of TP33] (BP33B);
\coordinate[above right = 0.3 and 0.15 of TP33] (BP33T);
\coordinate[below right = 0.3 and 0.15 of TP34] (BP34B);
\coordinate[above left = 0.3 and 0.15 of TP34] (BP34T);

\draw[blue] (BP31B) -- (BP31T);
\draw[red] (BP32B) -- (BP32T);
\draw[red] (BP33B) -- (BP33T);
\draw[blue] (BP34B) -- (BP34T);

\draw[dashed] (BP31B) to[out=-45, in = -135] (BP33B);
\draw[dashed] (BP33T) to[out=45, in = 135] (BP34T);

\coordinate[right = 2 of DP45] (T41);
\coordinate[right = 0.8 of T41] (T42);
\coordinate[right = 0.8 of T42] (T43);
\coordinate[right = 0.8 of T43] (T44);

\coordinate[below left = 0.3 and 0.15 of T41] (B41B);
\coordinate[above right = 0.3 and 0.15 of T41] (B41T);
\coordinate[below right = 0.3 and 0.15 of T42] (B42B);
\coordinate[above left = 0.3 and 0.15 of T42] (B42T);

\coordinate[below right = 0.3 and 0.15 of T43] (B43B);
\coordinate[above left = 0.3 and 0.15 of T43] (B43T);
\coordinate[below left = 0.3 and 0.15 of T44] (B44B);
\coordinate[above right = 0.3 and 0.15 of T44] (B44T);

\draw[red] (B41B) -- (B41T);
\draw[blue] (B42B) -- (B42T);
\draw[blue] (B43B) -- (B43T);
\draw[red] (B44B) -- (B44T);

\draw[dashed] (B41T) to[out=45, in = 135] (B43T);
\draw[dashed] (B43B) to[out=-45, in = -135] (B44B);

\coordinate[right=0.5 of T44] (AT4S);
\coordinate[right=1 of AT4S] (AT4T);
\draw[-to] decorate[decoration=zigzag] {(AT4S) -- (AT4T)};

\coordinate[right = 0.5 of AT4T] (TP41);
\coordinate[right = 0.8 of TP41] (TP42);
\coordinate[right = 0.8 of TP42] (TP43);
\coordinate[right = 0.8 of TP43] (TP44);

\coordinate[below left = 0.3 and 0.15 of TP41] (BP41B);
\coordinate[above right = 0.3 and 0.15 of TP41] (BP41T);
\coordinate[below right = 0.3 and 0.15 of TP42] (BP42B);
\coordinate[above left = 0.3 and 0.15 of TP42] (BP42T);

\coordinate[below right = 0.3 and 0.15 of TP43] (BP43B);
\coordinate[above left = 0.3 and 0.15 of TP43] (BP43T);
\coordinate[below left = 0.3 and 0.15 of TP44] (BP44B);
\coordinate[above right = 0.3 and 0.15 of TP44] (BP44T);

\draw[red] (BP41B) -- (BP41T);
\draw[blue] (BP42B) -- (BP42T);
\draw[blue] (BP43B) -- (BP43T);
\draw[red] (BP44B) -- (BP44T);

\draw[dashed] (BP41T) to[out=45, in = 135] (BP42T);
\draw[dashed] (BP42B) to[out=-45, in = -135] (BP44B);

\coordinate[right = 2 of DP55] (T51);
\coordinate[right = 0.8 of T51] (T52);
\coordinate[right = 0.8 of T52] (T53);
\coordinate[right = 0.8 of T53] (T54);

\coordinate[below right = 0.3 and 0.15 of T51] (B51B);
\coordinate[above left = 0.3 and 0.15 of T51] (B51T);
\coordinate[below left = 0.3 and 0.15 of T52] (B52B);
\coordinate[above right = 0.3 and 0.15 of T52] (B52T);

\coordinate[below right = 0.3 and 0.15 of T53] (B53B);
\coordinate[above left = 0.3 and 0.15 of T53] (B53T);
\coordinate[below left = 0.3 and 0.15 of T54] (B54B);
\coordinate[above right = 0.3 and 0.15 of T54] (B54T);

\draw[blue] (B51B) -- (B51T);
\draw[red] (B52B) -- (B52T);
\draw[blue] (B53B) -- (B53T);
\draw[red] (B54B) -- (B54T);

\draw[dashed] (B51B) to[out=-45, in = -135] (B52B);
\draw[dashed] (B52T) to[out=45, in = 135] (B53T);
\draw[dashed] (B53B) to[out=-45, in = -135] (B54B);

\coordinate[right=0.5 of T54] (AT5S);
\coordinate[right=1 of AT5S] (AT5T);
\draw[-to] decorate[decoration=zigzag] {(AT5S) -- (AT5T)};

\coordinate[right = 0.5 of AT5T] (TP51);
\coordinate[right = 0.8 of TP51] (TP52);
\coordinate[right = 0.8 of TP52] (TP53);
\coordinate[right = 0.8 of TP53] (TP54);

\coordinate[below right = 0.3 and 0.15 of TP51] (BP51B);
\coordinate[above left = 0.3 and 0.15 of TP51] (BP51T);
\coordinate[below left = 0.3 and 0.15 of TP52] (BP52B);
\coordinate[above right = 0.3 and 0.15 of TP52] (BP52T);

\coordinate[below right = 0.3 and 0.15 of TP53] (BP53B);
\coordinate[above left = 0.3 and 0.15 of TP53] (BP53T);
\coordinate[below left = 0.3 and 0.15 of TP54] (BP54B);
\coordinate[above right = 0.3 and 0.15 of TP54] (BP54T);

\draw[blue] (BP51B) -- (BP51T);
\draw[red] (BP52B) -- (BP52T);
\draw[blue] (BP53B) -- (BP53T);
\draw[red] (BP54B) -- (BP54T);

\draw[dashed] (BP51B) to[out=-45, in = -135] (BP54B);

\coordinate[right = 2 of DP65] (T61);
\coordinate[right = 0.8 of T61] (T62);
\coordinate[right = 0.8 of T62] (T63);
\coordinate[right = 0.8 of T63] (T64);

\coordinate[below right = 0.3 and 0.15 of T61] (B61B);
\coordinate[above left = 0.3 and 0.15 of T61] (B61T);
\coordinate[below right = 0.3 and 0.15 of T62] (B62B);
\coordinate[above left = 0.3 and 0.15 of T62] (B62T);

\coordinate[below left = 0.3 and 0.15 of T63] (B63B);
\coordinate[above right = 0.3 and 0.15 of T63] (B63T);
\coordinate[below left = 0.3 and 0.15 of T64] (B64B);
\coordinate[above right = 0.3 and 0.15 of T64] (B64T);

\draw[blue] (B61B) -- (B61T);
\draw[blue] (B62B) -- (B62T);
\draw[red] (B63B) -- (B63T);
\draw[red] (B64B) -- (B64T);

\draw[dashed] (B61B) to[out=-45, in = -135] (B63B);
\draw[dashed] (B62B) to[out=-45, in = -135] (B64B);

\coordinate[right=0.5 of T64] (AT6S);
\coordinate[right=1 of AT6S] (AT6T);
\draw[-to] decorate[decoration=zigzag] {(AT6S) -- (AT6T)};

\coordinate[right = 0.5 of AT6T] (TP61);
\coordinate[right = 0.8 of TP61] (TP62);
\coordinate[right = 0.8 of TP62] (TP63);
\coordinate[right = 0.8 of TP63] (TP64);

\coordinate[below right = 0.3 and 0.15 of TP61] (BP61B);
\coordinate[above left = 0.3 and 0.15 of TP61] (BP61T);
\coordinate[below right = 0.3 and 0.15 of TP62] (BP62B);
\coordinate[above left = 0.3 and 0.15 of TP62] (BP62T);

\coordinate[below left = 0.3 and 0.15 of TP63] (BP63B);
\coordinate[above right = 0.3 and 0.15 of TP63] (BP63T);
\coordinate[below left = 0.3 and 0.15 of TP64] (BP64B);
\coordinate[above right = 0.3 and 0.15 of TP64] (BP64T);

\draw[blue] (BP61B) -- (BP61T);
\draw[blue] (BP62B) -- (BP62T);
\draw[red] (BP63B) -- (BP63T);
\draw[red] (BP64B) -- (BP64T);

\draw[dashed] (BP61B) to[out=-45, in = -135] (BP64B);
\draw[dashed] (BP62B) to[out=-45, in = -135] (BP63B);

\end{tikzpicture}
\caption{Illustration of simple moves for general brane diagrams.}\label{figure:IllustrationSMGeneralCase}
\end{figure}

\begin{example}\label{example:GeneralSimpleMoves}
Let $\mathcal D=0\textcolor{blue}{\backslash}
1
\textcolor{blue}{\backslash}
2
\textcolor{red}{\slash}
3
\textcolor{blue}{\backslash}
3
\textcolor{red}{\slash}
2
\textcolor{blue}{\backslash}
2
\textcolor{red}{\slash}
0$.
As tie diagram $D$ we choose
\begin{center}
\begin{tikzpicture}[scale=.3]
\draw[thick] (0,0)--(2,0);
\draw[thick] (3,0)--(5,0);
\draw[thick] (6,0)--(8,0);
\draw[thick] (9,0)--(11,0);
\draw[thick] (12,0)--(14,0);
\draw[thick] (15,0)--(17,0);
\draw[thick] (18,0)--(20,0);
\draw[thick] (21,0)--(23,0);

\draw [thick,red] (8,-1) --(9,1); 
\draw [thick,red] (14,-1) --(15,1); 
\draw [thick,red] (20,-1) --(21,1);

\draw [thick,blue] (3,-1) --(2,1); 
\draw [thick,blue] (6,-1) --(5,1); 
\draw [thick,blue] (12,-1) --(11,1); 
\draw [thick,blue] (18,-1) --(17,1);

\draw[dashed] (3,-1) to[out=-45, in=-135] (14,-1);
\draw[dashed] (6,-1) to[out=-45, in=-135] (20,-1);
\draw[dashed] (18,-1) to[out=-45, in=-135] (20,-1);
\draw[dashed] (9,1) to[out=45, in=135] (17,1);
\node at (1,0.7) {$0$};
\node at (4,0.7) {$1$};
\node at (7,0.7) {$2$};
\node at (10,0.7) {$3$};
\node at (13,0.7) {$3$};
\node at (16,0.7) {$2$};
\node at (19,0.7) {$2$};
\node at (22,0.7) {$0$};

\def\f{1.5};
\def\s{27};
\def\t{-2.25};

\draw[ultra thin] (0*\f+\s,0*\f+\t) -- (4*\f+\s,0*\f+\t);
\draw[ultra thin]  (0*\f+\s,1*\f+\t) -- (4*\f+\s,1*\f+\t);
\draw[ultra thin]  (0*\f+\s,2*\f+\t) -- (4*\f+\s,2*\f+\t);
\draw[ultra thin]  (0*\f+\s,3*\f+\t) -- (4*\f+\s,3*\f+\t);
\draw[ultra thin]  (0*\f+\s,0*\f+\t) -- (0*\f+\s,3*\f+\t);
\draw[ultra thin]  (1*\f+\s,0*\f+\t) -- (1*\f+\s,3*\f+\t);
\draw[ultra thin]  (2*\f+\s,0*\f+\t) -- (2*\f+\s,3*\f+\t);
\draw[ultra thin]  (3*\f+\s,0*\f+\t) -- (3*\f+\s,3*\f+\t);
\draw[ultra thin]  (4*\f+\s,0*\f+\t) -- (4*\f+\s,3*\f+\t);
\draw[ultra thick] (0*\f+\s,0*\f+\t) -- (2*\f+\s,0*\f+\t) -- (2*\f+\s,1*\f+\t) -- (3*\f+\s,1*\f+\t) -- (3*\f+\s,3*\f+\t) -- (4*\f+\s,3*\f+\t);
  
\node at (0.5*\f+\s,0.5*\f+\t) {\tiny $1$};
\node at (1.5*\f+\s,0.5*\f+\t) {\tiny $1$};
\node at (2.5*\f+\s,0.5*\f+\t) {\tiny $0$};
\node at (3.5*\f+\s,0.5*\f+\t) {\tiny $1$};

\node at (0.5*\f+\s,1.5*\f+\t) {\tiny $0$};
\node at (1.5*\f+\s,1.5*\f+\t) {\tiny $1$};
\node at (2.5*\f+\s,1.5*\f+\t) {\tiny $1$};
\node at (3.5*\f+\s,1.5*\f+\t) {\tiny $0$};

\node at (0.5*\f+\s,2.5*\f+\t) {\tiny $1$};
\node at (1.5*\f+\s,2.5*\f+\t) {\tiny $0$};
\node at (2.5*\f+\s,2.5*\f+\t) {\tiny $1$};
\node at (3.5*\f+\s,2.5*\f+\t) {\tiny $0$};
\end{tikzpicture}
\end{center}
The tie diagrams that are obtained from $D$ via a simple moves have the following binary contingency tables:
\begin{center}
\begin{tikzpicture}[scale=.3]

\def\f{1.5};
\def\s{0};
\def\t{0};

\draw [fill=ForestGreen] (0*\f+\s,1*\f+\t) rectangle (2*\f+\s,3*\f+\t);

\draw[ultra thin] (0*\f+\s,0*\f+\t) -- (4*\f+\s,0*\f+\t);
\draw[ultra thin]  (0*\f+\s,1*\f+\t) -- (4*\f+\s,1*\f+\t);
\draw[ultra thin]  (0*\f+\s,2*\f+\t) -- (4*\f+\s,2*\f+\t);
\draw[ultra thin]  (0*\f+\s,3*\f+\t) -- (4*\f+\s,3*\f+\t);
\draw[ultra thin]  (0*\f+\s,0*\f+\t) -- (0*\f+\s,3*\f+\t);
\draw[ultra thin]  (1*\f+\s,0*\f+\t) -- (1*\f+\s,3*\f+\t);
\draw[ultra thin]  (2*\f+\s,0*\f+\t) -- (2*\f+\s,3*\f+\t);
\draw[ultra thin]  (3*\f+\s,0*\f+\t) -- (3*\f+\s,3*\f+\t);
\draw[ultra thin]  (4*\f+\s,0*\f+\t) -- (4*\f+\s,3*\f+\t);
\draw[ultra thick] (0*\f+\s,0*\f+\t) -- (2*\f+\s,0*\f+\t) -- (2*\f+\s,1*\f+\t) -- (3*\f+\s,1*\f+\t) -- (3*\f+\s,3*\f+\t) -- (4*\f+\s,3*\f+\t);
  
\node at (0.5*\f+\s,0.5*\f+\t) {\tiny $1$};
\node at (1.5*\f+\s,0.5*\f+\t) {\tiny $1$};
\node at (2.5*\f+\s,0.5*\f+\t) {\tiny $0$};
\node at (3.5*\f+\s,0.5*\f+\t) {\tiny $1$};

\node at (0.5*\f+\s,1.5*\f+\t) {\tiny $0$};
\node at (1.5*\f+\s,1.5*\f+\t) {\tiny $1$};
\node at (2.5*\f+\s,1.5*\f+\t) {\tiny $1$};
\node at (3.5*\f+\s,1.5*\f+\t) {\tiny $0$};

\node at (0.5*\f+\s,2.5*\f+\t) {\tiny $1$};
\node at (1.5*\f+\s,2.5*\f+\t) {\tiny $0$};
\node at (2.5*\f+\s,2.5*\f+\t) {\tiny $1$};
\node at (3.5*\f+\s,2.5*\f+\t) {\tiny $0$};

\node at (2*\f+\s,-\f+\t ) {$M(D)$};

\draw[-to] decorate[decoration=zigzag] {(4.9*\f, 1.5*\f) -- (7.1*\f, 1.5*\f)};

\def\s{8*\f};
\def\t{0};

\draw [fill=ForestGreen] (0*\f+\s,1*\f+\t) rectangle (2*\f+\s,3*\f+\t);

\draw[ultra thin] (0*\f+\s,0*\f+\t) -- (4*\f+\s,0*\f+\t);
\draw[ultra thin]  (0*\f+\s,1*\f+\t) -- (4*\f+\s,1*\f+\t);
\draw[ultra thin]  (0*\f+\s,2*\f+\t) -- (4*\f+\s,2*\f+\t);
\draw[ultra thin]  (0*\f+\s,3*\f+\t) -- (4*\f+\s,3*\f+\t);
\draw[ultra thin]  (0*\f+\s,0*\f+\t) -- (0*\f+\s,3*\f+\t);
\draw[ultra thin]  (1*\f+\s,0*\f+\t) -- (1*\f+\s,3*\f+\t);
\draw[ultra thin]  (2*\f+\s,0*\f+\t) -- (2*\f+\s,3*\f+\t);
\draw[ultra thin]  (3*\f+\s,0*\f+\t) -- (3*\f+\s,3*\f+\t);
\draw[ultra thin]  (4*\f+\s,0*\f+\t) -- (4*\f+\s,3*\f+\t);
\draw[ultra thick] (0*\f+\s,0*\f+\t) -- (2*\f+\s,0*\f+\t) -- (2*\f+\s,1*\f+\t) -- (3*\f+\s,1*\f+\t) -- (3*\f+\s,3*\f+\t) -- (4*\f+\s,3*\f+\t);
  
\node at (0.5*\f+\s,0.5*\f+\t) {\tiny $1$};
\node at (1.5*\f+\s,0.5*\f+\t) {\tiny $1$};
\node at (2.5*\f+\s,0.5*\f+\t) {\tiny $0$};
\node at (3.5*\f+\s,0.5*\f+\t) {\tiny $1$};

\node at (0.5*\f+\s,1.5*\f+\t) {\tiny $1$};
\node at (1.5*\f+\s,1.5*\f+\t) {\tiny $0$};
\node at (2.5*\f+\s,1.5*\f+\t) {\tiny $1$};
\node at (3.5*\f+\s,1.5*\f+\t) {\tiny $0$};

\node at (0.5*\f+\s,2.5*\f+\t) {\tiny $0$};
\node at (1.5*\f+\s,2.5*\f+\t) {\tiny $1$};
\node at (2.5*\f+\s,2.5*\f+\t) {\tiny $1$};
\node at (3.5*\f+\s,2.5*\f+\t) {\tiny $0$};

\node at (2*\f+\s,-\f+\t ) {$M(D_1)$};
\def\s{16*\f};
\def\t{0};

\draw [fill=ForestGreen] (2*\f+\s,2*\f+\t) rectangle (4*\f+\s,3*\f+\t);
\draw [fill=ForestGreen] (2*\f+\s,0*\f+\t) rectangle (4*\f+\s,1*\f+\t);

\draw[ultra thin] (0*\f+\s,0*\f+\t) -- (4*\f+\s,0*\f+\t);
\draw[ultra thin]  (0*\f+\s,1*\f+\t) -- (4*\f+\s,1*\f+\t);
\draw[ultra thin]  (0*\f+\s,2*\f+\t) -- (4*\f+\s,2*\f+\t);
\draw[ultra thin]  (0*\f+\s,3*\f+\t) -- (4*\f+\s,3*\f+\t);
\draw[ultra thin]  (0*\f+\s,0*\f+\t) -- (0*\f+\s,3*\f+\t);
\draw[ultra thin]  (1*\f+\s,0*\f+\t) -- (1*\f+\s,3*\f+\t);
\draw[ultra thin]  (2*\f+\s,0*\f+\t) -- (2*\f+\s,3*\f+\t);
\draw[ultra thin]  (3*\f+\s,0*\f+\t) -- (3*\f+\s,3*\f+\t);
\draw[ultra thin]  (4*\f+\s,0*\f+\t) -- (4*\f+\s,3*\f+\t);
\draw[ultra thick] (0*\f+\s,0*\f+\t) -- (2*\f+\s,0*\f+\t) -- (2*\f+\s,1*\f+\t) -- (3*\f+\s,1*\f+\t) -- (3*\f+\s,3*\f+\t) -- (4*\f+\s,3*\f+\t);
  
\node at (0.5*\f+\s,0.5*\f+\t) {\tiny $1$};
\node at (1.5*\f+\s,0.5*\f+\t) {\tiny $1$};
\node at (2.5*\f+\s,0.5*\f+\t) {\tiny $0$};
\node at (3.5*\f+\s,0.5*\f+\t) {\tiny $1$};

\node at (0.5*\f+\s,1.5*\f+\t) {\tiny $0$};
\node at (1.5*\f+\s,1.5*\f+\t) {\tiny $1$};
\node at (2.5*\f+\s,1.5*\f+\t) {\tiny $1$};
\node at (3.5*\f+\s,1.5*\f+\t) {\tiny $0$};

\node at (0.5*\f+\s,2.5*\f+\t) {\tiny $1$};
\node at (1.5*\f+\s,2.5*\f+\t) {\tiny $0$};
\node at (2.5*\f+\s,2.5*\f+\t) {\tiny $1$};
\node at (3.5*\f+\s,2.5*\f+\t) {\tiny $0$};

\node at (2*\f+\s,-\f+\t ) {$M(D)$};

\draw[-to] decorate[decoration=zigzag] {(4.9*\f+\s, 1.5*\f+\t) -- (7.1*\f+\s, 1.5*\f+\t)};

\def\s{8*\f+16*\f};
\def\t{0};

\draw [fill=ForestGreen] (2*\f+\s,2*\f+\t) rectangle (4*\f+\s,3*\f+\t);
\draw [fill=ForestGreen] (2*\f+\s,0*\f+\t) rectangle (4*\f+\s,1*\f+\t);

\draw[ultra thin] (0*\f+\s,0*\f+\t) -- (4*\f+\s,0*\f+\t);
\draw[ultra thin]  (0*\f+\s,1*\f+\t) -- (4*\f+\s,1*\f+\t);
\draw[ultra thin]  (0*\f+\s,2*\f+\t) -- (4*\f+\s,2*\f+\t);
\draw[ultra thin]  (0*\f+\s,3*\f+\t) -- (4*\f+\s,3*\f+\t);
\draw[ultra thin]  (0*\f+\s,0*\f+\t) -- (0*\f+\s,3*\f+\t);
\draw[ultra thin]  (1*\f+\s,0*\f+\t) -- (1*\f+\s,3*\f+\t);
\draw[ultra thin]  (2*\f+\s,0*\f+\t) -- (2*\f+\s,3*\f+\t);
\draw[ultra thin]  (3*\f+\s,0*\f+\t) -- (3*\f+\s,3*\f+\t);
\draw[ultra thin]  (4*\f+\s,0*\f+\t) -- (4*\f+\s,3*\f+\t);
\draw[ultra thick] (0*\f+\s,0*\f+\t) -- (2*\f+\s,0*\f+\t) -- (2*\f+\s,1*\f+\t) -- (3*\f+\s,1*\f+\t) -- (3*\f+\s,3*\f+\t) -- (4*\f+\s,3*\f+\t);
  
\node at (0.5*\f+\s,0.5*\f+\t) {\tiny $1$};
\node at (1.5*\f+\s,0.5*\f+\t) {\tiny $1$};
\node at (2.5*\f+\s,0.5*\f+\t) {\tiny $1$};
\node at (3.5*\f+\s,0.5*\f+\t) {\tiny $0$};

\node at (0.5*\f+\s,1.5*\f+\t) {\tiny $0$};
\node at (1.5*\f+\s,1.5*\f+\t) {\tiny $1$};
\node at (2.5*\f+\s,1.5*\f+\t) {\tiny $1$};
\node at (3.5*\f+\s,1.5*\f+\t) {\tiny $0$};

\node at (0.5*\f+\s,2.5*\f+\t) {\tiny $1$};
\node at (1.5*\f+\s,2.5*\f+\t) {\tiny $0$};
\node at (2.5*\f+\s,2.5*\f+\t) {\tiny $0$};
\node at (3.5*\f+\s,2.5*\f+\t) {\tiny $1$};

\node at (2*\f+\s,-\f+\t ) {$M(D_2)$};
\def\s{8*\f};
\def\t{-6*\f};

\draw [fill=ForestGreen] (2*\f+\s,0*\f+\t) rectangle (4*\f+\s,2*\f+\t);

\draw[ultra thin] (0*\f+\s,0*\f+\t) -- (4*\f+\s,0*\f+\t);
\draw[ultra thin]  (0*\f+\s,1*\f+\t) -- (4*\f+\s,1*\f+\t);
\draw[ultra thin]  (0*\f+\s,2*\f+\t) -- (4*\f+\s,2*\f+\t);
\draw[ultra thin]  (0*\f+\s,3*\f+\t) -- (4*\f+\s,3*\f+\t);
\draw[ultra thin]  (0*\f+\s,0*\f+\t) -- (0*\f+\s,3*\f+\t);
\draw[ultra thin]  (1*\f+\s,0*\f+\t) -- (1*\f+\s,3*\f+\t);
\draw[ultra thin]  (2*\f+\s,0*\f+\t) -- (2*\f+\s,3*\f+\t);
\draw[ultra thin]  (3*\f+\s,0*\f+\t) -- (3*\f+\s,3*\f+\t);
\draw[ultra thin]  (4*\f+\s,0*\f+\t) -- (4*\f+\s,3*\f+\t);
\draw[ultra thick] (0*\f+\s,0*\f+\t) -- (2*\f+\s,0*\f+\t) -- (2*\f+\s,1*\f+\t) -- (3*\f+\s,1*\f+\t) -- (3*\f+\s,3*\f+\t) -- (4*\f+\s,3*\f+\t);
  
\node at (0.5*\f+\s,0.5*\f+\t) {\tiny $1$};
\node at (1.5*\f+\s,0.5*\f+\t) {\tiny $1$};
\node at (2.5*\f+\s,0.5*\f+\t) {\tiny $0$};
\node at (3.5*\f+\s,0.5*\f+\t) {\tiny $1$};

\node at (0.5*\f+\s,1.5*\f+\t) {\tiny $0$};
\node at (1.5*\f+\s,1.5*\f+\t) {\tiny $1$};
\node at (2.5*\f+\s,1.5*\f+\t) {\tiny $1$};
\node at (3.5*\f+\s,1.5*\f+\t) {\tiny $0$};

\node at (0.5*\f+\s,2.5*\f+\t) {\tiny $1$};
\node at (1.5*\f+\s,2.5*\f+\t) {\tiny $0$};
\node at (2.5*\f+\s,2.5*\f+\t) {\tiny $1$};
\node at (3.5*\f+\s,2.5*\f+\t) {\tiny $0$};

\node at (2*\f+\s,-\f+\t ) {$M(D)$};

\draw[-to] decorate[decoration=zigzag] {(4.9*\f+\s, 1.5*\f+\t) -- (7.1*\f+\s, 1.5*\f+\t)};

\def\s{8*\f+8*\f};
\def\t{-6*\f};

\draw [fill=ForestGreen] (2*\f+\s,0*\f+\t) rectangle (4*\f+\s,2*\f+\t);

\draw[ultra thin] (0*\f+\s,0*\f+\t) -- (4*\f+\s,0*\f+\t);
\draw[ultra thin]  (0*\f+\s,1*\f+\t) -- (4*\f+\s,1*\f+\t);
\draw[ultra thin]  (0*\f+\s,2*\f+\t) -- (4*\f+\s,2*\f+\t);
\draw[ultra thin]  (0*\f+\s,3*\f+\t) -- (4*\f+\s,3*\f+\t);
\draw[ultra thin]  (0*\f+\s,0*\f+\t) -- (0*\f+\s,3*\f+\t);
\draw[ultra thin]  (1*\f+\s,0*\f+\t) -- (1*\f+\s,3*\f+\t);
\draw[ultra thin]  (2*\f+\s,0*\f+\t) -- (2*\f+\s,3*\f+\t);
\draw[ultra thin]  (3*\f+\s,0*\f+\t) -- (3*\f+\s,3*\f+\t);
\draw[ultra thin]  (4*\f+\s,0*\f+\t) -- (4*\f+\s,3*\f+\t);
\draw[ultra thick] (0*\f+\s,0*\f+\t) -- (2*\f+\s,0*\f+\t) -- (2*\f+\s,1*\f+\t) -- (3*\f+\s,1*\f+\t) -- (3*\f+\s,3*\f+\t) -- (4*\f+\s,3*\f+\t);
  
\node at (0.5*\f+\s,0.5*\f+\t) {\tiny $1$};
\node at (1.5*\f+\s,0.5*\f+\t) {\tiny $1$};
\node at (2.5*\f+\s,0.5*\f+\t) {\tiny $1$};
\node at (3.5*\f+\s,0.5*\f+\t) {\tiny $0$};

\node at (0.5*\f+\s,1.5*\f+\t) {\tiny $0$};
\node at (1.5*\f+\s,1.5*\f+\t) {\tiny $1$};
\node at (2.5*\f+\s,1.5*\f+\t) {\tiny $0$};
\node at (3.5*\f+\s,1.5*\f+\t) {\tiny $1$};

\node at (0.5*\f+\s,2.5*\f+\t) {\tiny $1$};
\node at (1.5*\f+\s,2.5*\f+\t) {\tiny $0$};
\node at (2.5*\f+\s,2.5*\f+\t) {\tiny $1$};
\node at (3.5*\f+\s,2.5*\f+\t) {\tiny $0$};

\node at (2*\f+\s,-\f+\t ) {$M(D_3)$};
\end{tikzpicture}
\end{center} 
We highlighted the submatrices that are involved in the respective simple moves.
The corresponding illustrations of the simple moves in terms of tie diagrams are illustrated in Figure~\ref{fig:FigureExampleSimpleMoveGeneral}.
\end{example}

If $D'\in \mathrm{SM}_D$ then the \textit{sign of the simple move between $D$ and $D'$} is defined as
\[
\operatorname{sgn}(D,D')\coloneqq (-1)^{n_1+n_2},\quad\textup{where }n_1=\sum_{l=j_1+1}^{j_2-1} M(D)_{i_1,l},\;n_2=\sum_{l=j_1+1}^{j_2-1} M(D)_{i_2,l}.
\]
The notion of twisted simple moves also generalizes as expected: for $z\in S_N$ we set 
\[
\mathrm{SM}_{D,z}\coloneqq \{D'\in\mathrm{Tie}(\mathcal D)\mid z.D'\in \mathrm{SM}_{z.D} \}
\]
and
\[
\mathrm{SM}_{D,z,i} \coloneqq \{D'\in\mathrm{Tie}(\mathcal D)\mid z.D'\in \mathrm{SM}_{z.D,i} \},\quad
\textup{for $i=1,\ldots,M$.}
\]
If $D'\in \mathrm{SM}_{D,z}$, we say that $D'$ \textit{is obtained from $D$ via a $z$-twisted simple move}. The corresponding \textit{sign of the $z$-twisted simple move between $D$ and $D'$} is defined as
$
 \operatorname{sgn}_z(D,D')\coloneqq \operatorname{sgn}(z.D,z.D').
$

By Lemma~\ref{lemma:SImpleMovesBCT}, the definitions of ($z$-twisted) simple moves and the corresponding signs agree with the previous definitions for separated brane diagrams.

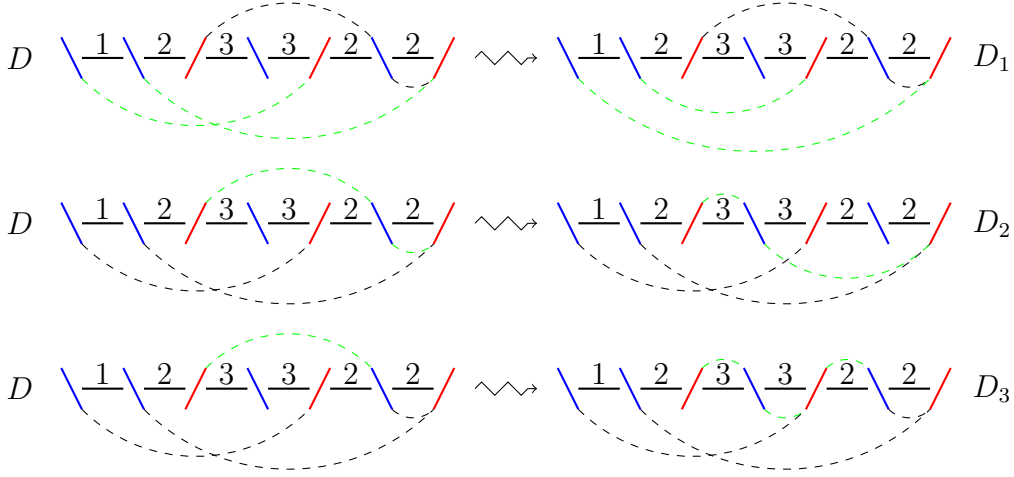
\begin{figure}
\begin{tikzpicture}[scale=.275]
\node at (0,0) {$D$};

\draw[thick] (3,0)--(5,0);
\draw[thick] (6,0)--(8,0);
\draw[thick] (9,0)--(11,0);
\draw[thick] (12,0)--(14,0);
\draw[thick] (15,0)--(17,0);
\draw[thick] (18,0)--(20,0);

\draw [thick,red] (8,-1) --(9,1); 
\draw [thick,red] (14,-1) --(15,1); 
\draw [thick,red] (20,-1) --(21,1);

\draw [thick,blue] (3,-1) --(2,1); 
\draw [thick,blue] (6,-1) --(5,1); 
\draw [thick,blue] (12,-1) --(11,1); 
\draw [thick,blue] (18,-1) --(17,1);
\draw[dashed] (18,-1) to[out=-45, in=-135] (20,-1);
\draw[dashed] (9,1) to[out=45, in=135] (17,1);

\draw[dashed, very thick, ForestGreen] (3,-1) to[out=-45, in=-135] (14,-1);
\draw[dashed, very thick, ForestGreen] (6,-1) to[out=-45, in=-135] (20,-1);
\node at (4,0.7) {$1$};
\node at (7,0.7) {$2$};
\node at (10,0.7) {$3$};
\node at (13,0.7) {$3$};
\node at (16,0.7) {$2$};
\node at (19,0.7) {$2$};
%
\draw[-to] decorate[decoration=zigzag] {(22,0) -- (25,0)}; 
\def \s{24};

\draw[thick] (3+\s,0)--(5+\s,0);
\draw[thick] (6+\s,0)--(8+\s,0);
\draw[thick] (9+\s,0)--(11+\s,0);
\draw[thick] (12+\s,0)--(14+\s,0);
\draw[thick] (15+\s,0)--(17+\s,0);
\draw[thick] (18+\s,0)--(20+\s,0);

\draw [thick,red] (8+\s,-1) --(9+\s,1); 
\draw [thick,red] (14+\s,-1) --(15+\s,1); 
\draw [thick,red] (20+\s,-1) --(21+\s,1);

\draw [thick,blue] (3+\s,-1) --(2+\s,1); 
\draw [thick,blue] (6+\s,-1) --(5+\s,1); 
\draw [thick,blue] (12+\s,-1) --(11+\s,1); 
\draw [thick,blue] (18+\s,-1) --(17+\s,1);

\draw[dashed] (18+\s,-1) to[out=-45, in=-135] (20+\s,-1);
\draw[dashed] (9+\s,1) to[out=45, in=135] (17+\s,1);

\draw[dashed, very thick, ForestGreen] (3+\s,-1) to[out=-45, in=-135] (20+\s,-1);
\draw[dashed, very thick, ForestGreen] (6+\s,-1) to[out=-45, in=-135] (14+\s,-1);

\node at (4+\s,0.7) {$1$};
\node at (7+\s,0.7) {$2$};
\node at (10+\s,0.7) {$3$};
\node at (13+\s,0.7) {$3$};
\node at (16+\s,0.7) {$2$};
\node at (19+\s,0.7) {$2$};

\node at (23+\s,0) {$D_1$};

\def\t{-16};
\node at (0,0+\t) {$D$};

\draw[thick] (3,0+\t)--(5,0+\t);
\draw[thick] (6,0+\t)--(8,0+\t);
\draw[thick] (9,0+\t)--(11,0+\t);
\draw[thick] (12,0+\t)--(14,0+\t);
\draw[thick] (15,0+\t)--(17,0+\t);
\draw[thick] (18,0+\t)--(20,0+\t);

\draw [thick,red] (8,-1+\t) --(9,1+\t); 
\draw [thick,red] (14,-1+\t) --(15,1+\t); 
\draw [thick,red] (20,-1+\t) --(21,1+\t);

\draw [thick,blue] (3,-1+\t) --(2,1+\t); 
\draw [thick,blue] (6,-1+\t) --(5,1+\t); 
\draw [thick,blue] (12,-1+\t) --(11,1+\t); 
\draw [thick,blue] (18,-1+\t) --(17,1+\t);

\draw[dashed] (18,-1+\t) to[out=-45, in=-135] (20,-1+\t);

\draw[dashed] (3,-1+\t) to[out=-45, in=-135] (14,-1+\t);
\draw[dashed] (6,-1+\t) to[out=-45, in=-135] (20,-1+\t);
\draw[dashed, very thick, ForestGreen] (9,1+\t) to[out=45, in=135] (17,1+\t);

\node at (4,0.7+\t) {$1$};
\node at (7,0.7+\t) {$2$};
\node at (10,0.7+\t) {$3$};
\node at (13,0.7+\t) {$3$};
\node at (16,0.7+\t) {$2$};
\node at (19,0.7+\t) {$2$};

\draw[-to] decorate[decoration=zigzag] {(22,0+\t) -- (25,0+\t)};

\draw[thick] (3+\s,0+\t)--(5+\s,0+\t);
\draw[thick] (6+\s,0+\t)--(8+\s,0+\t);
\draw[thick] (9+\s,0+\t)--(11+\s,0+\t);
\draw[thick] (12+\s,0+\t)--(14+\s,0+\t);
\draw[thick] (15+\s,0+\t)--(17+\s,0+\t);
\draw[thick] (18+\s,0+\t)--(20+\s,0+\t);

\draw [thick,red] (8+\s,-1+\t) --(9+\s,1+\t); 
\draw [thick,red] (14+\s,-1+\t) --(15+\s,1+\t); 
\draw [thick,red] (20+\s,-1+\t) --(21+\s,1+\t);

\draw [thick,blue] (3+\s,-1+\t) --(2+\s,1+\t); 
\draw [thick,blue] (6+\s,-1+\t) --(5+\s,1+\t); 
\draw [thick,blue] (12+\s,-1+\t) --(11+\s,1+\t); 
\draw [thick,blue] (18+\s,-1+\t) --(17+\s,1+\t);

\draw[dashed] (18+\s,-1+\t) to[out=-45, in=-135] (20+\s,-1+\t);

\draw[dashed] (3+\s,-1+\t) to[out=-45, in=-135] (14+\s,-1+\t);
\draw[dashed] (6+\s,-1+\t) to[out=-45, in=-135] (20+\s,-1+\t);
\draw[dashed, very thick, ForestGreen] (9+\s,1+\t) to[out=45, in=135] (11+\s,1+\t);
\draw[dashed, very thick, ForestGreen] (12+\s,-1+\t) to[out=-45, in=-135] (14+\s,-1+\t);
\draw[dashed, very thick, ForestGreen] (15+\s,1+\t) to[out=45, in=135] (17+\s,1+\t);

\node at (4+\s,0.7+\t) {$1$};
\node at (7+\s,0.7+\t) {$2$};
\node at (10+\s,0.7+\t) {$3$};
\node at (13+\s,0.7+\t) {$3$};
\node at (16+\s,0.7+\t) {$2$};
\node at (19+\s,0.7+\t) {$2$};

\node at (23+\s,0+\t) {$D_3$};

\def\t{-8};
\node at (0,0+\t) {$D$};

\draw[thick] (3,0+\t)--(5,0+\t);
\draw[thick] (6,0+\t)--(8,0+\t);
\draw[thick] (9,0+\t)--(11,0+\t);
\draw[thick] (12,0+\t)--(14,0+\t);
\draw[thick] (15,0+\t)--(17,0+\t);
\draw[thick] (18,0+\t)--(20,0+\t);

\draw [thick,red] (8,-1+\t) --(9,1+\t); 
\draw [thick,red] (14,-1+\t) --(15,1+\t); 
\draw [thick,red] (20,-1+\t) --(21,1+\t);

\draw [thick,blue] (3,-1+\t) --(2,1+\t); 
\draw [thick,blue] (6,-1+\t) --(5,1+\t); 
\draw [thick,blue] (12,-1+\t) --(11,1+\t); 
\draw [thick,blue] (18,-1+\t) --(17,1+\t);

\draw[dashed] (3,-1+\t) to[out=-45, in=-135] (14,-1+\t);
\draw[dashed] (6,-1+\t) to[out=-45, in=-135] (20,-1+\t);
\draw[dashed, very thick, ForestGreen] (9,1+\t) to[out=45, in=135] (17,1+\t);
\draw[dashed, very thick, ForestGreen] (18,-1+\t) to[out=-45, in=-135] (20,-1+\t);
\node at (4,0.7+\t) {$1$};
\node at (7,0.7+\t) {$2$};
\node at (10,0.7+\t) {$3$};
\node at (13,0.7+\t) {$3$};
\node at (16,0.7+\t) {$2$};
\node at (19,0.7+\t) {$2$};

\draw[-to] decorate[decoration=zigzag] {(22,0+\t) -- (25,0+\t)};

\draw[thick] (3+\s,0+\t)--(5+\s,0+\t);
\draw[thick] (6+\s,0+\t)--(8+\s,0+\t);
\draw[thick] (9+\s,0+\t)--(11+\s,0+\t);
\draw[thick] (12+\s,0+\t)--(14+\s,0+\t);
\draw[thick] (15+\s,0+\t)--(17+\s,0+\t);
\draw[thick] (18+\s,0+\t)--(20+\s,0+\t);

\draw [thick,red] (8+\s,-1+\t) --(9+\s,1+\t); 
\draw [thick,red] (14+\s,-1+\t) --(15+\s,1+\t); 
\draw [thick,red] (20+\s,-1+\t) --(21+\s,1+\t);

\draw [thick,blue] (3+\s,-1+\t) --(2+\s,1+\t); 
\draw [thick,blue] (6+\s,-1+\t) --(5+\s,1+\t); 
\draw [thick,blue] (12+\s,-1+\t) --(11+\s,1+\t); 
\draw [thick,blue] (18+\s,-1+\t) --(17+\s,1+\t);

\draw[dashed] (3+\s,-1+\t) to[out=-45, in=-135] (14+\s,-1+\t);
\draw[dashed] (6+\s,-1+\t) to[out=-45, in=-135] (20+\s,-1+\t);
\draw[dashed, very thick, ForestGreen] (9+\s,1+\t) to[out=45, in=135] (11+\s,1+\t);
\draw[dashed, very thick, ForestGreen] (12+\s,-1+\t) to[out=-45, in=-135] (20+\s,-1+\t);

\node at (4+\s,0.7+\t) {$1$};
\node at (7+\s,0.7+\t) {$2$};
\node at (10+\s,0.7+\t) {$3$};
\node at (13+\s,0.7+\t) {$3$};
\node at (16+\s,0.7+\t) {$2$};
\node at (19+\s,0.7+\t) {$2$};

\node at (23+\s,0+\t) {$D_2$};
\end{tikzpicture}
\caption{Example of simple moves for general brane diagrams.}\label{fig:FigureExampleSimpleMoveGeneral}
\end{figure}

Simple moves are well-behaved with respect to Hanany--Witten transition: let $\tilde D$ be the brane diagram obtained via Hanany--Witten transition from $\mathcal D$ by switching $U_{j_0}\in\mathrm b(\mathcal D)$ and $V_{i_0}\in\mathrm r(\mathcal D)$. Let $\Phi:\mathcal C(\mathcal D)\xrightarrow\sim \mathcal C(\tilde{\mathcal D})$ be the corresponding Hanany--Witten isomorphism (see Proposition~\ref{prop:HWTransition}) and let $\phi\colon\mathrm{Tie}(\mathcal D)\xrightarrow\sim \mathrm{Tie}(\tilde{\mathcal D})$ be the induced bijection.
\begin{lemma}\label{lemma:SimpleMovesAndHW}
Let $D$, $D'\in\mathrm{Tie}(\mathcal D)$, $z\in S_N$ and $i\in\{1,\ldots,M\}$. Then, we have $D'\in \mathrm{SM}_{D,i,z}$ if and only if $\phi(D')\in \mathrm{SM}_{\phi(D),i,z}$.
\end{lemma}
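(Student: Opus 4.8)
The plan is to reduce the statement entirely to combinatorics of binary contingency tables, using the fact that the Hanany--Witten fixed point matching is "invisible" at the level of matrices. By Proposition~\ref{prop:HWFixedPointMatching}, we have $M(D) = M(\phi(D))$ for all $D \in \mathrm{Tie}(\mathcal D)$, and likewise $M(D') = M(\phi(D'))$. Moreover, the margin vectors $\mathbf r$ and $\mathbf c$ are preserved under Hanany--Witten transition: this follows from the formulas for $r_i(\mathcal D)$ and $c_j(\mathcal D)$ in Subsection~\ref{subsec:BCT} together with the definition of the local move (the exchange of a blue and red line, with the change $d_2 + \tilde d_2 = d_1 + d_3 + 1$, leaves each $r_i$ and $c_j$ unchanged). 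Hence the ambient group $S_n$, the Young subgroups $S_{\mathbf r}$, $S_{\mathbf c}$, and the numbers $M, N$ are the same for $\mathcal D$ and $\tilde{\mathcal D}$.

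With this in hand, first I would observe that the definition of $\mathrm{SM}_{D,i}$ for general brane diagrams (given just before Example~\ref{example:GeneralSimpleMoves}) depends only on the matrices $M(D)$ and $M(D')$ and on the indices $i_1, i_2, j_1, j_2$: the conditions \ref{item:GeneralSimpleMove1}--\ref{item:GeneralSimpleMove3} are matrix conditions, and the relative-position condition "$1 \le i_1 \le M-i+1 \le i_2 \le M$" involves only $i$, $M$, and the row indices $i_1, i_2$. None of these data change when we pass from $\mathcal D$ to $\tilde{\mathcal D}$ via $\phi$. Therefore $D' \in \mathrm{SM}_{D,i}$ if and only if $\phi(D') \in \mathrm{SM}_{\phi(D),i}$ directly from the definitions, once we know $M(D) = M(\phi(D))$ and $M(D') = M(\phi(D'))$.

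Next I would handle the $z$-twist. The twisted sets are defined by $\mathrm{SM}_{D,z,i} = \{D' \mid z.D' \in \mathrm{SM}_{z.D,i}\}$, where the $S_N$-action on tie diagrams permutes blue lines with their attached ties. The key compatibility to check is that the $S_N$-action commutes with $\phi$ in the appropriate sense, i.e. that $\phi(z.D) = z.\phi(D)$ up to the obvious identification of the underlying brane diagrams — or more precisely, that $M(z.D) = M(z.\phi(D))$. Since the $S_N$-action on tie diagrams corresponds to column permutation on binary contingency tables (moving $U_j$ with its ties to position $U_{w(j)}$ permutes the columns of $M(D)$ accordingly), and since $M(D) = M(\phi(D))$, we get $M(z.D) = M(z.\phi(D))$. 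Applying the untwisted equivalence from the previous paragraph to $z.D$ and $z.D'$ in place of $D, D'$ then yields: $z.D' \in \mathrm{SM}_{z.D,i} \iff z.\phi(D') \in \mathrm{SM}_{z.\phi(D),i}$, which is precisely $D' \in \mathrm{SM}_{D,z,i} \iff \phi(D') \in \mathrm{SM}_{\phi(D),z,i}$.

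The main obstacle — though it is more bookkeeping than genuine difficulty — is making the identification of brane diagrams and their associated combinatorial data precise enough that the chain of matrix equalities is unambiguous. In particular one must be careful that the labelling conventions for red and blue lines ($V_1, \dots, V_M$ right-to-left, $U_1, \dots, U_N$ left-to-right) are compatible with the Hanany--Witten move, so that "the $(i,j)$ entry of $M(D)$" refers to the same red-blue pair before and after the transition; this is exactly what Proposition~\ref{prop:HWFixedPointMatching} asserts. Once that is granted, the proof is a short unwinding of definitions, and I would write it as such without invoking any of the deeper machinery of the earlier sections.
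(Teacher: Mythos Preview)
Your proposal is correct and follows essentially the same approach as the paper: the paper's proof is the single sentence ``immediate from the fact that $M(D)=M(\phi(D))$ for all $D\in\mathrm{Tie}(\mathcal D)$, see Proposition~\ref{prop:HWFixedPointMatching}'', and you have simply unpacked what ``immediate'' means by tracing through the matrix-level definitions of $\mathrm{SM}_{D,i}$ and the $z$-twist.
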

\begin{proof}
The proof is immediate from the fact that $M(D)=M(\phi(D))$ for all $D\in \mathrm{Tie}(\mathcal D)$, see Proposition~\ref{prop:HWFixedPointMatching}.
\end{proof}

\subsection{Chevalley--Monk formula in the general case} We finally formulate and prove Chevalley--Monk formulas for bow varieties corresponding to not-necessarily separated brane diagrams. 

We first set up some notation: given a brane diagram $\mathcal D$ and $i\in\{1,\ldots,M-1\}$ then we set
\[
I(\mathcal D,i)\coloneqq \{X\in \mathrm{h}(\mathcal D)\mid V_{i+1} \triangleleft X \triangleleft V_i \}.
\]
In addition, we set
\[
I(\mathcal D,0)\coloneqq \{X\in \mathrm{h}(\mathcal D)\mid V_1\triangleleft X \}\quad\textup{and}\quad
I(\mathcal D,M)\coloneqq \{X\in \mathrm{h}(\mathcal D)\mid X \triangleleft V_M \}.
\]
For instance, let $\mathcal D=0\textcolor{blue}{\backslash}
1
\textcolor{blue}{\backslash}
2
\textcolor{red}{\slash}
3
\textcolor{blue}{\backslash}
3
\textcolor{red}{\slash}
2
\textcolor{blue}{\backslash}
2
\textcolor{red}{\slash}
0$ as in Example~\ref{example:GeneralSimpleMoves}. Then, one can easily check that
$I(\mathcal D,0)=\{X_8\},$ 
$I(\mathcal D,1)=\{X_7,X_6\}$,
$I(\mathcal D,2)=\{X_5,X_4\}$, 
$I(\mathcal D,3)=\{X_3,X_2,X_1\}$. 

Now, we finally state the general Chevalley--Monk formula:

\begin{theorem}\label{thm:CMGeneralCase}
Let $\mathfrak C=z^{-1}.\mathfrak C_-$ for $z\in S_N$ and $i\in\{0,\ldots,M+1\}$. Then, we have the following identity in $H_{\mathbb T}^\ast(\mathcal C(\mathcal D))_{\mathrm{loc}}$:
\[
c_1(\xi_j)\cup \mathrm{Stab}_{\mathfrak C}(D)=\iota_{D}^\ast(c_1(\xi_j))\cdot \mathrm{Stab}_{\mathfrak C}(D) + \sum_{D'\in \mathrm{SM}_{D,z,i}} \operatorname{sgn}_z(D,D')h\cdot\mathrm{Stab}_{\mathfrak C}(D'),
\]
for all $X_j\in I(\mathcal D,i)$ and $D\in\mathrm{Tie}(\mathcal D)$.
\end{theorem}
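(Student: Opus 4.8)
The plan is to reduce to the separated case and invoke Theorem~\ref{thm:CMArbitraryChamber}, proceeding by induction on the separatedness degree $\mathrm{sdeg}(\mathcal D)$, where the inductive step is carried out by a single Hanany--Witten transition. For the base case $\mathrm{sdeg}(\mathcal D)=0$ the diagram is separated, $I(\mathcal D,i)$ consists of the single black line $X_{M-i+1}$ for $1\le i\le M-1$, $I(\mathcal D,M)=\{X_1\}$ with $\xi_1=0$, and $I(\mathcal D,0)$ is the set of black lines right of $V_1$, whose tautological bundles are $\mathbb T$-equivariantly trivial by Corollary~\ref{cor:PropertiesTautologicalBundlesSeparated} while $\mathrm{SM}_{D,z,0}=\emptyset$. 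In each of these cases the asserted identity is either trivial or is exactly Theorem~\ref{thm:CMArbitraryChamber}, once one uses the observation recorded after Figure~\ref{figure:IllustrationSMGeneralCase} (via Lemma~\ref{lemma:SImpleMovesBCT}) that the general notions of $z$-twisted simple move, of $\mathrm{SM}_{D,z,i}$, and of $\operatorname{sgn}_z$ specialize in the separated case to those of Section~\ref{section:CMSeparatedCase}.

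For the inductive step, assume $\mathrm{sdeg}(\mathcal D)>0$, fix adjacent lines $U_{j_0}\triangleleft V_{i_0}$ in $\mathcal D$, let $\tilde{\mathcal D}$ be the Hanany--Witten transition exchanging them (so $\mathrm{sdeg}(\tilde{\mathcal D})=\mathrm{sdeg}(\mathcal D)-1$ and Theorem~\ref{thm:CMGeneralCase} holds for $\tilde{\mathcal D}$), let $\Phi\colon\mathcal C(\mathcal D)\xrightarrow{\sim}\mathcal C(\tilde{\mathcal D})$ and $\phi\colon\mathrm{Tie}(\mathcal D)\xrightarrow{\sim}\mathrm{Tie}(\tilde{\mathcal D})$ be the induced maps, and let $X_k$ be the black line with $X_k^-=U_{j_0}$, $X_k^+=V_{i_0}$. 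A short position check shows that the transition keeps every black line in the same gap, except for $X_k$, which moves from the gap $I(\mathcal D,i_0)$ to the gap $I(\tilde{\mathcal D},i_0-1)$ --- the latter being also the gap of $X_{k+1}$ in both $\mathcal D$ and $\tilde{\mathcal D}$, while $X_{k-1}$ stays in the gap indexed $i_0$. The tools to be combined are Corollary~\ref{cor:MatchingOfTautologicalsHW} (matching of $c_1$ of tautological bundles under $\Phi^\ast$), Corollary~\ref{cor:MatchingOfEqMultTautHW} (the matching of their equivariant multiplicities, which carries the automorphism $\varphi_{j_0}\colon t_{j_0}\mapsto t_{j_0}+h$ reflecting the $\rho_{j_0}$-equivariance of $\Phi$), Proposition~\ref{prop:MatchingStableEnvelopesHW} ($\Phi^\ast\mathrm{Stab}_{\mathfrak C}(\phi(D'))=\mathrm{Stab}_{\mathfrak C}(D')$), Proposition~\ref{prop:HWFixedPointMatching} ($M(D)=M(\phi(D))$, hence $\operatorname{sgn}_z$ is preserved) and Lemma~\ref{lemma:SimpleMovesAndHW} ($\phi(\mathrm{SM}_{D,z,i})=\mathrm{SM}_{\phi(D),z,i}$).

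If $X_j\ne X_k$, then $X_j\in I(\mathcal D,i)$ if and only if $X_j\in I(\tilde{\mathcal D},i)$ and $\Phi^\ast c_1(\tilde\xi_j)=c_1(\xi_j)$; applying $\Phi^\ast$ to the $\tilde{\mathcal D}$-formula for $c_1(\tilde\xi_j)\cup\mathrm{Stab}_{\mathfrak C}(\phi(D))$ and feeding in the five facts above yields the $\mathcal D$-formula directly, the only subtlety being that $\Phi^\ast$ acts on the scalar $\iota_{\phi(D)}^\ast(c_1(\tilde\xi_j))$ by $\varphi_{j_0}$, which is precisely undone by Corollary~\ref{cor:MatchingOfEqMultTautHW}. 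If $X_j=X_k$, so $X_k\in I(\mathcal D,i_0)$, I would instead start from the Chern-class identity $\Phi^\ast c_1(\tilde\xi_k)=c_1(\xi_{k+1})+c_1(\xi_{k-1})+h+t_{j_0}-c_1(\xi_k)$ of Corollary~\ref{cor:MatchingOfTautologicalsHW}, rewriting, with the help of $\Phi^\ast(t_{j_0}\cdot 1)=(t_{j_0}+h)\cdot 1$,
\[
c_1(\xi_k)\cup\mathrm{Stab}_{\mathfrak C}(D)=\Phi^\ast\Big(\big(c_1(\tilde\xi_{k+1})+c_1(\tilde\xi_{k-1})-c_1(\tilde\xi_k)+t_{j_0}\big)\cup\mathrm{Stab}_{\mathfrak C}(\phi(D))\Big),
\]
and then expanding each of $c_1(\tilde\xi_{k+1})$, $c_1(\tilde\xi_{k-1})$, $c_1(\tilde\xi_k)$ by the $\tilde{\mathcal D}$-formula. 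Since $X_k$ and $X_{k+1}$ share the gap $I(\tilde{\mathcal D},i_0-1)$, the off-diagonal contributions of $c_1(\tilde\xi_{k+1})$ and of $-c_1(\tilde\xi_k)$ are indexed by the same set $\mathrm{SM}_{\phi(D),z,i_0-1}$ with the same signs and cancel, leaving only the diagonal term and the $\mathrm{SM}_{\phi(D),z,i_0}$-terms produced by $c_1(\tilde\xi_{k-1})$; applying $\Phi^\ast$ and recombining the scalar part via Corollary~\ref{cor:MatchingOfEqMultTautHW} gives $\iota_D^\ast(c_1(\xi_k))\,\mathrm{Stab}_{\mathfrak C}(D)+\sum_{D'\in\mathrm{SM}_{D,z,i_0}}\operatorname{sgn}_z(D,D')h\,\mathrm{Stab}_{\mathfrak C}(D')$, which is the claim for $X_k\in I(\mathcal D,i_0)$. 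The main obstacle is this last case: one must pin down the gaps of $X_{k-1},X_k,X_{k+1}$ in $\tilde{\mathcal D}$ exactly, verify the cancellation of the $\mathrm{SM}_{\phi(D),z,i_0-1}$-contributions, and check the scalar identity $\iota_D^\ast(c_1(\xi_k))=\varphi_{j_0}\big(\iota_{\phi(D)}^\ast(c_1(\tilde\xi_{k+1})+c_1(\tilde\xi_{k-1})-c_1(\tilde\xi_k))+t_{j_0}\big)$ --- this is where the precise shape of the Hanany--Witten exact sequence \eqref{eq:HWExactSequence} genuinely enters, although it is a finite bookkeeping with Corollary~\ref{cor:MatchingOfTautologicalsHW}, Corollary~\ref{cor:MatchingOfEqMultTautHW} and Proposition~\ref{prop:HWFixedPointMatching}.
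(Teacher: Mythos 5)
Your proposal is correct and follows essentially the same route as the paper: induction on $\mathrm{sdeg}(\mathcal D)$ with Theorem~\ref{thm:CMArbitraryChamber} as base case, and an inductive step via a single Hanany--Witten transition using Proposition~\ref{prop:MatchingStableEnvelopesHW}, Corollaries~\ref{cor:MatchingOfTautologicalsHW} and~\ref{cor:MatchingOfEqMultTautHW}, Proposition~\ref{prop:HWFixedPointMatching} and Lemma~\ref{lemma:SimpleMovesAndHW}. The only (cosmetic) difference is in the exceptional case $X_j=X_k$: the paper packages the transfer into the multiplication matrices of Lemma~\ref{lemma:HWMatrixInvariance} and deduces off-diagonally $\mathrm C(\mathcal D,\xi_k)=\mathrm C(\mathcal D,\xi_{k-1})$ using the already-settled case of $\xi_{k+1}$, whereas you expand $c_1(\tilde\xi_{k-1}),c_1(\tilde\xi_k),c_1(\tilde\xi_{k+1})$ on the resolved side and cancel the two gap-$(i_0-1)$ contributions, which is the same computation arranged differently.
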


For the proof, we use the following notation: given a $\mathbb T$-equivariant vector bundle $E$ on $\mathcal C(\mathcal D)$, we denote by $\mathrm{C}(\mathcal D,E)=\mathrm{C}(\mathcal D,E)_{D,D'}$ the matrix with entries in $H_{\mathbb T}^\ast(\operatorname{pt})_{\mathrm{loc}}$ corresponding to the operator of multiplication with $c_1(E)$ on $H_{\mathbb T}^{\ast}(\mathcal C(\mathcal D))_{\mathrm{loc}}$ with respect to the stable basis $(\mathrm{Stab}_{\mathfrak C}(D))_{D\in \mathrm{Tie}(\mathcal D)}$. 

We begin with the following lemma:

\begin{lemma}\label{lemma:HWMatrixInvariance}
Let $U_{j_0}\in\mathrm b(\mathcal D)$, $V_{i_0}\in\mathrm r(\mathcal D)$, $\tilde{\mathcal D}$, $\Phi$ and $\phi$ be as in Lemma~\ref{lemma:SimpleMovesAndHW}. Let $X_l=U_{j_0}^+$ and $D$, $D'\in \mathrm{Tie}(\mathcal D)$. Then, we have
\[
\varphi_{j_0}(\mathrm{C}(\tilde{\mathcal D},\tilde \xi_j)_{\phi(D),\phi(D')})=\mathrm{C}(\mathcal D,\xi_j)_{D,D'}, \quad \textit{for $j \ne l$}
\] 
and
\[ 
\varphi_{j_0}(\mathrm C(\tilde{\mathcal D},\tilde\xi_{l})_{\phi(D),\phi(D')})
=
\mathrm{C}(\mathcal D,\xi_{l+1})_{D,D'}+\mathrm C({\mathcal D},\xi_{l-1})_{D,D'}-\mathrm C({\mathcal D},\xi_{l})_{D,D'}+(t_{j_0}+h)\delta_{D,D'}.
\]
Here, $\tilde \xi_i$ is the tautological bundle over $\mathcal C(\tilde{\mathcal D})$ corresponding to $X_i$, $\delta_{D,D'}$ is the Kronecker delta and $\varphi_{j_0}:\mathbb Q[t_1,\ldots,t_N,h]\xrightarrow\sim \mathbb Q[t_1,\ldots,t_N,h]$ is the $\mathbb Q[h]$-algebra automorphism given by $t_{j_0}\mapsto t_{j_0}+h$ and $t_j\mapsto t_j$ for $j\ne j_0$.
\end{lemma}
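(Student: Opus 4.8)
The statement is a matrix reformulation of the relations between tautological Chern classes under a Hanany--Witten transition, and the strategy is to combine Corollary~\ref{cor:MatchingOfTautologicalsHW} with the compatibility of stable bases with Hanany--Witten transition from Proposition~\ref{prop:MatchingStableEnvelopesHW}. First I would recall that $\Phi^\ast:H_{\mathbb T}^\ast(\mathcal C(\tilde{\mathcal D}))\xrightarrow{\sim} H_{\mathbb T}^\ast(\mathcal C(\mathcal D))$ is a ring isomorphism which is $\rho_{j_0}$-equivariant, where $\rho_{j_0}$ induces the coefficient automorphism $\varphi_{j_0}$ on $H_{\mathbb T}^\ast(\operatorname{pt})=\mathbb Q[t_1,\ldots,t_N,h]$. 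By Proposition~\ref{prop:MatchingStableEnvelopesHW}, $\Phi^\ast(\mathrm{Stab}_{\mathfrak C}(\phi(D)))=\mathrm{Stab}_{\mathfrak C}(D)$ for all $D$, so $\Phi^\ast$ carries the stable basis of $\mathcal C(\tilde{\mathcal D})$ to the stable basis of $\mathcal C(\mathcal D)$, but it is only $\varphi_{j_0}$-semilinear over $H_{\mathbb T}^\ast(\operatorname{pt})$. Hence for any $\mathbb T$-equivariant class $\gamma$ on $\mathcal C(\tilde{\mathcal D})$, writing $\gamma\cup\mathrm{Stab}_{\mathfrak C}(\phi(D'))=\sum_{D}\mathrm C(\tilde{\mathcal D},\gamma)_{\phi(D),\phi(D')}\mathrm{Stab}_{\mathfrak C}(\phi(D))$ and applying $\Phi^\ast$, one obtains
\[
\Phi^\ast(\gamma)\cup\mathrm{Stab}_{\mathfrak C}(D')=\sum_{D}\varphi_{j_0}\bigl(\mathrm C(\tilde{\mathcal D},\gamma)_{\phi(D),\phi(D')}\bigr)\mathrm{Stab}_{\mathfrak C}(D),
\]
so that the matrix of multiplication by $\Phi^\ast(\gamma)$ in the stable basis of $\mathcal C(\mathcal D)$ is exactly $\varphi_{j_0}$ applied entrywise to the matrix of multiplication by $\gamma$.

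Next I would feed in the specific identities for $\gamma=c_1(\tilde\xi_j)$. For $j\ne l$ (using $X_l=U_{j_0}^+$, which matches the black line $X_k$ with $X_k^-=U_{j_0}$, $X_k^+=V_{i_0}$ in Corollary~\ref{cor:MatchingOfTautologicalsHW} after the relabelling $k=l$), part~\ref{item:MatchingOfTautologicalsHW1} gives $\Phi^\ast(c_1(\tilde\xi_j))=c_1(\xi_j)$; substituting into the displayed relation above yields $\varphi_{j_0}(\mathrm C(\tilde{\mathcal D},\tilde\xi_j)_{\phi(D),\phi(D')})=\mathrm C(\mathcal D,\xi_j)_{D,D'}$, which is the first claimed equality. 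For $j=l$, part~\ref{item:MatchingOfTautologicalsHW2} gives $\Phi^\ast(c_1(\tilde\xi_l))=c_1(\xi_{l+1})+c_1(\xi_{l-1})+h+t_{j_0}-c_1(\xi_l)$. Multiplying by $\mathrm{Stab}_{\mathfrak C}(D')$ and expanding each summand in the stable basis of $\mathcal C(\mathcal D)$, the Chern-class terms contribute $\mathrm C(\mathcal D,\xi_{l+1})_{D,D'}+\mathrm C(\mathcal D,\xi_{l-1})_{D,D'}-\mathrm C(\mathcal D,\xi_l)_{D,D'}$, while the scalar $h+t_{j_0}$ contributes $(h+t_{j_0})\delta_{D,D'}$ since multiplication by a constant is diagonal in any basis. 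Comparing with $\sum_D\varphi_{j_0}(\mathrm C(\tilde{\mathcal D},\tilde\xi_l)_{\phi(D),\phi(D')})\mathrm{Stab}_{\mathfrak C}(D)$ and using that the stable basis is a basis of $H_{\mathbb T}^\ast(\mathcal C(\mathcal D))_{\mathrm{loc}}$ gives the second equality.

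The one point requiring a little care, and the place I expect to spend the most words, is bookkeeping the index conventions: matching the black line $X_l=U_{j_0}^+$ on $\mathcal D$ with the line $X_k$ of Proposition~\ref{prop:HWTransition} (so that $X_{l-1}$, $X_l$, $X_{l+1}$ on $\mathcal D$ are the relevant triple), and checking that the automorphism $\rho_{j_0}$ of Proposition~\ref{prop:HWTransition} really induces $\varphi_{j_0}$ on cohomology of a point under the identification $H_{\mathbb T}^\ast(\operatorname{pt})\cong\mathbb Q[t_1,\ldots,t_N,h]$ --- i.e. that $\rho_{j_0}^\ast$ sends $t_{j_0}\mapsto t_{j_0}+h$ and fixes the other parameters. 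Once these identifications are fixed the argument is purely formal: semilinearity of $\Phi^\ast$ over the coefficient ring plus the two Chern-class relations, with no geometry beyond what is already in Corollary~\ref{cor:MatchingOfTautologicalsHW} and Proposition~\ref{prop:MatchingStableEnvelopesHW}.
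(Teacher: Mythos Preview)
Your proposal is correct and follows essentially the same approach as the paper: use Proposition~\ref{prop:MatchingStableEnvelopesHW} to see that $\Phi^\ast$ carries stable basis to stable basis (with coefficients twisted by $\varphi_{j_0}$), deduce that $\varphi_{j_0}(\mathrm{C}(\tilde{\mathcal D},\tilde\xi_j)_{\phi(D),\phi(D')})=\mathrm{C}(\mathcal D,\Phi^\ast(\tilde\xi_j))_{D,D'}$, and then plug in the Chern-class identities from Corollary~\ref{cor:MatchingOfTautologicalsHW}. The paper's proof is just a terser version of what you wrote.
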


\begin{proof}
Let $\Phi^\ast\colon H^\ast_{\mathbb T}(\mathcal C(\tilde{\mathcal D}))\xrightarrow\sim H^\ast_{\mathbb T}(\mathcal C(\mathcal D))$ be the induced ring isomorphism from $\Phi$. By Proposition~\ref{prop:MatchingStableEnvelopesHW}, we have $\Phi^\ast(\mathrm{Stab}_{\mathfrak C}(\phi(T)))=\mathrm{Stab}_{\mathfrak C}(T)$ for all $T\in \mathrm{Tie}(\mathcal D)$. Thus,
\[
\varphi_{j_0}(\mathrm{C}(\tilde{\mathcal D},\tilde{\xi}_i)_{\phi(D),\phi(D')}) = \mathrm{C}(\mathcal D,\Phi^\ast (\tilde \xi_i))_{D,D'}.
\] Hence, the lemma follows from Corollary~\ref{cor:MatchingOfTautologicalsHW}.
\end{proof}

\begin{proof}[Proof of Theorem~\ref{thm:CMGeneralCase}]
We prove the theorem via induction on the separatedness degree of $\mathcal D$. The case $\mathrm{sdeg}(\mathcal D)=0$ is exactly the statement of Theorem~\ref{thm:CMArbitraryChamber}, so let us assume  $\mathrm{sdeg}(\mathcal D)>0$. As in the proof of Theorem~\ref{thm:CMAntidominantChamber}, the support condition for stable basis elements directly implies
\[
\mathrm{C}(\mathcal D,\xi_j)_{D,D}=\iota_D^\ast (c_1(\xi_j)),\quad\textup{for all $D\in\mathrm{Tie}(\mathcal D)$}.
\]
Hence, it is left to show that $\mathrm{C}(\mathcal D,E)$ has the correct off-diagonal terms.
As $\mathrm{sdeg}(\mathcal D)>0$, there exist $U_{j_0}\in\mathrm b(\mathcal D)$ and $V_{i_0}\in\mathrm r(\mathcal D)$ as in Lemma~\ref{lemma:HWMatrixInvariance}. In the following, we use the notation from Lemma~\ref{lemma:HWMatrixInvariance}. Let $D$, $D'\in\mathrm{Tie}(\mathcal D)$ with $D\ne D'$. Assume first that $X_j\ne U_{j_0}^+$. Note that in this case $X_j\in I(\tilde{\mathcal D},i)$. Hence, the induction hypothesis gives
\[
\mathrm{C}(\tilde{\mathcal D},\tilde \xi_j)_{\phi(D),\phi(D')}
=
\begin{cases}
\operatorname{sgn}_z(\phi(D),\phi(D'))h &\textup{if $\phi(D')\in \mathrm{SM}_{\phi(D),i,z}$,}\\
0 &\textup{otherwise.}
\end{cases}
\]
Hence, Lemma~\ref{lemma:SimpleMovesAndHW} and Lemma~\ref{lemma:MatchingFunctionsProperties} imply
\[
\mathrm{C}({\mathcal D}, \xi_j)_{D,D'}
=
\begin{cases}
\operatorname{sgn}_z(D,D')h &\textup{if $D'\in \mathrm{SM}_{D,i,z}$,}\\
0 &\textup{otherwise.}
\end{cases}
\]
So $\mathrm{C}(\mathcal D,\xi_j)$ has the correct off-diagonal terms.
It remains to prove the case $X_j= U_{j_0}^+$. Note that in this case $i=i_0$. Since $X_{j+1}\in I(\mathcal D,i-1)$ and $X_{j}\in I(\tilde{\mathcal D},i-1)$, the induction hypothesis and the previous case imply $\varphi_{j_0}(\mathrm C(\tilde{\mathcal D},\tilde\xi_{j})_{\phi(D),\phi(D')})=\mathrm{C}({\mathcal D}, \xi_{j+1})_{D,D'}$. Therefore, Lemma~\ref{lemma:SimpleMovesAndHW} and Lemma~\ref{lemma:MatchingFunctionsProperties} again imply that $\mathrm{C}(\mathcal D,\xi_{j})$ and $\mathrm{C}(\mathcal D,\xi_{j-1})$ have identical off-diagonal entries. By the first case, the latter are given by
\[
\mathrm{C}({\mathcal D}, \xi_{j-1})_{D,D'}
=
\begin{cases}
\operatorname{sgn}_z(D,D')h &\textup{if $D'\in \mathrm{SM}_{D,i,z}$,}\\
0 &\textup{otherwise},
\end{cases}
\]
which completes the proof.
\end{proof}

	\bibliographystyle{alpha}
	\bibliography{BowCMv3ArXiv.bib}
\end{document}